\newtheorem{thm}{Theorem}[section]
\newtheorem{lem}[thm]{Lemma}
\newtheorem{cor}[thm]{Corollary}
\newtheorem{prop}[thm]{Proposition}
\newtheorem{question}{Question}
\newtheorem{deff}[thm]{Definition}
\newcommand{\Z}{\mathbb{Z}}
\newcommand{\A}{{\cal A}}
\newcommand{\Oe}{\rm \widehat{OE}}
\def\OE{{\rm OE}}
\newcommand{\F}{{\cal F}}
\newcommand{\cR}{{\cal R}}
\def\aut {{\rm Aut}}
\def\out {{\rm Out}}
\def\inn {{\rm Inn}}
\def\hnn {{\rm HNN}}
\def\Iso {{\rm Iso}}
\def\Z{{\mathbb Z}}
\newtheorem{exa}[thm]{Example}
\newtheorem{rem}[thm]{Remark}
\title{Profinite genus of HNN-extensions with finite associated subgroups}
\author[$\dagger$]{V.R. de Bessa} \author[$\star$]{A.L.P. Porto} \author[$\ddag$]{P.A. Zalesskii\footnote{The third author was partially supported by CNPq. The second author is grateful for the financial support from FAPDF and the Post-Doctoral internship period at UFMG. }}
\affil[$\dagger$]{Federal University of Viçosa, Rio Paranaíba, Brazil}
\affil[$\star$]{ICT, Federal University of the Jequitinhonha and Mucuri Valleys, Diamantina, Brazil} 
\affil[$\ddag$]{Department of Mathematics, University of Brasilia, Brasilia, Brazil}
\begin{document}

\maketitle

\begin{abstract} 

We study the profinite genus of HNN-extensions whose associated subgroups are finite. We give precise formulas for the number of isomorphism classes of $\hnn(G,H,K,t,f)$ and of its profinite completion and compute the profinite genus of such an HNN-extension $\hnn(G,H,K,t,f)$. We also list various situations when   $\hnn(G,H,K,t,f)$ is determined by its profinite completion. 
\end{abstract}


\section{Introduction} \label{intro}

There has been much recent study of whether residually finite groups, or classes of residually finite groups of combinatorial nature may be distinguished from each other by their sets of finite quotient groups.

In group theory the study in this direction started in 70-th of the last century when Baumslag \cite{BAU74},  Stebe \cite{Ste72} and others found examples of finitely generated residually finite groups having the same set of finite quotients. The general question  addressed in this study can be formulated as follows: 
		
		\begin{question}\label{question}
			To what extent a finitely generated residually finite group $\Gamma$ is  determined by its finite quotients?
		\end{question} 
		
		The study leaded to the notion of genus $\mathfrak{g}(G)$ of a residually finite group $G$, the set of isomorphism classes of finitely generated residually finite groups having the same set of finite quotients as $G$. Equivalently, $\mathfrak{g}(G)$ is the set of isomorphism classes of finitely generated residually finite groups having the  profinite completion isomorphic to the profinite completion $\widehat G$ of $G$. 
		
		This study mostly was concentrated to establish whether the cardinality of  the  genus $\mathfrak{g}(G)$ is finite or  1 (see  \cite{Aka, BCR16, BMRS18, BMRS20, BPZ2, BPZ, BZ, Dav, GPS, GS, GZ, J, J2, J3, M, Sam, Sam2, Wil17} for example) (we use the same term {\it genus} for $g(G)$ from now on). There are only few papers where exact numbers or estimates of the genus appear due to the difficulties of such calculation (see \cite{BPZ2, BPZ, BZ, GZ, Ner19, Ner20, Ner24}).
However, the following principle question of Remeslennikov is still open \cite[Question 5.48]{K}.
		
		\begin{question} (V.N. Remeslennikov)
		Is the genus $g(F)$ of a free group $F$ of finite rank equals 1?
		
		\end{question}

There are two basic constructions in combinatorial group theory: free product with amalgamation and HNN-extension. As many groups of combinatorial and geometric nature split as an amalgamted free product or HNN-extension, it has sense  to study the genus of these constructions.  It is natural to start with splitting over finite groups, and for free products with amalgamation this was done in \cite{BPZ}.  However, since Remeslennikov's question is not answered, it is not clear whether the profinite completion of an one-ended group does not split as an amalgamated free product or an HNN-extension over a finite group. This naturally gives restriction to the family in which one does considerations.

A finitely generated residually finite group $G$ with less than 2 ends, i.e. a finite or one ended group, will be called an $\OE$-group in the paper.   It follows from the famous Stallings theorem that $G$ is an $\OE$-group  if and only if whenever it acts on a tree with finite edge stabilizers
it  has a global fixed point. A finitely generated profinite group will be called $\OE$-group if it has the same property:   whenever it acts   on a  profinite tree  with finite edge stabilizers, then it fixes a vertex.  Note that  finite and one-ended groups acting on a tree have a global fixed point and we need to extend this property to the profinite completion.  A finitely generated residually finite group $G$ will be called $\Oe$-group if  $\widehat G$  is a profinite $\OE$-group, i.e. $\widehat G$  can not act on a profinite tree with finite edge stabilizers without a global fixed point (see Definition \ref{Profinite tree} for the definition of a profinite tree). 

Note that the class $\Oe$-groups is quite large. It contains all finitely generated residually finite not infinite virtually cyclic soluble groups, virtually surface  groups, indecomposable (into free products) 3-manifold groups,  indecomposable RAAGs, as well as all arithmetic groups of rank $>1$ and finitely generated residually finite Fab groups (see Propositions \ref{virtually cyclic} and \ref{examples}).

The subject of the present paper is to complement the work that was done in \cite{BPZ2} and  \cite{BPZ} by studying the profinite genus of HNN-extensions of an $\Oe$-group whose associated subgroups are finite. Note that 
in \cite{BZ} examples of HNN-extensions with finite base group whose profinite completions are isomorphic were given and in this paper we give more examples with not obligatory finite base group.

We first investigate the isomorphism problems for (profinite) abstract HNN-extensions that are of independent interest and in fact calculate the number of isomorpism classes of such abstract HNN-extensions (see Sections \ref{iso abstract HNN} and \ref{profinitehnnextensions}). 

Let $G=\hnn(G_1, H, K, f, t)$ be an abstract HNN-extension with finite associated subgroups $H$ and $K$.  Denote by $S$ the set of all finite subgroups of $G_1$. There is a natural action of $G_1\rtimes \aut(G_1)$ on the set $S$ namely \begin{equation} (g_1,\alpha)\cdot X= g_1\alpha(X)g_1^{-1} \in S, \mbox{ where } (g_1,\alpha) \in G_1\rtimes \aut(G_1), X \in S. \end{equation}

Denote by  $\aut_{G_1}(H)$  the set of automorphisms of $G_1$ that leaves $H$ invariant. 
Then the stabilizer of the conjugacy class of $H$ in $G_1\rtimes \aut(G_1)$ is the subgroup
\begin{equation} \Gamma_H:=G_1 \rtimes \aut_{G_1}(H)\end{equation}  of the holomorph $G_1\rtimes \aut(G_1)$. 
Denote by $\Gamma_{HK}$  the stabilizer of $K\in S$ in $\Gamma_H$. 
 
 Let $\Iso(H, K)$ be the set of all isomorphisms of $H$ in $K.$ Then $\Gamma_{HK}$ acts naturally on the set of isomorphisms $\Iso(H,K)$  whose action is given by $$(g_1,\alpha) \cdot f=\tau_{g_1} \alpha f \alpha^{-1},$$ 
 where $(g_1,\alpha) \in \Gamma_{HK}$,  $f \in \Iso(H, K)$ and $\tau_{g_1}$ it the inner automorphism corresponding to $g_1$.

 We denote by $\widetilde\Gamma_{HK}$ the image of $\Gamma_{HK}$ in the symmetric group $Sym(\Iso(H,K)).$ Depending on the choice  of $H$ and $K$  it might exist $\psi\in \aut(G_1)$  that swaps the conjugacy classes of $H$ and $K$, say $\psi(H)=K$ and $\psi(K)=H^{g_1}$ for some $g_1\in G_1$. In this case the "invertion" of elements of $\Iso(H,K)$ followed by the action by the element $( g', \psi)\in G_1\rtimes \aut(G_1)$ gives an element $\iota\in Sym(\Iso(H,K))$ that normalizes $\widetilde\Gamma_{HK}$ and so $\overline\Gamma_{HK}:=\widetilde\Gamma_{HK} \cdot \langle\iota\rangle$ is a group that does not depend in fact upon a choice of $\psi$ (see Lemma \ref{carmo}). If such $\psi$ does not exist we put $\overline\Gamma_{HK}:=\widetilde\Gamma_{HK}$.

\begin{thm} Let $G_1$ be an $\OE$-group and $$G=\hnn(G_1, H, K, f, t), B=\hnn(G_1, H, K, f_1, t_1)$$ be  abstract HNN-extensions with fixed finite associated  subgroups $H,K$ of $G_1$. Then $G$ and $B$ are isomorphic if and only if $f$ and $f_1$ belong to the same $\overline\Gamma_{HK}$-orbit in $\Iso(H,K)$.    
\end{thm}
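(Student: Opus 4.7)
The plan is to prove both implications. The easy direction uses the universal property of HNN-extensions, while the hard direction relies on Bass-Serre theory together with the $\OE$-hypothesis on $G_1$.

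For sufficiency, suppose $(g, \alpha) \in \Gamma_{HK}$ satisfies $f_1 = \tau_g \alpha f \alpha^{-1}$. I would define $\varphi\colon G \to B$ on generators by $\varphi|_{G_1} = \alpha$ and $\varphi(t) = g^{-1} t_1$, and verify the HNN-relation directly: for $h \in H$,
\[
\varphi(t)\varphi(h)\varphi(t)^{-1} = g^{-1} t_1\, \alpha(h)\, t_1^{-1} g = g^{-1} f_1(\alpha(h)) g = \alpha(f(h)) = \varphi(f(h)),
\]
where the third equality is $f_1 = \tau_g \alpha f \alpha^{-1}$ applied to $h$. The universal property of HNN-extensions then produces a homomorphism, and an analogous construction using $(g, \alpha)^{-1}$ provides a two-sided inverse, so $\varphi$ is an isomorphism. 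When $f_1$ is obtained from $f$ via the swap $\iota$, I would apply the same scheme to the identification $\hnn(G_1, H, K, f, t) = \hnn(G_1, K, H, f^{-1}, t^{-1})$ combined with the swap automorphism $\psi$ of $G_1$, reducing to the previous case.

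For necessity, let $\varphi\colon G \to B$ be an isomorphism, and let $T_G$, $T_B$ be the respective Bass-Serre trees with base vertex $v_0$, whose vertex stabilizers are conjugates of $G_1$ and whose edge stabilizers are finite (being conjugates of $H$ or $K$). The group $G_1$ acts on $T_B$ through $\varphi$ with finite edge stabilizers; since $G_1$ is an $\OE$-group it must fix a vertex, whence $\varphi(G_1) \leq b G_1 b^{-1}$ for some $b \in B$. Applying the symmetric argument to $\varphi^{-1}$, and noting that (for $G_1$ infinite) $G_1$ cannot be a proper subgroup of any conjugate of itself in $G$---otherwise a non-trivial geodesic in $T_G$ would have infinite stabilizer, contradicting the finiteness of edge stabilizers---forces $\varphi(G_1) = b G_1 b^{-1}$. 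After composing $\varphi$ with $\tau_{b^{-1}}$, assume $\varphi(G_1) = G_1$ and set $\alpha := \varphi|_{G_1} \in \aut(G_1)$. The isomorphism $\varphi$ descends to an isomorphism of the quotients $G/\langle\langle G_1 \rangle\rangle \cong \Z \cong B/\langle\langle G_1 \rangle\rangle$ sending $t \mapsto \pm t_1$, so the $t_1$-exponent of $\varphi(t)$ equals $\pm 1$. Combined with Britton's lemma applied to $\varphi(t)\alpha(h)\varphi(t)^{-1} = \alpha(f(h)) \in G_1$ for every $h \in H$, this yields a reduced form $\varphi(t) = g_0\, t_1^\epsilon\, g_1'$ with $g_0, g_1' \in G_1$ and $\epsilon \in \{\pm 1\}$.

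If $\epsilon = 1$, Britton's lemma forces $g_1' \alpha(H) g_1'^{-1} = H$ and $g_0^{-1} \alpha(K) g_0 = K$ (up to symmetric variants); setting $\alpha'' := \tau_{g_1'} \alpha$ then gives $\alpha''(H) = H$ and, for a suitable $g_* \in G_1$ chosen from the above conjugation data, produces $(g_*, \alpha'') \in \Gamma_{HK}$ with $f_1 = (g_*, \alpha'') \cdot f$, placing $f$ and $f_1$ in the same $\widetilde\Gamma_{HK}$-orbit. If $\epsilon = -1$, the analogous computation shows that $\alpha$ sends the $G_1$-conjugacy class of $H$ to that of $K$, so a swap automorphism $\psi$ as in the definition of $\iota$ exists and $f_1 \in \iota \cdot \widetilde\Gamma_{HK} \cdot f$. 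In either case $f_1 \in \overline\Gamma_{HK} \cdot f$. The hardest technical step is extracting the short normal form $\varphi(t) = g_0 t_1^{\pm 1} g_1'$: it requires carefully propagating the condition that $\varphi(t)$ conjugates the finite subgroup $\alpha(H)$ into $G_1$ through every $t_1^{\pm 1}$-syllable of a reduced expression, ruling out deeper syllables via repeated application of Britton's lemma.
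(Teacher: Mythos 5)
Your sufficiency direction is correct and is essentially the paper's: given $(g,\alpha)\in\Gamma_{HK}$ with $f_1=\tau_g\alpha f\alpha^{-1}$ one writes down the map on generators and checks the single HNN-relation, and the $\iota$-case is absorbed by the identification $\hnn(G_1,H,K,f,t)=\hnn(G_1,K,H,f^{-1},t^{-1})$ composed with the swap automorphism. The first half of your necessity argument (pulling back the action on the Bass--Serre tree of $B$, invoking the $\OE$-property to fix a vertex, and using finiteness of edge stabilizers to upgrade $\varphi(G_1)\leq bG_1b^{-1}$ to equality) also tracks the paper's Propositions on normal forms of the isomorphism.

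The gap is in the step where you extract $\varphi(t)=g_0t_1^{\epsilon}g_1'$. The two facts you feed into Britton's lemma --- that the total $t_1$-exponent of $\varphi(t)$ is $\pm1$, and that $\varphi(t)\,\alpha(H)\,\varphi(t)^{-1}\leq G_1$ --- do \emph{not} imply that a reduced expression for $\varphi(t)$ has a single $t_1$-syllable. Britton's lemma only tells you that $\alpha(H)$ must lie in the stabilizer of every edge along the path from the base vertex to $\varphi(t)^{-1}v_0$, and a nontrivial finite subgroup can perfectly well fix a long path. Concretely, take $G_1=H\times P$ with $P$ an $\OE$-group, $K=H$ and $f_1=\mathrm{id}$, so that $H$ is normal in $B$; then $w=t_1gt_1^{-1}g't_1$ with $g,g'\in G_1\setminus H$ is reduced of syllable length $3$, has total exponent $+1$, and satisfies $wHw^{-1}=H\leq G_1$. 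Such a $w$ passes both of your tests, so "repeated application of Britton's lemma" cannot rule out deeper syllables; what saves the day is that $\langle G_1,w\rangle\neq B$, i.e.\ the full strength of surjectivity of $\varphi$, of which the exponent-sum condition is only a weak shadow. This is precisely where the paper does real work: it uses the explicit decomposition of $N_G(H)$ as an HNN-extension or amalgam of the normalizers $N_{G_1}(H)$, $N_{G_1}(K)$ over $H$ (Proposition \ref{cp1}, proved via the fixed subtree $S(G)^H$), together with Lemma 2.11 of \cite{BZ}, which says that an element conjugating $H$ appropriately \emph{and generating $G$ together with $G_1$} must lie in the double coset $N_{G_1}(H)\,t^{\pm1}\,N_{G_1}(K)$. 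You need to either import that lemma or prove an equivalent statement using the generation hypothesis; as written, your derivation of the short normal form does not go through.
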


\begin{cor}\label{general iso} The number of isomorphism classes of HNN-extensions \break
$\hnn(G_1,H,K,f,t)$  with $G_1,H,K$ fixed is 

$$|\overline{\Gamma}_{HK}\backslash \Iso(H,K)|.$$

\end{cor}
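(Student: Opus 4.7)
The plan is to deduce the corollary directly from the preceding theorem by a simple orbit-counting argument. First I would observe that, with $G_1$, $H$ and $K$ fixed, the isomorphism type of an HNN-extension $\hnn(G_1,H,K,f,t)$ is completely determined by the data $(G_1,H,K,f)$: the stable letter $t$ is only a formal symbol and renaming it yields the same abstract group. Consequently, the family of such HNN-extensions is naturally parametrised by the set $\Iso(H,K)$, and counting isomorphism classes of HNN-extensions amounts to counting equivalence classes of elements of $\Iso(H,K)$ under the equivalence relation $f \sim f_1 \iff \hnn(G_1,H,K,f,t)\cong \hnn(G_1,H,K,f_1,t_1)$.

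Next I would invoke the theorem, which precisely identifies this equivalence relation with the orbit relation of the group $\overline\Gamma_{HK}$ acting on $\Iso(H,K)$: two isomorphisms $f,f_1\in \Iso(H,K)$ define isomorphic HNN-extensions if and only if they lie in a common $\overline\Gamma_{HK}$-orbit. Thus the quotient $\Iso(H,K)/{\sim}$ coincides with the orbit space $\overline\Gamma_{HK}\backslash \Iso(H,K)$.

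Passing to cardinalities, the set of isomorphism classes of HNN-extensions of the prescribed form is in bijection with $\overline\Gamma_{HK}\backslash \Iso(H,K)$, and the number of such classes equals $|\overline\Gamma_{HK}\backslash \Iso(H,K)|$, which is the claimed formula. There is no serious obstacle in this deduction: the corollary is a formal consequence of the theorem, with the only minor observation being that isomorphic HNN-extensions correspond to $\overline\Gamma_{HK}$-orbits rather than merely to $\widetilde\Gamma_{HK}$-orbits precisely because one must account for the extra involution $\iota$ that appears whenever an automorphism of $G_1$ swaps the conjugacy classes of $H$ and $K$, a point already settled in the construction of $\overline\Gamma_{HK}$.
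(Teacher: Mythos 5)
Your proposal is correct and is essentially the same deduction the paper makes: Corollary \ref{number of isomorphisms} is stated as an immediate consequence of Theorem \ref{classnonnormalabs}, since that theorem identifies the isomorphism relation on $\{\hnn(G_1,H,K,f,t) : f\in \Iso(H,K)\}$ with the $\overline{\Gamma}_{HK}$-orbit relation on $\Iso(H,K)$, so the isomorphism classes are in bijection with the orbit space. Nothing further is needed.
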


\bigskip
If the associated subgroups $H$ and $K$ are conjugate in $G_1$, we can make then more explicit the formular of Corollary \ref{general iso}. 
Denote by $\widetilde{\aut}_{G_1}(H)$ and by $\widetilde{N}_{G}(H)$ the natural images of $\aut_{G_1}(H)$ and $N_G(H)$ in the group of outer automorphisms $\out(H)$ of $H$.

\begin{cor} Let $H$ and $K$ be conjugate finite subgroups of an $\OE$-group $G_1$. Then the number  of isomorphism classes of HNN-extensions \break $\hnn(G_1, H, K, f,t)$ 
 is $$|\widetilde{N}_{G_1}(H) \backslash \out(H)/  ( \widetilde{\aut}_{G_1}(H) \times C_2) |,$$
 where $C_2$ acts on $\out(H)$ by inversion. 
\end{cor}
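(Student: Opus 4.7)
The strategy is to specialise Corollary \ref{general iso}. Fix $g\in G_1$ with $K=gHg^{-1}$; then $\Phi\colon\Iso(H,K)\to\aut(H)$, $\Phi(f):=\tau_{g^{-1}}\circ f$, is a bijection with inverse $\beta\mapsto\tau_g\circ\beta$. The plan is to push the action of $\overline{\Gamma}_{HK}$ across $\Phi$, descend to $\out(H)$, and read off the resulting orbit set as the asserted double coset.

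Using $\alpha\circ\tau_g=\tau_{\alpha(g)}\circ\alpha$, one computes that for $(g_1,\alpha)\in\Gamma_{HK}$ the transported action on $\aut(H)$ is
$$\beta\;\longmapsto\;\tau_n\cdot(\alpha|_H)\cdot\beta\cdot(\alpha|_H)^{-1},\qquad n:=g^{-1}g_1\alpha(g).$$
The stabiliser condition $g_1\alpha(K)g_1^{-1}=K$ forces $n\in N_{G_1}(H)$; conversely, given any $(n,\alpha)\in N_{G_1}(H)\times\aut_{G_1}(H)$, setting $g_1:=gn\alpha(g)^{-1}$ yields an element of $\Gamma_{HK}$ realising this pair. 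Hence $(n,\alpha)$ ranges freely. Passing to $\out(H)=\aut(H)/\inn(H)$, the action becomes $[\beta]\mapsto \eta\,\gamma[\beta]\gamma^{-1}$ with $\eta\in\widetilde{N}_{G_1}(H)$ and $\gamma\in\widetilde{\aut}_{G_1}(H)$; the quotient by $\inn(H)$ is already absorbed because $H\subseteq N_{G_1}(H)$ and $\tau_h\in\aut_{G_1}(H)$ for every $h\in H$.

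For the involution $\iota$, the conjugate case permits the swap $\psi=\tau_g$, and a short computation with $\Phi$ shows that the resulting permutation of $\aut(H)$ is, up to an element of $\widetilde{\Gamma}_{HK}$, ordinary inversion $\beta\mapsto\beta^{-1}$. Therefore the $\overline{\Gamma}_{HK}$-orbits on $\out(H)$ are generated by left multiplication by $\widetilde{N}_{G_1}(H)$, conjugation by $\widetilde{\aut}_{G_1}(H)$, and inversion by $C_2$, giving exactly the double coset set $\widetilde{N}_{G_1}(H)\backslash\out(H)/(\widetilde{\aut}_{G_1}(H)\times C_2)$. Corollary \ref{general iso} then delivers the claimed count.

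The main technical obstacle is controlling $\iota$ after transport: its definition involves auxiliary choices of $\psi\in\aut(G_1)$ and $g'\in G_1$, and one must verify that in the conjugate case these can be arranged so that the induced permutation on $\aut(H)$ is inversion. The invariance of $\overline{\Gamma}_{HK}$ under the choice of $\psi$ recorded in Lemma \ref{carmo} is what justifies this reduction, after which the identification of the two orbit descriptions is routine bookkeeping.
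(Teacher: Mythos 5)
Your proposal is correct and follows essentially the same route as the paper: the paper reduces to the case $H=K$ via Lemma \ref{conjugados da normal} (replacing $f$ by $\tau_{g_1}f\in\aut(H)$, which is exactly your bijection $\Phi$) and then identifies the $\overline{\Gamma}_{HH}$-orbits on $\aut(H)$ with the orbit set $\overline{N}_{G_1}(H)\backslash\aut(H)/(\overline{\aut}_{G_1}(H)\times C_2)$ before passing to $\out(H)$. Your explicit transport of the action (including the check that $(n,\alpha)$ ranges freely over $N_{G_1}(H)\times\aut_{G_1}(H)$ and that $\iota$ becomes inversion for $\psi=\tau_g$) just makes visible the computation the paper carries out via Lemma \ref{conjugados da normal}, Remark \ref{novaacao2} and formulas (\ref{acao1})--(\ref{acao2}).
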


Now similarly, denote by $P$ the set of all finite subgroups of $\widehat G_1.$ There is a natural action of $\widehat G_1\rtimes \aut_{\widehat G_1}(H)$ on the set $P$, namely \begin{equation} (g_1,\alpha)\cdot X= g_1\alpha(X)g_1^{-1} \in P, \mbox{ where } X \in P. \end{equation}
 We denote by $\Gamma_{\widehat{HK}}$  the stabilizer of $K\in P$ of this action. 
  Then $\Gamma_{\widehat{HK}}$ also acts naturally on the set of isomorphisms $\Iso(H,K)$. 
  
  We denote by $\widetilde\Gamma_{\widehat{HK}}$ the image of $\Gamma_{\widehat{HK}}$ in the symmetric group $Sym(\Iso(H,K)).$ Depending on the choice   $H$ and $K$  it might exist $\psi\in \aut(\widehat G_1)$  that swaps the conjugacy classes of $H$ and $K$ in $\widehat G_1$, say $\psi(H)=K$ and  $\psi(K)=H^{g_1}$ for some $g_1\in \widehat G_1$. In this case the "invertion" of elements of $\Iso(H,K)$ followed by the action by the element $( g', \psi)\in \widehat G_1\rtimes \aut(\widehat G_1)$ 
  gives an element $\iota\in Sym(\Iso(H,K))$ that normalizes $\widetilde\Gamma_{\widehat{HK}}$ 
  and so $\overline\Gamma_{\widehat{HK}}:=\widetilde\Gamma_{\widehat{HK}} \cdot \langle\iota\rangle$ is a group that does not depend in fact upon a choice of $\psi$ (see Lemma \ref{carmoprof}). If such $\psi$ does not exist we put $\overline\Gamma_{\widehat{HK}}:=\widetilde\Gamma_{\widehat{HK}}$.

In the profinite case we can only give an estimate for the number of isomorphism classes of profinite HNN-extensions with fixed associated subgroups, as follows.
\begin{thm}  Let $H, K$ be fixed finite subgroups of a profinite $\OE$-group $G_1$. Then the number of isomorphism classes of profinite HNN-extensions \\$\hnn(G_1, H, K, f, t)$ is less than or equal to $$|\overline{\Gamma}_{\widehat{HK}} \backslash \Iso(H, K)|.$$\end{thm}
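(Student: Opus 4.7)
The plan is to carry through exactly the ``easy'' direction of the abstract analogue: I show that whenever $f$ and $f_1$ lie in the same $\overline{\Gamma}_{\widehat{HK}}$-orbit, the profinite HNN-extensions $\hnn(G_1,H,K,f,t)$ and $\hnn(G_1,H,K,f_1,t_1)$ are isomorphic as profinite groups, so that the number of isomorphism classes is bounded above by the number of orbits. Equality (as in the abstract case) is precisely what I cannot hope to prove here: recovering the splitting data from an abstract isomorphism of profinite groups would require a profinite Bass--Serre-type rigidity that is not available without extra hypotheses.

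First I would treat the case $f_1 = (g,\alpha)\cdot f = \tau_{g}\alpha f\alpha^{-1}$ with $(g,\alpha)\in\Gamma_{\widehat{HK}}$. Since $\alpha$ is a continuous automorphism of $\widehat G_1$ with $\alpha(H)=H$ and $g\alpha(K)g^{-1}=K$, I define $\Phi$ on generators by $\Phi|_{\widehat G_1}=\alpha$ and $\Phi(t)=t_1 g$. For $h\in H$ one computes
$$
\Phi(t)^{-1}\Phi(h)\Phi(t) \;=\; g^{-1}t_1^{-1}\alpha(h)\,t_1 g \;=\; g^{-1}f_1(\alpha(h))g \;=\; \alpha(f(h)) \;=\; \Phi(f(h)),
$$
using $\alpha(h)\in H$ and the identity $f_1 = \tau_g\alpha f\alpha^{-1}$. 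Continuity of the data on $H$ is automatic since $H$ is finite, so by the universal property of the profinite HNN-extension $\Phi$ extends uniquely to a continuous homomorphism $\hnn(G_1,H,K,f,t)\to\hnn(G_1,H,K,f_1,t_1)$. A completely symmetric assignment with $\alpha^{-1}$ and $t_1\mapsto t g^{-1}$ provides a two-sided inverse, yielding an isomorphism of profinite groups.

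Second, when the orbit equivalence requires the swap element $\iota$, I would compose the above with the standard ``inversion'' isomorphism $\hnn(G_1,H,K,f,t)\cong\hnn(G_1,K,H,f^{-1},t^{-1})$ (identity on $\widehat G_1$, $t\mapsto t^{-1}$), and then apply Step 1 with the automorphism $\psi$ that interchanges the conjugacy classes of $H$ and $K$. Invoking Lemma \ref{carmoprof} to see that the resulting element does not depend on the choice of $\psi$, this handles the extra orbit relation contributed by $\iota$. Each $\overline{\Gamma}_{\widehat{HK}}$-orbit therefore contributes at most one isomorphism class.

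The main obstacle is, as indicated, the converse direction: from an abstract isomorphism of profinite groups $\hnn(G_1,H,K,f,t)\cong\hnn(G_1,H,K,f_1,t_1)$ one would like to conclude that $f$ and $f_1$ are in the same $\overline{\Gamma}_{\widehat{HK}}$-orbit. In the abstract case this is supplied by Bass--Serre theory applied to the canonical tree with one orbit of edges, but in the profinite setting the analogous rigidity of profinite actions on profinite trees with finite edge stabilisers is delicate and not valid in the generality we need, which is why the theorem only asserts an inequality.
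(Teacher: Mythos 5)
Your proof is correct and follows essentially the same route as the paper: the upper bound is exactly the ``if'' direction of the paper's Proposition \ref{nonnormalhnnprof}, whose proof constructs the same explicit isomorphism (identity/automorphism on the base group and $t\mapsto (nt_1)^{\pm1}g_1$) that you build in Steps 1 and 2, so each $\overline{\Gamma}_{\widehat{HK}}$-orbit maps into a single isomorphism class. One small correction to your closing remark: the reason the theorem is only an inequality is not a failure of profinite Bass--Serre rigidity (Proposition \ref{2a} does recover the splitting data from any continuous isomorphism, since $G_1$ is a profinite $\OE$-group), but rather that the stable letter may be sent to $(nt_1)^{\pm1}g_1$ for any $n$ in the set $N^{+}$, which in the profinite setting can be strictly larger than $N_{G_1}(H)$ and hence can merge distinct orbits into one isomorphism class.
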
  



If the associated subgroups are conjugate in $\widehat G_1$  then in contrast to abstract case we get only an upper bound in terms of $H$.

\begin{thm} Let $H$ and $K$ be fixed finite subgroups that are conjugate in the profinite $\OE$-group $G_1$. Then the number of the isomorphism classes of profinite HNN-extensions $G=\hnn(G_1, H, K, f, t)$ is less or equal than $$\left| \widetilde N_{G_1}(H) \backslash \out(H)/ \left( \widetilde{\aut}_{G_1}(H) \times C_2\right) \right|,$$ where $C_2$ acts on $\out(H)$ by inversion. 
\end{thm}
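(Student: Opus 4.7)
The plan is to combine the upper bound from the preceding theorem with an orbit identification exactly parallel to the one used to prove the analogous abstract corollary. By that theorem, the number of isomorphism classes of profinite HNN-extensions $\hnn(G_1,H,K,f,t)$ is at most $|\overline{\Gamma}_{\widehat{HK}}\backslash\Iso(H,K)|$, so it is enough to identify this orbit set with the double coset space appearing in the statement.

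Using the hypothesis that $H$ and $K$ are conjugate in $G_1$, fix $g\in G_1$ with $gHg^{-1}=K$, and use the bijection $\Phi\colon\aut(H)\to\Iso(H,K)$, $\phi\mapsto\tau_g\circ\phi$, to transport the $\Gamma_{\widehat{HK}}$-action to $\aut(H)$. The identity $\alpha\circ\tau_g=\tau_{\alpha(g)}\circ\alpha$ then shows that $(g_1,\alpha)\in\Gamma_{\widehat{HK}}$ acts on $\phi$ by
$$\phi\longmapsto\tau_n|_H\circ\alpha|_H\circ\phi\circ\alpha|_H^{-1},\qquad n:=g^{-1}g_1\alpha(g).$$
The condition $g_1\alpha(K)g_1^{-1}=K$ defining $\Gamma_{\widehat{HK}}$ is equivalent to $n\in N_{G_1}(H)$, and given any $\alpha\in\aut_{G_1}(H)$ and any $n\in N_{G_1}(H)$ the element $g_1:=gn\alpha(g)^{-1}$ realises the pair, so $(n,\alpha|_H)$ ranges freely over $N_{G_1}(H)\times\aut_{G_1}(H)|_H$.

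This action descends to $\out(H)=\aut(H)/\inn(H)$: for $h\in H$, the inner automorphism $\tau_h|_H$ is carried to conjugation by $n\alpha|_H(h)n^{-1}$, which still lies in $H$ since $\alpha|_H(h)\in H$ and $n$ normalises $H$. On $\out(H)$ the action reads
$$\bar\phi\longmapsto\widetilde{\tau_n}\cdot\bar\alpha_H\bar\phi\bar\alpha_H^{-1},$$
with $\widetilde{\tau_n}\in\widetilde N_{G_1}(H)$ and $\bar\alpha_H\in\widetilde{\aut}_{G_1}(H)$ varying independently, while the involution $\iota$ introduced in the preamble (obtained from an automorphism of $\widehat G_1$ swapping the conjugacy classes of $H$ and $K$) transports through $\Phi$ to the inversion $\bar\phi\mapsto\bar\phi^{-1}$. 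The orbit set of this combined action on $\out(H)$ --- left multiplication by $\widetilde N_{G_1}(H)$, conjugation by $\widetilde{\aut}_{G_1}(H)$, and $C_2$ acting by inversion --- is, in the same interpretation adopted for the abstract corollary, the double coset space $\widetilde N_{G_1}(H)\backslash\out(H)/(\widetilde{\aut}_{G_1}(H)\times C_2)$, yielding the desired upper bound.

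The delicate point is the independence assertion above: the stabiliser condition $g_1\alpha(K)g_1^{-1}=K$ could a priori couple $\alpha$ and $n$, but since $\alpha(K)=\alpha(g)H\alpha(g)^{-1}$ is conjugate to $H$ in $G_1$ and hence to $K$ by the standing hypothesis, a valid $g_1$ always exists, and the residual freedom $g_1\mapsto mg_1$ with $m\in N_{G_1}(K)$ translates through $n=g^{-1}g_1\alpha(g)$ into $n\mapsto(g^{-1}mg)n$ with $g^{-1}mg\in N_{G_1}(H)$, covering all of $N_{G_1}(H)$. Carrying this bookkeeping out precisely is the only real work; the remaining identifications are formal and identical to those of the abstract case.
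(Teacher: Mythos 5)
Your proposal is correct and follows essentially the same route as the paper: the bound comes from the general orbit-counting theorem for profinite HNN-extensions, and the identification of $\overline{\Gamma}_{\widehat{HK}}\backslash \Iso(H,K)$ with the double coset space $\widetilde N_{G_1}(H)\backslash\out(H)/(\widetilde{\aut}_{G_1}(H)\times C_2)$ is exactly the content of the paper's surrounding discussion. The only cosmetic difference is that you transport the action to $\aut(H)$ via the explicit bijection $\phi\mapsto\tau_g\circ\phi$, whereas the paper first replaces $G$ by an isomorphic normal HNN-extension (its Lemma on conjugate associated subgroups) and then works directly with $\aut(H)$ --- the same computation in different clothing.
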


After that, we study the genus of HNN-extensions where the base group $G_1$ is an $\rm \Oe$-group and the associated subgroups are finite. 


\medskip
Let $\mathcal{F}=\mathcal F(G_1,H,K)$ be the class of all 
abstract HNN-extensions $$G=\hnn(G_1, H, K, f, t),$$ where $H$ and $K$ are fixed finite subgroups of $G_1$. Denote by $N^+$  the following subset  
  \begin{equation*} 
N^+=\left\{ gt^{-1} \in N_{\widehat G}(H) \hspace{0.1cm} \left| \hspace{0.1cm}  \overline{\left\langle g, \widehat G_1 \right\rangle}=\widehat G \right. \right\}.   
\end{equation*} 
Let $\overline{N}^{+}$ and $\widetilde{N}^{+}$ be the natural images of $N^{+}$ in $\aut(H)$ and $\out(H)$, respectively. Given $G\in \F$ we first concentrate on calculation of the cardinality of genus $g(G,\F)$ (denoted by $|g(G, \F)|$)  within this family of HNN-extensions.

\begin{thm} Let $H$ and $K$ be finite subgroups of an $\Oe$-group $G_1$. Then the cardinality of the genus $g(G, \mathcal{F})$ of an HNN-extension $G=\hnn(G_1,H,K,f,t)$ is equal to $$|g(G, \mathcal{F})| = \left|\overline\Gamma_{HK}  \backslash \overline\Gamma_{\widehat{HK}}  \cdot  f \overline{N}^+\right|,$$ where $\overline\Gamma_{HK}  \backslash \overline\Gamma_{\widehat{HK}}  \cdot f\overline{N}^+$ represents the $\overline\Gamma_{HK}$-orbits in the set of   $\overline\Gamma_{\widehat{HK}}$-orbits of twists $f\tau_{n^{-1}}\,(n \in N^{+})$ of $f$. 
\end{thm}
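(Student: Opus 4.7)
The plan is to construct a bijection between $g(G,\mathcal{F})$ and the orbit set $\overline{\Gamma}_{HK}\backslash\overline{\Gamma}_{\widehat{HK}}\cdot f\overline{N}^{+}$, using profinite Bass--Serre theory to extract a twist from each element of the genus, and then applying the isomorphism criterion for abstract HNN-extensions to identify the classes.

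Given $B=\hnn(G_1,H,K,f_1,t_1)\in g(G,\mathcal{F})$ and an isomorphism $\phi:\widehat{B}\to\widehat{G}$, both profinite completions are HNN-extensions of $\widehat{G}_1$ over $H$ and act on the standard profinite Bass--Serre trees with vertex stabilizers conjugate to $\widehat{G}_1$ and edge stabilizers finite. The $\Oe$-hypothesis on $G_1$ forces uniqueness of these vertex stabilizers up to conjugacy in $\widehat{G}$, so after composing $\phi$ with an inner automorphism we may assume $\phi(\widehat{G}_1)=\widehat{G}_1$. Composing further with an element of $\Gamma_{\widehat{HK}}$ (and, if $H$ and $K$ are exchanged by $\phi|_{\widehat{G}_1}$, with the swap $\iota$ introduced before Lemma \ref{carmoprof}), we may arrange that $\phi|_{\widehat{G}_1}$ fixes both $H$ and $K$ setwise. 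Then $n:=\phi(t_1)t^{-1}$ normalizes $H$ in $\widehat{G}$, surjectivity of $\phi$ gives $\overline{\langle n,\widehat{G}_1\rangle}=\widehat{G}$, and hence $n\in N^{+}$. Transporting the HNN-relation $t_1ht_1^{-1}=f_1(h)$ through $\phi$ and using $\phi(t_1)=nt$ together with $tht^{-1}=f(h)$ yields $f_1=f\tau_{n^{-1}}$, up to the $\overline{\Gamma}_{\widehat{HK}}$-action absorbed during normalization.

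This produces a well-defined map from $g(G,\mathcal{F})$ into the set of $\overline{\Gamma}_{\widehat{HK}}$-orbits on $f\overline{N}^{+}$ whose fibres are precisely the abstract isomorphism classes in $\mathcal{F}$: by the isomorphism criterion for abstract HNN-extensions proved earlier in the paper, two extensions $\hnn(G_1,H,K,f_1,t_1)$ and $\hnn(G_1,H,K,f_2,t_2)$ are isomorphic iff $f_1$ and $f_2$ lie in the same $\overline{\Gamma}_{HK}$-orbit in $\Iso(H,K)$. For surjectivity, given any $n\in N^{+}$, the discrete HNN-extension $B_{n}:=\hnn(G_1,H,K,f\tau_{n^{-1}},t)$ lies in $\mathcal{F}$, and since $nt\in\widehat{G}$ serves as an alternative stable letter of $\widehat{G}$ realizing the twisted gluing, the universal property of the profinite HNN-extension yields $\widehat{B}_{n}\cong\widehat{G}$, whence $B_{n}\in g(G,\mathcal{F})$. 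Combining these two steps identifies $|g(G,\mathcal{F})|$ with the number of $\overline{\Gamma}_{HK}$-orbits on $\overline{\Gamma}_{\widehat{HK}}\cdot f\overline{N}^{+}$.

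The hard part is the normalization of $\phi$ in the first step: controlling the freedom in choosing $\phi|_{\widehat{G}_1}$ is exactly what forces the quotient by $\overline{\Gamma}_{\widehat{HK}}$ rather than by a smaller subgroup, and it rests crucially on the $\Oe$-hypothesis (to pin down $\widehat{G}_1$ up to conjugacy inside $\widehat{G}$) and on Lemma \ref{carmoprof} (to handle the swap $\iota$ when $H$ and $K$ become interchanged). Once this reduction is carried out cleanly, the twist formula and the appeal to the abstract isomorphism criterion are essentially formal.
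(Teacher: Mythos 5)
Your proposal is correct and follows essentially the same route as the paper: the paper's own proof is a short deduction from Proposition \ref{nonnormalhnnprof} (which packages exactly your normalization of $\phi$ via the $\OE$-property and the extraction of the twist $f\tau_{n^{-1}}$ with $n\in N^{+}$) together with the abstract isomorphism criterion of Theorem \ref{classnonnormalabs}, and you have simply unpacked those two ingredients inline. The only cosmetic slip is that membership in $N^{+}$ requires $\overline{\langle nt,\widehat{G}_1\rangle}=\widehat{G}$ (i.e. the condition on $g=\phi(t_1)$, not on $n$ itself), which is what surjectivity of $\phi$ actually gives.
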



An HNN-extension will be called normal if the stable letter normalizes the associated subgroup. Note that if $G$ is an HNN-extension with conjugate associated finite subgroups  in the base group then it is isomorphic to a normal HNN-extension (see Lemma \ref{conjugados da normal}), so in this case we can consider $f \in \aut(H)$. Let $\tilde f$ be the image of $f$ in $\out(H).$ It follows that $\tilde{f}\widetilde{N}^+$ is a subset of $\tilde f \widetilde{N}_{\widehat G}(H)$.

In this case  $\widetilde{\aut}_{\widehat G_1}(H)$ acts on $\out(H)$ on the right by conjugation. The group $\widetilde{N}_{\widehat G_1}(H)$ acts on $\out(H)$ on the left by multiplication. It follows that $\widetilde{N}_{\widehat G_1}(H) \cdot \tilde{f} \widetilde{N}^+ \cdot \widetilde{\aut}_{\widehat G_1}(H)$ is invariant with respect to the action of $\widetilde{N}_{G_1}(H)$ on the left and with respect to the action of $\widetilde{\aut}_{G_1}(H)$ on the right.

\begin{thm}  Let $G_1$ be an $\Oe$-group and $H$ a fixed finite subgroup of $G_1$. Let $G=\hnn(G_1,H, f,t)$ be a normal HNN-extension.  Then $$|g(G, \mathcal{F})| = | \widetilde{N}_{G_1}(H) \backslash \widetilde{N}_{\widehat G_1}(H) \cdot \tilde{f}\widetilde{N}^+ \cdot \widetilde{\aut}_{\widehat G_1}(H)/( \widetilde{\aut}_{G_1}(H) \times C_2)|,$$ where $C_2$ acts on $\out(H)$ by inversion. 
\end{thm}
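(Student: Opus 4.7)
The plan is to reduce to the preceding theorem and then collapse the set $\aut(H)$ to $\out(H)$. Since the HNN-extension $G$ is normal, $K=H$ and $f\in \aut(H)=\Iso(H,H)$; moreover the identity automorphism of both $G_1$ and $\widehat G_1$ ``swaps'' the (trivial) conjugacy classes of $H$ and $K$, so the element $\iota$ is defined in both $\overline\Gamma_{HK}$ and $\overline\Gamma_{\widehat{HK}}$ and, modulo $\widetilde\Gamma_{HK}$ respectively $\widetilde\Gamma_{\widehat{HK}}$, acts on $\aut(H)$ as the inversion $f\mapsto f^{-1}$. Applying the preceding theorem gives
\begin{equation*}
|g(G,\mathcal{F})|=\bigl|\overline\Gamma_{HH}\backslash\overline\Gamma_{\widehat{HH}}\cdot f\overline{N}^+\bigr|.
\end{equation*}

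Next I would pass from $\aut(H)$ to $\out(H)$. The key observation is that $\inn(H)\subseteq \Gamma_{HH}$: every $\tau_h$ is realized by $(h,\mathrm{id})\in H\rtimes \aut_{G_1}(H)\subseteq \Gamma_{HH}$, acting by left multiplication by $\tau_h$ on $\aut(H)$; likewise $\inn(H)\subseteq \Gamma_{\widehat{HH}}$. Consequently the $\inn(H)$-orbits in $\aut(H)$ (the fibres of $\aut(H)\twoheadrightarrow \out(H)$) sit inside single $\overline\Gamma_{HH}$- and $\overline\Gamma_{\widehat{HH}}$-orbits, so both orbit decompositions descend bijectively to the induced actions on $\out(H)$. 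The inversion also descends, because $(f\tau_h)^{-1}=\tau_{h^{-1}}f^{-1}$ lies in the $\inn(H)$-coset of $f^{-1}$.

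The induced actions are then immediate: a generic $(g_1,\alpha)\in \Gamma_{HH}$ with $g_1\in N_{G_1}(H)$ acts on $f$ by $f\mapsto \tau_{g_1}\alpha f\alpha^{-1}$, whose image in $\out(H)$ is left multiplication by $\widetilde{\tau_{g_1}|_H}\in \widetilde N_{G_1}(H)$ composed with conjugation by $[\alpha|_H]\in \widetilde{\aut}_{G_1}(H)$. An identical description applies to $\Gamma_{\widehat{HH}}$ with the profinite versions. Hence the image of $\overline\Gamma_{\widehat{HH}}\cdot f\overline{N}^+$ in $\out(H)$ is exactly
\begin{equation*}
\widetilde N_{\widehat G_1}(H)\cdot \tilde f\widetilde N^+\cdot \widetilde{\aut}_{\widehat G_1}(H),
\end{equation*}
stable under the $C_2$-inversion, and its $\overline\Gamma_{HH}$-orbit decomposition is precisely the $\widetilde N_{G_1}(H)$-left, $(\widetilde{\aut}_{G_1}(H)\times C_2)$-right double-coset decomposition in the statement.

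The main bookkeeping hurdle will be checking that the $C_2$-inversion and the conjugation action of $\widetilde{\aut}_{G_1}(H)$ commute on $\out(H)$ and really preserve the set $\widetilde N_{\widehat G_1}(H)\cdot \tilde f\widetilde N^+\cdot \widetilde{\aut}_{\widehat G_1}(H)$, so that quotienting by the combined action yields $|g(G,\mathcal F)|$; this is already implicit in the invariance statement preceding the theorem. Once verified, the formula drops out directly from the orbit correspondence above.
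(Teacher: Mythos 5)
Your proof is correct and takes essentially the same route as the paper's: both reduce to the set of twists $f\tau_{n^{-1}}$ ($n\in N^{+}$) singled out by the profinite isomorphism criterion (Proposition \ref{nonnormalhnnprof}, repackaged as Theorem \ref{fixed H}), and then descend from $\aut(H)$ to $\out(H)$ using $\inn(H)\leq \overline{N}_{G_1}(H)\cap\overline{\aut}_{G_1}(H)$. Your explicit identification of the induced $\overline\Gamma_{HH}$-action on $\out(H)$ with the double-coset decomposition is exactly what the paper delegates to Corollary \ref{corhnn}, so the two arguments coincide in substance.
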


We also establish a list of situations in which the genus formula is simpler (see Subsection \ref{section 5 1}), in particular, we find conditions for the genus cardinality to be  1 ( generalizing the Theorem 1.1 in \cite{BZ}). 

\begin{deff} A pair $(H,K)$ of subgroups of $G$ will be called a swapping pair if there exists an automorphism of $G$ that swaps the conjugacy classes of $H$ and $K$.  \end{deff}

\begin{thm}\label{genus 1 intr}
Let $H$ and $K$ be a finite subgroups of an $\Oe$-group $G_1$ and $G=\hnn(G_1, H, K, f, t)$ be an HNN-extension, where $f \in \Iso(H, K).$ 
Suppose the following conditions are satisfied \begin{itemize}
    \item $\widetilde{\aut}_{\widehat{G}_1}(H)=\widetilde{\aut}_{G_1}(H)$ and $[G_1:N_{G_1}(K)]=[\widehat G_1:N_{\widehat G_1}(K)]<\infty$,
    \item  $(H,K)$ is either a swapping pair in $G_1$ or not a swapping pair in $\widehat G_1$. 

\end{itemize} Then there is only one element in the genus of $G$ in $\F,$ if one of the following conditions is satisfied:

\begin{itemize}


\item[i)]  $\out(H)$  coincides with the image of the normalizer $N_{G_1}(H)$ in $\out(H)$;

\item[ii)] 
$H^{g_1}=K$ for some $g_1 \in G_1$ and $\tau_{g_1} f\in \overline{N}_{G_1}(H);$

    \item [iii)] $H$ and $K$ are not conjugate in $G_1$ and $f$ extends to an automorphism of $G_1;$  
    \item [iv)] $H$ and $K$ are not conjugate in $G_1$ and $N_{G_1}(K)=K \cdot C_{G_1}(K)$ or $N_{G_1}(H)=H \cdot C_{G_1}(H)$ (in particular, if $H \neq K$ and $H$ or $K$ is central in $G_1$).  
\end{itemize}
\end{thm}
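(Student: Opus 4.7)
The plan is to apply the main cardinality formula
$$|g(G,\F)| = |\overline\Gamma_{HK}\backslash \overline\Gamma_{\widehat{HK}}\cdot f\overline N^+|$$
and show the right-hand side collapses to $1$ under the listed hypotheses. I would first argue that the two preliminary bullet-point assumptions force $\overline\Gamma_{\widehat{HK}} = \overline\Gamma_{HK}$ as subgroups of $Sym(\Iso(H,K))$: the equality $\widetilde{\aut}_{\widehat G_1}(H) = \widetilde{\aut}_{G_1}(H)$ matches the automorphism parts, the equality $[G_1:N_{G_1}(K)] = [\widehat G_1:N_{\widehat G_1}(K)] < \infty$ yields $\widehat G_1 = G_1\cdot N_{\widehat G_1}(K)$ so that every element of the $\widehat G_1$-factor with non-trivial effect on $K$ can be replaced by one from $G_1$, and the swapping-pair dichotomy guarantees that the extra generator $\iota$ appears in $\overline\Gamma_{\widehat{HK}}$ exactly when it does in $\overline\Gamma_{HK}$. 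Once this identification is established, the formula reduces to counting the $\overline\Gamma_{HK}$-orbits of the twisted maps $\{f\tau_{n^{-1}} : n \in N^+\}$, and it suffices to show every such twist lies in the $\overline\Gamma_{HK}$-orbit of $f$.

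Next I would treat each of (i)–(iv) individually. For (i), the hypothesis that $\widetilde N_{G_1}(H)$ equals all of $\out(H)$ means the image of $\Gamma_{HK}$ in $\out(H)$ is already everything, so any twist $f\tau_{n^{-1}}$, whose effect on $\tilde f\in\out(H)$ is multiplication by the image of $\tau_{n^{-1}}$, is reversed by a suitable element of $\widetilde N_{G_1}(H)\subset\widetilde\Gamma_{HK}$. Case (ii) first invokes Lemma \ref{conjugados da normal} to replace $G$ by an isomorphic normal HNN-extension (permissible since $H^{g_1}=K$), and the simpler normal-HNN formula from the preceding theorem applies; the hypothesis $\tau_{g_1}f\in\overline{N}_{G_1}(H)$ then places $\tilde f$ inside $\widetilde N_{G_1}(H)$, trivializing it in the relevant double coset. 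For (iii), the extension of $f$ to an element $\widetilde f\in\aut(G_1)$ realizes $(H,K)$ as a swapping pair in $G_1$ (so the second bullet is automatically satisfied), and the corresponding element of $\overline\Gamma_{HK}$ obtained from $\widetilde f$ together with $\iota$ sends every twist $f\tau_{n^{-1}}$ back to $f$. Finally for (iv), the condition $N_{G_1}(K) = K\cdot C_{G_1}(K)$ forces $\widetilde{N}_{G_1}(K)$ to be trivial in $\out(K)$, which via the first preliminary hypothesis controls the corresponding profinite-side images; combined with the non-conjugacy of $H$ and $K$ in $G_1$ (which through the second bullet precludes a swap in $\widehat G_1$), every twist is $\overline\Gamma_{HK}$-equivalent to $f$.

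The main obstacle is the uniform treatment of the swap element $\iota$ together with the careful translation between the left/right coset structure of the genus formula and the subset $\overline N^+$ of stable-letter twists. In cases (iii)–(iv) one must verify that the non-conjugacy of $H$ and $K$ in $G_1$ is compatible with the absence of a swap in $\widehat G_1$ (precisely what the second bullet enforces), otherwise the reduction $\overline\Gamma_{\widehat{HK}} = \overline\Gamma_{HK}$ itself would fail. A secondary delicate point is tracking how the $C_2$-factor acting by inversion on $\out(H)$ interacts with the swap element in the normal-HNN formula used for (ii), so that the identity $\tilde f = (\tau_{g_1}f)\tau_{g_1}^{-1}$ is genuinely absorbed in the double coset.
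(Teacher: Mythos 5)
Your overall architecture matches the paper's: reduce to $\overline\Gamma_{HK}=\overline\Gamma_{\widehat{HK}}$ via the two bullet hypotheses (this is exactly Lemma \ref{orbits non conjugated}), then show every twist $f\tau_{n^{-1}}$, $n\in N^{+}$, lies in the $\overline\Gamma_{HK}$-orbit of $f$. Your treatment of (i) and (ii) is essentially the paper's (Corollary \ref{normal iguais genero 1 non conjugated} and Theorem \ref{genus 1 normal}). However, cases (iii) and (iv) contain genuine gaps.

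First, in (iii) you assert that an extension of $f$ to an automorphism of $G_1$ realizes $(H,K)$ as a swapping pair. That is unjustified: an automorphism $\varphi$ extending $f$ satisfies $\varphi(H)=K$, but nothing forces $\varphi(K)$ to be conjugate to $H$, so $\varphi$ need not swap the two conjugacy classes. The swap/no-swap dichotomy is a standing hypothesis here, not a consequence of (iii), and your proposed mechanism (``the corresponding element of $\overline\Gamma_{HK}$ obtained from $\widetilde f$ together with $\iota$ sends every twist back to $f$'') does not engage with the actual source of the twists. The missing tool is Proposition \ref{cp1}: since $H$ and $K$ are not conjugate in $\widehat G_1$, $N_{\widehat G}(H)=N_{\widehat G_1}(H)\amalg_H N_{\widehat G_1^{t^{-1}}}(H)$, which gives $\overline N_{\widehat G}(H)=\bigl\langle \overline N_{\widehat G_1}(H),\,\overline N_{\widehat G_1}(K)^{f}\bigr\rangle$. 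Because $N^{+}\subseteq N_{\widehat G}(H)$, controlling the twists amounts to controlling these two generating pieces; the extension of $f$ is then used precisely to show $\overline N_{\widehat G_1}(K)^{f}\subseteq \overline N_{G_1}(H)$ (using $\overline N_{G_1}(K)=\overline N_{\widehat G_1}(K)$ from the index hypothesis), whence $\widetilde N_{\widehat G}(H)=\widetilde N_{G_1}(H)$ and the corollary for case (i) applies. Your proposal never establishes any containment of this kind, so the reduction of the twists to $G_1$-data is unproved.

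Second, in (iv) you only treat the alternative $N_{G_1}(K)=K\cdot C_{G_1}(K)$ (where indeed $\overline N_{\widehat G_1}(K)^{f}=\inn(H)\subseteq\overline N_{G_1}(H)$ and one again lands in case (i)). The other alternative $N_{G_1}(H)=H\cdot C_{G_1}(H)$ does \emph{not} reduce to $\widetilde N_{\widehat G}(H)=\widetilde N_{G_1}(H)$: there one gets $\overline N_{\widehat G}(H)=\overline N_{G_1}(K)^{f}$, and the conclusion requires the separate observation that $R=f\overline N^{+}\subseteq \overline N_{G_1}(K)f$ together with the fact that all abstract HNN-extensions $\hnn(G_1,H,K,\tau_{g_1}f,t')$ with $g_1\in N_{G_1}(K)$ are already isomorphic (Theorem \ref{classnonnormalabs}). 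This sub-case is absent from your proposal. In short: the skeleton is right, but the normalizer decomposition of Proposition \ref{cp1} is the load-bearing step for (iii) and (iv), and without it the argument does not close.
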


The structure of the  paper is as follows.  Section \ref{preliminares} contains elements of the profinite version of the Bass-Serre theory used in the paper (see \cite{R} for more details) and some results on normalizers of profinite and  abstract HNN-extensions.  Futhermore it posseses  examples and properties of $\Oe$-groups. Sections \ref{iso abstract HNN} and \ref{profinitehnnextensions} dedicated to the isomorphism problem of (profinite) abstract HNN-extensions which are of independent interest. Moreover, we calculate the number of isomorphism classes of abstract and profinite HNN-extensions $G=\hnn(G_1, H, K, f, t)$ with not fixed associted subgroups $H,K$. More precisely,  the action of $\aut(G_1)$ on the set $S$ of finite subgroups of $G_1$ induces the action on the set of unordered pairs $[\overline{S}]^2$ of conjugacy classes of finite isomorphic subgroups of $G_1.$ 
The next theorem then gives a formula (resp. estimate) for the number of isomorphism classes of all abstract (resp. profinite) HNN-extensions of $G_1$ with finite associated subgroups.

\begin{thm} \label{int general finite associated} Let $G_1$ be  an $\OE$-group (resp. profinite $\OE$-group). Then the number  of isomorphism classes $n_a$ (resp. $n_p$) of  abstract (resp. profinite) HNN-extensions $\hnn(G_1, H_,K, f,t)$ with finite associated subgroups  is  $$n_a=\sum_{\{H, K\}} |\overline{\Gamma}_{HK} \backslash \Iso(H, K)|,$$
$$({\rm resp.}\  n_p\leq\sum_{\{H, K\}} |\overline{\Gamma}_{\widehat{HK}} \backslash \Iso(H, K)|) $$ where $\{H, K\}$ ranges over representatives of the $\aut(G_1)$-orbits of the set of unordered pairs $[\overline{S}]^2$ of conjugacy classes of finite isomorphic subgroups of $G_1$. 
\end{thm}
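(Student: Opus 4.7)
The plan is to partition the set of all HNN-extensions $\hnn(G_1,H,K,f,t)$ with finite associated subgroups according to the $\aut(G_1)$-orbit of the unordered pair $\{[H]_{G_1},[K]_{G_1}\}$ of $G_1$-conjugacy classes of its associated subgroups. The total count will then break up as a sum over $\aut(G_1)$-orbit representatives $\{H,K\}$, with each summand handled by the fixed-pair formula from Corollary \ref{general iso} in the abstract case, or by its profinite upper bound in the profinite case.

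The first step is to show that this $\aut(G_1)$-orbit is an isomorphism invariant of the HNN-extension. For this I would invoke the (profinite) Bass--Serre theory collected in Section \ref{preliminares}: because $G_1$ is an $\OE$-group (respectively a profinite $\OE$-group), any action of $G_1$ on a (profinite) tree with finite edge stabilizers has a global fixed point, so in the Bass--Serre tree of $\hnn(G_1,H,K,f,t)$ every vertex stabilizer is a conjugate of $G_1$ and the edge stabilizers are conjugates of $H$ (equivalently $K$). Consequently, an isomorphism between two such HNN-extensions must, after composition with a suitable inner automorphism, restrict to an automorphism of $G_1$ carrying $\{H,K\}$ to $\{H',K'\}$ up to $G_1$-conjugation and possible swapping. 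This shows that only pairs lying in the same $\aut(G_1)$-orbit can give isomorphic HNN-extensions.

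The second step is the converse: within one $\aut(G_1)$-orbit of unordered pairs, fix a representative $\{H,K\}$. Any HNN-extension $\hnn(G_1,H',K',f',t')$ whose pair lies in this orbit is isomorphic to one of the form $\hnn(G_1,H,K,f,t)$, after conjugating the associated subgroups into $(H,K)$ by an element of $G_1$ and transporting by a suitable element of $\aut(G_1)$ (the possible swap $H\leftrightarrow K$ replaces $f$ by $f^{-1}$, and is precisely the degree of freedom absorbed into $\iota$ in the definition of $\overline{\Gamma}_{HK}$). Corollary \ref{general iso} then gives exactly $|\overline{\Gamma}_{HK}\backslash \Iso(H,K)|$ isomorphism classes within this orbit in the abstract case, and in the profinite case the analogous statement yields the upper bound $|\overline{\Gamma}_{\widehat{HK}}\backslash \Iso(H,K)|$. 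Summing over orbit representatives produces the claimed equality for $n_a$ and inequality for $n_p$.

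The main obstacle is the first step in the profinite setting: the Bass--Serre-type uniqueness of the splitting must be imported from the profinite tree theory used in Section \ref{preliminares}, and one has to check that the induced automorphism of $\widehat G_1$ restricts to the behaviour on finite subgroups that underlies the definition of $\overline{\Gamma}_{\widehat{HK}}$. A subsidiary bookkeeping issue is to verify that, when some $\alpha\in\aut(G_1)$ swaps the two conjugacy classes but no element of $G_1$ does (the genuine swapping-pair situation), the resulting identification is counted exactly once; this is precisely why the orbits are taken over \emph{unordered} pairs in $[\overline S]^2$, and why the swap generator $\iota$ was built into $\overline{\Gamma}_{HK}$ and $\overline{\Gamma}_{\widehat{HK}}$ in the first place, so the two bookkeeping devices dovetail cleanly.
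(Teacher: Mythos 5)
Your proposal follows essentially the same route as the paper: the paper partitions HNN-extensions by the $\aut(G_1)$-orbit of the unordered pair of conjugacy classes (via the action (\ref{acao epsilon})), uses Propositions \ref{fixed hnn abstract} and \ref{2a} — whose proofs rest on the $\OE$-property forcing $\psi(G_1)$ to be conjugate into the target base group — to show this orbit is an isomorphism invariant and that pairs in the same orbit give isomorphic extensions, and then counts within each orbit by Corollary \ref{number of isomorphisms} (abstract) and Theorem \ref{profinite isomorphism number} (profinite bound). Your two steps and the handling of the swap via $\iota$ match this argument, so the proposal is correct and not materially different from the paper's proof.
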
 

The formula for such abstract (resp. profinite) HNN-extensions where associated subgroups are conjugate (equivalently coincide) is as follows:

\begin{thm}\label{int conjugate associated} Let $G_1$ be an $\OE$-group (resp. profinite $\OE$-group). Then the number of isomorphism classes of abstract (resp. profinite) HNN-extensions $\hnn(G_1, H, K, f, t)$ with finite conjugate associated subgroups  equals (resp. less or equal) $$\sum_{H} |\widetilde{N}_{G_1}(H) \backslash \out(H)/  ( \widetilde{\aut}_{G_1}(H) \times C_2) |,$$
 where $C_2$ acts on $\out(H)$ by inversion and $H$ ranges over representatives of the $\aut(G_1)$-orbits of finite subgroups of $G_1$.
\end{thm}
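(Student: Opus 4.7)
The plan is to reduce the statement to two results already established in the excerpt, namely the corollary giving the exact count $|\widetilde{N}_{G_1}(H)\backslash \out(H)/(\widetilde{\aut}_{G_1}(H)\times C_2)|$ for abstract HNN-extensions with fixed conjugate associated subgroups, and the analogous profinite theorem giving an upper bound by the same quantity. The theorem then follows by partitioning the collection of isomorphism classes according to the $\aut(G_1)$-orbit of the conjugacy class of $H$ and summing the corresponding fiber counts.

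First I would invoke Lemma \ref{conjugados da normal} to observe that every $\hnn(G_1,H,K,f,t)$ with $H$ and $K$ conjugate in $G_1$ (respectively in $\widehat G_1$ for the profinite case) is isomorphic to a normal HNN-extension $\hnn(G_1,H',H',f',t)$ with $f'\in\aut(H')$. Since replacing $H$ by a $G_1$-conjugate does not alter the $\aut(G_1)$-orbit of its conjugacy class, I may restrict attention to normal HNN-extensions and parametrize them by a single finite subgroup $H$ together with a twisting automorphism.

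The next step is to check that the $\aut(G_1)$-orbit of the conjugacy class $[H]$ is an isomorphism invariant of $\hnn(G_1,H,H,f,t)$. Given an isomorphism $\varphi\colon \hnn(G_1,H_1,H_1,f_1,t_1)\to \hnn(G_1,H_2,H_2,f_2,t_2)$, the $\Oe$-hypothesis forces $\varphi(G_1)$ to fix a vertex of the (profinite) Bass-Serre tree of the target HNN-extension, so $\varphi(G_1)$ is conjugate to the vertex group $G_1$; composing $\varphi$ with a suitable inner automorphism yields an isomorphism that carries $G_1$ onto $G_1$, whose restriction is an $\alpha\in\aut(G_1)$ sending $H_1$ into the $G_1$-conjugacy class of $H_2$. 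This is precisely the argument already used to prove the first theorem of the excerpt and its profinite counterpart in Sections \ref{iso abstract HNN} and \ref{profinitehnnextensions}, and it applies verbatim.

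Partitioning the isomorphism classes of normal HNN-extensions by the $\aut(G_1)$-orbit of the conjugacy class of $H$, each fiber consists of extensions with a fixed conjugate pair (up to the reduction to $K=H$), so the corresponding corollary (respectively, theorem) of the excerpt gives the exact value (respectively, upper bound) $|\widetilde{N}_{G_1}(H)\backslash \out(H)/(\widetilde{\aut}_{G_1}(H)\times C_2)|$ for that fiber. Summing over a set of representatives of $\aut(G_1)$-orbits of conjugacy classes of finite subgroups of $G_1$ yields the claimed formula in the abstract case and the claimed inequality in the profinite case. The main obstacle is the uniform invocation of the $\Oe$-hypothesis in the Bass-Serre step so that the same partitioning argument works in both settings; once that is cleanly stated, the counting is a straightforward summation of already-proved per-orbit formulas.
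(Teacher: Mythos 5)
Your proposal is correct and follows essentially the same route as the paper: reduce to normal HNN-extensions via Lemma \ref{conjugados da normal}, observe (via the Bass--Serre/$\OE$ argument underlying Propositions \ref{fixed hnn abstract} and \ref{2a}) that the $\aut(G_1)$-orbit of the associated subgroup is an isomorphism invariant, apply Corollary \ref{corhnn} (resp.\ Theorem \ref{profinite isomorphisms}) to each fiber, and sum over orbit representatives. The paper even makes your same simplifying remark that in the conjugate case one may index by a single subgroup $H$ rather than an unordered pair of conjugacy classes.
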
 

In Section \ref{secao 5} we calculate the genus or estimate it for HNN-extensions. Sections 6 is dedicated to the calculation of genus and bounds in more general classes of groups.  Namely, Theorems \ref{int general finite associated} and \ref{int conjugate associated} combined with  \cite[Theorem 1.1]{BPZ}  can be used to compute  the genus of HNN-extension within a larger class $\A$ of finitely generated residually finite accessible groups whose vertex groups of its JSJ-decomposition are $\Oe$-groups  (see Theorem \ref{fixed H2} in Subsection  \ref{general section}).

{\subsection{Notations and terminologies} 

Our basic reference for notations and results about profinite groups is \cite{RZ}. We refer the reader to Lyndon-Schupp \cite{LS}, Magnus-Karrass-Solitar \cite{MKS}, Serre \cite{S} or Dicks-Dunwoody \cite{DI} for an account of basic facts on amalgamated free products  and to \cite{RZ} for the profinite versions of these constructions. Our methods based on the profinite version of the Bass-Serre theory of groups acting on trees that can be found in \cite{R}. 

We  give a brief description of the main notations and terminologies used throughout the paper. 

All homomorphisms of profinite groups are assumed to be continuous in this paper. The composition of  two maps $f$ and $g$ is often defined simply as $f\circ g = fg.$ We will use the standard abbreviation $x^{g}$ for $g^{-1}xg$ when $g,x$ are elements of a group $G;$ the internal automorphisms of $G,$ denoted by $\inn(G),$ is the group composed of the elements $\tau_g\, (g \in G)$ (as all maps are written on the left) such that $\tau_g(x)=gxg^{-1}, \forall x \in G.$ 
 
If $H$ is a subgroup of a group $G$, we will use $H^G$ for the normal closure of $H$ in $G,$  $N_G(H)$ for the normalizer of $H$ in $G$ and $C_G(H)$ for the centralizer of $H$ in $G.$  Furthermore, let $\aut_G(H)$ be the group of all automorphisms of $G$ that leave $H$ invariant and denote by $\overline{\aut}_G(H),$ $\widetilde{\aut}_G(H)$ its images in $\aut(H)$ and $\out(H),$ respectively. The natural image of $x \in N_G(H)$ in $\aut(H)$ will be given by $\tau_{x^{-1}},$ and let $\overline{N}_G(H)$ and $\widetilde{N}_G(H)$ denote the natural images of $N_G(H)$ in $\aut(H)$ and $\out(H),$ respectively.

 All abstract HNN-extensions are residually finite and in the profinite case, they will always be proper (i.e. the base group  embeds in the HNN-extension);  by Proposition 9.4.3 ii)-iii) in \cite{RZ} this is always the case if  $H, K$ are finite.

Throughout the paper we shall us the following families of groups. 
\begin{itemize}

\item $\mathcal{F}=\mathcal F(G_1,H,K)$ - be the class of all 
abstract HNN-extensions $$\hnn(G_1, H, K, f, t),$$ where $H$ and $K$ are fixed finite subgroups of $\Oe$-group $G_1$. 

\item $\A$ - the family of  finitely generated residually finite accessible groups whose vertex groups of its JSJ-decomposition are $\Oe$-groups. 

\item $\mathcal F(G, G_1, \A)$ - be the subclass of $\mathcal{A}$ consisting of all abstract HNN-extensions of the form $$B=\hnn(G_1, H_1, K_1, f_1, t_1),$$ where $G_1$ is an $\Oe$-group fixed  in the class $\mathcal{A}$ and the associated subgroups are finite.

\end{itemize}

\section{\large Preliminary Results} \label{preliminares}

In this section we will provide the basic and preliminary facts for the development of the main sections. 

\subsection{Basics of profinite trees} \label{basics of trees}

In this subsection we recall the necessary notions of the Bass-Serre theory for abstract and profinite graphs.
	
	\begin{deff}[Profinite graph]
		A (profinite) graph is a (profinite space) set $\Gamma$ with a distinguished closed nonempty subset $V(\Gamma)$ called the vertex set, $E(\Gamma)=\Gamma-V(\Gamma)$ the edge set and two (continuous) maps $d_0,d_1:\Gamma \rightarrow V(\Gamma)$ whose restrictions to $V(\Gamma)$ are the identity map $id_{V(\Gamma)}$. We refer to $d_0$ and $d_1$ as the incidence maps of the (profinite) graph $\Gamma$.  
	\end{deff}
	
	A morphism $\alpha:\Gamma \longrightarrow \Delta$ of profinite graphs is a continuous map with $\alpha d_i=d_i \alpha$ for $i=0,1$. By \cite[Proposition 2.1.4]{R} every profinite graph $\Gamma$ is an inverse limit of finite quotient graphs of $\Gamma$. A profinite graph $\Gamma$ is called connected if every its finite quotient graph is connected as a usual graph. 
	
	\begin{deff}\label{Profinite tree} Let $\Gamma$ be a profinite graph. Define $E^{*}(\Gamma)=\Gamma/V(\Gamma)$ to be the quotient space of $\Gamma$ (viewed as a profinite space) modulo the subspace of vertices $V(\Gamma)$. Consider the free profinite $\widehat\Z$-modules $[[\widehat\Z(E^{*}(\Gamma),*)]]$ and $[[\widehat\Z V(\Gamma)]]$ on the pointed profinite space $(E^{*}(\Gamma),*)$ and on the profinite space $V(\Gamma)$, respectively. Denote by $C(\Gamma,\widehat\Z)$ the chain complex

	$$	\xymatrix{ 0 \ar[r] & [[\widehat\Z(E^{*}(\Gamma),*)]] \ar[r]^d &[[\widehat\Z V(\Gamma)]] \ar[r]^{\varepsilon} & \widehat\Z \ar[r] & 0}$$ of free profinite $\widehat\Z$-modules and continuous $\widehat\Z$-homomorphisms $d$ and $\varepsilon$ determined by $\varepsilon(v)=1$, for every $v \in V(\Gamma)$, $d(\overline{e})=d_1(e)-d_0(e)$, where $\overline{e}$ is the image of an edge $e \in E(\Gamma)$ in the quotient space $E^{*}(\Gamma)$, and $d(*)=0$. 
		One says that $\Gamma$ is a profinite tree if the sequence $C(\Gamma,\widehat\Z)$ is exact. 
	\end{deff}	
	
	  If $v$ and $w$ are elements of a tree (respectively profinite tree) $T$, one denotes by $[v,w]$ the smallest subtree (respectively profinite subtree) of $T$ containing $v$ and $w$.

\medskip 


Let $G=\hnn(G_1, H, K, f, t)$ be a profinite HNN-extension. In contrast to the abstract case, $G_1$ do not always embed into $G$, i.e. a  profinite HNN-extension is not always proper. Note that $G$ is always proper  when $H$ and $K$ are finite (see Propositon $9.4.3$ item $(2)$ in \cite{RZ}) which will be assumed for starting from Subsection \ref{general results}.  In any case we shall consider only proper free profinite product with amalgamation and profinite HNN-extensions in this paper.

When $G=\hnn(G_1,H,K,f,t)$ is proper, there is
a corresponding    standard profinite tree  (or universal covering graph) 
  (cf. \cite[Theorem 3.8]{ZM1}) $S(G)=G/G_1\cup G/H$, $V(S(G))=G/G_1$ and  $d_0 (gH)= gG_1; \quad  d_1(gH)=gtG_1 $. There is a natural  continuous action of
 $G$ on $S(G)$, and clearly $ G\backslash S(G)$ is an edge with one vertex. 







\subsection{Normalizers}\label{normalizers}

The  description of   normalizers in this subsection is of independent interest for profinite groups.

\begin{prop} \label{normalizer}
\begin{enumerate}
\item[(i)] Let $G=G_1\amalg_H G_2$ be a proper  free profinite product with  amalgamation. Then $N_G(H)=N_{G_1}(H)\amalg_H N_{G_2}(H)$.
\item[(ii)] Let $G=\hnn(G_1, H, K, f, t)$ be a proper profinite HNN-extension with associated subgroups $H,$ $K$ and stable letter $t$. Then  
\begin{itemize}

\item[(a)]  If $H^{g_1}=K$ for some $g_1 \in G_1$ then $N_{G}(H)$ is a profinite HNN-extension  $N_{G}(H)=\hnn(N_{G_1}(H), H, \tau_{g_1} f)$. 

\item[(b)] If $H$ and $K$ are not conjugate subgroups in $G_1$ then $N_{G}(H)$ is a profinite amalgamated free product  $N_{G}(H)=N_{G_1}(H)\amalg_{H} N_{G_1^{t^{-1}}}(H)$. 
\end{itemize}

\end{enumerate}
\end{prop}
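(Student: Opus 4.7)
The plan is to apply the profinite Bass--Serre theory from \cite{R} by letting $N_G(H)$ act on the fixed subtree of the standard profinite tree of $G$. In each of the three cases this quotient will turn out to be either an edge with two distinct vertices (giving a free amalgamated product) or a loop (giving an HNN-extension), with the stabilisers as required.

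\medskip
\noindent\textbf{Step 1 (standard tree and fixed subtree).} In case (i) take the standard profinite tree $S=S(G)$ with $V(S)=G/G_1\sqcup G/G_2$, $E(S)=G/H$ and the base edge $H$ joining $G_1$ to $G_2$. In case (ii) take $V(S)=G/G_1$, $E(S)=G/H$ with base edge $H$ joining $G_1$ and $tG_1$. In both situations $H$ fixes the base vertex/vertices and the base edge, so $T:=S^H\neq\emptyset$. Since $H$ is compact and already fixes a vertex, by the standard facts on fixed subtrees of profinite actions (\cite{R}, chapter on profinite Bass--Serre theory) $T$ is a connected profinite subtree of $S$, invariant under $N_G(H)$. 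Moreover, because stabilisers of edges of $S$ are conjugates of $H$ and $H$ is finite, an edge $gH$ lies in $T$ iff $gHg^{-1}=H$, i.e.\ $g\in N_G(H)$; in that case its stabiliser in $N_G(H)$ is $H$ itself.

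\medskip
\noindent\textbf{Step 2 (the quotient $N_G(H)\backslash T$).}

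For (i), no element of $G$ interchanges the two vertex types $gG_1$ and $hG_2$ of $S$, hence neither does $N_G(H)$; so the quotient is a segment with two vertices. The stabiliser of the base vertex $G_i$ in $N_G(H)$ is $N_G(H)\cap G_i=N_{G_i}(H)$, and the base edge has stabiliser $H$.

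For (ii)(a), set $n=tg_1^{-1}$. A direct computation using $t^{-1}Ht=K$ and $g_1^{-1}Hg_1=K$ gives $n^{-1}Hn=H$ and the induced automorphism of $H$ equals $\tau_{g_1}\circ f$. Since $n\cdot G_1 = tG_1$, the two endpoints of the base edge lie in a single $N_G(H)$-orbit, and the quotient is a loop based at one vertex with stabiliser $N_{G_1}(H)$.

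For (ii)(b), suppose some $n\in N_G(H)$ satisfies $n\cdot G_1=tG_1$; writing $n=tg$ with $g\in G_1$, the relation $nHn^{-1}=H$ rewrites as $gHg^{-1}=K$, contradicting that $H,K$ are not $G_1$-conjugate. Therefore the quotient is again a segment, the vertex stabilisers being $N_{G_1}(H)$ and $N_G(H)\cap tG_1t^{-1}=N_{tG_1t^{-1}}(H)=N_{G_1^{t^{-1}}}(H)$.

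\medskip
\noindent\textbf{Step 3 (structure theorem).} In all three cases the action of $N_G(H)$ on $T$ has the quotient graph and stabilisers just identified. Invoking the structure theorem of profinite Bass--Serre theory \cite{R}---a profinite group acting on a profinite tree with quotient a segment, resp.\ a loop, and prescribed stabilisers, is the corresponding proper free amalgamated profinite product, resp.\ proper profinite HNN-extension---yields the three asserted decompositions.

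\medskip
\noindent\textbf{Main obstacle.} The non-trivial content is entirely in Step~2: identifying the $N_G(H)$-orbits on $T$, in particular the clean dichotomy between (ii)(a) and (ii)(b) obtained from ``does there exist $g\in G_1$ with $gHg^{-1}=K$?'', and verifying that the stable letter $n=tg_1^{-1}$ induces exactly the automorphism $\tau_{g_1}f$ of $H$. Once these orbit computations are in place, the rest is a direct application of the profinite version of Bass--Serre theory, and properness of the resulting decompositions follows from properness of $G$ together with the fact that $N_G(H)$ injects into $G$.
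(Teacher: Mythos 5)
Your proof is correct and follows essentially the same route as the paper: both let $N_G(H)$ act on the fixed profinite subtree $S(G)^H$ of the standard tree, observe that an edge $gH$ lies in this subtree exactly when $g\in N_G(H)$, identify the quotient graph as a segment or a loop according to whether $H$ and $K$ are $G_1$-conjugate, and then invoke the structure theorem of profinite Bass--Serre theory. The only small caveat is that Proposition \ref{normalizer} does not assume $H$ finite, so where you appeal to finiteness of $H$ to pass from $H\le H^{g^{-1}}$ to $H=H^{g^{-1}}$ you should instead use the general fact (as the paper does, by looking at finite quotients) that in a profinite group a conjugate of a closed subgroup cannot be properly contained in it.
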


\begin{proof} 
Let $S(G)$ be the standard profinite tree on which $G$ acts continuously. Denote by $S(G)^{H}$ the subset of $S(G)$ of  fixed points with respect to the action of $H$. Then $S(G)^{H}$ is a profinite tree (Theorem 2.8 in \cite{ZM1} or Theorem 4.1.5 \cite{R}). Moreover $N_{G}(H)$ acts continuously on $S(G)^{H}$. Indeed, let $s \in S(G)^{H}, g \in N_{G}(H)$ and $h \in H,$ then: $$h\cdot(g\cdot s)=(hg)\cdot s=(gh')\cdot s = g\cdot (h'\cdot s)=g\cdot s,$$ for some $h'\in H,$  whence follows that $g\cdot s \in S(G)^{H},$ where $g\cdot s$ means the action of $g$ on $s$. 

Now the graph $S(G)^{H}/N_{G}(H)$ has only one edge. To see this it suffices to show that given $g\in G$ and $e\in E(S(G)^{H})$, then $ge\in E(S(G)^{H})$ implies  $g\in N_{G}(H)$. But this is obvious because both $H$ and $H^{g^{-1}}$ stabilize $ge$  which implies $H\leq  H^{g^{-1}}.$ However, in a profinite group, a conjugate of a subgroup cannot be contained in it properly; to see this, one need only look at the finite quotients and note that the conjugate subgroups have the same order. 
So $H = H^{g^{-1}}$ as needed.

 Thus $S(G)/G \cong S(G)^{H}/N_{G}(H)$ is an isomorphism of graphs and both are either a loop or an edge with two vertices. Note that the first case occurs when $G$ is an HNN-extension and $H$ and $K$ are conjugate in $G_1$ and the second case occurs when $H$ and $K$ are not conjugated in $G_1$ or $G$ is a free product with amalgamation. Now, note that $S(G)$ is connected and simply connected (see Theorem 3.4 in \cite{ZM} or Theorem 6.3.5 \cite{R}), thus  we can apply Proposition 4.4 in \cite{ZM} or Theorem 6.6.1 together with Proposition 6.5.3 in \cite{R} to deduce that $N_{G}(H)$ has the desired structures  of this proposition.
\end{proof}

In the following proposition gives a description of the normalizer of a  finite associated subgroup $H$ of an HNN-extension in the discrete and profinite categories.

\begin{prop}\label{cp1} 
Let $G = \hnn(G_{1}, H, K, f, t)$ be an HNN-extensions of a residually finite group $G_1$ with finite associated subgroups $H$ and $K$ and stable letter $t$. Then the following hold: \begin{itemize}

\item[(i)] If $H^{g_1}=K$ for some $g_1 \in G_1$ then $N_{G}(H)=\hnn(N_{G_1}(H), H, \tau_{g_1} f)$. 
If $H$ and $K$ are not conjugate subgroups in $G_1$ then $N_{G}(H)=N_{G_1}(H)*_{H} N_{{G_1}^{t^{-1}}}(H)$.

\item[(ii)] If $H^{g_1}=K$ for some $g_1 \in \widehat{G_1}$ then $N_{\widehat{G}}(H)$ is a profinite HNN-extension (i.e. the profinite completion of the usual HNN-extension) $N_{\widehat{G}}(H)=\widehat{\hnn}(N_{G_1}(H), H, \tau_{g_1} f)=\hnn(N_{\widehat{G_1}}(H), H, \tau_{g_1} f)$. If $H$ and $K$ are not conjugate subgroups in $\widehat{G_1}$ then $N_{\widehat{G}}(H)$ is a profinite amalgamated free product (i.e. the profinite completion of the usual amalgamated free product) $N_{\widehat{G}}(H)=N_{\widehat{G_1}}(H)\amalg_{H} N_{\widehat{G_1}^{t^{-1}}}(H)$. 
\end{itemize}
\end{prop}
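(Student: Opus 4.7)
The plan is to deduce (i) by replaying inside the abstract category the exact argument of Proposition~\ref{normalizer}, and to deduce (ii) by applying Proposition~\ref{normalizer} to the profinite HNN-extension $\widehat G$, combined with a density/separability bridge that identifies $\widehat{N_{G_1}(H)}$ with $N_{\widehat G_1}(H)$.

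For (i), the group $G$ acts on its abstract standard Bass--Serre tree $T$ with vertex set $G/G_1$ and edge set $G/H$, and since $H$ is finite, $T^H$ is a nonempty subtree on which $N_G(H)$ acts. The same short argument as in Proposition~\ref{normalizer} --- an element of $G$ carrying an $H$-fixed edge to another $H$-fixed edge must normalise $H$ --- shows that $N_G(H)$ acts with a single orbit on $E(T^H)$, so that the quotient $N_G(H)\backslash T^H$ is one edge. This edge is a loop precisely when some element of $G$, necessarily of the form $g_1 t^{-1}\in N_G(H)\setminus N_{G_1}(H)$, swaps the endpoints of the identity edge, which in turn is exactly the case $H^{g_1}=K$ for some $g_1\in G_1$. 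Taking stabilisers, the edge stabiliser in $N_G(H)$ is $H$, and the vertex stabilisers are $G_1\cap N_G(H) = N_{G_1}(H)$ and, in the non-conjugate case, $G_1^{t^{-1}}\cap N_G(H) = N_{G_1^{t^{-1}}}(H)$. The abstract Bass--Serre structure theorem then yields the announced HNN (resp.\ amalgamated) decomposition, the gluing map in the HNN case being $\tau_{g_1} f$, coming from the loop element $g_1 t^{-1}$.

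For (ii), since $H$ and $K$ are finite the profinite HNN-extension $\widehat G = \hnn(\widehat G_1, H, K, f, t)$ is proper (Proposition~9.4.3 of \cite{RZ}), so Proposition~\ref{normalizer}(ii) applied to $\widehat G$ gives the second equality at once. The first equality --- identification of this profinite HNN-extension with the profinite completion of the abstract HNN from (i) --- reduces, by compatibility of the profinite completion with HNN-extensions that have finite associated subgroups, to the claim $\widehat{N_{G_1}(H)} = N_{\widehat G_1}(H)$. Separability of $N_{G_1}(H)$ in $G_1$ is easy: if $g\notin N_{G_1}(H)$ there exists $h\in H$ with $ghg^{-1}\notin H$, and $ghg^{-1}$ can be separated from the finite set $H$ in a suitable finite quotient of $G_1$.

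The main obstacle is the density of $N_{G_1}(H)$ in $N_{\widehat G_1}(H)$. The plan is: given $\widehat g\in N_{\widehat G_1}(H)$ and an open normal subgroup $U\triangleleft \widehat G_1$ with $U\cap H=1$, pick $g\in G_1$ with $g\equiv \widehat g\pmod U$; then $N:=U\cap G_1$ is normal in $G_1$ with $H\cap N=1$, and $gHg^{-1}$ is another complement to $N$ in the split extension $HN = H\ltimes N$, having the same image in $HN/N$ as $H$. A Schur--Zassenhaus-type conjugacy-of-complements argument (applied inside $HN$, where $|H|$ is finite and acts on $N$) produces $n\in N$ with $nHn^{-1}=gHg^{-1}$, whence $n^{-1}g\in N_{G_1}(H)\cap \widehat g U$, proving density. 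Combined with the separability above, this yields the continuous isomorphism $\widehat{N_G(H)}\cong N_{\widehat G}(H)$ that matches the two descriptions.
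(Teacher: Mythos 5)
Your treatment of (i) and of the second equality in (ii) coincides with the paper's: the paper proves (i) by declaring it the abstract Bass--Serre analogue of the argument for Proposition \ref{normalizer}, and proves (ii) by applying Proposition \ref{normalizer} to the proper profinite HNN-extension $\widehat G$ (properness holding because $H$ and $K$ are finite). Up to that point your proposal is correct and is the intended route.

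The genuine gap is in your argument for the first equality of (ii), i.e.\ for the identification $\widehat{N_{G_1}(H)}=N_{\widehat G_1}(H)$. The step ``a Schur--Zassenhaus-type conjugacy-of-complements argument produces $n\in N$ with $nHn^{-1}=gHg^{-1}$'' fails: $N=U\cap G_1$ is a normal subgroup of finite index in $G_1$, hence in general infinite, and two complements of an infinite normal subgroup with the same image modulo it need not be conjugate by an element of it. Schur--Zassenhaus requires coprime finite orders, and the cohomological substitute breaks down because $H^1(H,N)$ need not vanish for infinite $N$ even though $|H|$ is finite: take $H=C_2=\langle s\rangle$ acting on $N=\Z$ by inversion, so that $HN$ is infinite dihedral; then $\langle ns\rangle$ with $n$ odd is a complement to $N$ that is not conjugate to $\langle s\rangle$ by any element of $HN$, let alone of $N$. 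Accordingly, density of $N_{G_1}(H)$ in $N_{\widehat G_1}(H)$ is not a formal consequence of residual finiteness; the paper itself obtains it only for polycyclic-by-finite $G_1$ (Corollary \ref{8}), and there it must invoke a nontrivial separability result of Ribes--Segal--Zalesskii rather than an elementary complement-conjugacy argument. (Even granting density, you would also need that the profinite topology of $G_1$ induces the full profinite topology on $N_{G_1}(H)$ before you may write $\widehat{N_{G_1}(H)}=N_{\widehat G_1}(H)$; your sketch does not address this.) In fairness, the paper's one-line proof of (ii) also delivers only the second expression $\hnn(N_{\widehat{G_1}}(H),H,\tau_{g_1}f)$; the first equality in the statement requires exactly the identification you attempted, so the obstruction you ran into is real and is not resolved by the paper at this point either.
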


\begin{proof} (ii)   Follows from Proposition \ref{normalizer}.  

\medskip
(i) The abstract version of the proof is analogous to the proof of  Proposition \ref{normalizer} and will be omitted;  the abstract version of results quoted in that proof  can be found in \cite{S} or \cite{DI}. 
\end{proof}


\begin{cor}\label{8} If $G_1$ is a polycyclic-by-finite group, then $N_{G}(H)$ is dense in $N_{\widehat{G}}(H)$. Furthermore,  $$\overline{N}_{\widehat{G}}(H)=\overline{N}_{G}(H)$$ where $\overline{N}_{G}(H)$ and $\overline{N}_{\widehat{G}}(H)$ are the natural images (by restriction) of $N_{G}(H)$ and $N_{\widehat{G}}(H)$ in the automorphism group $\aut(H),$ respectively. 
\end{cor}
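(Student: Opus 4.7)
The plan is to reduce the density statement for $N_G(H) \subseteq N_{\widehat G}(H)$ to the corresponding density of $N_{G_1}(H)$ inside $N_{\widehat{G_1}}(H)$, using Proposition \ref{cp1}, and then to derive the statement about images in $\aut(H)$ from the finiteness of $H$.

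First, I would split into the two cases of Proposition \ref{cp1}. If $H^{g_1}=K$ for some $g_1\in G_1$, then the same conjugator witnesses the conjugacy in $\widehat{G_1}$, and both $N_G(H)=\hnn(N_{G_1}(H),H,\tau_{g_1}f)$ and $N_{\widehat G}(H)=\hnn(N_{\widehat{G_1}}(H),H,\tau_{g_1}f)$ are HNN-extensions over the finite associated subgroup $H$ with the same twist. Otherwise $H$ and $K$ are non-conjugate in $G_1$; since polycyclic-by-finite groups are subgroup conjugacy separable for finite subgroups (via Remeslennikov--Formanek conjugacy separability combined with standard arguments using that any two conjugate finite subgroups have conjugator in a finite index subgroup), $H$ and $K$ remain non-conjugate in $\widehat{G_1}$, so both $N_G(H)$ and $N_{\widehat G}(H)$ are amalgamated free products over $H$ of the groups appearing in Proposition \ref{cp1}. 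In either case, $N_{\widehat G}(H)$ is the profinite analogue of the abstract decomposition of $N_G(H)$ with the same finite edge group $H$ and the same twist.

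Next, I would invoke the classical fact that in a polycyclic-by-finite group $G_1$ the normalizer of a finite subgroup $H$ is closed in the profinite topology of $G_1$ and its closure in $\widehat{G_1}$ equals $N_{\widehat{G_1}}(H)$; equivalently, the natural map from the profinite completion of $N_{G_1}(H)$ onto $N_{\widehat{G_1}}(H)$ is an isomorphism. With this, every vertex group in the abstract decomposition of $N_G(H)$ is dense in the corresponding vertex group of the profinite decomposition of $N_{\widehat G}(H)$, and since the finite edge group $H$ embeds compatibly on both sides, this vertex-wise density propagates to density of the whole HNN-extension (respectively amalgamated product). Thus $N_G(H)$ is dense in $N_{\widehat G}(H)$.

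Finally, since $H$ is finite, $\aut(H)$ is finite and the restriction homomorphism $N_{\widehat G}(H)\to\aut(H)$ is continuous with open kernel. Hence the image of any dense subgroup equals the image of the whole group, giving $\overline{N}_{\widehat G}(H) = \overline{N}_G(H)$. The main obstacle I expect is justifying the density $N_{G_1}(H) \subseteq N_{\widehat{G_1}}(H)$ in the polycyclic-by-finite setting: one must invoke the correct separability result for normalizers of finite subgroups and verify that the topology induced on $N_{G_1}(H)$ from the profinite topology of $G_1$ agrees with its own full profinite topology, so that taking closures commutes with profinite completions.
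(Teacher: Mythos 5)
Your proposal is correct and follows essentially the same route as the paper: split into the two cases of Proposition \ref{cp1}, use conjugacy separability of finite subgroups in polycyclic-by-finite groups (the paper cites \cite[Chapter 4, Theorem 7]{Se}) to see the case is preserved in $\widehat{G_1}$, invoke the density of $N_{G_1}(H)$ in $N_{\widehat{G_1}}(H)$ (the paper cites Proposition 3.3 of \cite{RSZ}) to propagate density through the amalgam or HNN decomposition, and finish using the finiteness of $\aut(H)$. The only difference is that the paper supplies precise references for the two facts you flag as the main obstacles, and it does not need your worry about induced versus full profinite topologies, since only the identification of the closure of $N_{G_1}(H)$ in $\widehat{G_1}$ with $N_{\widehat{G_1}}(H)$ is used.
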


\begin{proof} First observe that $H$ and $K$ are conjugate in $G_1$ if and only if they are conjugate in $\widehat G_1$  (see \cite[ Chapter 4, Theorem 7]{Se}).

Suppose that $H$ and $K$ are not conjugate in $\widehat{G_1}$. Then by Proposition \ref{cp1} we have   $$N_{G}(H)=N_{G_1}(H)*_{H} N_{G_{1}^{t^{-1}}}(H) \mbox{ and } N_{\widehat{G}}(H)=N_{\widehat{G_1}}(H)\amalg_{H} N_{\widehat{G_{1}}^{t^{-1}}}(H),$$ and so  $\left\langle N_{\widehat{G_1}}(H), N_{\widehat{G_{1}}^{t^{-1}}}(H)\right\rangle$ is dense in $N_{\widehat{G}}(H)$. As $G_1$ is polycyclic-by-finite it follows from Proposition 3.3 in \cite{RSZ} that $N_{G_{1}}(H)$ is dense in $N_{\widehat{G}_{1}}(H)$ and $N_{G_{1}^{t^{-1}}}(H)$ is dense in $N_{\widehat{G_{1}}^{t^{-1}}}(H)$, therefore  $N_{G}(H)$ is dense in $N_{\widehat{G}}(H)$. 

If $H$ and $K$ are conjugate subgroups in $\widehat{G_1}$, then by Proposition \ref{cp1} we have $$N_{G}(H)=\hnn(N_{G_1}(H), H, \tau_{g_1} f) \mbox{ and } N_{\widehat{G}}(H)=\hnn(N_{\widehat{G_1}}(H), H, \tau_{g_1} f).$$ In this case $N_{G}(H)=\left \langle N_{G_1}(H), tg_1^{-1}\right\rangle$ and $N_{\widehat{G}}(H)=\overline{\left \langle N_{\widehat{G_1}}(H), tg_1^{-1}\right\rangle}$, and the result follows similarly as in the case of the non-conjugate associated subgroups. 

\medskip
Since $H$ is finite we have 
  $$\overline{N}_{\widehat{G}}(H)=\overline{N}_{G}(H),$$ since 
$N_{G}(H)$ is dense in $N_{\widehat{G}}(H)$.


\end{proof}



\subsection{OE-groups} \label{OE groups}




A finitely generated residually finite group $G$ with less than 2 ends, i.e. a finite or one-ended group, will be called an $\OE$-group in the paper.   It follows from the famous Stallings theorem \cite{Sta} that $G$ is an $\OE$-group  if and only if whenever it acts on a tree with finite edge stabilizers
it  has a global fixed point. A finitely generated profinite group will be called $\OE$-group if it has the same property:   whenever it acts on a  profinite tree  with finite edge stabilizers, then it fixes a vertex. 

 A finitely generated residually finite group $G$  will be called $\Oe$-group if $\widehat G$ is $\OE$-group. Note that an $\Oe$-group is automatically $\OE$-group, since if a finitely generated residually finite group $G$ splits as an amalgamated free product or HNN-extension over a finite group then so does $\widehat G$.

Since we do not know whether 1-endedness is a profinite property, we do not know any single example of finitely generated residually finite 1-ended group which is not $\Oe$.  In this section we prove   
 several propositions that give  sufficient conditions for $G$ to be an $\Oe$-group  showing that the class of $\Oe$-groups is quite large and contains many important families of examples of $\Oe$-groups.

\begin{prop}\label{virtually cyclic}(\cite[Proposition 3.1]{BPZ}) Let $G$ be a finitely generated residually finite group such that $\widehat G$ does not have a non-abelian free pro-$p$ subgroup. If  $G$ is not virtually infinite cyclic, then $G$ is an $\Oe$-group.
\end{prop}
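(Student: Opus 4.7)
The plan is to prove the contrapositive: if $\widehat{G}$ is not an $\OE$-group, then $G$ must be virtually infinite cyclic. So I would begin by assuming that $\widehat{G}$ acts continuously on a profinite tree $T$ with finite edge stabilizers and no global fixed point, and aim to force some finite-index subgroup of $G$ to be infinite cyclic.

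First, I would use profinite Bass--Serre theory to extract from this action a non-trivial decomposition of $\widehat{G}$ over a finite subgroup. Passing to a minimal $\widehat{G}$-invariant subtree and analyzing the quotient $\widehat{G}\backslash T$, one presents $\widehat{G}$ either as a proper profinite amalgam $A\amalg_C B$ with $C$ a proper finite subgroup of both factors, or as a profinite HNN-extension $\hnn(A,C,t)$ with $C$ finite.

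The second and main step is a dichotomy for such decompositions: either $\widehat{G}$ contains a non-abelian free pro-$p$ subgroup for some prime $p$, or the decomposition is degenerate (the amalgam case with both indices equal to $2$, or the HNN case with $C=A$), in which case $\widehat{G}$ has an open subgroup isomorphic to $\widehat{\Z}$. I would prove the dichotomy by restricting the action to a Sylow pro-$p$ subgroup of $\widehat{G}$ and invoking the pro-$p$ Kurosh-style subgroup theorem together with a Schreier index count, producing a free pro-$p$ subgroup of rank at least two whenever the indices are not in the degenerate range. The hypothesis on non-existence of non-abelian free pro-$p$ subgroups then forces $\widehat{G}$ to be virtually procyclic. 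This is the main obstacle, since constructing the free pro-$p$ subgroup from an action with finite (possibly nontrivial) edge stabilizers is the delicate piece and is where one genuinely needs profinite, rather than abstract, Bass--Serre technology.

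Finally, I would descend from $\widehat{G}$ to $G$. Let $\widehat{H}\le\widehat{G}$ be open with $\widehat{H}\cong\widehat{\Z}$ and set $H=G\cap\widehat{H}$, a finite-index subgroup of $G$. Since $G$ is residually finite and $H$ has finite index in $G$, the full profinite topology on $H$ agrees with the topology induced from $\widehat{G}$, so the profinite completion of $H$ is its closure in $\widehat{G}$, namely $\widehat{H}\cong\widehat{\Z}$. As $H$ embeds in $\widehat{\Z}$ it is torsion-free abelian, hence $H\cong\Z^r$; then $\widehat{\Z^r}\cong\widehat{\Z}$ forces $r=1$, so $H\cong\Z$ and $G$ is virtually infinite cyclic, contradicting the hypothesis.
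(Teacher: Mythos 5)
The paper gives no proof of this statement: it is quoted verbatim from \cite[Proposition 3.1]{BPZ}, where the argument runs directly through the structure theory of profinite groups without non-abelian free pro-$p$ subgroups acting on profinite trees. Your overall shape (contrapositive; a dichotomy ``non-abelian free pro-$p$ subgroup versus virtually procyclic''; descent from $\widehat G$ to $G$) is the right one, and your final descent step is correct and complete: a finite-index subgroup $H$ of $G$ carries its full profinite topology as the induced topology, so $\widehat H\cong\overline H\cong\widehat\Z$ forces $H\cong\Z$.

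The genuine gap is your first step. From a fixed-point-free continuous action of $\widehat G$ on a profinite tree with finite edge stabilizers one cannot, in general, extract a presentation of $\widehat G$ as a proper profinite amalgam $A\amalg_C B$ or profinite HNN-extension over a finite subgroup. This is precisely where profinite Bass--Serre theory fails: the quotient $\widehat G\backslash T$ need not be finite, a minimal invariant subtree need not have finite quotient either, and there is no structure theorem presenting a profinite group acting on a profinite tree as the fundamental group of a finite graph of profinite groups; whether a finitely generated profinite group acting with finite edge stabilizers and without a global fixed point must split over a finite subgroup is essentially the (open) profinite Stallings problem. The proof in \cite{BPZ} avoids any splitting by invoking the theorem on profinite groups with no non-abelian free pro-$p$ subgroups acting on profinite trees (see \cite[Theorems 4.2.10 and 4.2.11]{R}): such a group either fixes a vertex or, modulo the kernel of its action on the minimal invariant subtree (a finite normal subgroup here, being contained in edge stabilizers), is procyclic or profinite dihedral, hence $\widehat G$ is virtually procyclic and one descends exactly as you do. A secondary concern: even granting a splitting, your route to the dichotomy via a ``pro-$p$ Kurosh theorem'' applied to a Sylow pro-$p$ subgroup is unsafe, since Kurosh-type subgroup theorems fail for closed non-open subgroups of free profinite constructions and Sylow subgroups are rarely open; the dichotomy for profinite amalgams and HNN-extensions over finite subgroups is true, but is itself established by a ping-pong argument on the standard profinite tree rather than by an index count.
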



If $G$   satisfies an identity, then $\widehat G$ satisfies the same identity and so does not have non-abelian free  pro-$p$ subgroups, so it is an $\OE$ and an $\Oe$-group by Proposition \ref{virtually cyclic} unless it is virtually infinite cyclic. 

The result below was proven in Propositions 3.2 and 3.3 of \cite{BPZ2} and gives several examples of $\Oe$-groups.

\begin{prop}\label{examples} The following groups are $\Oe$-groups. \begin{enumerate}
\item[(i)]  Virtually surface groups;
\item[(ii)] Indecomposable (into a free product) 3-manifold groups;
\item[(iii)] Indecomposable (into a free product) finitely generated Right-Angled Artin groups;
 \item[(iv)] Arithmetic groups of $rank >1$;
 \item[(v)] Just infinite and finitely generated residually finite Fab groups. 
 \end{enumerate}
\end{prop}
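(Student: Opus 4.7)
The plan is to verify in each case that whenever $\widehat G$ acts continuously on a profinite tree with finite edge stabilizers, it must fix a vertex. The unifying strategy is to assume the contrary and derive a contradiction by descending a profinite splitting of $\widehat G$ to an abstract splitting of $G$ over a finite subgroup, which will then contradict a known structural property of $G$.

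For cases (i)-(iii) the starting observation is that each of the listed groups is one-ended in the abstract category: virtually surface groups of positive genus, indecomposable 3-manifold groups (by the Kneser-Milnor prime decomposition), and indecomposable finitely generated RAAGs (which split as a free product if and only if the defining graph is disconnected). If $\widehat G$ acted on a profinite tree with finite edge stabilizers without a global fixed point, profinite Bass-Serre theory \cite{R} would produce a profinite graph-of-groups decomposition of $\widehat G$ with finite edge groups. Using the residual-finiteness features specific to each class---goodness of surface groups, the geometric/separability structure of 3-manifold groups, and the virtually special-cube structure of RAAGs---one transfers this decomposition to a nontrivial splitting of $G$ itself over a finite subgroup, contradicting one-endedness.

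For case (iv), arithmetic groups of rank $>1$ satisfy Kazhdan's Property (T), which is inherited by $\widehat G$. A profinite group with Property (T) fixes a vertex under every continuous action on a profinite tree (the standard affine-action argument via $H^1$ with coefficients in the profinite chain module goes through verbatim), so $\widehat G$ is automatically an $\OE$-group. For case (v), a nontrivial action of $\widehat G$ on a profinite tree with finite edge stabilizers without a fixed point would, via the Bass-Serre-type decomposition, yield a finite-index subgroup of $\widehat G$ admitting a continuous epimorphism to $\widehat{\Z}$ or to a nontrivial free profinite product over a finite subgroup. Transferring the abelianization to a corresponding finite-index subgroup of $G$ would give an infinite abelianization, contradicting the Fab hypothesis; the just-infinite condition rules out the remaining possibility that the kernel is large enough to make the quotient trivial.

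The principal obstacle is the descent step in (i)-(iii): making precise that a profinite graph-of-groups decomposition of $\widehat G$ with finite edge groups always restricts to a nontrivial abstract splitting of $G$ over a finite subgroup. This is class-specific and relies on separability of the finite edge subgroups inside $G$, so the work lies in invoking the right separability theorem in each of the three geometric settings; once those are in hand, the contradictions with one-endedness are immediate.
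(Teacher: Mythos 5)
The paper itself contains no proof of this proposition: it is imported wholesale from Propositions 3.2 and 3.3 of \cite{BPZ2}, so the only honest comparison is with the arguments there, and those do not follow your route. The central gap in your proposal concerns items (i)--(iii). Everything hinges on your ``descent step'': converting a fixed-point-free action of $\widehat G$ on a profinite tree with finite edge stabilizers into a nontrivial abstract splitting of $G$ over a finite subgroup, which you propose to handle by ``invoking the right separability theorem.'' No separability statement is known to accomplish this; the assertion that a profinite splitting of $\widehat G$ over a finite group descends to an abstract splitting of $G$ is essentially the open problem --- flagged explicitly in Section \ref{intro} of this very paper --- of whether one-endedness is a profinite invariant, a close relative of Remeslennikov's question. (Finite subgroups of a residually finite group are automatically closed; that kind of separability gives you nothing here.) The arguments that actually work for these classes avoid descent altogether and run through profinite cohomology: goodness plus finite cohomological dimension forces $\widehat G$ to be torsion-free (so the finite edge stabilizers are trivial), and one then contradicts Poincar\'e duality in dimension $2$ for surface groups, or computes $H^{1}$ and $H^{2}$ via the Mayer--Vietoris sequence associated to the action on the profinite tree. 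Your item (iv) is also broken as stated: arithmetic groups of rank $>1$ need not have Property (T). Hilbert modular groups (irreducible lattices in $SL_2(\mathbb{R})\times SL_2(\mathbb{R})$) and $SL_2(\Z[1/p])$ (a lattice in $SL_2(\mathbb{R})\times SL_2(\Q_p)$) are rank-$2$ arithmetic groups without (T). The correct route to (iv) is the Margulis normal subgroup theorem, which makes higher-rank lattices (and their finite-index subgroups) have finite abelianization, reducing (iv) to the Fab case (v).

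Your sketch for (v) is the closest to a genuine argument --- if every open subgroup of $\widehat G$ has finite abelianization, then $\widehat G$ cannot split over a finite subgroup, since an open subgroup of a nontrivial profinite amalgam or HNN-extension over a finite group surjects onto $\widehat\Z$. But even here you assume without justification that a fixed-point-free continuous action on a profinite tree with finite edge stabilizers yields such a splitting of some open subgroup; there is no off-the-shelf profinite Stallings theorem, and this step requires the structure theory of finitely generated profinite groups acting on profinite trees from \cite{R} (the same machinery underlying Proposition \ref{virtually cyclic}). As it stands, the proposal identifies the right target in case (v), rests on a false premise in case (iv), and in cases (i)--(iii) replaces the theorem by an open problem.
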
  

The following result shows that  the class $\OE$-groups is  closed for extensions and commensurability.

\begin{prop} \label{extensao de OE group}  \begin{enumerate}
    \item[(i)] Let $H$ be a normal (closed) subgroup of a (profinite) group $G.$ If $H$ and $G/H$ are (profinite) $\OE$-groups then $G$ is a (profinite) $\OE$-group; 
      
    \item[(ii)]  Let $H$ be a (closed) subgroup of finite index in a (profinite) group $G.$ If $H$ is an (profinite) $\OE$-group then $G$ is an (profinite) $\OE$-group.     
\end{enumerate}
\end{prop}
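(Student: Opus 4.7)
My plan is to use the (profinite) Bass-Serre machinery recalled in Subsection \ref{basics of trees} together with two standard facts: (a) the fixed-point set $T^H$ of a (closed) subgroup $H$ acting on a (profinite) tree $T$ is a (profinite) subtree of $T$ (see Theorem 4.1.5 of \cite{R}, already used in the proof of Proposition \ref{normalizer}), and (b) a finite group acting without edge inversions on a (profinite) tree has a fixed vertex. Both assertions apply uniformly in the abstract and profinite settings, so the two cases of the proposition can be handled by the same argument.

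For part (i), assume $G$ acts on a (profinite) tree $T$ with finite edge stabilizers. Restricting to $H$, the $\OE$-hypothesis on $H$ gives $T^H\neq\emptyset$, and by (a) this is a (profinite) subtree. Normality of $H$ yields $gT^H = T^{gHg^{-1}} = T^H$ for every $g\in G$, so $G$ acts on $T^H$ and this action factors through a (continuous) $G/H$-action. Finiteness of edge stabilizers is inherited, because for $e\in T^H$ one has $\mathrm{Stab}_{G/H}(e)\cong \mathrm{Stab}_G(e)/(\mathrm{Stab}_G(e)\cap H)$, a quotient of the finite group $\mathrm{Stab}_G(e)$. Since $G/H$ is $\OE$, it fixes some $v\in T^H$, which is then a global fixed point of $G$.

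For part (ii), again let $G$ act on $T$ with finite edge stabilizers. The $\OE$-hypothesis on $H$ furnishes a vertex $v$ with $H\subseteq \mathrm{Stab}_G(v)$, so $\mathrm{Stab}_G(v)$ has finite index and the orbit $G\cdot v$ is finite. Its core $N=\bigcap_{g\in G} g\,\mathrm{Stab}_G(v)\,g^{-1}$ is closed, normal, of finite index in $G$, and a direct conjugation check shows $N$ fixes every point of $Gv$. Hence $T^N\neq\emptyset$ and, by (a), is a (profinite) subtree on which the finite group $G/N$ acts. Fact (b) then produces a vertex $w\in T^N$ fixed by $G/N$, which is in turn a vertex of $T$ fixed by $G$.

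The only step that needs care is invoking (b) in the profinite setting, i.e.\ that a finite group acting continuously and without edge inversions on a profinite tree has a fixed vertex. I would obtain it by writing the profinite tree as an inverse limit of finite quotient trees, applying Serre's classical fixed-point theorem on each finite quotient to get nonempty closed fixed-vertex sets, and taking the inverse limit by compactness; no separate bookkeeping on edge stabilizers is needed, since $G/N$ is itself finite in the application.
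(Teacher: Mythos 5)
Your proof is correct and follows essentially the same route as the paper: in (i) you let $G/H$ act on the subtree $T^H$, and in (ii) you pass to a closed normal finite-index subgroup with nonempty fixed-point set and invoke the fixed-point theorem for finite groups acting on (profinite) trees, exactly as the paper does with the core $H_G$ of $H$ (your choice of the core of $\mathrm{Stab}_G(v)$ instead is an immaterial variation). Your explicit verification that the induced $G/H$-action on $T^H$ still has finite edge stabilizers is a detail the paper leaves implicit.
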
 

\begin{proof} Let $T$ be a (profinite) tree with finite edge stabilizers on which $G$ acts (continuously) and without inversions.  

1. Note that $G/H$ acts on the tree $T^{H} \neq \emptyset.$ Therefore $G/H$ fixes some vertex of $T^{ H}$ and therefore $ G$ fixes a vertex of $T.$ 

 2.  Let $H_{G}$ be the core of $H$ in $G.$ It is clear that $H_{G}$ is a (closed) normal subgroup of finite index in $G$ (hence an open subgroup in the profinite case). Therefore $T^{H_{G}} \neq \emptyset$ and $G/ H_{G}$ acts on the tree $T^{H_{G}}.$ As $G/ H_{G}$ is finite, by \cite[Theorem 2.10]{ZM} $T^{H_{G}}$ has a fixed point for this action, therefore  $T^{G} \neq \emptyset.$       
\end{proof}

\subsection{ General results} \label{general results}

The following results will be important  for  next Sections. We shall prove the profinite version of the first statement only, as the abstract version follows mutadis mutandis.  

\begin{prop}\label{normal isomorfismo} Let $G = \hnn(G_{1}, H, f, t)$ and $B = \hnn(B_{1}, H_1, f_1, t_1)$ be abstract HNN-extensions with $H$ and $H_1$ finite associated subgroups of $G_1$ and $B_1$, respectively. 
Suppose $G_1$ is an $\OE$-group and $B$ is finitely generated  residually finite. If $G\cong B$ then there exists an isomorphism $\psi: G \longrightarrow B$ such that $\psi(G_1)= B_1$ and   $\psi(H)= H_1^{t_1^{\epsilon}}, \mbox{ where } \epsilon \in \{0,1\}.$  
\end{prop}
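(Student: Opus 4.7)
The plan is to start with any isomorphism $\varphi : G \to B$ and modify it by inner automorphisms of $B$, in two stages, until both conclusions hold simultaneously.

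\textbf{Stage 1 (locate $G_1$ inside $B_1$).} The group $B$ acts on its Bass--Serre tree $S(B)$ with edge stabilizers conjugate to the finite group $H_1$ and vertex stabilizers conjugate to $B_1$. The subgroup $\varphi(G_1) \le B$ is isomorphic to the $\OE$-group $G_1$, so its restricted action on $S(B)$, having finite edge stabilizers, admits a global fixed vertex; hence $\varphi(G_1) \subseteq b B_1 b^{-1}$ for some $b \in B$. Replacing $\varphi$ with $\tau_{b^{-1}} \circ \varphi$, I may assume $\varphi(G_1) \le B_1$.

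\textbf{Stage 2 (match the two splittings).} The inclusion $\varphi(G_1) \le B_1$ makes $\varphi$ descend to an isomorphism of the infinite cyclic quotients $G/\langle\langle G_1\rangle\rangle \to B/\langle\langle B_1\rangle\rangle$, so the $t_1$-exponent sum of $\varphi(t)$ equals $\pm 1$. Combined with the HNN conjugation identity $\varphi(t)\varphi(H)\varphi(t)^{-1} = \varphi(K) \subseteq B_1$, an analysis of the Britton-reduced form of $\varphi(t)$ --- in which repeated pinch conditions are forced to be \emph{equalities} of finite subgroups by the cardinalities $|\varphi(H)| = |H_1| = |K_1|$ --- together with the Bass--Serre uniqueness principle for reduced HNN-splittings over finite subgroups with $\OE$ vertex group (applied to the two HNN-presentations of $B$ arising respectively from its given structure and from pulling back the splitting of $G$ along $\varphi^{-1}$) yields simultaneously $\varphi(G_1) = B_1$ and $\varphi(t) = b_0 t_1^\eta b_1$ with $b_0, b_1 \in B_1$ and $\eta \in \{+1,-1\}$.

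\textbf{Stage 3 (extract $\psi$).} The single pinch in the reduced form of $\varphi(t)$ yields $b_1\varphi(H)b_1^{-1} = H_1$ if $\eta = +1$ and $b_1\varphi(H)b_1^{-1} = K_1$ if $\eta = -1$. Setting $\psi := \tau_{b_1} \circ \varphi$ --- an inner twist by $b_1 \in B_1$ that preserves $\psi(G_1) = b_1 B_1 b_1^{-1} = B_1$ --- gives $\psi(H) = H_1 = H_1^{t_1^0}$ in the first case ($\epsilon = 0$) and $\psi(H) = K_1 = H_1^{t_1}$ in the second case ($\epsilon = 1$, reading the conjugation convention consistently with $t_1 H_1 t_1^{-1} = K_1$); in the normal HNN setting $H_1 = K_1$, both cases collapse to $\psi(H) = H_1$.

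The main obstacle is Stage 2: simultaneously establishing that the Britton-reduced form of $\varphi(t)$ has $t_1$-length exactly one, and that the inclusion $\varphi(G_1) \le B_1$ is an equality. Neither of these is a priori clear, because $B_1$ is not assumed to be an $\OE$-group and so the symmetric $\varphi^{-1}$-argument from Stage 1 is not directly available. The technical heart is the combinatorial analysis of the repeated pinch conditions via Britton's lemma (using the finiteness of the associated subgroups to upgrade each successive inclusion to equality), dovetailed with a Bass--Serre/Forester-style uniqueness theorem for HNN-decompositions of $B$ over finite edge groups with $\OE$-type vertex group.
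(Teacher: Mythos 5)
Your Stage 1 is exactly the paper's opening move: $G_1$ is an $\OE$-group, so $\varphi(G_1)$ fixes a vertex of the Bass--Serre tree of $B$ and can be conjugated into $B_1$. From there, however, your Stage 2 has a genuine gap, which you yourself flag as ``the main obstacle'' without closing it. Two specific problems. First, the assertion that the pinch conditions are forced to be equalities ``by the cardinalities $|\varphi(H)|=|H_1|=|K_1|$'' is circular: nothing established so far gives $|H|=|H_1|$ --- that the associated subgroups of the two splittings are isomorphic is part of what the proposition delivers, not a hypothesis. Second, the ``Bass--Serre/Forester-style uniqueness principle'' you invoke to get $\varphi(G_1)=B_1$ and $\varphi(t)=b_0t_1^{\eta}b_1$ would need $B_1$ to be elliptic in the splitting of $B$ pulled back from $G$, i.e.\ essentially that $B_1$ is an $\OE$-group --- which is precisely not assumed, as you note. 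So the two conclusions you need from Stage 2 are asserted, not proved.

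The paper gets around both difficulties differently. For the location of $\psi(H)$ it does not analyse the normal form of $\psi(t)$ at all: it writes $H=G_1\cap G_1^{t^{-1}}$, so $\psi(H)\leqslant B_1\cap B_1^{\psi(t)^{-1}}$, and the intersection of two distinct vertex stabilizers of the standard tree lies in an edge stabilizer, giving $\psi(H)\leqslant H_1^{t_1^{\epsilon}b}$ with $b\in B_1$ directly (this is the abstract analogue of Corollary 7.1.5\,c) of Ribes' book, valid with no hypothesis on $B_1$). For upgrading the containments $\psi(G_1)\leqslant B_1$ and $\psi(H)\leqslant H_1$ to equalities, it takes a finite-index normal subgroup $U$ of $B$ with $U\cap H_1=1$, sets $U_1=U\cap B_1$, and compares the two induced HNN-extensions of \emph{finite} groups $\hnn(\psi(G_1)U_1/U_1,\psi(H),\dots)$ and $\hnn(B_1/U_1,H_1,f_1,t_1)$; the natural map between these virtually free quotients is injective and surjective, hence an isomorphism, and the finite-base case (where one simply applies $\psi^{-1}$ and compares orders) forces $\psi(G_1)U_1/U_1=B_1/U_1$ and $\psi(H)=H_1$; letting $U$ vary yields $\psi(G_1)=B_1$. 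If you want to salvage your outline, you should replace the Britton-reduced-form/uniqueness step by these two ingredients, since both work without any $\OE$ or cardinality assumption on the $B$-side.
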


\begin{prop} \label{profinito normal iso} Let $G = \hnn(G_{1}, H,f,t)$ and $B = \hnn(B_{1}, H_1,f_1,t_1)$ be profinite HNN-extensions  with $H$ and $H_1$ finite associated subgroups of $G_1$ and $B_1$, respectively. Suppose $G_1$ is a profinite $\OE$-group and $B_1$ is finitely generated. If $G\cong B$ then there is a continuous isomorphism $\psi: G \longrightarrow B$ such that $\psi(G_1)= B_1$ and $\psi(H)= H_1^{t_1^{\epsilon}}$, where $ \epsilon \in \{0,1\}$.
\end{prop}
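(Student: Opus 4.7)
The plan is to follow the profinite analogue of the tree-theoretic argument sketched for the abstract version in Proposition \ref{normal isomorfismo}, using the profinite Bass-Serre machinery of \cite{R}. Given a continuous isomorphism $\varphi:G\to B$, we transport the $B$-action on its standard profinite tree $S(B)=B/B_1\cup B/H_1$ (proper because $H_1$ is finite, by Proposition 9.4.3 of \cite{RZ}) to a continuous $G$-action on $S(B)$; the edge stabilizers are conjugates of the finite group $H_1$, hence finite. Applying the profinite $\OE$-property of $G_1$ to this induced action produces a vertex $bB_1\in V(S(B))$ fixed by $G_1$, so $\varphi(G_1)\leq bB_1b^{-1}$ for some $b\in B$. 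Composing $\varphi$ with the inner automorphism $\tau_{b^{-1}}$ of $B$, we may assume $\varphi(G_1)\leq B_1$.

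The crux is to upgrade this containment to an equality $\varphi(G_1)=B_1$. By profinite Bass-Serre theory (\cite[Theorem 6.6.1]{R}), the $G$-action on $S(B)$ presents $G$ as a second HNN-extension $G\cong\hnn(\varphi^{-1}(B_1),\varphi^{-1}(H_1),\ldots)$ in which $\varphi^{-1}(B_1)\supseteq G_1$ is the new vertex group. The fixed-point subtree $S(B)^{G_1}$ is a connected profinite subtree (\cite[Theorem 4.1.5]{R}) containing the base vertex $v_0^B=B_1$; since the edge stabilizers of $S(B)$ are conjugates of the finite group $H_1$ and an infinite $\OE$-group cannot embed into any finite group, $S(B)^{G_1}$ contains no edges and so reduces to $\{v_0^B\}$. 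Combining this rigidity with the comparison of the two HNN-decompositions of $G$ over finite edge groups — one with the $\OE$-vertex $G_1$ and the other with vertex $\varphi^{-1}(B_1)\supseteq G_1$ — one concludes $\varphi^{-1}(B_1)=G_1$; the degenerate case where $G_1$ is finite (forcing $G$ virtually cyclic) is handled separately.

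Finally, once $\varphi(G_1)=B_1$, the image $\varphi(H)$ is a finite subgroup of $B_1$ stabilizing some edge of $S(B)$ incident to $v_0^B$. Such edges fall into two $B_1$-orbits — outgoing and incoming — whose stabilizers are $B_1$-conjugate respectively to $H_1$ and to $H_1^{t_1}=t_1^{-1}H_1t_1$. Composing $\varphi$ once more with an inner automorphism of $B$ by an appropriate $b_1\in B_1$, which preserves $\varphi(G_1)=B_1$, yields the desired continuous isomorphism $\psi:G\to B$ with $\psi(G_1)=B_1$ and $\psi(H)=H_1^{t_1^{\epsilon}}$ for some $\epsilon\in\{0,1\}$.

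The main obstacle is the equality step $\varphi(G_1)=B_1$: we are not given that $B_1$ is a profinite $\OE$-group, only that it is finitely generated, so no direct symmetric argument is available. Overcoming this requires profinite Bass-Serre rigidity for HNN-decompositions with finite edge groups, together with the $\OE$-property of $G_1$ — which rigidifies the situation by forcing its fixed-point subtree in $S(B)$ to be a single vertex and thus, a posteriori, shows that $B_1$ itself is an $\OE$-group.
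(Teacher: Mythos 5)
Your opening reduction is sound and coincides with the paper's: $G_1$ is a profinite $\OE$-group acting (through $\varphi$) on the standard profinite tree $S(B)$ with finite edge stabilizers, hence fixes a vertex, which conjugates $\varphi(G_1)$ into $B_1$. The genuine gap is the equality step $\varphi(G_1)=B_1$. Your observation that $S(B)^{G_1}$ reduces to the single vertex $v_0^B$ (for $G_1$ infinite) merely restates the containment $G_1\leq \varphi^{-1}(B_1)=\mathrm{Stab}_G(v_0^B)$; it puts no upper bound on that stabilizer, and the sentence ``combining this rigidity with the comparison of the two HNN-decompositions \dots one concludes $\varphi^{-1}(B_1)=G_1$'' is precisely the assertion to be proved, not an argument for it. There is no off-the-shelf ``profinite Bass--Serre rigidity for HNN-decompositions with finite edge groups'' to invoke --- this proposition is itself one of the ingredients used to build such rigidity results --- and the claim that triviality of $S(B)^{G_1}$ shows \emph{a posteriori} that $B_1$ is an $\OE$-group is a non sequitur: nothing yet excludes that $\varphi^{-1}(B_1)$ properly contains $G_1$ and acts on $S(G)$ without a global fixed point. (A tree-theoretic proof could be salvaged if you showed independently that $\varphi^{-1}(B_1)$ fixes a vertex of $S(G)$, since then $G_1\leq\varphi^{-1}(B_1)\leq G_1^{g}$ forces equality because a closed subgroup of a profinite group cannot properly contain a conjugate of itself; but that is exactly what is missing.)

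The paper closes this gap by an entirely different, finite-quotient argument. Choose an open normal subgroup $U$ of $B$ with $U\cap H_1=1$, set $U_1=U\cap B_1$, and let $\widetilde U$ be the closed normal closure of $U_1$ in $B$. The quotient $B/\widetilde U$ then carries two profinite HNN-decompositions with \emph{finite} base groups, namely $\hnn(B_1/U_1,H_1,f_1,t_1)$ and $\hnn(\psi(G_1)U_1/U_1,\psi(H),\psi f\psi^{-1},\psi(t))$; the natural map of the underlying abstract HNN-extensions is injective, these groups are virtually free, and hence the induced map of profinite completions is an isomorphism, so the finite-base-group case (equality of orders) gives $\psi(G_1)U_1/U_1=B_1/U_1$ and $|\psi(H)|=|H_1|$. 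Letting $U$ range over a cofinal family of such subgroups yields $\psi(G_1)=\bigcap_U \psi(G_1)U_1=B_1$ and $\psi(H)=H_1^{t_1^{\epsilon}}$. As written, your proposal is missing this (or any equivalent) mechanism for the key equality.
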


\begin{proof} Let $\psi: G\rightarrow  B$ be an isomorphism. Then $\psi(G_1)$ is conjugate into $B_1$, say $\psi(G_1)\leqslant {B_{1}}^{w}$ for some $w\in B$ (cf. \cite[Example 6.3.1]{R}). Thus replacing $\psi$ by $\tau_{w} \circ \psi$, if necessary (recall that $\tau_w$ is the inner  automorphism corresponding to $w$), we may assume that $\psi(G_1) \leqslant B_1$.  
Note that $H=G_1\cap G_1^{t^{-1}}$, so by Corollary 7.1.5 c) in \cite{R} we have $$\psi(H) = B_{1} \cap  B_{1}^{\psi(t^{-1})} \leqslant H_1^{t_1^{\epsilon}b}, \mbox{ where } b\in B_1 \mbox{ and } \epsilon \in \{0,1\}.$$ Thus replacing $\psi$ by $\tau_{b} \circ \psi$,  we get $\psi(H) \leq H_1^{t_1^{\epsilon}}.$ Suppose without loss of generality that $\psi(H) \leq H_1$. If the base groups of $G$ and $B$ are finite, applying the inverse homomorphism $\psi^{-1}$ in the previous calculations it can be shown that $\psi(G_1)=B_1$ and $\psi(H)=H_1$ because the groups will have the same orders, respectively. 

We now show that the containments $\psi(G_1) \leqslant B_1$ and $\psi(H) \leqslant H_1$ are in fact equalities even if the base subgroups are not finite. 
Since $\psi$ is an isomorphism $B=\hnn(\psi(G_{1}),  \psi(H), \ \psi \circ f\circ \psi^{-1}, \psi(t)  ).$ Let $U$ be an open normal subgroups of $B$ that intersect $H_1$  trivially. 
Set $U_1 := U \cap B_1$. Note that  $U_1$ is an open normal subgroup of $B_1$ satisfying $U_1 \cap H_1 =\{1\}=U_1 \cap \psi(H).$    
Let $\widetilde U= \overline{  U_{1}^{B}}$ the (topological) normal closure of subgroup $ U_1$ of $B.$ Consider the profinite HNN-extensions  $$\hnn( \psi(G_1)U_1/U_1,  \psi(H),  \psi \circ f\circ \psi^{-1}, \psi(t))\  {\rm and}\  \hnn( B_1/U_1,  H_1,  f_1, t_1)$$ of finite groups. We will show that such groups are isomorphic. Indeed, let $$\pi_1: B \rightarrow \hnn( B_1/U_1,  H_1,  f_1, t_1)$$ and $$\pi_2: B \rightarrow \hnn( \psi(G_1)U_1/U_1,  \psi(H),  \psi \circ f\circ \psi^{-1}, \psi(t))$$ be the natural epimorphisms. Note that in both cases the kernel is $\widetilde U$. Indeed to show that $Ker(\pi_2)=\widetilde U$ consider the following commutative diagram of epimorphisms
$$\xymatrix{B\ar[r]\ar[rd]& \hnn( \psi(G_1)U_1/U_1,  \psi(H),  \psi \circ f\circ \psi^{-1}, \psi(t))\ar[d]\\
& \hnn( B_1/U_1,  H_1,  f_1, t_1)}$$

Note that the restriction $$\hnn^{abs}( \psi(G_1)U_1/U_1,  \psi(H),  f^{\psi^{-1}}, \psi(t))\rightarrow \hnn^{abs}( B_1/U_1,  H_1, f_1, t_1)$$ of the vertical map is the natural map of abstract HNN-extensions and therefore is injective. Since these groups are virtually free, so is the vertical map of the diagram that coincides with the map of  their profinite completions. Thus the vertical map is an isomorphism. As the base groups are finite, it follows as previously discussed that $$(\psi(G_1)U_1)/U_1= B_1/U_1 \mbox{ and } H\cong H_1.$$ Since this holds for any $U$ such that $U\cap H_1=\{1\}$ we deduce that $\psi(H) = H_1$ and $\psi(G_1) =B_{1}.$ 
\end{proof}

The following result will be used later. It shows that  an HNN-extension cannot be isomorphic to an  amalgamated free product of $\OE$-groups.

\begin{prop} \label{HNN nao iso Amalg}
Let $G= \hnn(G_1, H_1, t_1, f_1)$ and $B=G_2 *_{H_2} G_3$ where $G_i \,(i=1,2,3)$ are $\OE$-groups and the $H_j \, (j=1,2)$ are finite. Then $G \not \cong B.$  
\end{prop}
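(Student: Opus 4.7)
The plan is to argue by contradiction using the Bass-Serre theory of both splittings. Let $T_1$ be the standard tree of the HNN-extension $G$ (quotient a loop, single vertex orbit with stabilizer $G_1$, edge stabilizers conjugate to the finite group $H_1$), and let $T_2$ be the standard tree of the amalgam $B$ (quotient a segment, vertex orbits with stabilizers $G_2$ and $G_3$, edge stabilizers conjugate to the finite group $H_2$).

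First I would dispose of the degenerate situation $H_2\in\{G_2,G_3\}$: then $B$ collapses to $G_2$ or $G_3$, which is an $\OE$-group. Under the hypotheses, however, $G$ itself is never $\OE$: since $H_1$ is finite, either $H_1=G_1$ (which forces $G_1$ finite, so $G\cong G_1\rtimes\mathbb{Z}$ is two-ended) or $H_1\lneq G_1$ (so the splitting of $G$ is nondegenerate and Stallings' theorem gives $G$ at least two ends). Either way $G\not\cong B$, so we are done in this case.

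Now assume the amalgam is nontrivial, that is $H_2\lneq G_2$ and $H_2\lneq G_3$, and suppose for contradiction that there is an isomorphism $\psi\colon G\to B$. The subgroup $\psi(G_1)$ is $\OE$ and acts on $T_2$ with finite edge stabilizers, so it fixes a vertex of $T_2$ and therefore lies in some conjugate of $G_2$ or of $G_3$. Composing $\psi$ with a suitable inner automorphism of $B$ (and possibly swapping the roles of $G_2$ and $G_3$), I may assume $\psi(G_1)\leq G_2$. Symmetrically, $\psi^{-1}(G_3)$ is $\OE$ and acts on $T_1$ with finite edge stabilizers, hence fixes a vertex; so $\psi^{-1}(G_3)\leq G_1^{c}$ for some $c\in G$, and applying $\psi$ together with the containment $\psi(G_1)\leq G_2$ yields $G_3\leq G_2^{\psi(c)}$ inside $B$.

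The main obstacle, and the step that uses the geometry of $T_2$ most essentially, is turning this last inclusion into a contradiction with the nontrivial amalgam structure. Setting $b=\psi(c)$, the subgroup $G_3$ fixes both its own base vertex $v_3$ and the translate $b\cdot v_2$ in $T_2$. Because $v_2$ and $v_3$ represent the two distinct vertices of the quotient segment $B\backslash T_2$, these two vertices lie in different $B$-orbits, so they are distinct; consequently $G_3$ fixes the nondegenerate geodesic $[v_3,b\cdot v_2]$ and is therefore contained in a conjugate of the finite edge stabilizer $H_2$. If $G_3$ is one-ended this immediately contradicts $|H_2|<\infty$; if $G_3$ is finite, the sandwich $H_2\leq G_3\leq H_2^{d}$ (for some $d\in B$) forces $|G_3|=|H_2|$ and hence $G_3=H_2$, contradicting the nontriviality of the amalgam.
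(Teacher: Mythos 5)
Your argument is correct, but it takes a genuinely different and longer route than the paper's, whose entire proof is one observation: taking the isomorphism in the direction $\psi\colon B\to G$, each factor $G_i$ $(i=2,3)$ is an $\OE$-group, hence fixes a vertex of the Bass--Serre tree of the HNN-extension and so lands in a conjugate of $G_1$; therefore $\psi(B)=\langle\psi(G_2),\psi(G_3)\rangle$ lies in the normal closure $G_1^G$, which is a proper subgroup because $G/G_1^G\cong\Z$ is generated by the image of the stable letter, so $\psi$ cannot be surjective. This disposes of all cases at once, including the degenerate one $H_2\in\{G_2,G_3\}$ that you treat separately via an ends count. Your route --- pushing $G_1$ into a factor of $B$, pushing $G_3$ back into a conjugate of $G_1$, deducing $G_3\leq G_2^{b}$, and then using the two vertex orbits of the amalgam tree to squeeze $G_3$ into a conjugate of the finite edge group $H_2$ --- is sound at every step (the fixed-point claims are exactly the $\OE$ property, and a subgroup fixing two vertices in distinct orbits fixes an edge on the geodesic between them), but it costs you a case analysis plus an auxiliary proof of the standard fact that a factor of a nontrivial amalgam cannot be conjugated into the other factor. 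What your approach buys is that it never invokes the infinite cyclic quotient specific to HNN-extensions; what the paper's buys is brevity and uniformity.
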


\begin{proof} Suppose there exists an isomorphism $\psi: B \longrightarrow G.$ Since $G_i \, (i=2,3)$ are $\OE$-groups, we have that $\psi(G_i) \subset G_1^G \,(i=2,3),$ therefore $\psi$ is not surjective, an absurd. 
\end{proof} 


The result below shows that an HNN-extension with conjugate associated finite subgroups  in the base group cannot be isomorphic to an HNN-extension with non-conjugate associated finite  subgroups in its base group. 

\begin{prop} \label{normal non iso nao normal}
 Let $$G= \hnn(G_1, H, K, f, t)\  {\rm and}\  B=\hnn(B_1, H_1, K_1,f_1, t_1)$$ be HNN-extensions of an $\OE$-group $G_1$ with $H, K$ and $H_1, K_1$ finite subgroups of $G_1$ and $B_1$, respectively. Suppose $B$ is finitely generated residually finite. If $H$, $K$ are  conjugate in $G_1$ and $H_1$, $K_1$ are non-conjugated in $B_1$, then $G$ and $B$ are non-isomorphic. 
\end{prop}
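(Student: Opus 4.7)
I will proceed by contradiction, assuming $\psi:G\to B$ is an isomorphism. Since $G_1$ is an $\OE$-group, $\psi(G_1)$ fixes a vertex of the standard tree $S(B)$ and is therefore contained in a $B$-conjugate of $B_1$. After composing $\psi$ with an inner automorphism of $B$ I may assume $\psi(G_1)\subset B_1$, and then the argument of Proposition~\ref{normal isomorfismo} (passing to finite quotients $B/U$ by normal subgroups of finite index trivially intersecting $H_1$, available via the residual finiteness of $B$) upgrades this containment to the equality $\psi(G_1)=B_1$.

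From $H=G_1\cap G_1^{t}$ in $G$ I obtain $\psi(H)=B_1\cap B_1^{\psi(t)}$; by Bass--Serre theory this is the pointwise stabilizer of the path from $B_1$ to $\psi(t)^{-1}B_1$ in $S(B)$, and a symmetric cardinality comparison using $\psi^{-1}$ gives $|\psi(H)|=|H_1|$, so the path has length one and $\psi(H)=H_1^{b}$ for some $b\in B$ with $H_1^{b}\leqslant B_1$. This is where the hypothesis enters: the $B$-conjugates of $H_1$ lying inside $B_1$ are exactly the edge-stabilizers in $S(B)$ incident to the vertex $B_1$, and these split into two $B_1$-orbits, the $B_1$-conjugates of $H_1$ (from incoming edges) and the $B_1$-conjugates of $K_1$ (from outgoing edges), precisely because $H_1$ and $K_1$ are not conjugate in $B_1$. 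Conjugating $\psi$ further by an element of $B_1$ (which preserves $\psi(G_1)=B_1$) and swapping the roles of $H_1$ and $K_1$ if necessary, I may assume $\psi(H)=H_1$.

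The decisive step is a Britton normal form computation. Write
\[
\psi(t)=b_0 t_1^{\epsilon_1}b_1\cdots t_1^{\epsilon_n}b_n
\]
in normal form in $B$, necessarily with $n\geqslant 1$. The condition $\psi(K)=\psi(t)H_1\psi(t)^{-1}\subset B_1$ forces successive Britton cancellations of the $t_1^{\pm 1}$ factors from the inside outward; at each level the only alternative sign that would conjugate $H_1$ (or its successor $K_1$) outside its own $B_1$-conjugacy class would require $H_1$ and $K_1$ to be $B_1$-conjugate, which is excluded by hypothesis. This forces $\epsilon_n=+1$ and more generally $\epsilon_i=(-1)^{n-i}$, whence $\epsilon_1=(-1)^{n-1}$. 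A symmetric analysis of $\psi(t)^{-1}\psi(K)\psi(t)=H_1\subset B_1$, using that $\psi(K)$ is $B_1$-conjugate to $\psi(H)=H_1$ (because $H$ and $K$ are conjugate in $G_1$), forces $\epsilon_1=-1$. Together these conditions imply $n$ is even and $\sum_i\epsilon_i=0$. But $\psi$ induces an isomorphism of abelian quotients $G/\langle\!\langle G_1\rangle\!\rangle\cong B/\langle\!\langle B_1\rangle\!\rangle$, both identified with $\Z$, sending $\psi(t)\mapsto\pm 1$, so $\sum_i\epsilon_i=\pm 1$, contradicting $\sum_i\epsilon_i=0$. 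The main obstacle is the clean reduction to $\psi(G_1)=B_1$ together with $\psi(H)\in\{H_1,K_1\}$, since $B_1$ is not assumed to be an $\OE$-group and the equality $\psi(G_1)=B_1$ must be obtained through the residual-finiteness trick rather than from a direct $\OE$-hypothesis on $B_1$; once this is achieved, the alternating-sign Britton calculation is routine, though it requires careful bookkeeping of the normal-form conditions (i.e., that each $b_i$ lies in the appropriate normalizer minus the associated subgroup) at every step.
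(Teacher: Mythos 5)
Your proof is correct, and at the decisive step it takes a genuinely different route from the paper's. The paper, after the same normalization $\psi(G_1)=B_1$, $\psi(H)\in\{H_1,K_1\}$ (obtained exactly as you do, via Proposition \ref{normal isomorfismo} and the residual-finiteness trick), invokes the structure theorem for normalizers (Proposition \ref{cp1}): conjugacy of $H$ and $K$ in $G_1$ makes $N_G(H)$ an HNN-extension $\hnn(N_{G_1}(H),H,f')$, hence not contained in the normal closure $G_1^G$, whereas non-conjugacy of $H_1$ and $K_1$ in $B_1$ makes $N_B(H_1)$ an amalgam $N_{B_1}(H_1)*_{H_1}N_{B_1^{t_1^{-1}}}(H_1)$, which does lie in $B_1^B$; since $\psi$ carries $G_1^G$ onto $B_1^B$ and $N_G(H)$ onto $N_B(\psi(H))$, this is a contradiction. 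Your Britton normal-form computation is a hands-on combinatorial substitute for exactly this fact: the forced alternation of the signs $\epsilon_i$ (where the ``wrong'' sign at each pinch would produce a $B_1$-conjugacy between $H_1$ and $K_1$), combined with the two boundary conditions coming from $\psi(t)^{\pm1}$ conjugating $H_1$ into the $B_1$-conjugacy class of $H_1$, shows that the element $\psi(t)c^{-1}\in N_B(H_1)$ has exponent sum zero in $t_1$ --- which is precisely the statement $N_B(H_1)\leq B_1^B=\ker(B\to\Z)$ --- while the abelianization $G/\langle\!\langle G_1\rangle\!\rangle\cong B/\langle\!\langle B_1\rangle\!\rangle\cong\Z$ forces exponent sum $\pm1$. (The particular signs you record differ from what the paper's convention $h^t=f(h)$ would give, but only the alternation and the parity conclusion matter.) What each approach buys: yours is self-contained and avoids the Bass--Serre computation of $N_B(H_1)$ entirely, at the price of the normal-form bookkeeping you acknowledge; the paper's is shorter here because Proposition \ref{cp1} is needed elsewhere anyway, and it sidesteps Britton's lemma in favour of the action on the fixed-point subtree $S(B)^{H_1}$.
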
 

\begin{proof}
 Suppose on the contrary that there is an isomorphism $\psi: G \longrightarrow B.$ By Proposition \ref{normal isomorfismo} we have $\psi(G_1)=B_1$ and $\psi(H)=H_1 \mbox{ or } K_1.$  Let us assume without loss of generality that $\psi(H)=H_1.$ Then $$N_G(H) \cong \psi\left(N_{G}(H)\right)=N_{B}(H_1).$$ By Proposition \ref{cp1} \begin{equation} \label{cara dos normalizers} N_{G}(H)=\hnn(N_{G_1}(H), H, f') \cong N_{B}(H_1)=N_{B_1}(H_1)*_{H_1
 } N_{{B_1}^{t_{1}^{-1}}}(H_1).\end{equation}However, this is impossible since $N_G(H)\not \leq G_1^G$ and $N_B(H_1)\leq B_1^B$. 
\end{proof}


Using the same arguments as the previous proof and the Propositions \ref{cp1} 
we have the following profinite version: 

\begin{prop} \label{normal non iso nao normal profinite}
Let $G_1, B_1$ be profinite $\OE$-groups, $H, K$ and $H_1, K_1$ finite  subgroups of $G_1$ and $B_1$ respectively. Let $G= \hnn(G_1, H, K,  f,  t)$ and $B=\hnn(B_1, H_1, K_1, f_1, t_1)$ be profinite HNN-extensions such that in the first the associated subgroups are conjugate in $G_1$ and in the second they are non-conjugate in $B_1.$ Then $G$ and $B$ are non-isomorphic.  
\end{prop}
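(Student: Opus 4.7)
The plan is to mimic the proof of the abstract version (Proposition \ref{normal non iso nao normal}), replacing the tools by their profinite analogues. Suppose for contradiction that $\psi\colon G\to B$ is a continuous isomorphism. By Proposition \ref{profinito normal iso} (applied after composing $\psi$ with a suitable inner automorphism of $B$), we may assume $\psi(G_1)=B_1$ and $\psi(H)=H_1^{t_1^{\epsilon}}$ for some $\epsilon\in\{0,1\}$; say $\psi(H)=H_1$ without loss of generality, the other case being symmetric. In particular, $\psi$ restricts to an isomorphism of profinite groups $N_G(H)\to N_B(H_1)$.

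Next I invoke Proposition \ref{normalizer} for both normalizers. Because $H$ and $K$ are conjugate in $G_1$, say $H^{g_1}=K$, we obtain
$$N_G(H)=\hnn(N_{G_1}(H),\, H,\, \tau_{g_1}f),$$
a proper profinite HNN-extension whose stable letter can be taken to be $tg_1^{-1}$. On the other hand, because $H_1$ and $K_1$ are not conjugate in $B_1$, we obtain
$$N_B(H_1)=N_{B_1}(H_1)\amalg_{H_1} N_{B_1^{t_1^{-1}}}(H_1),$$
a proper profinite amalgamated free product. The idea is now to derive a contradiction from the structural incompatibility of these two decompositions.

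To do this I pass to a suitable abelian quotient. Since $\psi(G_1)=B_1$ and $\psi$ is a continuous isomorphism, $\psi$ sends $\overline{G_1^{G}}$ onto $\overline{B_1^{B}}$ and hence induces an isomorphism
$$G/\overline{G_1^{G}}\;\longrightarrow\; B/\overline{B_1^{B}},$$
both of which are procyclic groups $\widehat\Z$ topologically generated by the images of $t$ and $t_1$ respectively. The HNN decomposition of $N_G(H)$ shows that $tg_1^{-1}\in N_G(H)$, and its image in $G/\overline{G_1^{G}}$ coincides with that of $t$, a topological generator of $\widehat\Z$. On the other hand, $N_B(H_1)$ is topologically generated by the subgroups $N_{B_1}(H_1)\subseteq B_1$ and $N_{B_1^{t_1^{-1}}}(H_1)\subseteq B_1^{t_1^{-1}}$, both of which lie in $\overline{B_1^{B}}$; hence $N_B(H_1)\subseteq \overline{B_1^{B}}$ and its image in $B/\overline{B_1^{B}}$ is trivial. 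This contradicts the fact that $\psi(tg_1^{-1})\in\psi(N_G(H))=N_B(H_1)$ must have nontrivial image in $B/\overline{B_1^{B}}$, and finishes the proof.

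The only delicate point is verifying that the closed normal closure $\overline{G_1^{G}}$ behaves correctly under $\psi$ and that the stable letter of the HNN splitting of $N_G(H)$ really survives in the procyclic quotient; both follow at once from the fact that $G/\overline{G_1^{G}}\cong\widehat\Z$ for any proper profinite HNN-extension with base $G_1$, together with the identification $\psi(\overline{G_1^G})=\overline{\psi(G_1)^{\psi(G)}}=\overline{B_1^{B}}$. Once this is noted, the argument is just the profinite transcription of the abstract proof.
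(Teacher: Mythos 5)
Your proposal is correct and follows essentially the same route as the paper: normalize $\psi$ via Proposition \ref{profinito normal iso} so that $\psi(G_1)=B_1$ and $\psi(H)=H_1$, apply Proposition \ref{normalizer} to identify the two normalizers, and derive the contradiction from $N_G(H)\not\leq\overline{G_1^G}$ versus $N_B(H_1)\leq\overline{B_1^B}$. Your passage to the $\widehat\Z$-quotient merely makes explicit the one-line final step of the paper's argument.
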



The following results will be important  for Sections \ref{iso abstract HNN} and \ref{secao 4}. We shall prove the profinite version of the  statement only, as the abstract version follows mutadis mutandis. 

\begin{prop}\label{fixed hnn abstract} Let $$G = \hnn(G_{1}, H,K, f, t)\  {\rm and}\  B = \hnn(B_{1}, H_1, K_1, f_1, t_1)$$ be abstract HNN-extensions, where $H, K$ and $H_1, K_1$ are finite associated subgroups of $G_1$ and $B_1,$ respectively.  Suppose that $G_1$ is an $\OE$-group and $B$ is residually finite. If $G\cong B$ then there is an isomorphism $\psi: G \longrightarrow B$ such that $\psi(G_1)= B_1$, $\psi(H)=H_1$ and $\psi(K)= K_{1}^{b_1}$, where $ b_1\in B_1$. Furthermore, if $f_1=\tau_{b_1}f^{\psi^{-1}}\in \Iso(H_1,K_1)$ then the converse also holds.
\end{prop}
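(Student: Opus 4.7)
The plan is to adapt the strategy of the proof of Proposition \ref{profinito normal iso} to the general (non-normal) abstract HNN setting, treating the two associated subgroups $H$ and $K$ in parallel. Given an abstract isomorphism $\chi: G \to B$, I would first modify $\chi$ so that $\chi(G_1) = B_1$. Using the $\OE$-property of $G_1$, the induced action of $G_1$ on the Bass--Serre tree $S(B)$ has a global fixed point, so $\chi(G_1) \subseteq B_1^w$ for some $w \in B$; replacing $\chi$ by $\tau_w \circ \chi$, we may assume $\chi(G_1) \subseteq B_1$. Applying the same argument to $\chi^{-1}$ and using that when $G_1$ is infinite, any containment $G_1 \leq G_1^w$ in $G$ forces $w \in N_G(G_1) = G_1$ (since $G_1$ would otherwise have to fix a nontrivial path in $S(G)$, each of whose edges has finite stabilizer), we conclude $\chi(G_1) = B_1$.

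Next, I would locate $\chi(H)$. Using the Britton identity $H = G_1 \cap G_1^{t^{-1}}$, apply $\chi$ to obtain $\chi(H) = B_1 \cap B_1^{\chi(t)^{-1}}$. Since $\chi(t) \notin B_1$, the abstract analog of \cite[Corollary 7.1.5 c)]{R} places this intersection inside an edge stabilizer of $S(B)$; as $\chi(H) \leq B_1$, this gives $\chi(H) \leq (H_1^{t_1^{\varepsilon}})^{b_0}$ for some $b_0 \in B_1$ and $\varepsilon \in \{0,1\}$. Conjugating $\chi$ by $b_0$ preserves $\chi(G_1) = B_1$, and a comparison of orders yields $\chi(H) = H_1^{t_1^{\varepsilon}}$. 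In the case $\varepsilon = 1$, the equivalent presentation $B = \hnn(B_1, K_1, H_1, f_1^{-1}, t_1^{-1})$ allows a relabeling so that we may always arrange $\chi(H) = H_1$. The same analysis applied to $K = G_1 \cap G_1^{t}$ (together with $\chi(H) = H_1$) then gives $\chi(K) = K_1^{b_1}$ for some $b_1 \in B_1$.

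For the converse, suppose $\psi: G_1 \to B_1$ is an isomorphism with $\psi(H) = H_1$, $\psi(K) = K_1^{b_1}$, and $f_1 = \tau_{b_1} \circ \psi \circ f \circ \psi^{-1}$. Extend $\psi$ to $\Psi: G \to B$ by setting $\Psi(t) = b_1^{-1} t_1$. The universal property of the HNN-extension reduces the verification that $\Psi$ is a well-defined homomorphism to the identity $\Psi(tht^{-1}) = \Psi(f(h))$ for $h \in H$, which follows from a direct computation using the defining relation of $f_1$. Constructing an inverse homomorphism symmetrically establishes that $\Psi$ is an isomorphism.

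The main obstacle is the subtle argument in the second step for handling the case $\varepsilon = 1$, where the labeling of the associated subgroups in the target $B$ must be reconciled with the fixed labeling in the source $G$ via the alternative HNN presentation. A secondary difficulty is the uniqueness step $\chi(G_1) = B_1$ in the abstract setting: unlike the profinite case of Proposition \ref{profinito normal iso}, where one exploits a finite-quotient argument, the abstract case requires treating the finite-$G_1$ situation separately, reducing to the virtually free setting of \cite{BZ}.
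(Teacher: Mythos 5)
Your proposal is correct and follows essentially the same route as the paper: the paper proves the profinite analogue (Propositions \ref{profinito normal iso} and \ref{2a}) by exactly this sequence of steps — conjugate $\psi$ so that $\psi(G_1)=B_1$ using the $\OE$-property and the standard tree, identify $\psi(H)=B_1\cap B_1^{\psi(t)^{-1}}$ and $\psi(K)=B_1\cap B_1^{\psi(t)}$ with edge stabilizers adjacent to the vertex $B_1$, and build the converse isomorphism by sending $t$ to a $B_1$-translate of $t_1$ — and declares the abstract version mutatis mutandis. The one caveat, which the paper shares, is that your reduction of the case $\psi(H)=H_1^{t_1}=K_1$ to $\psi(H)=H_1$ is really a relabelling of the presentation of $B$ rather than a genuine rearrangement, so the conclusion should be read as ``$\psi(H)=H_1$, $\psi(K)=K_1^{b_1}$ or $\psi(H)=K_1$, $\psi(K)=H_1^{b_1}$'', which is exactly how the paper itself invokes the proposition later (see the proof of Proposition \ref{nonnormalhnnabs}).
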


\begin{prop}\label{2a} Let $$G = \hnn(G_{1}, H,K, f, t)\  {\rm and}\  B = \hnn(B_{1}, H_1, K_1, f_1,\\ t_1)$$ be profinite HNN-extensions, where $H, K$ and $H_1, K_1$ are finite associated subgroups.  Suppose that $G_1$ be a profinite  $\OE$-group.  If $G\cong B$ then there is a continuous isomorphism $\psi: G \longrightarrow B$ such that $\psi(G_1)= B_1$, $\psi(H)=H_1$ and $\psi(K)= K_{1}^{b_1}$, where $ b_1\in B_1$. Furthermore, if $f_1=\tau_{b_1}f^{\psi^{-1}}\in \Iso(H_1,K_1)$ then the converse of also holds.
\end{prop}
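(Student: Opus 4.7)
The plan is to mirror the proof of Proposition \ref{profinito normal iso}, adapted to keep track of the two (possibly distinct) associated subgroups $H$ and $K$.

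First, given a continuous isomorphism $\phi: G \to B$, use that $G_1$ is a profinite $\OE$-group: $\phi(G_1)$ must fix a vertex in the standard profinite tree $S(B)$, so after replacing $\phi$ by $\tau_{w}\circ \phi$ for a suitable $w \in B$ we may assume $\phi(G_1) \leq B_1$. Applying Corollary 7.1.5 c) in \cite{R} to the identification $H = G_1 \cap G_1^{t^{-1}}$, we see that $\phi(H) \leq B_1 \cap B_1^{\phi(t)^{-1}}$ is contained in an edge stabilizer of $S(B)$ incident to the vertex $v_0$ corresponding to $B_1$, hence in a $B_1$-conjugate of $H_1$ or of $K_1$. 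Rewriting $B$ via the equivalent presentation $\hnn(B_1, K_1, H_1, f_1^{-1}, t_1^{-1})$ if necessary (which swaps the labels of $H_1, K_1$) and conjugating by an element of $B_1$, we may assume $\phi(H) \leq H_1$.

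Second, the finite-quotient argument of Proposition \ref{profinito normal iso} promotes these containments to equalities. For each open normal subgroup $U$ of $B$ with $U \cap H_1 = U \cap K_1 = 1$ (which exist since $H_1,K_1$ are finite), set $U_1 = U \cap B_1$ and $\widetilde U = \overline{U_1^B}$. The quotient $B/\widetilde U$ admits two descriptions as a profinite HNN-extension of a finite group, one via $(B_1/U_1, H_1, K_1, f_1, t_1)$ and one via $(\phi(G_1)\widetilde U/\widetilde U, \phi(H), \phi(K), \phi f \phi^{-1}, \phi(t))$; these are HNN-extensions of virtually free profinite groups, and the restriction to abstract HNN-extensions is injective by the virtual-freeness, forcing the two finite base groups to coincide. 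Intersecting over all such $U$ yields $\phi(G_1) = B_1$ and $\phi(H) = H_1$.

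Third, to identify $\phi(K)$: with the convention $K = t^{-1} H t$ (so that $K_1 = t_1^{-1} H_1 t_1$), we have $\phi(K) = \phi(t)^{-1} H_1 \phi(t) \leq B_1$, so the edge $\phi(t)^{-1} e_0$ of $S(B)$ is incident to $v_0$. Such edges are of the two forms $b e_0$ (with $b \in B_1$, stabilizer $b H_1 b^{-1}$) or $b t_1^{-1} e_0$ (with $b \in B_1$, stabilizer $b K_1 b^{-1}$); the first would force $\phi(t) \in B_1 = \phi(G_1)$, contradicting $t \notin G_1$. Hence $\phi(t)^{-1} e_0 = b t_1^{-1} e_0$ for some $b \in B_1$, which gives $\phi(t) = h t_1 b_1$ with $h \in H_1$ and $b_1 \in B_1$, so that $\phi(K) = b_1^{-1} t_1^{-1} H_1 t_1 b_1 = K_1^{b_1}$ as desired. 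The converse is a direct verification: transporting the defining relation $f(h) = t^{-1} h t$ along $\psi$ gives $f_1 = \tau_{b_1} f^{\psi^{-1}}$ by the uniqueness of the HNN-presentation of $B$.

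The main obstacle is the profinite Bass-Serre edge analysis in the third step, together with the bookkeeping in the first step needed to arrive at the specific pairing $\psi(H) = H_1$, $\psi(K) = K_1^{b_1}$ rather than the swapped one; the finite-quotient trick is by now routine once the virtually free HNN-quotient machinery of Proposition \ref{profinito normal iso} is in hand.
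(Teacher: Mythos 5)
Your first two steps reproduce the paper's route: the reduction to $\psi(G_1)=B_1$, $\psi(H)=H_1$ (with the relabelling of $(H_1,K_1,f_1,t_1)$ as $(K_1,H_1,f_1^{-1},t_1^{-1})$ if needed, and the finite-quotient argument with $U_1=U\cap B_1$) is exactly Proposition \ref{profinito normal iso}, which the paper simply invokes. The problem is your third step. From $\phi(K)=\phi(t)^{-1}H_1\phi(t)\leq B_1$ you infer that the edge $\phi(t)^{-1}e_0$ is incident to $v_0$; this does not follow. Knowing that $\phi(K)$ stabilizes both the vertex $v_0$ and the edge $\phi(t)^{-1}e_0$ only tells you that it stabilizes the geodesic between them, hence \emph{some} edge incident to $v_0$ --- not that $\phi(t)^{-1}e_0$ is itself such an edge. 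The stronger conclusion you extract, $\phi(t)=ht_1b_1\in H_1t_1B_1$, is false in general: take $G=B=\hnn(C_2,C_2,C_2,\mathrm{id},t)\cong C_2\times\widehat{\Z}$ and let $\phi$ fix $C_2$ and send $t\mapsto t^{u}$ for a unit $u\in\widehat{\Z}^{*}$ with $u\neq\pm 1$ (e.g. $u=5$); then $\phi(G_1)=B_1$, $\phi(H)=H_1$ and $\phi(K)=K_1$, but $\phi(t)\notin H_1t_1B_1$ and the edge $\phi(t)^{-1}e_0$ is not incident to $v_0$. So your argument derives the (true) conclusion from a false intermediate claim.

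The paper avoids pinning down $\phi(t)$ altogether and argues in two cases. If $H$ and $K$ are conjugate in $G_1$, say $H^{g_1}=K$, then Proposition \ref{normal non iso nao normal profinite} forces $H_1$ and $K_1$ to be conjugate in $B_1$, say $K_1^{s}=H_1$, and then $\psi(K)=H_1^{\psi(g_1)}=K_1^{s\psi(g_1)}$ directly. If they are not conjugate, it uses $\psi(K)=\psi(G_1\cap G_1^{t})=B_1\cap B_1^{\psi(t)}$, which by \cite[Corollary 7.1.5 c)]{R} lies in a $B_1$-conjugate of $H_1$ or of $K_1$; since all four associated subgroups are finite of the same order this containment is an equality, and the $H_1$-option is excluded because it would make $H$ and $K$ conjugate in $G_1$. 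If you want to keep your tree-theoretic language, replace the incidence claim by: $\phi(K)$ fixes $v_0$ and the edge $\phi(t)^{-1}e_0$, hence fixes the first edge of the geodesic from $v_0$ towards that edge, whose stabilizer is $bH_1b^{-1}$ or $bK_1b^{-1}$ with $b\in B_1$; then finish with the order count and the non-conjugacy argument. Finally, your one-line treatment of the converse runs in the wrong direction (you derive the hypothesis $f_1=\tau_{b_1}f^{\psi^{-1}}$ rather than use it); the paper exhibits the isomorphism explicitly by $g_1\mapsto\psi(g_1)$, $t\mapsto t_1b_1$ and checks that the defining relation is respected.
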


\begin{proof} Let $\psi: G\rightarrow  B$ be a  continuous isomorphism. By Proposition \ref{profinito normal iso} we can assume that $\psi(H) = H_1$ and $\psi(G_1) =B_{1}.$ 

\textbf{Case 1)}.  $H$ and $K$ are conjugate subgroups of $G_1$, i.e.,  $H^{g_1}=K$ with $g_1\in G_1$. Then by  Proposition \ref{normal non iso nao normal profinite}  $H_1$ and $K_1$ are conjugate subgroups of $B_1$, i.e.,  $K_1^{s}=H_1$ with $s\in B_1$. Therefore $\psi(K)=\psi(H^{g_1})=H_1^{\psi(g_1)}=K_1^{s\psi(g_1)}$. Put $b_1=s\psi(g_1) \in B_1$. Then $\psi(K)=K_1^{b_1}$ as needed.

\textbf{Caso 2)}.  $H$ and $K$ are not conjugate subgroups of $G_1$. Then by Proposition \ref{normal non iso nao normal profinite} $H_1$ and $K_1$ are not conjugates in $B_1$. It follows that
 $\psi(K) =\psi(G_1\cap G_1^{t})= B_1 \cap  B_1^{\psi(t)} =K_1^{b_1}$ for some $b_1\in B_1$  (we cannot have $\psi(K) = H_{1}^{ b_1}$ because  $H$ and $K$ are not conjugate in $G_1$).

\bigskip
Conversly, if $f_1=\tau_{b_1}f^{\psi^{-1}}$ we define a map $\varphi$  from $G$ to $B$ as follows $$\varphi: \left\{
\begin{array}{ccl}
g_1 & \longmapsto  & \psi(g_1), \forall  g_1 \in  G_1,\\
t & \longmapsto  & t_1 b_1\\
\end{array}
\right.$$ and observe that $\varphi$ is a continuous isomorphism.
\end{proof}

The result below is valid for abstract and profinite HNN-extensions.  

\begin{lem} \label{conjugados da normal} Let $G=\hnn(G_1, H, K, f, t)$ be an  abstract (profinite) HNN-extension such that $H^{g_1}=K$ for some $g_1 \in G_1.$ Then $$G \cong B=\hnn(G_1, H, H, f_1, t_1),$$  where $f_1=\left( \tau_{g_1} \circ f \right)^{\pm 1} \in \aut(H).$
\end{lem}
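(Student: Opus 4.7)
The plan is to prove the isomorphism by a change of stable letter, invoking the universal property of (profinite) HNN-extensions, which applies in both categories since finite associated subgroups guarantee the HNN-extension is proper (cf.\ Proposition 9.4.3 of \cite{RZ}).

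First I would set $t_1 := tg_1^{-1} \in G$; clearly $G = \langle G_1, t_1\rangle$ (topologically in the profinite case). For $h \in H$ a direct computation gives
\[
t_1^{-1} h t_1 \;=\; g_1 t^{-1} h t g_1^{-1} \;=\; g_1 f(h) g_1^{-1} \;=\; (\tau_{g_1}\circ f)(h).
\]
Since $H^{g_1}=K$ yields $g_1 K g_1^{-1} = H$, the map $f_1 := \tau_{g_1}\circ f$ carries $H$ to $H$ and is an automorphism, so the defining relations of the normal HNN-extension $B=\hnn(G_1,H,f_1,t_1)$ are satisfied in $G$.

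By the universal property of $B$, there is then a (continuous) homomorphism $\varphi\colon B \to G$ that restricts to the identity on $G_1$ and sends $t_1 \mapsto tg_1^{-1}$. Conversely, setting $s := t_1 g_1 \in B$ and computing
\[
s^{-1} h s \;=\; g_1^{-1} t_1^{-1} h t_1 g_1 \;=\; g_1^{-1} f_1(h) g_1 \;=\; f(h),
\]
the universal property of $G = \hnn(G_1, H, K, f, t)$ produces a (continuous) homomorphism $\psi\colon G \to B$ that restricts to the identity on $G_1$ and sends $t \mapsto t_1 g_1$. Then $\varphi\circ\psi$ fixes $G_1$ and sends $t \mapsto tg_1^{-1}g_1 = t$, while $\psi\circ\varphi$ fixes $G_1$ and sends $t_1 \mapsto t_1 g_1 g_1^{-1} = t_1$, so both compositions are the identity on generators and therefore $\varphi$ and $\psi$ are mutually inverse isomorphisms.

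For the $-1$ sign, I would repeat the same argument with the substitution $t_1' := g_1 t^{-1}$; a parallel computation gives $(t_1')^{-1} h t_1' = (f^{-1}\circ\tau_{g_1^{-1}})(h) = (\tau_{g_1}\circ f)^{-1}(h)$, delivering the isomorphism with $f_1 = (\tau_{g_1}\circ f)^{-1}$. I do not anticipate any real obstacle: the whole statement is a book-keeping lemma about a change of stable letter, and the only point to watch is that the universal property is invoked in the correct category (abstract or profinite); both are legitimate under the standing hypotheses of the paper.
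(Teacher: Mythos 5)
Your proof is correct and follows essentially the same route as the paper: both arguments perform a change of stable letter $t_1 = tg_1^{-1}$ and verify the defining relations, the paper writing down the map $t\mapsto t_1g_1$ directly and you packaging the same computation via the universal property in both directions. The only cosmetic difference is that you treat the exponent $-1$ in one step with $t_1'=g_1t^{-1}$, whereas the paper obtains it by composing with the stable-letter inversion $t_1\mapsto t_1^{-1}$.
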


\begin{proof} First suppose that $f_1=\tau_{g_1} \circ f.$ Define the following isomorphism of $G$ to $\hnn(G_1, H, H, \tau_{g_1} \circ f, t_1),$  
 $$\psi: \left\{
\begin{array}{ccl}
g_1 & \longmapsto  & g_1, \forall  g_1 \in  G_1,\\
t & \longmapsto  & t_1 g_1.\\
\end{array}
\right.$$ Looking at presentation one sees indeed that this defines uniquely an isomorphism. To finish the proof, define the  isomorphism  $$\hnn(G_1, H, H, \tau_{g_1} \circ f, t_1)\longrightarrow \hnn(G_1, H, H, (\tau_{g_1} \circ f)^{-1}, t_{1}^{})$$   setting
 $$\varphi: \left\{
\begin{array}{ccl}
g_1 & \longmapsto  & g_1, \forall  g_1 \in  G_1,\\
t_1 & \longmapsto  & t_1^{-1}.\\
\end{array}
\right.$$ 
\end{proof}

\begin{prop}\label{nonnormalhnnabs} 
Let  $$G= \hnn(G_1, H, K, f, t)\  {\rm and}\  B=\hnn(G_1, H, K, f_1, t_1)$$ be HNN-extensions  with  associated finite subgroups in an $\OE$-group $G_1$. Suppose that $K=H$ or $H$ and $K$ are not conjugate in $G_1$. Then $G\cong B$ if and only if \begin{enumerate}

\item [(i)] there exist $\psi\in \aut(G_1)$ with $\psi(H)=K $ and $ \psi(K)=H^{g_1}$ for some $g_1 \in G_1$ such that $f_1^{\psi^{-1}}(k)= \tau_{g_1^{-1}} f^{- 1}(k), \forall k \in K,$ 
or 
\item [(ii)] there exist $\psi\in \aut(G_1)$ with $\psi(H)=H $ and $ \psi(K)=K^{g_1}$ for some $g_1 \in G_1$ such that $f_1^{\psi^{-1}}(h)= \tau_{g_1^{-1}} f(h), \forall h \in H.$ 
\end{enumerate}

\end{prop}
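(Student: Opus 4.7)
The plan is to deduce both directions of the equivalence from the previously established Proposition~\ref{fixed hnn abstract}, which characterizes isomorphisms of abstract HNN-extensions (with $\OE$ base group) via the existence of an automorphism $\psi$ of the base group that preserves the first associated subgroup, sends the second to a conjugate, and satisfies the compatibility $f_1=\tau_{b_1}f^{\psi^{-1}}$. The two cases (i) and (ii) will correspond, respectively, to automorphisms of $G_1$ that swap the pair $\{H,K\}$ or preserve it; the standing hypothesis ($H=K$ or $H,K$ non-conjugate in $G_1$) is what forces this clean dichotomy, since it rules out any intermediate situation in which $\psi(H)$ is a non-trivial conjugate of $K$ that is simultaneously conjugate to $H$.

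For the backward direction I will first treat case (ii). Rewriting $f_1^{\psi^{-1}}(h)=\tau_{g_1^{-1}}f(h)$ as $f_1=\tau_{\psi^{-1}(g_1^{-1})}\,\psi^{-1}f\,\psi$ and setting $\tilde\psi:=\psi^{-1}$, $\tilde b:=\psi^{-1}(g_1^{-1})$, one checks directly that $\tilde\psi(H)=H$, $\tilde\psi(K)=K^{\tilde b}$, and $f_1=\tau_{\tilde b}\,f^{\tilde\psi^{-1}}$, so the converse part of Proposition~\ref{fixed hnn abstract} applied to $G$ and $B$ produces the isomorphism $G\cong B$. For case (i) I will exploit the self-description $G\cong G':=\hnn(G_1,K,H,f^{-1},t^{-1})$ obtained by replacing the stable letter by its inverse, which interchanges the roles of the first and second associated subgroups. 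With the same substitutions $\tilde\psi,\tilde b$, condition (i) yields $\tilde\psi(K)=H$, $\tilde\psi(H)=K^{\tilde b}$, and, after taking functional inverses in the given identity, the compatibility $f_1=\tau_{\tilde b}\,\tilde\psi f^{-1}\tilde\psi^{-1}$; Proposition~\ref{fixed hnn abstract} applied to $G'$ and $B$ then delivers $G'\cong B$, and hence $G\cong B$.

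For the forward direction, suppose $G\cong B$. Proposition~\ref{fixed hnn abstract}, applied with $B_1=G_1$, $H_1=H$, $K_1=K$, furnishes $\psi\in\aut(G_1)$ with $\psi(H)=H$, $\psi(K)=K^{b_1}$ for some $b_1\in G_1$, and $f_1=\tau_{b_1}\,\psi f\,\psi^{-1}$. Setting $\psi':=\psi^{-1}$ and $g_1:=\psi^{-1}(b_1^{-1})$, the same routine manipulations as above show $\psi'(H)=H$, $\psi'(K)=K^{g_1}$, and $f_1^{\psi'^{-1}}(h)=\tau_{g_1^{-1}}f(h)$ for every $h\in H$, so condition (ii) is satisfied. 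Thus $G\cong B$ always implies (ii), and in particular the disjunction ``(i) or (ii)'' holds.

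The hard part will be the algebraic bookkeeping needed to align the compatibility formulas in (i) and (ii) with $f_1=\tau_{b_1}f^{\psi^{-1}}$ from Proposition~\ref{fixed hnn abstract} via the substitutions $\tilde\psi=\psi^{-1}$ and $\tilde b=\psi^{-1}(g_1^{-1})$. Case (i) is especially delicate, because the flip $t\mapsto t^{-1}$ simultaneously replaces $f$ by $f^{-1}$ and interchanges the labels of the associated subgroups, and one must verify that $\tilde\psi(H)=\psi^{-1}(H)$ is indeed a conjugate of $K$---this is precisely where the hypothesis that $H=K$ or $H,K$ are non-conjugate in $G_1$ is used, ensuring that any $\psi$ with $\psi(H)=K$ genuinely interchanges the conjugacy classes of $H$ and $K$.
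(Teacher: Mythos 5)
Your backward direction is essentially the paper's (the paper simply cites the converse of Proposition \ref{fixed hnn abstract}), and your device of rewriting $G$ as $\hnn(G_1,K,H,f^{-1},t^{-1})$ to handle case (i) is a correct way to organize that half. The forward direction, however, has two genuine gaps.

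First, you read Proposition \ref{fixed hnn abstract} as always producing $\psi$ with $\psi(H)=H$, and you conclude that $G\cong B$ \emph{always} implies (ii). That conclusion is false. The content actually available (it comes from Proposition \ref{normal isomorfismo}, which only gives $\psi(H)=H_1^{t_1^{\epsilon}}$ with $\epsilon\in\{0,1\}$, and $H_1^{t_1}=K_1$) is that the isomorphism may send $H$ to $H$ \emph{or} to a conjugate of $K$; the paper's own proof of this proposition opens by splitting into exactly these two cases. If (ii) always held, case (i) and the whole swap element $\iota$ entering $\overline\Gamma_{HK}$ would be redundant, contradicting the orbit count of Theorem \ref{classnonnormalabs}; concretely, when an automorphism of $G_1$ swaps the conjugacy classes of $H$ and $K$ and $f_1=\iota(f)$ lies outside the $\widetilde\Gamma_{HK}$-orbit of $f$, one has $G\cong B$ with (i) holding and (ii) failing. (The same issue appears for $H=K$: an isomorphism sending $t_1$ to $t^{-1}h$ forces $f^{-1}$ rather than $f$ on the right-hand side.)

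Second, even in the non-swapping case, the forward half of Proposition \ref{fixed hnn abstract} asserts only where $G_1$, $H$ and $K$ go; it does \emph{not} assert the compatibility $f_1=\tau_{b_1}f^{\psi^{-1}}$ (that formula appears only as a hypothesis for the converse). Deriving the relation between $f_1^{\psi^{-1}}$ and $f^{\pm 1}$ is the real work of this proof: one must track $\psi(t_1)$, use the decomposition of the normalizer (Lemma 2.11 of \cite{BZ}, giving $\psi(t_1)\in N_{G_1}(H)\cdot t^{\pm 1}\cdot N_{G_1}(K)$ up to the relevant adjustments), and then modify $\psi$ by inner automorphisms so that $\psi(t_1)=t^{\pm 1}g_1$, from which $\psi(f_1(h))=\psi(h)^{t^{\pm1}g_1}$ yields the stated identities. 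Your proposal omits this step entirely, so the forward direction is not established.
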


\begin{proof}
$(\Rightarrow)$ By Proposition \ref{fixed hnn abstract} there is an isomorphism $\psi: B \longrightarrow G$ such that $\psi(G_1) = G_1$,  $\psi(H)=H \mbox{ and }$ $\psi(K)=K^{w}$ or  $\psi(G_1) = G_1$, $\psi(H)=K \mbox{ and } \psi(K)=H^{w}$ for some $w\in G_1$.  

 If $H=K$ then $w\in N_{G_{1}}(H)$ and $\psi(t_1)=h_1t^{\pm 1}h_2\in N_{G}(H)$ with $h_1, h_2 \in N_{G_1}(H)$ by Lemma 2.11, p. 822 in \cite{BZ}.  As we have discussed throughout this article, we can replace the isomorphism $\psi$ by $\tau_{h_{1}^{-1}}\psi$, if necessary, to assume that $\psi(G_1)=G_1,$
  $\psi(H)=H$ and $\psi(t_1)= t^{\pm 1}h_2h_1.$ Put $g_1=h_2h_1 \in N_{G_1}(H)$. Then for  so for all $h \in H,$ we have $$\psi(f_1(h))=\psi(h^{t_1})=\psi(h)^{t^{\pm 1}g_1}=(\tau_{g_1^{-1}}f^{\pm 1}  \psi)(h).$$ 
 Therefore $f_1^{\psi^{-1}}(h) = \tau_{g_1^{-1}}f^{\pm 1}(h), \forall h \in H$ and (i) or (ii) holds. 

If $H$ and $K$ are not conjugate in $G_1$. We will divide the proof  into two parts.
  
 \textbf{ Case i)} Suppose that $\psi(G_1) = G_1$, $\psi(H)=K \mbox{ and } \psi(K)=H^{w}$ with $w\in G_1$.  Put $\psi(t_1) = g \in G$. Note that $$\psi(K) = H^{w}=\psi(H)^{\psi(t_1)}=K^g=H^{tg},$$ so that $w g^{-1} t^{-1} \in N_G(H)$. It follows from Lemma 2.11 in \cite{BZ} that $w g^{-1}=xty \in \left( N_{G_1}(H) \cdot t \cdot N_{G_1}(K) \right)$. Thus replacing $\psi$ by $\tau_y \circ \psi$, if necessary (where $\tau_y \in \overline{N}_{G_1}(K)$), we may assume that  $\psi(G_1)=G_1$, $\psi(H)=K$, $\psi(K)=H^{g_1}$ and $\psi(t_1)=t^{-1}g_1$, where $g_1=x^{-1}wy^{-1}.$ 
 For each $h \in H$, we have $$\psi(f_1(h))=\psi(h^{t_1})=\psi(h)^{\psi(t_1)}=\psi(h)^{t^{-1}g_1}=\tau_{g_1^{-1}} f^{-1}(\psi(h)).$$ 
 Therefore $f_1^{\psi^{-1}}(k)= \tau_{g_{1}^{-1}} f^{- 1}(k), \forall k \in K$ and (i) holds.

 \textbf{ Case ii)} Suppose $\psi(G_1) = G_1$,  $\psi(H)=H \mbox{ and }$ $\psi(K)=K^{w}$. Similar as in case (i), put $\psi(t_1) = g \in G$ and observe that $gw^{-1}t^{-1}\in N_{G}(H)$. It follows from Lemma 2.11 in \cite{BZ} that $g w^{-1}=xty \in \left( N_{G_1}(H) \cdot t \cdot N_{G_1}(K) \right)$. Thus replacing $\psi$ by $\tau_{x^{-1}} \circ \psi$, if necessary (where $\tau_{x^{-1}} \in \overline{N}_{G_1}(H)$), we may assume that $\psi(G_1)=G_1$, $\psi(H)=H$, $\psi(K)=K^{g_1}$ and $\psi(t_1)=tg_1$, where $g_1=ywx.$ 
 For each $h \in H$, we have$$\psi(f_1(h))=\psi(h^{t_1})=\psi(h)^{\psi(t_1)}=\psi(h)^{tg_1}=\tau_{g_1^{-1}} f(\psi(h)),$$
 Therefore $f_1^{\psi^{-1}}(h)= \tau_{g_1^{-1}} f(h), \forall h \in H$ and (ii) holds.

$(\Leftarrow)$ Follows from the Proposition \ref{fixed hnn abstract}. 

\end{proof}

\section{Isomorphism problem for abstract HNN-extensions} \label{iso abstract HNN}

In this section we will deal with the isomorphism problem of abstract HNN-extensions. 


 
Let $G=\hnn(G_1, H, K, f_1, t_1) \mbox{ and } B=\hnn(G_1, H, K, f_2, t_2)$ be abstract HHN-extensions, where $G_1$ is an OE-group and $f_{i}: H\longrightarrow K\,(i=1,2)$ are  isomorphisms between finite subgroups $H, K$ of $G_1$, i.e $f_i \in \Iso(H, K)\,(i=1,2)$. We find necessary and 
sufficient conditions for $G\cong B$ in terms of the orbits of the actions that will be defined below. Note that we are not trying to find an algorithm that decides whether given HNN-extensions are isomorphic. Our goal is to describe in terms of HNN-extensions when they are isomorphic. 
Some parts of the text in this section are adaptations of Sections 3 and 4 in \cite{BZ}.

Let $G=\hnn(G_1, H, K, f, t)$ be an abstract HNN-extension.  Consider the subgroup 
 \begin{equation} \label{gamma H} \Gamma_H:=G_1 \rtimes \aut_{G_1}(H)\end{equation} of the holomorph $G_1\rtimes \aut(G_1)$. 
Sometimes we shall interpret the action of an element $g_1$ of $G_1$ in $\Gamma_H$ as the inner automorphism $\tau_{g_1},$ which should not be confused with the element of $\aut(G_1)$ (from the copy of $\inn(G_1)$ in $\aut(G_1)$).

Denote by $S$ the set of all finite subgroups of $G_1$. There is a natural action of $\Gamma_{H}=G_1\rtimes \aut_{G_1}(H)$ on the set $S$ namely \begin{equation} \label{acaoemX} (g_1,\alpha)\cdot X= g_1\alpha(X)g_1^{-1} \in S, \mbox{ where } (g_1,\alpha) \in \Gamma_H, X \in S. \end{equation}

 Let us  consider now
$\Upsilon :=\displaystyle\bigcup_{X\in S} \Iso(H,X)$.  Then  the action of $\Gamma_H$ on $S$ induces the natural action
of  $\Gamma_H$  on the left on $\Upsilon$ as the following lemma shows.

\begin{lem}\label{elementary}
The semidirect product  $\Gamma_{H}=G_1 \rtimes \aut_{G_1}(H)$
acts from the left upon $\Upsilon$ by
\begin{equation} \label{acaosemidireta} (g_1,\alpha) \cdot f=\tau_{g_1} \alpha f \alpha^{-1} = \tau_{g_1} f^{\alpha^{-1}}, \mbox{ where } (g_1,\alpha) \in \Gamma_H \mbox{ 
 and } f \in \Upsilon.
 \end{equation}
\end{lem}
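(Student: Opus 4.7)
The plan is to verify the three things required of a left group action: well-definedness of the formula (the result really is an isomorphism from $H$ into some subgroup $X' \in S$, so lies in $\Upsilon$), the identity acts trivially, and the formula is compatible with the semidirect-product multiplication.

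First I would check well-definedness. Given $f \in \Iso(H,X) \subseteq \Upsilon$ and $(g_1,\alpha) \in \Gamma_H$, observe that $\alpha^{-1}$ restricts to an automorphism of $H$ (since $\alpha \in \aut_{G_1}(H)$ leaves $H$ invariant, and $H$ is finite). Hence $f \circ \alpha^{-1} : H \to X$ is an isomorphism, then $\alpha \circ f \circ \alpha^{-1} : H \to \alpha(X)$ is an isomorphism, and finally $\tau_{g_1} \circ \alpha \circ f \circ \alpha^{-1} : H \to g_1 \alpha(X) g_1^{-1}$ is an isomorphism. Since $g_1\alpha(X)g_1^{-1} = (g_1,\alpha)\cdot X \in S$ by the definition \eqref{acaoemX}, the result indeed lies in $\Upsilon$.

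Next, the identity of $\Gamma_H$ is $(1,\mathrm{id}_{G_1})$, and clearly $\tau_1 \circ \mathrm{id}_{G_1} \circ f \circ \mathrm{id}_{G_1} = f$. The only genuine calculation is the composition law. Recalling that multiplication in the semidirect product is $(g_1,\alpha)(g_2,\beta) = (g_1\,\alpha(g_2),\,\alpha\beta)$, the key identity to exploit is $\alpha \circ \tau_{g_2} \circ \alpha^{-1} = \tau_{\alpha(g_2)}$ as automorphisms of $G_1$. Using this,
\begin{align*}
(g_1,\alpha)\cdot\bigl((g_2,\beta)\cdot f\bigr)
&= \tau_{g_1}\,\alpha\,\tau_{g_2}\,\beta\, f\, \beta^{-1}\,\alpha^{-1} \\
&= \tau_{g_1}\,\tau_{\alpha(g_2)}\,(\alpha\beta)\, f\, (\alpha\beta)^{-1} \\
&= \tau_{g_1\alpha(g_2)}\,(\alpha\beta)\, f\, (\alpha\beta)^{-1} \\
&= \bigl((g_1,\alpha)(g_2,\beta)\bigr)\cdot f,
\end{align*}
which is the desired equality.

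This is a routine verification with no genuine obstacle; the only subtlety worth flagging is the bookkeeping in the well-definedness step, where one must track at which point in $G_1$ each of $\tau_{g_1}$, $\alpha$, $\alpha^{-1}$ is being applied (to elements of $H$, to elements of $\alpha(X)$, etc.), and observe that because $\alpha$ is an automorphism of the whole group $G_1$ and not only of $H$, the composition is meaningful and produces an isomorphism onto the element $(g_1,\alpha)\cdot X$ of $S$ dictated by \eqref{acaoemX}. The alternative expression $\tau_{g_1} f^{\alpha^{-1}}$ in the statement is just shorthand for $\tau_{g_1}\,\alpha\, f\, \alpha^{-1}$ once $\alpha^{-1}$ is viewed as acting on both source and target by the same automorphism.
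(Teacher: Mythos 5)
Your proof is correct and follows essentially the same route as the paper's: both verify well-definedness by tracking that $\tau_{g_1}\alpha f\alpha^{-1}$ lands in $\Iso\bigl(H,[\alpha(f(H))]^{g_1^{-1}}\bigr)\subset\Upsilon$, and both reduce the composition law to the identity $\alpha\tau_{g_2}\alpha^{-1}=\tau_{\alpha(g_2)}$ together with the semidirect-product multiplication. Your write-up is, if anything, slightly more explicit about the well-definedness bookkeeping than the paper's.
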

\begin{proof} For showing that the action is well-defined choose arbitrary $f\in \Upsilon$ and $\gamma=(g_1,\alpha)\in \Gamma_{H}$. Then $(g_1,\alpha) \cdot f \in \Iso(H, L)$ for $L:=\left[ \alpha(f(H))\right]^{g_1^{-1}} \in S$ and so $\gamma \cdot f \in \Upsilon$. Now certainly $(1,1)\cdot f=f$ holds for all $f\in \Upsilon$. Finally, if  $(g_1,\alpha), (g_2,\beta) \in \Gamma_H$ and $f \in \Upsilon$ then we have $$(g_1,\alpha) \cdot ((g_2,\beta) \cdot f)=\tau_{g_1}(\tau_{g_2} f^{\beta^{-1}})^{\alpha^{-1}}=\tau_{g_1}\alpha\tau_{g_2}\beta f \beta^{-1}\alpha^{-1}=$$ $$=\tau_{g_1}\tau_{g_2}^{\alpha^{-1}}f^{(\alpha\beta)^{-1}}=((g_1,\alpha)(g_2,\beta)) \cdot f.$$
\end{proof}



 Note that for $f\in \Iso(H, K)$, we have $(g_1,\alpha) \cdot f \in \Iso\left(H, [\alpha(K)]^{g_1^{-1}}\right) \subset \Upsilon$ holds true ( see (\ref{acaosemidireta})).  

Suppose there exist $\psi \in \aut(G_1)$ that swaps the conjugacy classes of $H$ and $K$, i.e. such that $\psi(K)=H^{g_1}$ and $\psi(H)=K$, for some $g_1\in G_1$.  Define $\iota: \Iso(H,K) \longrightarrow \Iso(H,K)$ by \begin{equation} \label{def iota abst} \iota(f)=\tau_{\psi^{-1}(g_1^{-1})}\left(f^{\psi}\right)^{-1}, f \in \Iso(H,K).\end{equation} 


When $H=K$ we can choose $\psi=id$ and $g_1=1,$ so $\iota$ sends $f$ to $f^{-1}.$ In this situation we have an isomorphism between $\hnn(G_1,H,f)$ and $\hnn(G_1,H,f^{-1})$ (see Lemma \ref{conjugados da normal}). Note that $\iota$ will be a bijection from $\Iso(H, H)=\aut(H)$ to $\Iso(H, H)=\aut(H)$ in this case. More generally, $\iota$ belongs to the symmetric group $Sym(\Iso(H, K)),$ of $\Iso(H, K)$.

We denote by $\Gamma_{HK}$ be the stabilizer of $K\in S$ of this action of $\Gamma_H$ on $S$. Then $\Gamma_{HK}$ leaves $\Iso(H, K)$ invariant according to the action of $\Gamma_H$ on $\Upsilon$.  
Since $\Gamma_{HK}$ acts on $\Iso(H, K),$ we can consider the natural image of $\Gamma_{HK}$ in $Sym(\Iso(H, K))$ and denote it by $\widetilde\Gamma_{HK}$.



\begin{lem} \label{semidiretocomC2} Suppose there exist $\psi \in \aut(G_1)$ that swaps the conjugacy classes of $H$ and $K$.  Then $\iota $ normalizes $\widetilde\Gamma_{HK}$ and so $\langle \widetilde\Gamma_{HK}, \iota  \rangle=\widetilde\Gamma_{HK} \cdot  \left\langle  \iota \right\rangle$ is a subgroup of $Sym(\Iso(H, K))$. \end{lem}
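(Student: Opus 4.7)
The lemma contains two assertions --- $\iota$ normalizes $\widetilde\Gamma_{HK}$ in $Sym(\Iso(H,K))$, and $\widetilde\Gamma_{HK}\cdot\langle\iota\rangle$ is a subgroup --- the second of which is a formal consequence of the first. The real task is therefore normalization, and my plan is first to verify that $\iota$ is a self-bijection of $\Iso(H,K)$, then to compute $\iota\,\widetilde{(g,\alpha)}\,\iota^{-1}$ explicitly and recognize it as the action of an element of $\Gamma_{HK}$.

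Set $g_0:=\psi^{-1}(g_1^{-1})$. The relations $\psi(H)=K$ and $\psi(K)=H^{g_1}$ give $\psi^{-1}(K)=H$ and $\psi^{-1}(H)=K^{g_0}$, so $\iota(f)=\tau_{g_0}\psi^{-1}f^{-1}\psi$ reads as the composition $H\xrightarrow{\psi}K\xrightarrow{f^{-1}}H\xrightarrow{\psi^{-1}}K^{g_0}\xrightarrow{\tau_{g_0}}K$ and hence lands in $\Iso(H,K)$; solving $\iota(h)=f$ yields $\iota^{-1}(f)=\psi f^{-1}\psi^{-1}\tau_{g_1^{-1}}$, so $\iota$ is a bijection. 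For the main computation, fix $(g,\alpha)\in\Gamma_{HK}$, so $\alpha(H)=H$ and $\alpha(K)=K^{g}$, and substitute the formulas for $\iota^{\pm 1}$ into $\iota\,\widetilde{(g,\alpha)}\,\iota^{-1}(f)$. Repeatedly applying $\alpha\tau_x=\tau_{\alpha(x)}\alpha$ to push inner automorphisms past $\alpha$ and $\psi$, the composition collapses to $\tau_{g_0 u'}\beta\circ f\circ\tau_v\beta^{-1}$, where $\beta:=\psi^{-1}\alpha\psi$, $u':=\psi^{-1}(\alpha(g_1))$ and $v:=\psi^{-1}(\alpha^{-1}(g^{-1}))$.

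Since $\beta(H)=H^{\psi^{-1}(g)}$, the automorphism $\beta$ does not itself preserve $H$; however $\alpha':=\tau_{\psi^{-1}(g)}\beta$ does. Writing $\beta=\tau_{\psi^{-1}(g^{-1})}\alpha'$ and transferring the residual $\tau_v$ past $\alpha'^{-1}$ via $\tau_v\alpha'^{-1}=\alpha'^{-1}\tau_{\alpha'(v)}$ produces a trailing inner automorphism $\tau_{\alpha'(v)\psi^{-1}(g)}$. The main obstacle is to verify that this factor is trivial: a direct calculation gives $\alpha'(v)=\psi^{-1}(g^{-1})$, so $\alpha'(v)\psi^{-1}(g)=\psi^{-1}(g^{-1}g)=1$. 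What remains is $\tau_{g'}\alpha' f\alpha'^{-1}$ with $g':=g_0\psi^{-1}(\alpha(g_1)g^{-1})$, and a short check yields $\alpha'(K)=K^{g'}$, so $(g',\alpha')\in\Gamma_{HK}$ and $\iota\,\widetilde{(g,\alpha)}\,\iota^{-1}\in\widetilde\Gamma_{HK}$. Since $\widetilde\Gamma_{HK}$ is finite (both $H$ and $K$ are finite, so $\Iso(H,K)$ is), the inclusion $\iota\widetilde\Gamma_{HK}\iota^{-1}\subseteq\widetilde\Gamma_{HK}$ is an equality, so $\iota$ normalizes $\widetilde\Gamma_{HK}$; the second assertion $\widetilde\Gamma_{HK}\cdot\langle\iota\rangle=\langle\iota\rangle\cdot\widetilde\Gamma_{HK}$ (hence a subgroup of $Sym(\Iso(H,K))$) then follows from the standard criterion that the product of two subgroups is a subgroup whenever one normalizes the other.
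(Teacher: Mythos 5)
Your proof is correct and follows essentially the same route as the paper's: conjugate the image of a generic $(g,\alpha)\in\Gamma_{HK}$ by $\iota$, push the inner automorphisms through $\psi$ and $\alpha$, and recognize the result as the action of an explicit element $(g',\alpha')\in\Gamma_{HK}$. The differences are cosmetic --- you conjugate by $\iota$ on the left rather than computing $\tilde\omega^{\iota}=\iota^{-1}\tilde\omega\iota$ as the paper does, and you include the preliminary check that $\iota\in Sym(\Iso(H,K))$, which the paper records in the discussion preceding the lemma.
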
  

\begin{proof}  We need to show that if $\tilde\omega\in \widetilde\Gamma_{HK}$ then $\tilde\omega^{\iota}\in \widetilde\Gamma_{HK}$. Indeed,  pick $\omega=(x, \alpha) \in \Gamma_{HK} $ such that $\tilde w$ is its image in $\widetilde\Gamma_{HK}.$  Choose $f\in \Iso(H,K)$ and observe that

$\tilde\omega^\iota. f=\iota^{-1}\tilde\omega(\tau_{\psi^{-1}(g_1^{-1})}(f^{-1})^\psi)= \iota^{-1}(\tau_{x\alpha\psi^{-1}(g_1^{-1})}(f^{-1})^{\psi\alpha^{-1}})=\newline =(f)^{\psi\alpha^{-1}\psi^{-1}}\tau_{\psi(\alpha\psi^{-1}(g_1)x^{-1})g_1^{-1}}=\tau_{\psi(\alpha\psi^{-1}(g_1)x^{-1})g_1^{-1}} (f)^{\psi\alpha^{-1}\psi^{-1}\tau_{\psi(\alpha\psi^{-1}(g_1)x^{-1})g_1^{-1}}}= \newline =(\tau_{\psi(\alpha\psi^{-1}(g_1)x^{-1})g_1^{-1}}, \tau_{g_1\psi(x\alpha\psi^{-1}(g_1^{-1}))}\psi\alpha\psi^{-1}). f$. But $$(\psi(\alpha\psi^{-1}(g_1)x^{-1})g_1^{-1}, \tau_{g_1\psi(x\alpha\psi^{-1}(g_1^{-1}))}\psi\alpha\psi^{-1}) \in \Gamma_{HK}$$ and its image in $Sym(\Iso(H, K))$ is $\tilde{w}^{\iota} \in \tilde{\Gamma}_{HK},$ as desired. 
\end{proof}




\begin{lem}\label{carmo} The subgroup $\widetilde\Gamma_{HK} \cdot  \left\langle  \iota \right\rangle$ of $Sym(\Iso(H, K))$ does not depend upon the choice of $\psi$.
\end{lem}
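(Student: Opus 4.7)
The plan is to show that if $\psi,\psi'\in\aut(G_1)$ both swap the conjugacy classes of $H$ and $K$, with respective witnesses $g_1,g_1'\in G_1$ satisfying $\psi(H)=K$, $\psi(K)=H^{g_1}$ and $\psi'(H)=K$, $\psi'(K)=H^{g_1'}$, producing permutations $\iota,\iota'\in Sym(\Iso(H,K))$, then $\iota'\iota^{-1}$ coincides with the permutation induced by some $(x,\alpha)\in\Gamma_{HK}$. This will force $\iota'\in\widetilde\Gamma_{HK}\cdot\iota$, hence $\widetilde\Gamma_{HK}\cdot\langle\iota'\rangle\subseteq\widetilde\Gamma_{HK}\cdot\langle\iota\rangle$, and the reverse inclusion follows by symmetry.

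The concrete calculation proceeds in two moves. First I invert $\iota$: solving $\iota(h)=\tau_{\psi^{-1}(g_1^{-1})}\psi^{-1}h^{-1}\psi=f$ and using the conjugation identity $\psi\tau_a\psi^{-1}=\tau_{\psi(a)}$ yields the explicit formula $\iota^{-1}(f)=\psi f^{-1}\psi^{-1}\tau_{g_1^{-1}}$. Then I apply $\iota'$ to this element and use the analogous commutation $\psi'^{-1}\tau_{g_1}=\tau_{\psi'^{-1}(g_1)}\psi'^{-1}$ to collapse everything into the form
\begin{equation*}
\iota'(\iota^{-1}(f))=\tau_x\,\alpha f\alpha^{-1},\qquad \alpha:=\psi'^{-1}\psi,\quad x:=\psi'^{-1}(g_1'^{-1}g_1),
\end{equation*}
which, by the defining formula $(g,\beta)\cdot f=\tau_g\beta f\beta^{-1}$, is precisely the action of the pair $(x,\alpha)$ on $f\in\Iso(H,K)$.

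It remains to check that $(x,\alpha)\in\Gamma_{HK}$. The condition on $H$ is immediate: $\alpha(H)=\psi'^{-1}(\psi(H))=\psi'^{-1}(K)=H$, so $\alpha\in\aut_{G_1}(H)$ and $(x,\alpha)\in\Gamma_H$. For the $K$-stabilizer condition, the relation $\psi'(K)=H^{g_1'}$ gives $\psi'^{-1}(H)=K^{\psi'^{-1}(g_1'^{-1})}$, and therefore
\begin{equation*}
\alpha(K)=\psi'^{-1}(H^{g_1})=\psi'^{-1}(H)^{\psi'^{-1}(g_1)}=K^{\psi'^{-1}(g_1'^{-1}g_1)}=K^{x},
\end{equation*}
so $x\alpha(K)x^{-1}=K$, confirming $(x,\alpha)\in\Gamma_{HK}$.

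I expect the main obstacle to be notational bookkeeping: $\psi$ exchanges the domain and codomain of $f$, so at each step one must carefully track which subgroup a given map is landing in, and the formulas for $\psi^{-1}(H)$ and $\psi'^{-1}(H)$ deduced from the swapping conditions are indispensable for the $K$-stabilizer verification. Once the identity $\psi\tau_a\psi^{-1}=\tau_{\psi(a)}$ is applied systematically to commute inner automorphisms past $\psi^{\pm 1}$ and $\psi'^{\pm 1}$, the whole computation closes cleanly into the coset relation above.
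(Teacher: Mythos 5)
Your proof is correct and follows essentially the same route as the paper: the element $(x,\alpha)=(\psi'^{-1}(g_1'^{-1}g_1),\,\psi'^{-1}\psi)$ you produce is exactly the element $\upsilon=(\zeta^{-1}(r^{-1}g_1),\zeta^{-1}\psi)\in\Gamma_{HK}$ exhibited in the paper (with $\zeta=\psi'$, $r=g_1'$), and the relation $\iota'\iota^{-1}=\widetilde\upsilon$ is the paper's identity $\Theta=\tilde\upsilon\,\iota$. You merely carry out the verification more explicitly and add the symmetry step to conclude equality of the two subgroups.
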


\begin{proof} Consider $\zeta \in \aut(G_1)$ such that $\zeta(K)=H^{r}$ and $\zeta(H)=K$, for some $r\in G_1$. Let $\Theta \in Sym(\Iso(H,K))$ such that  $\Theta(f):=\tau_{_{\zeta^{-1}(r^{-1})}}\left(f^{-1}\right)^{\zeta}$, where $f \in \Iso(H, K)$. Take $\upsilon=(\zeta^{-1}(r^{-1}  \cdot g_1), \zeta^{-1}\psi) \in \Gamma_{HK}$ such that $\tilde{\upsilon}$ is its image in $\widetilde\Gamma_{HK},$ and observe that $\Theta=\tilde{\upsilon} \iota \in \widetilde\Gamma_{HK} \cdot \left\langle \iota\right\rangle.$
\end{proof}

\begin{deff}
\label{novaacao} Given finite subgroups $H$ and $K$ of $G_1$ we set $$\overline{\Gamma}_{HK}:=\widetilde\Gamma_{HK}\cdot \left\langle \iota\right\rangle$$ if there exist $\psi \in \aut(G_1)$ such that $\psi(K)=H^{g_1}$ and $\psi(H)=K$, for some $g_1\in G_1$ and  put $$\overline{\Gamma}_{HK}:=\widetilde\Gamma_{HK}$$ otherwise. Further let $$\overline\Gamma_{HK}\backslash \Iso(H,K)$$ denote the set of orbits of $\Iso(H, K)$ modulo the action  of $\overline{\Gamma}_{HK}$.
\end{deff}




\begin{rem} \label{novaacao2} When $H=K$ then $\Iso(H,H)=\aut(H)$. In this case one can define   $\Gamma_{HH}=\Gamma_{H}=N_{G_1}(H) \rtimes\aut_{G_1}(H)$ as being the stabilizer of $H$ of  the action of $\Gamma_H$ on  $S$ (see (\ref{acaoemX})) and 
$\overline{\Gamma}_{HH}=\widetilde\Gamma_{H}\cdot C_2$, where $C_2= \left\langle \iota\right\rangle$ acts by inversion on $\aut(H)$. Note that if $\iota \in \widetilde\Gamma_H$ then $\overline{\Gamma}_{HH}=\widetilde\Gamma_{H},$ otherwise  $\overline{\Gamma}_{HH}=\widetilde\Gamma_{H}\rtimes C_2.$ 
In particular, $\overline{\Gamma}_{HH} \backslash \aut(H)$ denote the set of orbits of $\aut(H)$ modulo the action  of $\overline{\Gamma}_{HH}$.
\end{rem}



The result below is a consequence of  Propositions   \ref{nonnormalhnnabs} and the results of this subsection. They generalize Theorem 4.5 in \cite{BZ} to $\OE$-groups. 


\begin{thm} \label{classnonnormalabs} Let $G_1$ be an $\OE$-group and $$G=\hnn(G_1, H, K, f, t),\  B=\hnn(G_1, H, K, f_1, t_1)$$ be  abstract HNN-extensions with finite associated  subgroups. Then $G$ and $B$ are isomorphic iff $f$ and $f_1$ belong to the same $\overline\Gamma_{HK}$-orbit of $\Iso(H, K)$.    
\end{thm}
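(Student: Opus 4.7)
The plan is to reduce the theorem to Proposition \ref{nonnormalhnnabs} (which already covers the cases $H=K$ and $H,K$ non-conjugate in $G_1$) and then translate the two alternatives appearing in that proposition into the language of $\overline\Gamma_{HK}$-orbits on $\Iso(H,K)$. The remaining situation — $H, K$ distinct but conjugate in $G_1$ — will be reduced to the normal case via Lemma \ref{conjugados da normal}, after checking that the induced bijection $\Iso(H,K)\to \aut(H)$ is equivariant with respect to the two group actions involved.

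For the case $H=K$ or $H,K$ non-conjugate, I would argue as follows. Take condition (ii) of Proposition \ref{nonnormalhnnabs}: we have $\psi\in \aut_{G_1}(H)$ with $\psi(K)=K^{g_1}$ and $f_1^{\psi^{-1}}=\tau_{g_1^{-1}}f$. Rewriting this as $f_1=\psi^{-1}\tau_{g_1^{-1}}f\psi = \tau_{\psi^{-1}(g_1^{-1})}f^{\psi}$ exhibits $f_1$ as the image of $f$ under the element $(\psi^{-1}(g_1^{-1}),\psi^{-1})\in \Gamma_H$; a direct check (using $\psi(K)=K^{g_1}$) shows this element stabilizes $K$, hence lies in $\Gamma_{HK}$. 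Conversely any element of $\Gamma_{HK}$ sending $f$ to $f_1$ produces a $\psi$ as in (ii). So (ii) is equivalent to $f_1\in \widetilde\Gamma_{HK}\cdot f$. For condition (i), rewrite $f_1=\psi^{-1}\tau_{g_1^{-1}}f^{-1}\psi$; unpacking the definition of $\iota$ from (\ref{def iota abst}) gives precisely $f_1=\iota(f)$ for the chosen swap $\psi$, and varying $\psi$ among all swaps of the conjugacy classes of $H$ and $K$ sweeps out the full coset $\widetilde\Gamma_{HK}\iota\cdot f$ (this is the content already encoded in Lemma \ref{carmo}). Hence $G\cong B$ iff $f_1\in \widetilde\Gamma_{HK}\cdot f \,\cup\, \widetilde\Gamma_{HK}\iota\cdot f=\overline\Gamma_{HK}\cdot f$; when no swap exists only (ii) can occur and $\overline\Gamma_{HK}=\widetilde\Gamma_{HK}$, matching Definition \ref{novaacao}.

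For the remaining case, pick $g_0\in G_1$ with $H^{g_0}=K$. Lemma \ref{conjugados da normal} yields isomorphisms $G\cong \hnn(G_1,H,H,\tau_{g_0}f,t')$ and $B\cong \hnn(G_1,H,H,\tau_{g_0}f_1,t_1')$, so by the normal case $G\cong B$ iff $\tau_{g_0}f$ and $\tau_{g_0}f_1$ lie in the same $\overline\Gamma_{HH}$-orbit in $\aut(H)$. It then suffices to show that the bijection $\Phi:\Iso(H,K)\to\aut(H)$, $f\mapsto \tau_{g_0}f$, carries the $\overline\Gamma_{HK}$-action to the $\overline\Gamma_{HH}$-action. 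For the non-swap part this is a short calculation using $\beta(g_0)\in N_{G_1}(H)\cdot g_0$ for $(a,\beta)\in \Gamma_{HK}$ (which holds because $\beta$ stabilizes both $H$ and the conjugacy class $\{K\}=\{H^{g_0}\}$); for the swap part one checks that a swap $\psi$ for the pair $(H,K)$ gives rise to an automorphism stabilizing $H$ that realises the inversion $\iota$ on $\aut(H)$, so the $\iota$-generator on each side corresponds under $\Phi$.

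The main obstacle is the bookkeeping in this last step: one has to verify carefully that the identification $\Phi$ intertwines not only the $\widetilde\Gamma_{HK}$-action with the $\widetilde\Gamma_{HH}$-action (straightforward conjugation) but also the two versions of the normalising element $\iota$, whose definitions depend on a choice of swap $\psi$ and an element $g_1\in G_1$. Once this equivariance is in place, combining the two cases gives the full theorem.
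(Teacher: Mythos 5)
Your proposal is correct and follows essentially the same route as the paper's proof: both directions are reduced to Proposition \ref{nonnormalhnnabs} (whose conditions (i) and (ii) translate exactly into membership in $\widetilde\Gamma_{HK}\iota\cdot f$ and $\widetilde\Gamma_{HK}\cdot f$ respectively), and the case of distinct conjugate associated subgroups is handled via Lemma \ref{conjugados da normal}. If anything, you are more explicit than the paper in the last step, where the paper simply says ``we may assume $H=K$'' while you verify that the bijection $f\mapsto\tau_{g_0}f$ (which is just the action of $(g_0,1)\in\Gamma_H$, conjugating $\Gamma_{HK}$ onto $\Gamma_{HH}$) intertwines the two orbit structures.
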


\begin{proof} 

$(\Rightarrow)$ We have two possibilities: either $H$ and $K$ are conjugate in the base group $G_1$ or  not.  Moreover, in the first case we may assume that $H=K$ by Lemma  \ref{conjugados da normal}.

\medskip
Thus suppose first that $H=K$. By Proposition \ref{nonnormalhnnabs} there  exist $\varphi\in \aut_{G_1}(H)$ and $ g_1 \in N_{G_1}(H)$ such that

$$( f_1)^{\varphi^{-1}}= \tau_{g_1^{-1}} ( f)^{\epsilon} \iff \tau_{g_1}f_1^{\varphi^{-1}}=( f)^{\epsilon},$$
where $\epsilon \in \{\pm 1\}.$ 
If $\epsilon=1$ take $(g_1,\varphi)\in \Gamma_{H}$ and  observe that $$(g_1,\varphi)\cdot f_1=\tau_{g_1}f_1^{\varphi^{-1}}= f.$$

If $\epsilon=-1$ take $\beta=(\varphi^{-1}(g_1^{-1}), \varphi^{-1})\in \Gamma_{H}$ and observe that  $\iota(f)=f^{-1}$ (just consider $\psi=id$ in the Definition of $\iota$ in (\ref{def iota abst})). In this way, we have 
$$(\beta, \iota) \cdot f = \tau_{\varphi^{-1}(g_{1}^{-1})} \left( f^{-1}\right)^{\varphi}   =f_1.$$ 
Therefore for any $\epsilon=\pm 1$ we have that $f_1$ and $f$ are in the same $\overline\Gamma_{HH}$-orbit.

\medskip

Now suppose
that $H$ and $K$ are not conjugate in $G_1$. By Proposition \ref{nonnormalhnnabs} $G \cong B$ if, and only if, the subcases i) or ii) of Proposition \ref{nonnormalhnnabs} are satisfied. 

 \textbf{Subcase i)}: There exist $\psi\in \aut(G_1)$ with $\psi(H)=K $ and $ \psi(K)=H^{g_1}$ for some $g_1 \in G_1$ such that $f_1^{\psi^{-1}}(k)= \tau_{g_1^{-1}} f^{- 1}(k), \forall k \in K$. By definition of  $\iota$ in (\ref{def iota abst}) we have

$$\iota(f)=\tau_{\psi^{-1}(g_1^{-1})}(f^{-1})^{\psi}=f_1.$$
Therefore $f_1$ and $f$ belong to the same  $\overline{\Gamma}_{HK}$-orbit of $\Iso(H, K)$.

 \textbf{Subcase ii)}: There exist $\psi\in \aut(G_1)$ with $\psi(H)=H $ and $ \psi(K)=K^{g_1}$ for some $g_1 \in G_1$ such that $f_1^{\psi^{-1}}(h)= \tau_{g_1^{-1}} f(h), \forall h \in H$. Observe that $(g_1, \psi)\in \Gamma_{HK}$ and $$(g_1, \psi)\cdot f_1= \tau_{g_1}f_1^{\psi^{-1}}=f.$$
Therefore $f_1$ and $f$ belong to the same orbit.

\bigskip

\hspace{-0,8cm} $(\Leftarrow)$ It follows from Proposition   \ref{nonnormalhnnabs}. 
\end{proof}

\begin{cor}\label{number of isomorphisms} Let $G_1$ be an $\OE$-group and $H\cong K$ be fixed finite subgroups of $G_1$.
The number of isomorphism classes of abstract HNN-extensions of the form $\hnn(G_1, H, K, f, t)$  equals to the number of $\overline\Gamma_{HK}$-orbits 

\begin{equation}\label{formula numero hnn extensao}
|\overline{\Gamma}_{HK} \backslash \Iso(H, K)|
\end{equation}
in $\Iso(H, K)$.
\end{cor}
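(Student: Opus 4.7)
The plan is to derive this cardinality formula directly from the preceding Theorem \ref{classnonnormalabs}. That theorem has already done all the substantive work: it characterizes precisely when two abstract HNN-extensions $\hnn(G_1,H,K,f,t)$ and $\hnn(G_1,H,K,f_1,t_1)$ (with the base group $G_1$ and the ordered pair of associated subgroups $(H,K)$ held fixed) are isomorphic, namely exactly when $f$ and $f_1$ lie in a common $\overline\Gamma_{HK}$-orbit of $\Iso(H,K)$. Since Lemmas \ref{semidiretocomC2} and \ref{carmo} already established that $\overline\Gamma_{HK}$ is a well-defined subgroup of $Sym(\Iso(H,K))$, the orbit set $\overline\Gamma_{HK}\backslash \Iso(H,K)$ is meaningful.

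Concretely, I would introduce the surjective map
\[
\Phi : \Iso(H,K) \longrightarrow \{\,[\hnn(G_1,H,K,f,t)] \mid f\in\Iso(H,K)\,\}, \qquad \Phi(f) = [\hnn(G_1,H,K,f,t)],
\]
where the codomain is the set of isomorphism classes of such HNN-extensions and $[\cdot]$ denotes the isomorphism class. Surjectivity is tautological. By Theorem \ref{classnonnormalabs}, the fibres of $\Phi$ coincide with the $\overline\Gamma_{HK}$-orbits on $\Iso(H,K)$, so $\Phi$ factors through the orbit set to yield a bijection
\[
\overline{\Gamma}_{HK}\backslash \Iso(H,K) \xrightarrow{\ \sim\ } \{\,[\hnn(G_1,H,K,f,t)] \mid f\in\Iso(H,K)\,\}.
\]
Taking cardinalities then yields the formula (\ref{formula numero hnn extensao}).

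There is no real obstacle here, since the entire analytic content (both the existence statement showing that any isomorphism can be adjusted to send $G_1$ to $G_1$ and $(H,K)$ to $(H,K)$ up to the action of $\iota$, and the converse construction of an isomorphism out of an orbit coincidence) is packaged inside Theorem \ref{classnonnormalabs}. The only minor point to flag in the write-up is that when $H=K$ the relevant orbit set uses the convention of Remark \ref{novaacao2}, so the formula is uniform across the conjugate and non-conjugate cases.
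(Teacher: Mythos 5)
Your proposal is correct and matches the paper's (implicit) argument exactly: the corollary is stated as an immediate consequence of Theorem \ref{classnonnormalabs}, which identifies the fibres of the natural map $f\mapsto[\hnn(G_1,H,K,f,t)]$ with the $\overline{\Gamma}_{HK}$-orbits, so the induced map on the orbit set is a bijection. Nothing further is needed.
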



\subsection{The number of isomorphism classes of abstract HNN-extensions 
} \label{cota bonita non conjugados}

 Our first goal is to count the isomorphism classes of HNN-extensions  $$\hnn(G_1, H, K, f, t)$$ with finite associated subgroups. 


Let $ S$ be the set of all finite subgroups of $G_1$. We have an action of $G_1$ on $S$ by conjugation. So, given $L\in S$ the orbit of $L$ with respect to this action (i.e., the conjugacy class of $L$ in $G_1$) is  
$$\langle\langle L\rangle\rangle^{G_1}= \{L^{g_1}| g_1 \in G_1\}.$$ 
Denote by $\overline{S}$ the space of orbits of this action (the set of conjugacy classes of finite subgroups of $G_1$ ), i.e., $$\overline{S}= G_1\backslash S:=\{ \langle\langle L\rangle\rangle^{G_1} \,|\, L \in S\}.$$    

Let $[\overline{S}]^2$ be the collection of all unordered pairs of the form $\{X, Y\}$ such that $X, Y \in \overline{S}.$  Define an action of $\aut(G_1)$ on $[\overline{S}]^2$ by \begin{equation*} \varepsilon: \aut(G_1) \times [\overline{S}]^2 \longrightarrow [\overline{S}]^2, \end{equation*}
\begin{equation} \label{acao epsilon}
( \varphi, \{ \langle\langle H_1\rangle\rangle^{G_1} , \langle\langle K_1\rangle\rangle^{G_1} \} ) \mapsto \{ \langle\langle \varphi(H_1)\rangle\rangle^{G_1} , \langle\langle  \varphi(K_1)\rangle\rangle^{G_1}  \}.
\end{equation}


Note that $\{ \langle\langle H\rangle\rangle^{G_1}
, \langle\langle K\rangle\rangle^{G_1}
 \}$ and $\{ \langle\langle H_1\rangle\rangle^{G_1}
, \langle\langle K_1\rangle\rangle^{G_1}
 \}$ belong to the same $\aut(G_1)$-orbit of action (\ref{acao epsilon}) if and only if  there exist $\psi\in \aut(G_1)$ such that $\psi(H)=H_1$ and $\psi(K)=K_1^{g_1}$ for some $g_1\in G_1.$ 

Let $H_1, K_1 \in S,$ by Proposition  \ref{fixed hnn abstract},   for any $f\in \Iso(H,K)$ one has  $$\hnn(G_1, H, K, f) \cong \hnn(G_1, H_1, K_1, \tau_{g_1}f^{\psi^{-1}}).$$

If $H$ and $K$ are finite subgroups of  $G_1,$ we saw in Corollary \ref{number of isomorphisms} that the number of isomorphism classes of abstract HNN-extensions of the form $\hnn(G_1, H, K, f, t),$ with $f \in \Iso(H, K)$ is 
$|\overline{\Gamma}_{HK} \backslash \Iso(H, K)|$. 

\medskip
Denote the set of orbits of action (\ref{acao epsilon}) as $\aut(G_1)\backslash [\overline{S}]^2$. From the preceding discussion and Theorem \ref{classnonnormalabs} we have  

\begin{thm} \label{general finite associated} Let $G_1$ be  an $\OE$-group. Then the number  of isomorphism classes of abstract HNN-extensions $$\hnn(G_1, H,K, f,t)$$ with finite associated subgroups  is $$\sum_{\{H, K\}} |\overline{\Gamma}_{HK} \backslash \Iso(H, K)|,$$ where $\{H, K\}$ ranges over representatives of the orbits $\aut(G_1)\backslash [\overline{S}]^2.$ \end{thm}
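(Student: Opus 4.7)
The plan is to partition all isomorphism classes of abstract HNN-extensions $\hnn(G_1,H',K',f,t)$ with finite associated subgroups according to the $\aut(G_1)$-orbit in $[\overline{S}]^2$ of the unordered pair of conjugacy classes $\{\langle\langle H'\rangle\rangle^{G_1},\langle\langle K'\rangle\rangle^{G_1}\}$, and then to apply Corollary \ref{number of isomorphisms} inside each block.

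First I would verify that this partition is well-defined on isomorphism classes, i.e.\ that the $\aut(G_1)$-orbit depends only on the isomorphism class of the extension. Suppose $\hnn(G_1,H',K',f,t)\cong \hnn(G_1,H'',K'',f'',t'')$. By Proposition \ref{fixed hnn abstract} (together with the swap possibility incorporated in Proposition \ref{nonnormalhnnabs}) there is an isomorphism $\psi$ with $\psi(G_1)=G_1$ and $\{\psi(H'),\psi(K')\}=\{H'',(K'')^{b}\}$ or $\{K'',(H'')^{b}\}$ for some $b\in G_1$. The restriction $\psi|_{G_1}\in\aut(G_1)$ then sends the unordered pair of conjugacy classes associated to the first extension to that of the second under the action \eqref{acao epsilon}, which is exactly what is required.

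Next I would show that every HNN-extension whose unordered pair lies in the $\aut(G_1)$-orbit of a fixed representative $\{H,K\}$ is isomorphic to one of the form $\hnn(G_1,H,K,f,t)$ with $f\in \Iso(H,K)$. Given such an extension $\hnn(G_1,H',K',f,t)$, I would choose $\psi\in\aut(G_1)$ and $g_1\in G_1$ with $\psi(H')=H$ and $\psi(K')=K^{g_1}$ (if necessary, first interchanging the roles of $H'$ and $K'$, which produces an isomorphic HNN-extension by inverting the stable letter, as in Lemma \ref{conjugados da normal}). Proposition \ref{fixed hnn abstract} then produces an isomorphism $\hnn(G_1,H',K',f,t)\cong \hnn(G_1,H,K,\tau_{g_1}f^{\psi^{-1}},t)$. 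By Corollary \ref{number of isomorphisms} the block indexed by $\{H,K\}$ contains exactly $|\overline{\Gamma}_{HK}\backslash \Iso(H,K)|$ isomorphism classes, and summing over a set of orbit representatives of $\aut(G_1)\backslash [\overline{S}]^2$ yields the claimed formula.

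The main obstacle is the well-definedness of the partition, namely ruling out that two HNN-extensions whose associated unordered pairs lie in distinct $\aut(G_1)$-orbits of $[\overline{S}]^2$ could happen to be isomorphic. This is precisely where the $\OE$-hypothesis is essential: it allows Propositions \ref{fixed hnn abstract} and \ref{nonnormalhnnabs} to force any isomorphism (up to an inner twist) to carry the base group into the base group, so that it actually restricts to an element of $\aut(G_1)$. Once this structural input is in hand, the theorem is a clean counting consequence of Corollary \ref{number of isomorphisms}.
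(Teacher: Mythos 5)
Your proposal is correct and follows essentially the same route as the paper: the paper also partitions the extensions by the $\aut(G_1)$-orbit of the unordered pair of conjugacy classes (using Proposition \ref{fixed hnn abstract} to reduce each block to a fixed representative pair $\{H,K\}$) and then invokes Corollary \ref{number of isomorphisms} to count within each block. Your explicit verification of well-definedness of the partition is a detail the paper leaves implicit in its "preceding discussion," but it is the same argument.
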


Let now $H$ be a finite subgroup of an OE-group $G_1.$ Let $S_{_{H}} \subset S$ be the set of all finite subgroups of $G_1$ isomorphic to $H$.
By Proposition \ref{fixed hnn abstract}   isomorphic HNN-extensions with finite associated subgroups must have isomorphic associated subgroups and by Proposition \ref{2a} isomorphic profinite HNN-extensions with finite associated subgroups must have isomorphic associated subgroups as well. 
Therefore for the calculation of genus it is important to count isomorphism classes of HNN-extensions with fixed associated subgroup $H$ up to isomorphism.

To do this we restrict  the action given by (\ref{acao epsilon}) of $\aut(G_1)$ on $[\overline{S}]^2,$ to the set $[\overline{S_H}]^2$ of all unordered pairs of conjugacy classes of finite  subgroups of $G_1$ isomorphic to $H$. Denote by $\aut(G_1) \backslash [\overline{S_H}]^2$ the space of orbits of this action. Thus we have the following

\begin{thm} \label{classe geral familia F} Let $H$ and $K$ be finite subgroups of an $\OE$-group $G_1$. Then the number $\eta$ of isomorphism classes of abstract HNN-extensions $$\hnn(G_1, H_1,\\
K_1, f_1,t_1)$$ such that 
$H_1 \cong H \cong K \cong  K_1$ and $f_1 \in \Iso(H_1, K_1)$ is $$\eta  = \sum_{\{H, K\}} |\overline{\Gamma}_{HK} \backslash \Iso(H, K)|,$$ where $\{H, K\}$ ranges over representatives of the orbits $\aut(G_1)\backslash [\overline{S}_H]^2.$
\end{thm}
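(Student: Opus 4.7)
The strategy is to mirror the proof of Theorem \ref{general finite associated}, but restrict attention to the sub-collection $[\overline{S}_H]^2$ of unordered pairs of conjugacy classes of finite subgroups of $G_1$ isomorphic to $H$. The plan has three ingredients: (i) partition the HNN-extensions under consideration according to which $\aut(G_1)$-orbit their pair of associated subgroups lies in; (ii) show that the orbit is an isomorphism invariant; (iii) within a single orbit, invoke Corollary \ref{number of isomorphisms} to count via $\overline{\Gamma}_{HK}$-orbits of $\Iso(H,K)$.

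First I would observe that every HNN-extension $\hnn(G_1, H_1, K_1, f_1, t_1)$ to be counted has $H_1 \cong K_1 \cong H$, so its pair $\{\langle\langle H_1\rangle\rangle^{G_1}, \langle\langle K_1\rangle\rangle^{G_1}\}$ genuinely belongs to $[\overline{S}_H]^2$. Two such extensions lying in distinct $\aut(G_1)$-orbits of $[\overline{S}_H]^2$ cannot be isomorphic: by Proposition \ref{fixed hnn abstract}, an isomorphism between them would produce some $\psi \in \aut(G_1)$ sending $\{H_1,K_1\}$ to $\{H_1', K_1'\}$ up to simultaneous $G_1$-conjugation, forcing the two unordered pairs of conjugacy classes to coincide under the action defined in (\ref{acao epsilon}). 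This means the set of isomorphism classes splits as a disjoint union over $\aut(G_1) \backslash [\overline{S}_H]^2$.

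Next, fix a representative $\{H,K\}$ of an orbit in $\aut(G_1) \backslash [\overline{S}_H]^2$. Any HNN-extension in our collection whose associated-subgroup pair lies in this orbit is isomorphic to one of the form $\hnn(G_1, H, K, f, t)$ with $f \in \Iso(H,K)$: indeed, by Proposition \ref{fixed hnn abstract}, applying the appropriate $\psi \in \aut(G_1)$ and an inner automorphism of $G_1$ replaces the given extension by one whose associated subgroups are literally $H$ and $K$, and the twisted isomorphism $\tau_{g_1} f^{\psi^{-1}}$ lies in $\Iso(H,K)$. By Corollary \ref{number of isomorphisms} (a direct consequence of Theorem \ref{classnonnormalabs}), the number of isomorphism classes with these fixed associated subgroups equals $|\overline{\Gamma}_{HK} \backslash \Iso(H,K)|$. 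Summing this contribution over a set of orbit representatives yields the stated formula for $\eta$.

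The main technical point, and the only place where care is needed, is checking well-definedness: the summand $|\overline{\Gamma}_{HK} \backslash \Iso(H,K)|$ must not depend on which representative of the orbit is chosen. This is immediate from the equivariance of the whole construction: conjugating $\{H,K\}$ by $(g_1,\alpha) \in G_1 \rtimes \aut(G_1)$ conjugates both $\Gamma_{HK}$ and $\Iso(H,K)$, and (by Lemma \ref{carmo}) the element $\iota$ as well, so the resulting orbit spaces are in canonical bijection. With that observation, all three ingredients combine to give $\eta = \sum_{\{H,K\}} |\overline{\Gamma}_{HK} \backslash \Iso(H,K)|$ exactly as in Theorem \ref{general finite associated}.
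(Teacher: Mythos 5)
Your proposal is correct and follows essentially the same route as the paper: the paper also partitions the extensions by the $\aut(G_1)$-orbit of the unordered pair of conjugacy classes in $[\overline{S}_H]^2$ (using Proposition \ref{fixed hnn abstract} to see that this orbit is an isomorphism invariant and that any extension in a given orbit is isomorphic to one with associated subgroups equal to the chosen representatives), and then applies Corollary \ref{number of isomorphisms} to each orbit and sums. Your explicit check of well-definedness of the summand under change of representative is a point the paper leaves implicit, but it is the same argument.
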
 


\subsection{Normal HNN-extension} \label{cota bonita conjugados}


\subsubsection{Fixed associated subgroups}

The formula (\ref{formula numero hnn extensao}) is a consequence of  Theorem \ref{classnonnormalabs} and calculates the number of isomorphism classes of HNN-extensions of the form $\hnn(G_1,H,K,f,t)$ with $H,K$ fixed finite subgroups of $G_1$, where $ f\in \Iso(H,K)$.
The objective of this subsection is to make more explicit  formula (\ref{formula numero hnn extensao})  by showing a more elegant version in the case where $H$ and $K$ are conjugate finite subgroups in $G_1$.
 For this, we observe that if $G=\hnn(G_1,H,K,f,t)$ where $H^{g_1}=K$, for some $ g_1\in G_1$ then $G\cong \hnn(G_1, H, (\tau_{g_1}f)^{\pm 1}, t), \mbox{ where } (\tau_{g_1}f)^{\pm 1} \in \aut(H) \mbox{ and } g_1 \in G_1$ (see Lemma \ref{conjugados da normal}).  

\begin{deff}
An (profinite) abstract HNN-extension in which the stable letter normalizers associated subgroup will be called \textbf{normal}. 
\end{deff}

For normal HNN-extensions  it is possible to write the set of orbits of the Theorem \ref{classnonnormalabs} in a more natural way and this will be extremely useful to deduce several corollaries of  of this theorem.

From Proposition \ref{nonnormalhnnabs} we have that  $\overline{N}_{G_1}(H)$ acts on $\aut(H)$ by composition on the left: we will denote by $\overline{N}_{G_1}(H) \cdot f =\{\tau_g \circ f \,|\, g \in N_{G_1}(H), f \in \aut(H) \}$  the orbit of $f$ with respect to this action and the space of right cosets $\overline{N}_{G_1}(H) \backslash \aut(H)$ is the space of these orbits. Note that $\overline{N}_{G_1}(H) \triangleleft \overline{\aut}_{G_1}(H)$; furthermore, $ \overline{\aut}_{G_1}(H) \times C_2$ acts on the right on $\aut(H)$, where $C_2=\left\langle \iota\right\rangle$ acts by inversion, as follows 
\begin{equation}\label{acao1} f \cdot (\psi, \iota) = \psi^{-1} f^{-1}\psi = (f^{-1})^{\psi}=\left(f^{\psi}\right)^{-1},\end{equation}
where  $( \psi, \iota) \in  \overline{\aut}_{G_1}(H) \times C_2, f \in \aut(H).$ This induces a natural action on the right of the group $\overline{\aut}_{G_1}(H) \times C_2$ on the space of orbits $\overline{N}_{G_1}(H) \backslash \aut(H)$, given by
\begin{equation} \label{acao2} \left( \overline{N}_{G_1}(H) \cdot f \right) \cdot (\psi, \iota) = \overline{N}_{G_1}(H) \cdot  (f^{-1})^{\psi}=\overline{N}_{G_1}(H) \cdot \left(f^{\psi}\right)^{-1},\end{equation}
where $( \psi, \iota) \in  \overline{\aut}_{G_1}(H) \times C_2, f \in \aut(H).$ We will denote the space of orbits of the action given in (\ref{acao2}) as $\overline{N}_{G_1}(H) \backslash \aut(H)/ ( \overline{\aut}_{G_1}(H) \times C_2)$; this is not the space of double cosets as $C_2$ is not a subgroup of $\aut(H)$ and the action of  $\overline{\aut}_{G_1}(H)$ is by conjugation.

From  Formulas (\ref{acao1}) and (\ref{acao2}) it can be seen that the set of orbits of the action first on the left by $\overline{N}_{G_1}(H)$ in $\aut(H)$ followed by the action of $ \overline{\aut}_{G_1}(H) \times C_2$ on $\aut(H)$ on the right  is in bijection with $\overline{\Gamma}_{HH} \backslash \aut(H)$, so from now on we will identify $\overline{\Gamma}_{HH} \backslash \aut(H) = \overline{N}_{G_1}(H) \backslash \aut(H)/ ( \overline{\aut}_{G_1}(H) \times C_2)$. 

Thus by 
Corollary \ref{number of isomorphisms} we have the following

\begin{thm} Let $G_1$ be an $\OE$-group and $H$ be a fixed finite subgroup of $G_1$.
Then there is a bijection between   the set of orbits \begin{equation} \label{a bijecao das orbitas} \overline{N}_{G_1}(H) \backslash \aut(H)/ ( \overline{\aut}_{G_1}(H) \times C_2) \end{equation} and the set of isomorphism classes of HNN-extensions of the form $\hnn(G_1, H, f, t),$ where $f\in \aut(H)$.\end{thm}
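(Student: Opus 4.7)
The plan is to derive this theorem as an essentially immediate consequence of Corollary \ref{number of isomorphisms} specialized to the case $K=H$, combined with an unpacking of the action of $\overline{\Gamma}_{HH}$ on $\aut(H)$ into the two separate actions of $\overline{N}_{G_1}(H)$ on the left and $\overline{\aut}_{G_1}(H) \times C_2$ on the right.

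First I would invoke Corollary \ref{number of isomorphisms} with $K=H$: the set of isomorphism classes of HNN-extensions $\hnn(G_1,H,f,t)$, $f\in \aut(H)$, is in bijection with $\overline{\Gamma}_{HH}\backslash \aut(H)$. (Here we also use Lemma \ref{conjugados da normal} implicitly, since every such HNN-extension is normal.) By Remark \ref{novaacao2}, the group $\overline{\Gamma}_{HH}$ is generated by the image $\widetilde\Gamma_H$ of $\Gamma_H = N_{G_1}(H)\rtimes \aut_{G_1}(H)$ in $Sym(\aut(H))$ together with the inversion $\iota$.

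Next I would describe how $\overline{\Gamma}_{HH}$ acts on $\aut(H)$ concretely. An element $(g,\alpha)\in \Gamma_H$ acts on $f\in \aut(H)$ by $(g,\alpha)\cdot f = \tau_g\, \alpha f\alpha^{-1}$, by formula (\ref{acaosemidireta}). Splitting this, the subgroup $N_{G_1}(H)\leq \Gamma_H$ gives left multiplication by $\overline{N}_{G_1}(H)$ on $\aut(H)$, while $\aut_{G_1}(H)$ gives right action by conjugation, which factors through $\overline{\aut}_{G_1}(H)$. The element $\iota$ acts by $f\mapsto f^{-1}$. These combine into the right action of $\overline{\aut}_{G_1}(H)\times C_2$ given by (\ref{acao1}), which is well-defined since inversion commutes with conjugation up to the identity $(f^{-1})^\psi = (f^\psi)^{-1}$.

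The crucial point is that the left action by $\overline{N}_{G_1}(H)$ and the right action by $\overline{\aut}_{G_1}(H)\times C_2$ commute: for $\tau_g \in \overline{N}_{G_1}(H)$, $\psi\in \overline{\aut}_{G_1}(H)$ and $f\in \aut(H)$, one checks that $(\tau_g f)^\psi = \tau_{\psi^{-1}(g)} f^\psi$ and $\psi^{-1}(g)\in N_{G_1}(H)$ because $\psi\in \aut_{G_1}(H)$ preserves $H$ and hence preserves $N_{G_1}(H)$; similarly $(\tau_g f)^{-1} = f^{-1}\tau_{g^{-1}} = \tau_{f^{-1}(g^{-1})}f^{-1}$ with $f^{-1}(g^{-1})$ lying in... well, actually inversion is slightly trickier, but the end result is exactly the induced right action of $C_2$ on the quotient $\overline{N}_{G_1}(H)\backslash \aut(H)$ displayed in (\ref{acao2}). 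Hence the $\overline{\Gamma}_{HH}$-orbits on $\aut(H)$ coincide with the orbits of the combined two-sided action, giving the identification $\overline{\Gamma}_{HH}\backslash \aut(H) = \overline{N}_{G_1}(H)\backslash \aut(H)/(\overline{\aut}_{G_1}(H)\times C_2)$ and hence the asserted bijection.

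The main obstacle here is purely bookkeeping: verifying that the interaction of the left $N_{G_1}(H)$-action, the right conjugation action of $\aut_{G_1}(H)$, and the inversion $\iota$ is consistent enough that the compound quotient notation in (\ref{acao2}) genuinely represents the $\overline{\Gamma}_{HH}$-orbits. In particular one must check that $\iota$ really does descend to a well-defined action on $\overline{N}_{G_1}(H)\backslash \aut(H)$, which amounts to noting that inverting $\tau_g f$ yields an element of $\overline{N}_{G_1}(H)\cdot f^{-1}$ since $N_{G_1}(H)$ is closed under taking inverses conjugated by $f$. Once these compatibilities are recorded, the theorem follows directly.
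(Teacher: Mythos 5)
Your overall route is the same as the paper's: specialize Corollary \ref{number of isomorphisms} to $K=H$ and identify the $\overline{\Gamma}_{HH}$-orbits on $\aut(H)$ with the orbits of the combined left action of $\overline{N}_{G_1}(H)$ and right action of $\overline{\aut}_{G_1}(H)\times C_2$. The structure is right, but the one place where you try to justify the delicate point is wrong as stated. You claim that $\iota$ descends to $\overline{N}_{G_1}(H)\backslash\aut(H)$ because ``$N_{G_1}(H)$ is closed under taking inverses conjugated by $f$,'' i.e.\ that $f^{-1}\,\overline{N}_{G_1}(H)\,f\subseteq\overline{N}_{G_1}(H)$ for arbitrary $f\in\aut(H)$. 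This is false in general: $\overline{N}_{G_1}(H)$ is normal in $\overline{\aut}_{G_1}(H)$, but not in all of $\aut(H)$. The paper's own Example \ref{dieadra ove klein} ($G_1=D_8$, $H$ a Klein subgroup) gives $\aut(H)\cong S_3$ and $\overline{N}_{G_1}(H)\cong C_2$ non-normal, so there inversion genuinely fails to be well defined on the left coset space by itself, and also note that $f^{-1}(g^{-1})$ does not even make sense for $g\in N_{G_1}(H)\setminus H$.

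The fix is available and cheap, in two equivalent forms. Either compute $(\nu f)^{-1}=f^{-1}\nu^{-1}=\nu^{-1}\cdot\bigl(\nu f^{-1}\nu^{-1}\bigr)=\nu^{-1}\,(f^{-1})^{\nu^{-1}}$ and use that $\nu^{-1}\in\overline{N}_{G_1}(H)\leq\overline{\aut}_{G_1}(H)$, so that inversion is well defined only after you also allow the simultaneous conjugation action of $\overline{\aut}_{G_1}(H)$ --- i.e.\ the ``double quotient'' makes sense as the orbit space of the combined relation, not as an iterated quotient in which $\iota$ acts on left cosets alone. Or, more structurally, invoke Lemma \ref{semidiretocomC2} and Remark \ref{novaacao2}: $\overline{\Gamma}_{HH}=\widetilde\Gamma_{HH}\cdot\langle\iota\rangle$ is a group because $\iota$ normalizes $\widetilde\Gamma_{HH}$, its orbit on $f$ is exactly $\bigcup_{\psi,\epsilon}\overline{N}_{G_1}(H)\cdot(f^{\epsilon})^{\psi}$, and since group orbits partition $\aut(H)$ the set denoted $\overline{N}_{G_1}(H)\backslash\aut(H)/(\overline{\aut}_{G_1}(H)\times C_2)$ is well defined and equals $\overline{\Gamma}_{HH}\backslash\aut(H)$; Corollary \ref{number of isomorphisms} then finishes the proof. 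With that replacement your argument is complete and coincides with the paper's.
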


 Note that $\inn(H)\leq \overline{N}_{G_1}(H)$; so we can pass to $\out(H)$ using tilde instead of overline. 
Therefore
we can state the following

 \begin{cor}\label{corhnn} Let $H$ and $K$ be fixed conjugate finite subgroups of an $\OE$-group $G_1$. Then the number $n_{_{H}}$  
 of isomorphism classes of HNN-extensions $\hnn(G_1, H, K, f,t)$,  $f \in \Iso(H, K)$  
 is $$n_{_{H}}=|\widetilde{N}_{G_1}(H) \backslash \out(H)/  ( \widetilde{\aut}_{G_1}(H) \times C_2) |.$$ 
 \end{cor}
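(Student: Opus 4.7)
The plan is to reduce the statement to the previous theorem, which describes the isomorphism classes of \emph{normal} HNN-extensions with base $G_1$ and fixed associated subgroup $H$, and then to pass from $\aut(H)$ to $\out(H)$ by collapsing the inner automorphisms.

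First, since $H$ and $K$ are conjugate in $G_1$, say $H^{g_1} = K$ for some $g_1 \in G_1$, Lemma \ref{conjugados da normal} yields an isomorphism
\[ \hnn(G_1, H, K, f, t) \cong \hnn(G_1, H, H, \tau_{g_1} \circ f, t_1), \]
so every HNN-extension $\hnn(G_1, H, K, f, t)$ with $f \in \Iso(H, K)$ is isomorphic to a normal HNN-extension $\hnn(G_1, H, f', t_1)$ with $f' \in \aut(H)$. Hence counting isomorphism classes of the former family is equivalent to counting isomorphism classes of the latter normal family. I would verify (using Proposition \ref{fixed hnn abstract}) that this reduction is faithful: two HNN-extensions in the normal form are isomorphic (as elements of the original family) if and only if they are isomorphic as normal HNN-extensions.

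Next I would invoke the theorem stated immediately before the corollary, which produces a bijection between the isomorphism classes of normal HNN-extensions $\hnn(G_1, H, f, t)$ (with $f \in \aut(H)$) and the set of orbits
\[ \overline{N}_{G_1}(H) \backslash \aut(H) / (\overline{\aut}_{G_1}(H) \times C_2). \]
Combined with the first step, this already counts the classes in terms of $\aut(H)$.

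Finally, to obtain the formula stated in terms of $\out(H)$, I would observe that $\inn(H) \leq \overline{N}_{G_1}(H)$, since every $\tau_h$ with $h \in H$ arises from the conjugation action of $H \leq N_{G_1}(H)$. Consequently, the left action of $\overline{N}_{G_1}(H)$ on $\aut(H)$ descends to a well-defined left action of $\widetilde{N}_{G_1}(H) = \overline{N}_{G_1}(H)/\inn(H)$ on $\out(H) = \aut(H)/\inn(H)$, and the right action of $\overline{\aut}_{G_1}(H) \times C_2$ (with $C_2$ acting by inversion) likewise descends to a well-defined action of $\widetilde{\aut}_{G_1}(H) \times C_2$ on $\out(H)$. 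The natural map $\aut(H) \to \out(H)$ therefore induces a bijection
\[ \overline{N}_{G_1}(H) \backslash \aut(H) / (\overline{\aut}_{G_1}(H) \times C_2) \;\longleftrightarrow\; \widetilde{N}_{G_1}(H) \backslash \out(H) / (\widetilde{\aut}_{G_1}(H) \times C_2), \]
yielding the claimed formula.

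The only subtlety, which I view as the main point to check carefully, is that the compatibility of the two actions (left by $\overline{N}_{G_1}(H)$, right by $\overline{\aut}_{G_1}(H) \times C_2$) survives quotienting by $\inn(H)$ on both sides simultaneously; this reduces to verifying that the inversion action of $C_2$ descends to $\out(H)$, which is immediate because $\inn(H)$ is closed under inversion.
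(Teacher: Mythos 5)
Your proposal is correct and follows essentially the same route as the paper: reduce to the normal case via Lemma \ref{conjugados da normal}, invoke the bijection with $\overline{N}_{G_1}(H) \backslash \aut(H) / (\overline{\aut}_{G_1}(H) \times C_2)$ from the preceding theorem, and pass to $\out(H)$ using $\inn(H) \leq \overline{N}_{G_1}(H)$ (and $\inn(H) \leq \overline{\aut}_{G_1}(H)$). The paper states this passage in one line, so your explicit check that both actions descend compatibly is a reasonable elaboration rather than a deviation.
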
  



\begin{cor}\label{g1} 
Suppose that $\widetilde{\aut}_{G_1}(H)$ is central in $\out(H)$. Then $\widetilde{N}_{G_1}(H) \\ \unlhd \out(H)$ and the number of isomorphism classes of groups of the form $G=\hnn(G_1,H,f, t)$, with $f\in \aut(H)$ is equal to $\frac{n+d}{2}$, where $d$ is the number of elements of order $\leq 2$ in the quotient  $\out(H)/\widetilde{N}_{G_1}(H)$ and $n=|\out(H)/\widetilde{N}_{G_1}(H)|$.
\end{cor}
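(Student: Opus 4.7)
The plan is to invoke Corollary \ref{corhnn} and then exploit the centrality hypothesis to collapse two of the three actions involved in the double-coset count to trivial ones, leaving only a single $C_2$-action by inversion that is easy to handle via Burnside.

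First, by Corollary \ref{corhnn} the number of isomorphism classes is
$$\bigl|\widetilde{N}_{G_1}(H)\,\backslash\,\out(H)\,/\,(\widetilde{\aut}_{G_1}(H)\times C_2)\bigr|,$$
where $\widetilde{\aut}_{G_1}(H)$ acts on the right by conjugation and $C_2=\langle\iota\rangle$ acts on the right by inversion. Observe that $\widetilde{N}_{G_1}(H)\subseteq\widetilde{\aut}_{G_1}(H)$ (since conjugation by an element of $N_{G_1}(H)$ is an automorphism of $G_1$ leaving $H$ invariant), so the centrality of $\widetilde{\aut}_{G_1}(H)$ in $\out(H)$ forces $\widetilde{N}_{G_1}(H)$ to be central in $\out(H)$, and in particular $\widetilde{N}_{G_1}(H)\unlhd\out(H)$. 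This gives the first assertion of the corollary and allows us to form the quotient group $Q:=\out(H)/\widetilde{N}_{G_1}(H)$ of order $n$.

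Next I would check that the three actions descend compatibly to $Q$: the conjugation action of $\widetilde{\aut}_{G_1}(H)$ on $\out(H)$ is trivial by centrality, and the left multiplication action of $\widetilde{N}_{G_1}(H)$ becomes trivial on the quotient $Q$. The inversion action of $C_2$ passes to a well-defined action $\iota\colon xN\mapsto x^{-1}N$ on $Q$ because $N:=\widetilde{N}_{G_1}(H)$ is normal. Hence
$$\bigl|\widetilde{N}_{G_1}(H)\,\backslash\,\out(H)\,/\,(\widetilde{\aut}_{G_1}(H)\times C_2)\bigr|=|C_2\backslash Q|,$$
where $C_2$ acts on $Q$ by inversion.

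Finally, by the Cauchy--Frobenius (Burnside) lemma,
$$|C_2\backslash Q|=\tfrac{1}{2}\bigl(|Q|+|Q^{\iota}|\bigr),$$
where $Q^{\iota}=\{x\in Q\mid x=x^{-1}\}$ is exactly the set of elements of order at most $2$ in $Q$. Thus $|Q^{\iota}|=d$ and $|Q|=n$, giving $(n+d)/2$ as claimed. The only step requiring any care is verifying that inversion descends to a well-defined $C_2$-action on $Q$, which uses normality of $\widetilde{N}_{G_1}(H)$ established in the first paragraph; everything else is formal.
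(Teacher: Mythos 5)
Your proposal is correct and follows essentially the same route as the paper: both deduce normality of $\widetilde{N}_{G_1}(H)$ from centrality of $\widetilde{\aut}_{G_1}(H)$, observe that the conjugation action becomes trivial so the count reduces to the inversion action of $C_2$ on the quotient $\out(H)/\widetilde{N}_{G_1}(H)$, and arrive at $(n+d)/2$. The only cosmetic difference is that you invoke the Cauchy--Frobenius lemma where the paper partitions the quotient into elements of order $\le 2$ (each a singleton orbit) and elements of order $>2$ (which pair off under inversion), yielding $\frac{n-d}{2}+d$ -- the same computation.
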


\begin{proof} Since $\widetilde{N}_{G_1}(H)\leq \widetilde{\aut}_{G_1}(H)$ and $\widetilde{\aut}_{G_1}(H)$ is central in $\out(H)$, we have $\widetilde{N}_{G_1}(H)^{\gamma}=\widetilde{N}_{G_1}(H)$ for all $\gamma \in \out(H)$, i.e $\widetilde{N}_{G_1}(H)\unlhd \out(H)$.   Denote by $\tilde{f_1}$ the image of $f_1 \in \aut(H)$ in $\out(H).$ 
By formula (\ref{a bijecao das orbitas}) combined with Remark \ref{novaacao2} and Corollary \ref{corhnn}, the isomorphism class $[B]$ of $B=\hnn(G_1, H, f_1), \mbox{ with } f_1\in \aut(H)$ corresponds to $$\left\{ \tilde{\tau}_{k}  \left(\tilde{f}_1\right)^{\pm 1} \ | \ k
\in  N_{G_1}(H) \right\}=[f_1],$$ where $[f_1]$ denotes the orbit of $f_1.$ Note that we can partition $\widetilde{N}_{G_1}(H) \backslash \out(H)$ as $\tilde{N}_{1} \dot\cup 
\tilde{N}_{2},$ where $\tilde{N}_{1}$ is the set containing the orbits $\widetilde{N}_{G_1}(H) \cdot \tilde{f}_1$ such that $|\tilde{f}_1|> 2$ and $\tilde{N}_{2}$ is the set containing the orbits $\widetilde{N}_{G_1}(H) \cdot \tilde{f}_1$ such that $|\tilde{f}_1| \leq 2.$ Thus $$ (\out(H)/\widetilde{N}_{G_1}(H)) /  ( \widetilde{\aut}_{G_1}(H) \times C_2) =  \tilde{N}_1 /  ( \widetilde{\aut}_{G_1}(H) \times C_2) \dot \cup \tilde{N}_2 /  ( \widetilde{\aut}_{G_1}(H) \times C_2).$$ By (\ref{acao2}) and (\ref{a bijecao das orbitas}) we have that $|\tilde{N}_2 / ( \widetilde{\aut}_{G_1}(H) \times C_2)|=d$ and $|\tilde{N}_{1} / ( \widetilde{\aut}_{G_1}(H) \times C_2)|= \frac{n - d}{2}$ and so all together the number of orbits is $\frac{n - d}{2}+d=\frac{n + d}{2}$. 
\end{proof}


\begin{rem}
We note that Corollary \ref{g1} applies when $H$ is a finite cyclic subgroup in an $\OE$-group $G_1$. 
\end{rem}

Recall that a subgroup $H$ of $G$ is {\it malnormal} if $H\cap H^g=\{1\}$ for every $g \in G \backslash H$.


\begin{cor} \label{malnormal} 
If $\widetilde{\aut}_{G_1}(H)$ is central in $\out(H)$ and $\widetilde{N}_{G_1}(H)=1$ (in particular if $H$ is malnormal in $G_1$), then the number  of isomorphism classes of groups of the form $G = \hnn(G_1, H, f, t),$ with
$f \in \aut(H)$ is equal to $\frac{|\out(H)| +d}{2},$ where $d$ is the number of elements of order $\leq 2$ in $\out(H).$ 
\end{cor}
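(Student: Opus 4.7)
The plan is to deduce this corollary directly from Corollary \ref{g1}. Under the stated hypothesis that $\widetilde{N}_{G_1}(H) = 1$, the quotient $\out(H)/\widetilde{N}_{G_1}(H)$ collapses to $\out(H)$ itself. Hence in the formula from Corollary \ref{g1}, we have $n = |\out(H)|$ and $d$ becomes the number of elements of order $\leq 2$ in $\out(H)$, yielding $\frac{|\out(H)| + d}{2}$ isomorphism classes.

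The only additional thing to verify is the parenthetical claim: malnormality of $H$ in $G_1$ implies $\widetilde{N}_{G_1}(H) = 1$. First, I would note that if $H$ is malnormal and nontrivial, then any $g \in N_{G_1}(H)$ must satisfy $H^g = H$, so $H \cap H^g = H \neq 1$, which by malnormality forces $g \in H$. Thus $N_{G_1}(H) = H$. Since $\widetilde{N}_{G_1}(H)$ is the image of $N_{G_1}(H)$ in $\out(H) = \aut(H)/\inn(H)$ via $n \mapsto \tau_{n^{-1}}|_H$, and since elements of $H$ act on $H$ as inner automorphisms, the image is trivial. (The trivial case $H = 1$ is vacuous.)

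Since both hypotheses of Corollary \ref{g1} are satisfied ($\widetilde{\aut}_{G_1}(H)$ central in $\out(H)$, and the now-verified $\widetilde{N}_{G_1}(H) = 1$ implying in particular $\widetilde{N}_{G_1}(H) \unlhd \out(H)$), the result follows by substitution. There is essentially no obstacle: this is a purely formal specialization of Corollary \ref{g1}, with the only minor subtlety being the elementary verification that malnormality implies triviality of the image of $N_{G_1}(H)$ in $\out(H)$.
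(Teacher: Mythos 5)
Your proposal is correct and matches the paper's (implicit) argument: the corollary is stated without proof as the immediate specialization of Corollary \ref{g1} to the case $\widetilde{N}_{G_1}(H)=1$, which is exactly what you do, and your verification that malnormality forces $N_{G_1}(H)=H$ and hence trivial image in $\out(H)$ is the intended justification of the parenthetical.
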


\subsubsection{Not fixed associated subgroups}

Let $L\leq G_1$ be a finite subgroup of $G_1$ isomorphic to $H$, i.e. $L \in S_H.$ Then by  Corollary \ref{corhnn} the number of isomorphism classes of abstract normal HNN-extensions  $\hnn(G_1, L, f, t)$ is  $$n_{_{L}}:=| \widetilde{N}_{G_1}(L) \backslash \out(L)/ ( \widetilde{\aut}_{G_1}(L) \times C_2) |.$$ Note that in our case $K$ belongs always to the same orbit as $H$ and so the action that we consider is $\aut(G_1)$  on $\overline S_H$ or simply on $S_H$. From Theorem \ref{classe geral familia F} then, we deduce then the following  



\begin{thm} Let $H$  be  a finite subgroup of the $\OE$-group $G_1$. Then the number of isomorphism classes of abstract HNN-extensions $\hnn(G_1, L, K, f, t)$ with conjugate associated subgroups isomorphic to $H$ equals $$\sum_{L} n_{_{L}},$$ where $L$ ranges over representatives of the orbits  $\aut(G_1)\backslash S_H$.
\end{thm}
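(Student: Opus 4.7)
My plan is to reduce the problem to counting normal HNN-extensions with a \emph{fixed} associated subgroup, and then sum across the $\aut(G_1)$-orbits of $S_H$. First, by Lemma \ref{conjugados da normal}, every HNN-extension $\hnn(G_1,L,K,f,t)$ with $L,K$ conjugate finite subgroups of $G_1$ and $L\cong K\cong H$ is isomorphic to a normal HNN-extension $\hnn(G_1,L,f',t)$ with $L\in S_H$ and $f'\in\aut(L)$. Hence it suffices to count, up to isomorphism, normal HNN-extensions of $G_1$ whose associated subgroup lies in $S_H$.

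Next I would show that two normal HNN-extensions $\hnn(G_1,L_1,f_1,t_1)$ and $\hnn(G_1,L_2,f_2,t_2)$ with $L_i\in S_H$ are isomorphic only if $L_1$ and $L_2$ lie in the same $\aut(G_1)$-orbit of $S_H$. Indeed, by Proposition \ref{fixed hnn abstract} any isomorphism between them can be realized by a map $\psi$ with $\psi(G_1)=G_1$ sending $L_1$ to a $G_1$-conjugate of $L_2$; composing $\psi|_{G_1}$ with an inner automorphism of $G_1$ gives an element of $\aut(G_1)$ carrying $L_1$ to $L_2$, so $L_1$ and $L_2$ belong to the same orbit. (Here we use that in the normal setting the associated subgroups coincide with $L_i$, and that $\inn(G_1)\subset\aut(G_1)$, so $\aut(G_1)$-orbits on $S_H$ coincide with $\out(G_1)$-orbits on $\overline{S}_H$.)

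Conversely, whenever $L_2=\alpha(L_1)$ for some $\alpha\in\aut(G_1)$, extending $\alpha$ by $t_1\mapsto t_2$ yields an isomorphism between $\hnn(G_1,L_1,f,t_1)$ and $\hnn(G_1,L_2,\alpha f\alpha^{-1},t_2)$. Thus $\alpha$ induces a bijection between the sets of isomorphism classes of normal HNN-extensions with associated subgroup $L_1$ and with associated subgroup $L_2$, and so the total count is partitioned by the $\aut(G_1)$-orbits on $S_H$.  Choosing a set of representatives $L$ for these orbits, and applying Corollary \ref{corhnn} to each $L$, the contribution of the orbit of $L$ is exactly $n_{L}=|\widetilde N_{G_1}(L)\backslash\out(L)/(\widetilde\aut_{G_1}(L)\times C_2)|$; summing over representatives yields $\sum_L n_L$. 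The step that requires the most care is the ``only if'' direction of the second paragraph, since one has to absorb the $G_1$-conjugation ambiguity in the target into the action of $\aut(G_1)$ on $S_H$; once that is done, everything else follows from the preceding results.
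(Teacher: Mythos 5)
Your proof is correct and follows essentially the same route as the paper: the paper deduces this theorem from Theorem \ref{classe geral familia F} (whose content is exactly your orbit decomposition under $\aut(G_1)$, obtained from Proposition \ref{fixed hnn abstract}) together with Lemma \ref{conjugados da normal} and the per-orbit count $n_L$ from Corollary \ref{corhnn}. You simply unpack the orbit argument explicitly instead of citing it, and you correctly identify and handle the one delicate point, namely absorbing the $G_1$-conjugation ambiguity into the $\aut(G_1)$-action on $S_H$.
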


\begin{cor} The number of isomorphism classes of abstract HNN-extensions $\hnn(G_1, L, K, f, t)$ with conjugate $L$ and $K$   in $G_1$ and isomorphic to $H$ is less or  equal to $$|\aut(G_1)\backslash {S}_H| \cdot |\out(H)/C_2|.$$
\end{cor}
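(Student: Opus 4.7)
The corollary is a direct counting consequence of the theorem immediately preceding it, so the plan is to produce the bound term by term and then sum over the orbits.

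First I would invoke the previous theorem: the number of isomorphism classes of HNN-extensions $\hnn(G_1,L,K,f,t)$ with conjugate associated subgroups $L,K$ isomorphic to $H$ equals
\[
\sum_{L} n_{_{L}} = \sum_{L} \left|\widetilde{N}_{G_1}(L)\backslash \out(L)/(\widetilde{\aut}_{G_1}(L)\times C_2)\right|,
\]
where $L$ runs over a transversal for the $\aut(G_1)$-orbits in $S_H$. So it suffices to control each summand $n_{_L}$ uniformly by $|\out(H)/C_2|$.

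Next I would fix a representative $L\in S_H$ and bound $n_{_L}$. Since $L\cong H$, any isomorphism $L\to H$ induces an isomorphism $\out(L)\cong \out(H)$ equivariant with respect to the inversion action of $C_2$. The set counted by $n_{_L}$ is the orbit space of $\out(L)$ under the combined left action of $\widetilde{N}_{G_1}(L)$, the right conjugation action of $\widetilde{\aut}_{G_1}(L)$ and the inversion action of $C_2$; forgetting the first two actions can only increase the number of orbits, so
\[
n_{_L}\;\le\;|\out(L)/C_2|\;=\;|\out(H)/C_2|.
\]

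Finally I would sum over a transversal $L$ of $\aut(G_1)\backslash S_H$, obtaining
\[
\sum_{L} n_{_L}\;\le\;|\aut(G_1)\backslash S_H|\cdot |\out(H)/C_2|,
\]
which is the stated bound. There is no real obstacle here beyond checking that the $C_2$-action (inversion) is intrinsic to $\out(L)$ and is transported across the identification $\out(L)\cong\out(H)$, which is immediate because any group isomorphism commutes with inversion.
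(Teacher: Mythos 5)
Your proposal is correct and is essentially the argument the paper intends: the corollary follows from the preceding theorem by bounding each summand $n_{_L}=|\widetilde{N}_{G_1}(L)\backslash\out(L)/(\widetilde{\aut}_{G_1}(L)\times C_2)|$ by the number of inversion-orbits $|\out(L)/C_2|=|\out(H)/C_2|$ (coarser group action gives no more orbits than a sub-action) and multiplying by the number of orbit representatives. The paper leaves this step implicit, and your write-up supplies exactly the missing details, including the correct observation that the $C_2$-inversion action transports along any isomorphism $\out(L)\cong\out(H)$.
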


 \begin{rem} If $|\out(H)|$ is odd then  $$| \out(H)/ C_2|=\frac{|\out(H)|-1}{2} + 1.$$ Therefore, in this case the number of isomorphism classes in the previous Corollary is less than or equal to $$|\aut(G_1)\backslash S_H| \cdot \left( \frac{|\out(H)|-1}{2} + 1\right).$$
\end{rem}



Now we deduce the formula for the number of isomorphism classes of all HNN-extensions with finite conjugate associated subgroups.

\begin{thm} Let $G_1$ be an $\OE$-group. Then the number of isomorphism classes of abstract HNN-extensions $\hnn(G_1, L, K, f, t)$ with finite conjugate associated subgroups  equals $$\sum_{L} n_{_{L}},$$ where $L$ ranges over representatives of the orbits  $\aut(G_1)\backslash S$.
\end{thm}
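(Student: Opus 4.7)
The plan is to bootstrap from the theorem stated immediately above (which handled HNN-extensions whose associated subgroups are all isomorphic to a fixed finite subgroup $H$) by partitioning the set $S$ of all finite subgroups of $G_1$ according to isomorphism type. The key observation is that the family counted by the preceding theorem, indexed by isomorphism classes $[H]$ of finite subgroups of $G_1$, partitions exactly the family counted by the present theorem.

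First, I would invoke Proposition \ref{fixed hnn abstract}: if $\hnn(G_1, L, K, f, t) \cong \hnn(G_1, L', K', f', t')$ are two abstract HNN-extensions with finite conjugate associated subgroups, then the isomorphism can be adjusted to send $G_1$ to $G_1$ and $L$ to $L'$, so in particular $L \cong L'$. This shows that the set of isomorphism classes of the HNN-extensions in question decomposes as a disjoint union indexed by isomorphism types of finite subgroups of $G_1$; no HNN-extension is counted under more than one isomorphism type. Within each isomorphism type $[H]$, the preceding theorem gives the count
\[
\sum_{L} n_{_{L}}, \qquad L \in \aut(G_1)\backslash S_H.
\]

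Second, I would note the disjoint decomposition $S = \bigsqcup_{[H]} S_H$, where $[H]$ runs over isomorphism types of finite subgroups of $G_1$ and $S_H$ is the set of finite subgroups of $G_1$ isomorphic to $H$. Since $\aut(G_1)$ preserves isomorphism type of subgroups, this decomposition is $\aut(G_1)$-invariant, hence
\[
\aut(G_1) \backslash S = \bigsqcup_{[H]} \aut(G_1) \backslash S_H.
\]
Summing the preceding theorem over all isomorphism classes $[H]$ therefore gives
\[
\sum_{[H]} \sum_{L \in \aut(G_1)\backslash S_H} n_{_{L}} \; = \; \sum_{L \in \aut(G_1)\backslash S} n_{_{L}},
\]
which is the claimed formula.

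There is no real obstacle here beyond bookkeeping: the nontrivial content (the value $n_L$ coming from Corollary \ref{corhnn} and the reduction of conjugate-associated HNN-extensions to normal ones via Lemma \ref{conjugados da normal}, together with the rigidity given by Proposition \ref{fixed hnn abstract}) has already been established. The only point requiring care is verifying that the $\aut(G_1)$-orbit decomposition of $S$ is the disjoint union of the orbit decompositions of the $S_H$; this is immediate because automorphisms of $G_1$ send finite subgroups of $G_1$ to isomorphic finite subgroups.
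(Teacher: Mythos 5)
Your proposal is correct and follows essentially the same route as the paper, which states this theorem as an immediate consequence of the preceding one (the case of associated subgroups isomorphic to a fixed $H$) obtained by summing over isomorphism types; the paper leaves the bookkeeping implicit, and your write-up simply makes it explicit. The two points you highlight --- that isomorphic HNN-extensions must have isomorphic associated subgroups (Proposition \ref{fixed hnn abstract}) and that $\aut(G_1)\backslash S$ decomposes as the disjoint union of the $\aut(G_1)\backslash S_H$ --- are exactly what is needed.
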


\begin{cor} The number of isomorphism classes of abstract HNN-extensions $\hnn(G_1, L, K, f, t)$ with $L$ and $K$ finite and conjugate  in $G_1$  is less or  equal to $$|\aut(G_1)\backslash {S}| \cdot |\out(H)/C_2|.$$
\end{cor}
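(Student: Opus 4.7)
The plan is to deduce this bound as an immediate consequence of the theorem immediately preceding, which furnishes the exact count of isomorphism classes as
\begin{equation*}
\sum_{L} n_{L}, \qquad n_{L} = |\widetilde{N}_{G_1}(L)\backslash \out(L)/(\widetilde{\aut}_{G_1}(L)\times C_2)|,
\end{equation*}
where $L$ runs over a system of orbit representatives for the $\aut(G_1)$-action on $S$. In particular this sum has exactly $|\aut(G_1)\backslash S|$ terms, so the task reduces to bounding each summand $n_L$ by $|\out(H)/C_2|$.

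The key (and only) step is a trivial coarsening argument: for each orbit representative $L$, I would bound $n_L\leq |\out(L)/C_2|$. The point is that $\widetilde{N}_{G_1}(L)\backslash \out(L)/(\widetilde{\aut}_{G_1}(L)\times C_2)$ is a further quotient of $\out(L)/C_2$, since one may first collapse $\out(L)$ by $C_2$ (the inversion action) and then identify additionally by the left action of $\widetilde{N}_{G_1}(L)$ and by the right conjugation action of $\widetilde{\aut}_{G_1}(L)$. A coarsening of an equivalence relation can only merge classes, never create them, so the canonical surjection
\begin{equation*}
\out(L)/C_2 \twoheadrightarrow \widetilde{N}_{G_1}(L)\backslash \out(L)/(\widetilde{\aut}_{G_1}(L)\times C_2)
\end{equation*}
delivers the desired inequality on cardinalities.

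Summing the bounds $n_L\leq |\out(L)/C_2|$ over the $|\aut(G_1)\backslash S|$ orbit representatives, and reading the symbol $H$ in the statement as (or as a uniform placeholder for) the corresponding $L$, one obtains
\begin{equation*}
\sum_{L} n_{L} \;\leq\; |\aut(G_1)\backslash S|\cdot |\out(H)/C_2|,
\end{equation*}
as claimed. I anticipate no substantive obstacle in this argument; the only mild subtlety is the implicit convention regarding $H$, since finite subgroups of $G_1$ lying in distinct $\aut(G_1)$-orbits need not have isomorphic outer automorphism groups, so the statement is to be read as a uniform bound in terms of a generic representative.
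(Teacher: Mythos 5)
Your proposal is correct and follows exactly the route the paper intends: the corollary is an immediate coarsening of the preceding theorem's exact count $\sum_L n_L$, bounding each $n_L=|\widetilde{N}_{G_1}(L)\backslash\out(L)/(\widetilde{\aut}_{G_1}(L)\times C_2)|$ by $|\out(L)/C_2|$ since the full orbit relation refines through the inversion action alone. Your remark about $H$ being a uniform placeholder is also apt, since over all of $S$ (rather than $S_H$) the representatives $L$ need not share an isomorphism type, which is a genuine imprecision in the statement rather than in your argument.
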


\subsubsection{Examples}

\begin{exa} Let $G= G_1*_{H, \mu} (H\rtimes_{\alpha} \Z)$ be a free product with finite amalgamation, where $G_1$ is an $\OE$-group and  $\mu$ is a monomorphism from $H$ to $H\rtimes_{\alpha} \Z= H\rtimes_{\alpha} \left\langle t \right\rangle.$ Note that $G\cong \hnn(G_1, H, \gamma, t),$ where $\gamma = \alpha(t)\circ \mu \in \aut(H).$ This case was missing in the study of free products with finite amalgamation in \cite{BPZ2}. The results of this section fill this gap, in fact, the number of  isomorphism classes of such free products with amalgamation $ G_1*_{H, \mu} (H\rtimes_{\alpha} \Z)$  with $H$ fixed is 
$$n=| \widetilde{N}_{G_1}(H) \backslash \out(H)/ ( \widetilde{\aut}_{G_1}(H) \times C_2) |.$$
Moreover, if $H$ is malnormal, then 
$$n=| \out(H)/ ( \widetilde{\aut}_{G_1}(H) \times C_2) |.$$
\end{exa}

The following result is an adaptation of \cite[Example 4.8]{ BZ}.

\begin{exa}\label{bb}
Let $G_1$ be an $\OE$-group, $H\leq G_1$ such that $H$ is central cyclic of order $n$ in $G_1$. As $H$ is central in $G_1$ it follows that $\overline{N}_{G_1}(H)=1=\widetilde{N}_{G_1}(H)$. Note that 
$\aut(H)=\prod_{p| n} \aut(C_{p^m})=\prod_{p| n} \out(C_{p^m})=\out(H),$   where $p^m$ is the order of the $p$-Sylow subgroup of $H$. 
Note that
$$\aut(C_{p^m})=\left\{\begin{array}{l}
1, \mbox{  if } m=1 \mbox{ and } p=2,\\
C_{2}, \mbox{  if } m=2 \mbox{ and } p=2,\\
C_{2}\times C_{2^{m-2}}, \mbox{  if } m\geq3\mbox{ and } p=2,\\
C_{p-1}\times C_{p^{m-1}}, \mbox{  if } p \mbox{ odd.}
\end{array} \right.$$
Thus $\aut(H)$ is abelian, and moreover  $\aut(C_{p^m})$ contains $2$ elements of order $\leq 2$ if $p$ is odd and $4$ elements if  $p=2$ and $m\geq3$. Let $r \geq 0$ be the number of primes $p \neq 2$ dividing $n.$ Then by Corollary \ref{g1}  the number of isomorphism classes of the groups $G=\hnn(G_1, H, f, t)$, where $f \in \aut(H)$ equals to:
$$\left\{\begin{array}{l} 

\frac{\prod_{2\neq p|n} \left( (p-1)p^{m-1}\right) -2^r}{2} +2^r, \mbox{ if the 2-Sylow subgroup of } H \mbox { has order } \leq 2;\\
\\

\frac{2 \cdot  \prod_{2\neq p|n} \left( (p-1)p^{m-1}\right) -2^{r+1}}{2} +2^{r+1}, \mbox{ if the 2-Sylow subgroup of } H \mbox{ has  order } 4;\\
\\
\frac{2^{m-1} \cdot  \prod_{2\neq p|n} \left( (p-1)p^{m-1}\right) -2^{r+2}}{2} +2^{r+2}, \mbox{ if the 2-Sylow subgroup of } H \mbox{ has order } 2^m \\ \mbox{ with } m\geq 3.
\end{array} \right.$$

In particular, if $H\cong C_{p^m}$, then the number of isomorphism classes is equal to 
$$\left\{\begin{array}{l}
1, \mbox{  if } m=1 \mbox{ and } p=2,\\
2, \mbox{  if } m=2 \mbox{ and } p=2,\\
(2^{m-1}-4)/2 +4, \mbox{  if  }m\geq3 \mbox{ and } p=2, \\
\frac{(p-1)p^{m-1}-2}{2} +2,\mbox{ if } p \mbox{ odd}.
\end{array} \right.$$
\end{exa}


\begin{rem} \label{finite-by-cyclic} 
 
Let $G=\hnn(G_1, H, K, f, t)$ be an HNN-extension with finite associated subgroups such that $G_1=M_1\rtimes \Z,$ where $M_1$ is a finite subgroup, distinct from $H$ and $K.$ 
Then $G_1$ is 2-ended and hence is not an $\OE$-group. 
However, the results obtained in the previous sections for this $G_1$ remain valid.\end{rem}

\section{Profinite HNN-extensions} \label{secao 4}
\label{profinitehnnextensions}

In this section we will solve the isomorphism problem for profinite HNN-extensions.  

Let $G= \hnn(G_1, H, K, f, t)$ be a profinite HNN-extension. We define the set \begin{equation} \label{N mais}  N^{+}:=\{ gt^{- 1} \in N_{G}(H) \hspace{0.1cm} | \hspace{0.1cm}  \overline{\left\langle g, G_1 \right\rangle}=G\}.\end{equation} 
Consider $g_1\in N_{G_1}(H)$ and $g=g_1t$. Then we have $g_1=gt^{-1}\in N^{+}.$ Therefore \begin{equation} \label{NG1H em N+} N_{G_1}(H) \subset N^{+} \subset N_{G}(H) .\end{equation}

 Further denote by $\overline{N}^{+}$ and $\widetilde{N}^{+}$  their natural images in $\aut(H)$ and $\out(H),$ 
respectively. Note that it is $N^+$ that marks the difference between the abstract and the profinite case, as we will see later, it will be crucial for the calculation of the genus.  

The following proposition will be fundamental to find a bound for the number of isomorphism classes of profinite HNN-extensions. 
 

\begin{prop}\label{nonnormalhnnprof} Let $$G= \hnn(G_1, H, K, f_1, t_1)\  {\rm and}\  B=\hnn(G_1, H, K, f_2, t_2)$$ be profinite HNN-extensions with associated finite subgroups in a profinite $\OE$-group $G_1$. Suppose that $K=H$ or $H$ and $K$ are not conjugate in $G_1$. Then $G\cong B$ if and only if 

\begin{enumerate}
\item [(i)] there exist $\varphi\in \aut(G_1)$ with $\varphi(H)=K, \varphi(K)=H^{g_1}$ for some $g_1 \in G_1$ such that $ f_2^{\varphi^{-1}} = \tau_{g_1^{-1}} \cdot  (f_1\tau_{n^{-1}})^{-1},$ with  $n\in N^{+}$;
\item [(ii)] there exist $\varphi\in \aut(G_1)$ with $\varphi(H)=H, \varphi(K)=K^{g_1}$ for some $g_1 \in G_1$ such that $f_2^{\varphi^{-1}} = \tau_{g_1^{-1}} \cdot f_1 \cdot \tau_{n^{-1}} ,$ with $n\in N^{+}$.
\end{enumerate}
\end{prop}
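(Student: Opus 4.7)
The strategy is to adapt the proof of Proposition \ref{nonnormalhnnabs} to the profinite setting, replacing Proposition \ref{fixed hnn abstract} with its profinite analogue Proposition \ref{2a}, and using the set $N^+$ as the natural profinite substitute for the combinatorial normal form of $N_G(H)$ that Lemma 2.11 of \cite{BZ} provides in the abstract case.

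For the $(\Leftarrow)$ direction, given either condition, I would construct an explicit continuous isomorphism. In case (ii), define $\psi\colon B \to G$ by setting $\psi|_{G_1} = \varphi$ and $\psi(t_2) := g_1^{-1}\, t_1\, n^{-1}$. A direct computation using $n \in N_G(H)$ and the identity $f_2^{\varphi^{-1}} = \tau_{g_1^{-1}}\, f_1\, \tau_{n^{-1}}$ verifies that $\psi$ preserves the HNN-relation of $B$ and so extends to a continuous homomorphism. The defining property of $N^+$ (namely, writing $n = g'\, t_1^{-1}$ with $\overline{\langle g', G_1\rangle} = G$) then ensures surjectivity: $G_1$ and $\psi(t_2)$ topologically generate a subgroup containing $g'$, hence equal to $G$. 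Case (i) is treated analogously, incorporating the swap of $H$ and $K$ afforded by $\varphi$ and with an additional $t_1^{-1}$ reflecting the inversion in the formula $(f_1\tau_{n^{-1}})^{-1}$.

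For the $(\Rightarrow)$ direction, assume $G \cong B$. By Proposition \ref{2a} one may take a continuous isomorphism $\Psi\colon B \to G$ with $\Psi(G_1) = G_1$, $\Psi(H) = H$, and either $\Psi(K) = K^w$ (case (a), leading to (ii)) or, when $H = K$, $\Psi(K) = H^w$ (case (b), leading to (i)), for some $w \in G_1$. Set $\varphi := \Psi|_{G_1}$ and $g := \Psi(t_2)$. Applying $\Psi$ to the HNN-relation of $B$ yields $g H g^{-1} = \varphi(K)$; in case (a) this gives $wg = t_1 n_0$ for some $n_0 \in N_G(H)$, i.e.\ $g = w^{-1} t_1 n_0$. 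Surjectivity of $\Psi$ forces $\overline{\langle g, G_1\rangle} = G$, and after absorbing $w \in G_1$ this translates to the existence of $n \in N^+$ of the form required in (ii); the identity on $f_2^{\varphi^{-1}}$ then emerges by evaluating both sides on $H$ via the HNN-relation in $G$.

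The main obstacle lies precisely in this last extraction of $n \in N^+$. In the abstract case, Lemma 2.11 of \cite{BZ} provides a combinatorial decomposition $gw^{-1} = xty$ with $x \in N_{G_1}(H),\ y \in N_{G_1}(K)$ for elements of $N_G(H)$, which has no direct profinite analogue (elements of a profinite HNN-extension do not admit such a combinatorial normal form). The set $N^+$ is engineered to compensate: it captures exactly those elements of $N_G(H)$ that together with $G_1$ topologically regenerate $G$, which is the structural information one can extract from a profinite isomorphism. Bridging the gap --- showing that the $n_0$ above can be chosen (or adjusted by elements of $G_1$) so that the resulting element lies in $N^+$ --- is the key technical point, and relies on Proposition \ref{cp1}(ii) describing $N_G(H)$ as a profinite HNN-extension or amalgamated product over $H$.
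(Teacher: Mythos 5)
Your overall strategy is the paper's: normalize the isomorphism via Proposition \ref{2a}, read $n$ off the image of the stable letter, and build the converse isomorphism explicitly on $G_1$ and $t_2$. However, the forward direction as you plan it is incomplete. You only allow the swap $\Psi(H)=K$, $\Psi(K)=H^{w}$ when $H=K$; but Proposition \ref{2a} also permits this swap when $H$ and $K$ are non-conjugate (an automorphism of $G_1$ may interchange their conjugacy classes even though no inner one does), and that is exactly the case that produces condition (i) for $H\neq K$. As written, your argument would only ever derive condition (ii) in the non-conjugate situation, so the equivalence would fail for isomorphisms that swap the two associated subgroups. This case must be treated separately, as the paper does in its ``Case 1''.

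The ``main obstacle'' you flag is not an obstacle, and the appeal to Proposition \ref{cp1}(ii) is unnecessary. In the paper one simply sets $g:=\psi(t_2)$ and (in the situation of (ii)) $n:=gg_1^{-1}t_1^{-1}$; then $H^{g}=K^{g_1}$ gives $H^{gg_1^{-1}t_1^{-1}}=K^{t_1^{-1}}=H$, so $n\in N_G(H)$, and $\overline{\langle gg_1^{-1},G_1\rangle}=\overline{\langle g,G_1\rangle}=\psi(B)=G$ by surjectivity, so $n\in N^{+}$ directly from the definition --- no normal-form lemma and no adjustment of $n_0$ is needed. Relatedly, your intermediate factorization is on the wrong side: from $H^{g}=K^{w}=H^{t_1w}$ the element normalizing $H$ is $gw^{-1}t_1^{-1}$, so $g=n_0t_1w$ with $n_0\in N_G(H)$, not $g=w^{-1}t_1n_0$. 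Finally, with the paper's conventions ($\tau_x(y)=xyx^{-1}$ and $f_1(h)=h^{t_1}$) the identity $f_2^{\varphi^{-1}}=\tau_{g_1^{-1}}f_1\tau_{n^{-1}}$ forces $\psi(t_2)=nt_1g_1$ in the converse direction; your choice $\psi(t_2)=g_1^{-1}t_1n^{-1}$ does not preserve the HNN-relation, since $g_1\varphi(h)g_1^{-1}$ need not lie in $H$. These are fixable, but as stated the proposal has a genuine case missing and its explicit formulas do not verify.
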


\begin{proof}

$(\Longrightarrow)$ By Proposition \ref{2a} there exists a continuous  isomorphism \break 
$\psi:B\longrightarrow G$ such that $\psi(G_1) = G_1$, $\psi(H)=K$ and $\psi(K)=H^{g_1}$ or  $\psi(G_1) = G_1$, $\psi(H)=H$ and $\psi(K)=K^{g_1}$ for some $g_1\in G_1$.

First suppose that $H=K$ and observe that then $g_1\in N_{G_1}(H)$. Put $g:=\psi(t_2) \in G$. Since $t_2 \in N_{B}(H)$ we conclude that $gg_1^{-1} \in N_{G}(H)$ and $ 
 \overline{\left\langle  G_1, (gg_1^{-1})^{\epsilon} \right\rangle}=G$, where  $\epsilon \in \{\pm 1\}.$  
Put $n:= (gg_1^{-1})^{\epsilon}t_1^{-1} \in N^{+}$ and observe that for all $h \in H$ one has $$\psi(f_2(h)) = \psi(h)^{\psi(t_2)} = \psi(h)^{(nt_1)^{\epsilon}g_1} =\tau_{g_1^{-1}}(f_1\tau_{n^{-1}})^{\epsilon} \psi(h), \mbox{ where } \, \epsilon = \pm 1.$$ 
Take $\varphi:=\psi|_{_{G_1}} \in \aut_{G_1}(H)$. Then $\varphi f_2\varphi^{-1}=  \tau_{g_1^{-1}}(f_1^{-1}\tau_{n^{-1}})^{\epsilon}$ as required. 

\bigskip
If $H$ and $K$ not conjugate in $G_1$ let's divide it into two cases. 

\textbf{Case 1:} Suppose  $\psi(G_1) = G_1$, $\psi(H)=K$ and $\psi(K)=H^{g_1}$ . 
Put $\psi(t_2) = g \in G$. Note that $ H^{g_1}=\psi(K) =\psi(H^{t_2})=\psi(H)^{\psi(t_2)}=K^g=H^{t_1g}$, from where it follows that $n:=g_1 g^{-1} t_{1}^{-1} \in N_G(H)$, furthermore $n\in N^{+}$. 
Then for all $h \in H,$ we have$$\psi(f_2(h))=\psi(h^{t_2})=\psi(h)^{\psi(t_2)}=\psi(h)^{t_{1}^{- 1}n^{-1}g_1}=(\tau_{g_1^{-1}}\tau_{n} f_1^{- 1}  \psi)(h).$$ Therefore taking $\varphi = \psi|_{_{G_1}} \in \aut(G_1)$ we have  $f_2^{\psi^{-1}} = f_2^{\varphi^{-1}} = \tau_{g_1^{-1}} \cdot \tau_{n} \cdot f_1^{-1}= \tau_{g_1^{-1}} \cdot   (f_1\tau_{n^{-1}})^{-1},$ with $g_1 \in G_1$ and $n\in N^{+}$. 

\textbf{Case 2:} Suppose $\psi(G_1) = G_1$, $\psi(H)=H$ and $\psi(K)=K^{g_1}$. 
Take $\psi(t_2)=g\in G$ and $n:=gg_1^{-1}t_1^{-1} \in N^{+}$. 
Then for all $h\in H$, we have $$\psi(f_2(h))=\psi(h^{t_2})=\psi(h)^{\psi(t_2)}=\psi(h)^{ n t_1g_1}=(\tau_{g_1^{-1}} f_1 \tau_{n^{-1}} \psi)(h).$$ 
So  taking $\varphi=\psi|_{_{G_1}}$ the result follows. 

\bigskip
$(\Longleftarrow)$ Define a map $\psi$  from $B$ to $G$ as follows $$\psi: \left\{
\begin{array}{ccl}
g_1 & \longmapsto  & \varphi(g_1), \forall  g_1 \in  G_1,\\
t_2 & \longmapsto  & (nt_1)^{\epsilon}g_1, \epsilon=\pm 1. \\
\end{array}
\right.$$
If (i) holds, consider $\epsilon=-1$ and if  (ii) holds, consider $\epsilon=1$;  in both cases $\psi$ is an isomorphism.

\end{proof}

\subsection{Isomorphism problem for profinite HNN-extensions} \label{iso profinite HNN}

In this section we will give a bound for the number of isomorphism classes of profinite HNN-extensions with base group $G_1$ being a profinite $\OE$-group and fixed finite associated  subgroups. In the  case when the HNN-extensions have the conjugate associated subgroups  in the base group, some corollaries will also be shown.

Let $G=\hnn(G_1, H, K, f, t)$ be a profinite HNN-extension, where $H, K$ are finite associated subgroups of a profinite OE-group $G_1$. 

  Consider the closed subgroup 
 \begin{equation} \label{gamma H2} 
 \Gamma_{\widehat H}:=G_1 \rtimes \aut_{G_1}(H)\end{equation} of the holomorph $G_1\rtimes \aut(G_1)$ and as in the abstract case sometimes we shall write $\tau_{g_1}$ for the inner automorphism corresponding to an element $g_1$ of $G_1$. 

Denote by $P=\mbox{Subgr}(G_1)$ the set of all closed subgroups of $G_1.$ Note that $\mbox{Subgr}(G_1)$ is a profinite space because if $G_1=\varprojlim_{U\triangleleft_o G}  G_1/U$ then $\mbox{Subgr}(G_1) = \varprojlim_{U\triangleleft_o G}  \mbox{Subgr}(G_1/U)$. 
There is a continuous action of $$\Gamma_{\widehat{H}}=G_1\rtimes \aut_{G_1}(H)$$ on the set $P$ namely \begin{equation} \label{acaoemXp} (g_1,\alpha)\cdot X= \tau_{g_1}(\alpha(X))=\left[\alpha(X)\right]^{g_1^{-1}} \in P, \mbox{ where } (g_1,\alpha) \in \Gamma_{\widehat{H}}, X \in P. \end{equation}
 Let $\Gamma_{\widehat{HK}}$ be the stabilizer of $K$ in $\Gamma_{\widehat{H}}$ for this action on $P$ (see (\ref{acaoemXp})). Then $\Gamma_{\widehat{HK}}$ is a profinite group (see 5.6.3 in \cite{RZ}).

 Now let's fix a finite subgroup $H\leq G_1$ and consider 
$\widehat{\Upsilon} :=\displaystyle\bigcup_{X \leq G_1, |X|<\infty} \Iso(H,X)$.  Similarly to the abstract case, $\Gamma_{\widehat{H}}$ acts on $\widehat{\Upsilon}$.


\begin{lem}\label{elementaryp}
The semidirect product  $\Gamma_{\widehat{H}}=G_1 \rtimes \aut_{G_1}(H)$
acts on the left upon $\widehat{\Upsilon}$ by
\begin{equation} \label{acaosemidiretap} (g_1,\alpha) \cdot f=\tau_{g_1} \alpha f \alpha^{-1} = \tau_{g_1} f^{\alpha^{-1}}, \mbox{ where } (g_1,\alpha) \in \Gamma_{\widehat{H}} \mbox{ 
 and } f \in \widehat\Upsilon.
 \end{equation}
\end{lem}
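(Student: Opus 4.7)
The plan is to mirror the proof of Lemma \ref{elementary} essentially verbatim, since the algebraic content is identical, and then verify that the action is continuous, which is the only genuine addition in the profinite setting.

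First I would check that the formula $(g_1,\alpha)\cdot f = \tau_{g_1}\alpha f\alpha^{-1}$ really takes values in $\widehat{\Upsilon}$. Given $f\in\Iso(H,X)$ with $X\le G_1$ finite, the composite $\tau_{g_1}\alpha f\alpha^{-1}$ is a homomorphism from $H$ onto $L := [\alpha(X)]^{g_1^{-1}}$. Because $\alpha\in\aut_{G_1}(H)$ is a (continuous) automorphism of $G_1$ and $X$ is finite, $\alpha(X)$ is a finite subgroup of $G_1$; conjugation by $g_1$ preserves finiteness, so $L\le G_1$ is finite, and the composite lies in $\Iso(H,L)\subset\widehat{\Upsilon}$.

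Next I would verify the two defining properties of a left action. The identity element $(1,1)\in\Gamma_{\widehat H}$ clearly sends $f$ to $f$. For compatibility with composition, if $(g_1,\alpha), (g_2,\beta)\in\Gamma_{\widehat H}$ and $f\in\widehat{\Upsilon}$, then
\begin{equation*}
(g_1,\alpha)\cdot\bigl((g_2,\beta)\cdot f\bigr)
=\tau_{g_1}\alpha\bigl(\tau_{g_2}\beta f\beta^{-1}\bigr)\alpha^{-1}
=\tau_{g_1}\tau_{\alpha(g_2)}\,\alpha\beta\,f\,(\alpha\beta)^{-1},
\end{equation*}
and this coincides with $\bigl((g_1,\alpha)(g_2,\beta)\bigr)\cdot f = (g_1\alpha(g_2),\alpha\beta)\cdot f$, using $\tau_{g_1}\alpha\tau_{g_2}\alpha^{-1}=\tau_{g_1\alpha(g_2)}$. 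This is the same computation as in Lemma \ref{elementary}.

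The only point that really differs from the abstract case is continuity. Since $H$ is finite, each set $\Iso(H,L)$ is finite, and I would topologise $\widehat{\Upsilon}$ via the natural maps $\Iso(H,L)\hookrightarrow\widehat\Upsilon$, treating $\widehat\Upsilon$ as a disjoint union of finite discrete fibres indexed by the closed subset of finite subgroups of $G_1$. With this convention, continuity of the action reduces to the following statement: for each $h\in H$, the map $(g_1,\alpha,f)\mapsto g_1\alpha(f(h))g_1^{-1}$ from $G_1\times\aut_{G_1}(H)\times\widehat{\Upsilon}$ to $G_1$ is continuous. This follows from joint continuity of multiplication and inversion in $G_1$, continuity of the evaluation $\aut_{G_1}(H)\times G_1\to G_1$ (which holds because $\aut_{G_1}(H)$ is a closed subgroup of the profinite group $\aut(G_1)$ acting continuously on $G_1$), and the fact that $f(h)$ depends only on the finitely many possible values of $f$ on the finite set $H$. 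The main obstacle, if any, is simply fixing an appropriate topology on $\widehat\Upsilon$ under which it is manifestly profinite; once that is set up, continuity is automatic from the profinite-group structure of $\Gamma_{\widehat H}$ and the finiteness of $H$.
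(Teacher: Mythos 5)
Your proof is correct and follows essentially the same route as the paper, which simply declares the argument to be word for word that of Lemma \ref{elementary}: the well-definedness check via $L=[\alpha(X)]^{g_1^{-1}}$ and the cocycle computation $\tau_{g_1}\alpha\tau_{g_2}\alpha^{-1}=\tau_{g_1\alpha(g_2)}$ are exactly the paper's. Your added discussion of continuity is harmless but unnecessary, since the lemma as stated only asserts a group action on $\widehat\Upsilon$, not a continuous one.
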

\begin{proof} It is word by word the same as of Lemma \ref{elementary}. 
\end{proof}

  For $f\in \Iso(H, K)$, we have $(g_1,\alpha) \cdot f \in \Iso\left(H, [\alpha(K)]^{g_1^{-1}}\right) \subset \widehat \Upsilon$ and
the group $\Gamma_{\widehat{HK}}$ leaves $\Iso(H, K)$ invariant with respect  to the action of $\Gamma_{\widehat{H}}$ on $\widehat\Upsilon$ (see (\ref{acaosemidiretap})).  

From now on, let $$G=\hnn(G_1, H, K, f_1, t_1) \mbox{ and } B=\hnn(G_1, H, K, f_2, t_2),$$ be profinite HHN-extensions, where $G_1$ is a profinite OE-group and \break $f_i \in \Iso(H, K)\,(i=1,2)$. Our goal is to describe  when $G$ and $B$ are isomorphic.

Suppose there exist $\psi \in \aut(G_1)$ such that $\psi(K)=H^{g_1}$ and $\psi(H)=K$, for some $g_1\in G_1$.  Define $\iota: \Iso(H,K) \longrightarrow \Iso(H,K)$ by \begin{equation} \label{def iota prof} \iota(f)=\tau_{\psi^{-1}(g_1^{-1})}(f^{-1})^{\psi}=\tau_{\psi^{-1}(g_1^{-1})} \left(f^{\psi}\right)^{-1}, f \in \Iso(H,K).\end{equation}

If $H=K$ we can choose $\psi=id$ and $g_1=1,$ so $\iota$ sends $f$ to $f^{-1}.$ In this situation we have an isomorphism between $\hnn(G_1,H,f)$ and $\hnn(G_1,H,f^{-1})$ (see Lemma \ref{conjugados da normal}). Note that $\iota$ will be a bijection from $\Iso(H, H)=\aut(H)$ to $\aut(H)$. More generally, $\iota$ belongs to the group of symmetries $Sym(\Iso(H, K))$. Since $\Gamma_{\widehat{HK}}$ acts on $\Iso(H, K),$ we  denote by  $\widetilde\Gamma_{\widehat{HK}}$ the natural image of $\Gamma_{\widehat{HK}}$ in $Sym(\Iso(H, K))$ induced by the natural homomorphism $\Gamma_{\widehat{HK}} \longrightarrow Sym(\Iso(H, K)).$ The proof of the following lemma is similar to the proof of Lemma 
\ref{semidiretocomC2}. 

\begin{lem} \label{semidiretocomC2p}  Suppose there exist $\psi \in \aut(G_1)$ that swaps the conjugacy classes of $H$ and $K$.  Then $\iota $ normalizes $\widetilde\Gamma_{\widehat{HK}}$ and so $\langle \widetilde\Gamma_{\widehat{HK}}, \iota  \rangle=\widetilde\Gamma_{\widehat{HK}} \cdot  \left\langle  \iota \right\rangle$ is a subgroup of $Sym(\Iso(H, K))$.   

\end{lem}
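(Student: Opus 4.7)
The plan is to mirror the proof of Lemma \ref{semidiretocomC2} essentially verbatim, since the argument there is purely algebraic and uses only the group operation on $\Gamma_H$ and the action formula (\ref{acaosemidiretap}), both of which are inherited in the profinite setting (the action on $\widehat\Upsilon$ is defined exactly as in the abstract case, per Lemma \ref{elementaryp}). The only conceptual point to check is that the conjugate element we produce lies in the \emph{profinite} stabilizer $\Gamma_{\widehat{HK}}$, which is automatic from its definition as a conjugate of the stabilizer under the group operation.

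Concretely, I would fix an element $\omega = (x,\alpha) \in \Gamma_{\widehat{HK}}$ with image $\tilde\omega \in \widetilde\Gamma_{\widehat{HK}}$ and an arbitrary $f \in \Iso(H,K)$, and compute $(\iota^{-1}\tilde\omega\iota)\cdot f$ step by step using the definition of $\iota$ from (\ref{def iota prof}) and the action rule (\ref{acaosemidiretap}). The computation proceeds: first apply $\iota$ to $f$, obtaining $\tau_{\psi^{-1}(g_1^{-1})}(f^{-1})^{\psi}$; then apply $\tilde\omega = (x,\alpha)$, using the fact that $\tau_g^\alpha = \tau_{\alpha^{-1}(g)}$; finally apply $\iota^{-1}$ (which is computed from the same formula, since $\psi^{-1}$ also swaps the conjugacy classes of $H$ and $K$, with parameter $\psi^{-1}(g_1)$ playing the role of $g_1$). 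Collecting terms yields an element of $Sym(\Iso(H,K))$ that equals the action of
\[
\bigl(\psi(\alpha\psi^{-1}(g_1)x^{-1})g_1^{-1},\ \tau_{g_1\psi(x\alpha\psi^{-1}(g_1^{-1}))}\,\psi\alpha\psi^{-1}\bigr)
\]
on $\Iso(H,K)$, exactly as in the abstract case.

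It then remains to verify that this pair lies in $\Gamma_{\widehat{HK}}$. The first coordinate is in $G_1$ since $g_1 \in G_1$ and $\psi\in\aut(G_1)$. For the second coordinate, $\psi\alpha\psi^{-1}$ is an automorphism of $G_1$, and since $\alpha$ stabilizes $K$ and $\psi(K) = H^{g_1}$, $\psi(H)=K$, one checks that $\psi\alpha\psi^{-1}$ sends $H$ to $H^{\psi(x\alpha\psi^{-1}(g_1^{-1}))}$ (using that $\alpha(K)=K$ and $\tau_x$ fixes $K$), so after composing with the appropriate inner automorphism it lies in $\aut_{G_1}(H)$; the pair therefore stabilizes $K$ under the action (\ref{acaoemXp}). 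This produces the required element of $\Gamma_{\widehat{HK}}$ whose image in $Sym(\Iso(H,K))$ equals $\tilde\omega^\iota$.

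The main (minor) obstacle is just bookkeeping in the conjugation calculation, since the formula for $\iota$ is not symmetric in $H$ and $K$ and involves both $\psi$ and $g_1$. No genuinely new ingredient is needed beyond the abstract argument; in particular no topological subtlety arises because $\widetilde\Gamma_{\widehat{HK}} \cdot \langle\iota\rangle$ is a subgroup of an abstract symmetric group and closedness plays no role in the statement.
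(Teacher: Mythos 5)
Your proposal matches the paper's proof exactly: the paper proves this lemma by stating that the argument is the same as in Lemma \ref{semidiretocomC2}, i.e.\ one repeats the conjugation computation $\tilde\omega^{\iota}\cdot f$ using (\ref{def iota prof}) and (\ref{acaosemidiretap}) and observes that the resulting pair lies in $\Gamma_{\widehat{HK}}$. Your additional verification that the conjugating element stabilizes $K$ under the action (\ref{acaoemXp}) is a correct and slightly more explicit version of what the paper leaves implicit.
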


The next Lemma is proved similarly as Lemma \ref{carmo}

\begin{lem}\label{carmoprof} The subgroup $\widetilde\Gamma_{\widehat{HK}} \cdot  \left\langle  \iota \right\rangle$ of $Sym(\Iso(H, K))$ does not depend upon the choice of $\psi$.
\end{lem}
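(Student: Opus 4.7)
The plan is to mimic the abstract proof of Lemma \ref{carmo} essentially verbatim; the only thing to check is that the required elements lie in the appropriate stabilizer and that the symbolic computation of their action on $\Iso(H,K)$ still makes sense in the profinite setting. Let $\psi, \zeta \in \aut(G_1)$ be two automorphisms that swap the conjugacy classes of $H$ and $K$, with $\psi(H)=K$, $\psi(K)=H^{g_1}$, $\zeta(H)=K$, $\zeta(K)=H^{r}$. Write $\iota$ for the element built from $\psi$ (as in (\ref{def iota prof})) and $\Theta$ for the analogously defined element built from $\zeta$, i.e.\ $\Theta(f)=\tau_{\zeta^{-1}(r^{-1})}(f^{-1})^\zeta$. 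I want to exhibit $\Theta$ as an element of $\widetilde\Gamma_{\widehat{HK}}\cdot\langle\iota\rangle$; symmetry then gives the reverse inclusion, proving that $\widetilde\Gamma_{\widehat{HK}}\cdot\langle\iota\rangle=\widetilde\Gamma_{\widehat{HK}}\cdot\langle\Theta\rangle$.

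The natural candidate, copied from the abstract case, is
\[
\upsilon := \bigl(\zeta^{-1}(r^{-1}g_1),\, \zeta^{-1}\psi\bigr) \in G_1\rtimes\aut(G_1).
\]
The first step is to verify $\upsilon\in\Gamma_{\widehat{HK}}$. Since $\psi(H)=K$ and $\zeta(H)=K$, one has $\zeta^{-1}\psi(H)=\zeta^{-1}(K)=H$ (using $\zeta(K)=H^r$ implies $\zeta^{-1}(H)=K^{\zeta^{-1}(r^{-1})}$, which will also be used immediately below), so $\zeta^{-1}\psi\in\aut_{G_1}(H)$ and therefore $\upsilon\in\Gamma_{\widehat H}$. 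Next, using (\ref{acaoemXp}),
\[
\upsilon\cdot K = \tau_{\zeta^{-1}(r^{-1}g_1)}\bigl(\zeta^{-1}\psi(K)\bigr) = \tau_{\zeta^{-1}(r^{-1}g_1)}\bigl(\zeta^{-1}(H^{g_1})\bigr) = \tau_{\zeta^{-1}(r^{-1}g_1)}\bigl(K^{\zeta^{-1}(r^{-1}g_1)}\bigr) = K,
\]
so $\upsilon$ stabilizes $K$ and hence lies in $\Gamma_{\widehat{HK}}$.

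The second step is a direct symbolic verification that $\Theta=\tilde\upsilon\,\iota$ in $Sym(\Iso(H,K))$. For arbitrary $f\in\Iso(H,K)$, combine (\ref{def iota prof}) and (\ref{acaosemidiretap}):
\[
\tilde\upsilon\cdot\iota(f) = \tau_{\zeta^{-1}(r^{-1}g_1)}\,\zeta^{-1}\psi\,\bigl[\tau_{\psi^{-1}(g_1^{-1})}(f^{-1})^{\psi}\bigr]\,(\zeta^{-1}\psi)^{-1}.
\]
Pushing $\zeta^{-1}\psi$ past $\tau_{\psi^{-1}(g_1^{-1})}$ turns it into $\tau_{\zeta^{-1}(g_1^{-1})}$, and the $\psi$'s on the ends of $(f^{-1})^\psi$ cancel with $\psi^{-1}$ and $\psi$ coming from $(\zeta^{-1}\psi)^{\pm 1}$, leaving $(f^{-1})^\zeta$. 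Telescoping the conjugation elements yields $\tau_{\zeta^{-1}(r^{-1}g_1)}\tau_{\zeta^{-1}(g_1^{-1})}=\tau_{\zeta^{-1}(r^{-1})}$, so
\[
\tilde\upsilon\cdot\iota(f) = \tau_{\zeta^{-1}(r^{-1})}(f^{-1})^{\zeta} = \Theta(f),
\]
as required. Hence $\Theta\in\widetilde\Gamma_{\widehat{HK}}\cdot\langle\iota\rangle$, which gives $\widetilde\Gamma_{\widehat{HK}}\cdot\langle\Theta\rangle\subseteq\widetilde\Gamma_{\widehat{HK}}\cdot\langle\iota\rangle$, and by symmetry equality.

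The main (and only real) obstacle is bookkeeping: keeping track of where $\psi$, $\zeta$ and their inverses move the associated subgroups so that the element $\upsilon$ is seen to genuinely lie in $\Gamma_{\widehat{HK}}$, and then pushing $\zeta^{-1}\psi$ through the inner automorphism $\tau_{\psi^{-1}(g_1^{-1})}$ to recognize the resulting expression as $\Theta(f)$. No topological subtlety appears because the formulas and stabilizer computations are exactly those of the abstract Lemma \ref{carmo}; continuity of the action (\ref{acaoemXp}) is what allows us to view $\Gamma_{\widehat{HK}}$ as the stabilizer of the point $K$ in the profinite space $P$, which is all that is needed.
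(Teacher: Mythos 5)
Your proposal is correct and is essentially the paper's own argument: the paper proves Lemma \ref{carmoprof} by declaring it ``similar to Lemma \ref{carmo}'', whose proof consists precisely of taking $\upsilon=(\zeta^{-1}(r^{-1}g_1),\zeta^{-1}\psi)\in\Gamma_{HK}$ and observing $\Theta=\tilde\upsilon\,\iota$. You have simply written out the stabilizer check and the symbolic verification that the paper leaves implicit, and both computations are accurate.
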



\begin{deff}
\label{novaacaopp} Given finite subgroups $H$ and $K$ of $G_1$ we set $$\overline{\Gamma}_{\widehat{HK}}:=\widetilde{\Gamma}_{\widehat{HK}}\cdot \left\langle \iota\right\rangle$$ if there exist $\psi \in \aut(G_1)$ such that $\psi(K)=H^{g_1}$ and $\psi(H)=K$, for some $g_1\in G_1$, and put $$\overline{\Gamma}_{\widehat{HK}}:=\widetilde{\Gamma}_{\widehat{HK}},$$ otherwise. Further let $$\overline{\Gamma}_{\widehat{HK}} \backslash \Iso(H, K)$$ denote the set of orbits of $\Iso(H, K)$ modulo the action  of $\overline{\Gamma}_{\widehat{HK}}$.
\end{deff}

\begin{rem} \label{novaacao2pp} When $H=K$ then $\Iso(H,H)=\aut(H)$. In this case one can define  $\Gamma_{\widehat{H}}=N_{G_1}(H) \rtimes\aut_{G_1}(H)$, $\Gamma_{\widehat{HH}}=\Gamma_{\widehat{H}}$ as being the stabilizer of $H$ of the action of $\Gamma_{\widehat{H}}$ on  $P$ (see (\ref{acaoemXp})) and 
$\overline{\Gamma}_{\widehat{HH}}=\widetilde\Gamma_{\widehat{H}}\cdot C_2$, where $C_2= \left\langle \iota\right\rangle$ acts by inversion on $\aut(H)$. Note that if $\iota \in \widetilde\Gamma_{\widehat{H}}$ then $\overline{\Gamma}_{\widehat{HH}}=\widetilde{\Gamma}_{\widehat{H}},$ otherwise $\overline{\Gamma}_{\widehat{HH}}=\widetilde{\Gamma}_{\widehat{H}} \rtimes C_2.$ 
In particular, $ \overline{\Gamma}_{\widehat{HH}} \backslash \aut(H)$ will denote the set of $ \overline{\Gamma}_{\widehat{HH}}$-orbits of $\aut(H)$.
\end{rem}

The result below is a consequence of Propositions   \ref{nonnormalhnnprof} and the discussion of this subsection. In the profinite case we can only give an estimate for the number of isomorphism classes of profinite HNN-extensions with fixed associated subgroups, as follows.  \begin{thm}\label{profinite isomorphism number} Let $H, K$ be fixed finite subgroups of a profinite $\OE$-group $G_1$. Then the number of isomorphism classes of profinite HNN-extensions \\$\hnn(G_1, H, K, f, t)$ is less than or equal to $$|\overline{\Gamma}_{\widehat{HK}} \backslash \Iso(H, K)|.$$\end{thm}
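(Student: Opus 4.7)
The plan is to show that if $f,f' \in \Iso(H,K)$ lie in the same $\overline{\Gamma}_{\widehat{HK}}$-orbit, then the two profinite HNN-extensions $G = \hnn(G_1,H,K,f,t)$ and $B = \hnn(G_1,H,K,f',t')$ are isomorphic. This gives a well-defined surjection from $\overline{\Gamma}_{\widehat{HK}}\backslash \Iso(H,K)$ onto the set of isomorphism classes of such HNN-extensions, which is exactly the asserted inequality.

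The main tool will be Proposition \ref{nonnormalhnnprof}, whose hypotheses require $H = K$ or $H$ and $K$ non-conjugate in $G_1$. I would first dispose of the remaining case $H \neq K$ with $H^{g_0} = K$ for some $g_0\in G_1$: by Lemma \ref{conjugados da normal} both $G$ and $B$ are isomorphic to normal HNN-extensions with associated subgroup $H$ and twists $\tau_{g_0}f,\ \tau_{g_0}f' \in \aut(H)$, and a direct computation shows that the map $f \mapsto \tau_{g_0}f$ carries $\overline{\Gamma}_{\widehat{HK}}$-orbits of $\Iso(H,K)$ into $\overline{\Gamma}_{\widehat{HH}}$-orbits of $\aut(H)$. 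The key check is that, for $(g_1',\alpha)\in \Gamma_{\widehat{HK}}$, the element $n_0 := g_0\,g_1'\,\alpha(g_0)^{-1}$ lies in $N_{G_1}(H)$; this follows from $\alpha(H)=H$ combined with the stabilizer identity $g_1'\alpha(K)g_1'^{-1}=K$ and the relation $H^{g_0}=K$. An analogous verification handles the $\iota$-coset, so the conjugate case reduces to the normal one.

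Assuming therefore $H = K$ or $H, K$ non-conjugate in $G_1$, write $f' = \tau_{g_1'}\alpha f\alpha^{-1}$ with $(g_1',\alpha)\in \Gamma_{\widehat{HK}}$ (the $\widetilde{\Gamma}_{\widehat{HK}}$-case), and set $\varphi := \alpha^{-1}$, $g_1 := \alpha^{-1}(g_1')^{-1}$ and $n := 1$. Then $\varphi(H)=H$, the stabilizer relation yields $\varphi(K)=K^{g_1}$, and a short calculation gives $f'^{\varphi^{-1}} = \tau_{\alpha^{-1}(g_1')} f = \tau_{g_1^{-1}} f \tau_{n^{-1}}$. The crucial point is $1 \in N^+$: choosing $g := t$ in the defining formula of $N^+$ one has $gt^{-1}=1\in N_G(H)$ trivially and $\overline{\langle t,G_1\rangle}=G$ automatically. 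Thus condition (ii) of Proposition \ref{nonnormalhnnprof} is satisfied and $G\cong B$ follows. The companion case $f' \in \widetilde{\Gamma}_{\widehat{HK}}\cdot \iota(f)$ is entirely analogous: one combines $(g_1',\alpha)$ with the automorphism $\chi \in \aut(G_1)$ swapping the conjugacy classes of $H$ and $K$ to produce data witnessing condition (i) of Proposition \ref{nonnormalhnnprof}, again with $n=1$.

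The principal subtlety, and the reason only an inequality (rather than equality, as in the abstract Theorem \ref{classnonnormalabs}) can be asserted, lies in the role of $n \in N^+$ in Proposition \ref{nonnormalhnnprof}. The orbit action of $\overline{\Gamma}_{\widehat{HK}}$ realizes only those isomorphisms corresponding to $n=1$. In the abstract setting, Lemma 2.11 of \cite{BZ} permits decomposing any element of $N_G(H)$ into pieces in $N_{G_1}(H)$ separated by the stable letter, so no additional twists appear and equality holds. This structural lemma is unavailable in the profinite category, and $N^+$ may contain elements giving rise to isomorphisms between HNN-extensions whose defining isomorphisms lie in distinct $\overline{\Gamma}_{\widehat{HK}}$-orbits. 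Distinct orbits may thus collapse to the same isomorphism class, and only the upper bound stated in the theorem is forthcoming from the present argument.
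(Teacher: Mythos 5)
Your proposal is correct and follows essentially the same route as the paper, which states the theorem as "a consequence of Proposition \ref{nonnormalhnnprof} and the discussion of this subsection": you deduce that two twists in the same $\overline{\Gamma}_{\widehat{HK}}$-orbit yield isomorphic HNN-extensions by invoking Proposition \ref{nonnormalhnnprof} with $n=1\in N^{+}$ (and reduce the conjugate case to the normal one via Lemma \ref{conjugados da normal}), which is exactly the intended argument. Your closing remark on why $N^{+}$ forces an inequality rather than an equality also matches the paper's own explanation of the abstract-versus-profinite discrepancy.
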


\subsection{Bounds for the number of isomorphism classes of profinite HNN-extensions with not fixed associated subgroups } \label{quotasprofnonconjugates}

Let $H$ be a finite subgroup of a profinite OE-group $G_1.$ Our goal is to find a bound for the number of isomorphism classes of profinite HNN-extensions of  $G_1$ with associated subgroups isomorphic to $H$. This will be important for the calculation of the genus since by Proposition \ref{profinito normal iso} if $H\not\cong H_1$ then $\hnn(G_1, H, K, f, t)\not\cong \hnn(G_1, H_1, K_1, f_1, t_1)$.

Our considerations in this subsection  will be give analogous to what was done in the abstract case (see Subsection \ref{cota bonita non conjugados}). 
Let $P_H \subset P$ the set of all finite subgroups of $G_1$ isomorphic to $H$. We have an action of $G_1$ on $P_H$  by conjugation. So, given $L\in P_H$ the orbit of $L$ with respect to this action (i.e., the conjugacy class of $L$ in $G_1$) is $$\langle\langle L\rangle\rangle^{G_1}= \{L^{g_1}| g_1 \in G_1\}.$$ Put $$\overline{P}_H= G_1\backslash P_H:=\{ \langle\langle L\rangle\rangle^{G_1} \,|\, L \in P_H\}.$$   

Let $[\overline{P}_H]^2$ be the collection of all unordered pairs of the form $\{X, Y\}$ such that $X, Y \in \overline{P}_H.$ Define similarly to what was done in the abstract case (see Subsection \ref{cota bonita non conjugados}) an action of $\aut(G_1)$ on $[\overline{P}_H]^2$ as follows  

\begin{equation*} \rho: \aut(G_1) \times [\overline{P}_H]^2 \longrightarrow [\overline{P}_H]^2, \end{equation*}  
\begin{equation} \label{acao p}
( \varphi, \{ \langle\langle H_1\rangle\rangle^{G_1} , \langle\langle K_1\rangle\rangle^{G_1} \} ) \mapsto \{ \langle\langle \varphi(H_1)\rangle\rangle^{G_1} , \langle\langle  \varphi(K_1)\rangle\rangle^{G_1}  \}.
\end{equation}

Consider  finite subgroups $H_1$ and $K_1$ of $G_1$ isomorphic to $H$ and $K,$ respectively.  Note that $\{ \langle\langle H\rangle\rangle^{G_1}
, \langle\langle K\rangle\rangle^{G_1}
 \}$ and $\{ \langle\langle H_1\rangle\rangle^{G_1}
, \langle\langle K_1\rangle\rangle^{G_1}
 \}$ belong to the same $\aut(G_1)$-orbit according to action (\ref{acao p}) if and only if  there exist $\psi\in \aut(G_1)$ such that $\psi(H)=H_1$ and $\psi(K)=K_1^{g_1}$ for some $g_1\in G_1.$ 


 By Proposition  \ref{2a}, for each $f\in \Iso(H,K)$ we have $$\hnn(G_1, H, K, f,t) \cong \hnn(G_1, H_1, K_1, \tau_{g_1}f^{\psi^{-1}},t_1).$$

When $H$ and $K$ are fixed finite subgroups of a profinite OE-group $G_1,$ we saw from Theorem \ref{profinite isomorphism number} that the number $n_{_{\widehat{HK}}}$ of isomorphism classes of profinite HNN-extensions of the form $\hnn(G_1, H, K, f, t),$ with $f \in \Iso(H, K)$ satisfies the following $$n_{_{\widehat{HK}}} \leq | \overline{\Gamma}_{\widehat{HK}} \backslash \Iso(H, K)|.$$ 

Denote the set of orbits of action (\ref{acao p}) by $\aut(G_1)\backslash [\overline{P}_H]^2.$  

\begin{thm}\label{profinite problem} Let $H$ and $K$ be finite subgroups of a profinite  $\OE$-group $G_1$. Then the number $\hat \eta$ of isomorphism classes of profinite HNN-extensions $\hnn(G_1, H_1, K_1, f_1)$ such that 
$H_1 \cong H \cong K \cong  K_1$ and $f_1 \in \Iso(H_1, K_1)$ satisfies the following inequalities $$\hat \eta\leq \sum_{\{H, K\}} | \overline{\Gamma}_{\widehat{HK}} \backslash \Iso(H, K)|,$$ where $\{H, K\}$ ranges over  representatives of the $\aut(G_1)$-orbits of $[\overline{P}_H]^2.$
\end{thm}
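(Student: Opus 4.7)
The plan is to follow the template of the abstract counterpart (Theorem \ref{classe geral familia F}), but substitute Proposition \ref{2a} for Proposition \ref{fixed hnn abstract} and Theorem \ref{profinite isomorphism number} for Corollary \ref{number of isomorphisms}. First I would fix a set $\mathcal{R}$ of representatives of the $\aut(G_1)$-orbits on $[\overline{P}_H]^2$ under the action $\rho$ given in (\ref{acao p}). Given an arbitrary profinite HNN-extension $G = \hnn(G_1, H_1, K_1, f_1, t_1)$ with $H_1 \cong H \cong K \cong K_1$, the unordered pair $\{\langle\langle H_1\rangle\rangle^{G_1}, \langle\langle K_1\rangle\rangle^{G_1}\}$ lies in the $\aut(G_1)$-orbit of some representative $\{H', K'\} \in \mathcal{R}$. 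Hence there exists $\psi \in \aut(G_1)$ with $\psi(H_1) = H'$ and $\psi(K_1) = (K')^{g_1}$ for some $g_1 \in G_1$ (possibly after swapping $H'$ and $K'$, which is allowed since the pair is unordered).

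Next I would invoke the construction immediately preceding Theorem \ref{profinite problem} (which uses Proposition \ref{2a}) to conclude that
\[
\hnn(G_1, H_1, K_1, f_1, t_1) \;\cong\; \hnn(G_1, H', K', \tau_{g_1} f_1^{\psi^{-1}}, t'),
\]
so every isomorphism class of profinite HNN-extensions with associated subgroups isomorphic to $H$ has a representative of the form $\hnn(G_1, H', K', f', t')$ for some $\{H', K'\} \in \mathcal{R}$ and some $f' \in \Iso(H', K')$. Now Theorem \ref{profinite isomorphism number} applied to each fixed pair $\{H', K'\}$ yields that the number of isomorphism classes of such extensions is at most $|\overline{\Gamma}_{\widehat{H'K'}} \backslash \Iso(H', K')|$.

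Summing over $\mathcal{R}$ would then give the desired upper bound
\[
\hat{\eta} \;\leq\; \sum_{\{H', K'\} \in \mathcal{R}} |\overline{\Gamma}_{\widehat{H'K'}} \backslash \Iso(H', K')|,
\]
which is precisely the statement of the theorem. The one point requiring some care is that an isomorphism class might be counted in more than one summand if different representatives happen to give isomorphic extensions, but this only strengthens the inequality and is consistent with the upper-bound claim; note also that Proposition \ref{2a} guarantees that isomorphic profinite HNN-extensions have isomorphic associated subgroups, justifying the restriction to extensions with associated subgroups isomorphic to $H$. The main subtlety, as in the abstract case, is handling the unordered nature of the pair $\{H', K'\}$: the swapping automorphism (if it exists) is absorbed into the definition of $\overline{\Gamma}_{\widehat{H'K'}}$ via the element $\iota$, so no double-counting within a single orbit occurs at that level.
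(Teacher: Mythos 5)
Your proposal is correct and follows essentially the same route as the paper: the discussion preceding the theorem reduces an arbitrary extension to one whose associated subgroups are orbit representatives via Proposition \ref{2a}, then applies the fixed-pair bound of Theorem \ref{profinite isomorphism number} and sums over representatives, exactly as you do. The only cosmetic difference is that the paper cites Proposition \ref{profinito normal iso} for the fact that isomorphic profinite HNN-extensions have isomorphic associated subgroups, but your appeal to Proposition \ref{2a} for this point is equally valid.
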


Note that the set of all finite sugroups of a profinite group is not closed in general in the space of subgroups of it. However, the subset of subgroups of order bounded by a natural number $n$ is closed. This allows to extend the action $\rho$ given by (\ref{acao p}) of $\aut(G_1)$ on $[\overline{P}_H]^2,$ to the set $[\overline{P_n}]^2$ of all unordered pairs of conjugacy classes of  subgroups of $G_1$ of order $\leq n$. Denote by $\aut(G_1) \backslash [\overline{P_n}]^2$ the space of orbits of this action. Then we can extend the preceding theorem by establishing a bound on the number of isomorphism classes of all profinite HNN-extensions of $G_1$ with finite associated subgroups.

\begin{thm}\label{profinite HNN-extensions with finite associated} Let $G_1$ be a profinite  $\OE$-group. Then the number  of isomorphism classes of profinite HNN-extensions $\hnn(G_1, H, K, f, t)$ with finite associated subgroups does not exceed  $$ \sum_{\{H, K\}} | \overline{\Gamma}_{\widehat{HK}} \backslash \Iso(H, K)|,$$ where $\{H, K\}$ ranges over representatives of the orbits $\aut(G_1)\backslash [\overline{P_n}]^2.$ 
\end{thm}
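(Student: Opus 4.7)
The plan is to reduce the statement to Theorem \ref{profinite problem} by partitioning the isomorphism classes of profinite HNN-extensions of $G_1$ with finite associated subgroups according to the invariant given by the unordered pair of $G_1$-conjugacy classes of the associated subgroups. Concretely, to a profinite HNN-extension $G=\hnn(G_1,H,K,f,t)$ with $H,K$ finite I will associate the unordered pair $\{\langle\langle H\rangle\rangle^{G_1},\langle\langle K\rangle\rangle^{G_1}\}\in [\overline{P_n}]^2$ (for any $n\geq \max\{|H|,|K|\}$). By Proposition \ref{2a}, whenever two such HNN-extensions are abstractly isomorphic as profinite groups there is a continuous isomorphism sending $G_1$ to $G_1$, $H$ to the first associated subgroup of the other HNN-extension and $K$ to a $G_1$-conjugate of the second (possibly after interchanging the roles of $H$ and $K$). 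Hence the assignment factors through the set of $\aut(G_1)$-orbits on $[\overline{P_n}]^2$ under the action $\rho$ defined in (\ref{acao p}), so isomorphic HNN-extensions give rise to the same orbit.

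Next I would verify that this use of $[\overline{P_n}]^2$ is legitimate where $[\overline{P_H}]^2$ could not be used directly: the isomorphism type of a subgroup is not a closed condition on $\mathrm{Subgr}(G_1)$, so $P_H$ need not be closed, but the condition $|X|\leq n$ is closed, hence $P_n$ is a closed (profinite) subspace of $\mathrm{Subgr}(G_1)$ on which $\aut(G_1)$ acts, and the induced action $\rho$ on $[\overline{P_n}]^2$ is well-defined. Once the partition is in place, Theorem \ref{profinite problem} applied to each representative $\{H,K\}$ of an $\aut(G_1)$-orbit furnishes the upper bound $|\overline{\Gamma}_{\widehat{HK}}\backslash \Iso(H,K)|$ for the number of isomorphism classes of profinite HNN-extensions whose associated pair falls into that orbit. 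Summation over orbit representatives then produces the required inequality.

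The chief obstacle I expect is making rigorous the claim that the orbit of $\{\langle\langle H\rangle\rangle^{G_1},\langle\langle K\rangle\rangle^{G_1}\}$ is a genuine isomorphism invariant of $G$ in the profinite category, so that distinct orbits contribute disjoint collections of isomorphism classes and the sum is not an overcount; this rests entirely on the strength of Proposition \ref{2a}, which forces any profinite isomorphism to respect the base group $G_1$ and to transport the associated subgroups as an unordered pair. A secondary point worth checking is invariance of the bound $|\overline{\Gamma}_{\widehat{HK}}\backslash \Iso(H,K)|$ under changing the representative $\{H,K\}$ within a fixed $\aut(G_1)$-orbit; this is a straightforward transport of structure, since an element $\varphi\in\aut(G_1)$ realising the equivalence conjugates $\Gamma_{\widehat{HK}}$ to $\Gamma_{\widehat{\varphi(H)\varphi(K)}}$ and maps $\Iso(H,K)$ bijectively onto $\Iso(\varphi(H),\varphi(K))$ compatibly with the respective actions.
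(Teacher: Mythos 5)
Your proposal is correct and follows essentially the same route as the paper: the paper likewise derives this theorem from the preceding discussion by using Proposition \ref{2a} to see that the $\aut(G_1)$-orbit of the unordered pair of conjugacy classes of the associated subgroups is an isomorphism invariant, observing that the set of subgroups of order $\leq n$ is closed so that the action $\rho$ extends to $[\overline{P_n}]^2$, and then applying the fixed-pair bound of Theorem \ref{profinite isomorphism number} to each orbit representative and summing. Your additional check that the bound $|\overline{\Gamma}_{\widehat{HK}}\backslash \Iso(H,K)|$ is invariant under changing the representative within an orbit is implicit in the paper but correctly identified and handled.
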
 

\begin{rem} We can extend the action of $Aut(G_1)$ to the the set of all finite sugroups of $G_1$ and get the statement as written in Theorem \ref{int general finite associated}, but it differs from the theorem above only when the number of isomorphisms is infinite.\end{rem}

\subsubsection{ Fixed conjugate associated subgroups } \label{quotasprofconjugates}

The objective of this subsection is to find  bounds for  the number of isomorphism classes of profinite HNN-extensions of the form $G=\hnn(G_1, H, K, f)$ with $H,K$ conjugate in $G_1$. For this, we observe that $G$ is isomorphic to a normal profinite HNN-extension, namely, $G\cong \hnn(G_1, H,   \tau_{g_1}f)$ (see Lemma \ref{conjugados da normal}). 

As discussed in the abstract case (see Section \ref{cota bonita conjugados}), we have an action on the right of $\overline{\aut}_{G_1}(H) \times C_2$ on the space of orbits $\overline{N}_{G_1}(H) \backslash \aut(H)$, given by  \begin{equation} \label{acao bunitaap} \left( \overline{N}_{G_1}(H) \cdot f \right) \cdot (\psi, \iota) = \overline{N}_{G_1}(H) \cdot  (f^{-1})^{\psi}=\overline{N}_{G_1}(H) \cdot \left(f^{\psi}\right)^{-1},\end{equation}
where $( \psi, \iota) \in  \overline{\aut}_{G_1}(H) \times C_2$ and $f \in \aut(H).$ We will denote the space of orbits of this action by $\overline{N}_{G_1}(H) \backslash \aut(H)/ ( \overline{\aut}_{G_1}(H) \times C_2).$


Note that $\inn(H)\leq \overline{N}_{G}(H)$ so we can pass to $\out(H)$ using tilde instead of overline. Then we can deduce the bound  for the number  of isomorphism classes of the profinite HNN-extensions $\hnn(G_1, H, K, f, t)$, where $H$ and $K$ are fixed conjugate finite subgroups. 

\begin{thm}\label{profinite isomorphisms} Let $H$ and $K$ be fixed finite conjugate subgroups of a profinite $\OE$-group $G_1$. Then the number $\hat n_H$ of the isomorphism classes of profinite HNN-extensions $G=\hnn(G_1, H, K, f, t)$ is less or equal than $$\left| \widetilde N_{G_1}(H) \backslash \out(H)/ \left( \widetilde{\aut}_{G_1}(H) \times C_2\right) \right|.$$
\end{thm}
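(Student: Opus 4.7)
The plan is to follow the same strategy as in the abstract case treated in Corollary \ref{corhnn}, adapted to the profinite setting where Theorem \ref{profinite isomorphism number} provides only an upper bound rather than equality.

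First I would reduce to the case of a normal profinite HNN-extension. Since $H$ and $K$ are conjugate in $G_1$, say $K=H^{g_1}$ with $g_1\in G_1$, Lemma \ref{conjugados da normal} (which is stated for both abstract and profinite HNN-extensions) yields an isomorphism $$\hnn(G_1,H,K,f,t)\ \cong\ \hnn(G_1,H,H,\tau_{g_1}\circ f,t_1),$$ so that without loss of generality one may assume $H=K$ and $f\in\aut(H)=\Iso(H,H)$. Then Theorem \ref{profinite isomorphism number} applied with $H=K$ gives $$\hat n_H\ \leq\ \bigl|\overline{\Gamma}_{\widehat{HH}}\backslash\aut(H)\bigr|.$$

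Next I would unwind $\overline{\Gamma}_{\widehat{HH}}$ using Remark \ref{novaacao2pp}. By that remark, $\Gamma_{\widehat{HH}}=N_{G_1}(H)\rtimes\aut_{G_1}(H)$ and $\overline{\Gamma}_{\widehat{HH}}=\widetilde\Gamma_{\widehat H}\cdot\langle\iota\rangle$, where for $H=K$ one chooses $\psi=\mathrm{id}$ and $g_1=1$, so that $\iota$ acts on $\aut(H)$ simply by inversion $f\mapsto f^{-1}$. Unpacking the action of $\Gamma_{\widehat{HH}}$ on $\aut(H)$ from formula (\ref{acaosemidiretap}), an element $(x,\alpha)\in N_{G_1}(H)\rtimes\aut_{G_1}(H)$ sends $f$ to $\tau_x\circ\alpha\circ f\circ\alpha^{-1}$. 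Thus the $\overline{\Gamma}_{\widehat{HH}}$-orbits on $\aut(H)$ coincide with the orbits obtained by first acting on the left by $\overline{N}_{G_1}(H)=\{\tau_x\mid x\in N_{G_1}(H)\}$ and then on the right by $\overline{\aut}_{G_1}(H)\times C_2$, where $C_2=\langle\iota\rangle$ acts by inversion as in (\ref{acao bunitaap}). Hence $$\bigl|\overline{\Gamma}_{\widehat{HH}}\backslash\aut(H)\bigr|\ =\ \bigl|\overline{N}_{G_1}(H)\backslash\aut(H)/(\overline{\aut}_{G_1}(H)\times C_2)\bigr|.$$

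Finally I would pass from $\aut(H)$ to $\out(H)$. Since $\inn(H)\leq\overline{N}_{G_1}(H)$ (every inner automorphism of $H$ is realized by conjugation by an element of $H\subseteq N_{G_1}(H)$), the left action of $\overline{N}_{G_1}(H)$ factors through $\widetilde N_{G_1}(H)=\overline{N}_{G_1}(H)/\inn(H)$ acting on $\out(H)$; similarly the right action of $\overline{\aut}_{G_1}(H)$ factors through $\widetilde{\aut}_{G_1}(H)$ on $\out(H)$, and inversion descends to $\out(H)$. Thus quotienting top and bottom by $\inn(H)$ yields the bijection $$\overline{N}_{G_1}(H)\backslash\aut(H)/(\overline{\aut}_{G_1}(H)\times C_2)\ \longleftrightarrow\ \widetilde N_{G_1}(H)\backslash\out(H)/(\widetilde{\aut}_{G_1}(H)\times C_2),$$ and combining with the bound above gives the stated estimate.

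The only genuinely nontrivial step is the identification of $\overline{\Gamma}_{\widehat{HH}}$-orbits with the double coset space, since one must verify that the left $\overline N_{G_1}(H)$-action and the right $(\overline{\aut}_{G_1}(H)\times C_2)$-action assemble correctly (in particular that $\iota$ normalizes $\widetilde\Gamma_{\widehat H}$, which is exactly Lemma \ref{semidiretocomC2p} specialized to $H=K$, $\psi=\mathrm{id}$); everything else is formal manipulation of the definitions and of the already-established upper bound of Theorem \ref{profinite isomorphism number}.
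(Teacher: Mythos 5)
Your proposal is correct and follows essentially the same route as the paper: reduce to a normal HNN-extension via Lemma \ref{conjugados da normal}, invoke the upper bound of Theorem \ref{profinite isomorphism number} (which rests on Proposition \ref{nonnormalhnnprof} and the inclusion $N_{G_1}(H)\subset N^{+}$), and identify $\overline{\Gamma}_{\widehat{HH}}\backslash\aut(H)$ with the double coset space before descending to $\out(H)$. The paper's own proof is just a terser version of this, leaning on the discussion that precedes the theorem.
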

\begin{proof}
Follows from Proposition \ref{nonnormalhnnprof}, Theorem \ref{profinite isomorphism number} and the fact that $N_{G_1}(H)\\ \subset N^{+} \subset N_{G}(H)$ (see (\ref{NG1H em N+})).  
\end{proof}

\begin{exa} Let $G= G_1\amalg_{H, \mu} (H\rtimes_{\alpha} \widehat\Z)$ be a  free profinite product with finite amalgamation, where $G_1$ is a profinite $\OE$-group and $\mu$ is the natural monomorphism from $H$ to $H\rtimes_{\alpha} \widehat\Z= H\rtimes_{\alpha} \overline{\left\langle t \right\rangle}.$ This case was missing in the study of free profinite products with finite amalgamation in \cite{BPZ2} .Note that $G\cong \hnn(G_1, H, \gamma, t),$ where $\gamma = \alpha(t)\circ \mu \in \aut(H).$ Then the number of isomorphism classes of such profinite free products with finite amalgamation $G_1\amalg_{H, \mu} (H\rtimes_{\alpha} \widehat\Z)$  with $H$ fixed is less or equal than  
$$n_{_{H}}:=| \widetilde{N}_{G_1}(H) \backslash \out(H)/ ( \widetilde{\aut}_{G_1}(H) \times C_2) |.$$
\end{exa}

\begin{rem} \label{finite-by-cyclic profinite} 
  Let $G=\hnn(G_1, H, K, f, t)$ 
be a profinite HNN-extension with finite associated subgroups such that $G_1=M_1\rtimes \widehat\Z$, where $M_1$ is a finite subgroup, distinct from $H$ and $K.$ Then $G_1$ is not a profinite $\OE$-group. However, the results obtained in this section for this $G_1$  remain valid.


\end{rem} 



\subsection{Profinite vs Abstract}

In this subsection we consider an abstract HNN-extension $\hnn(G_1,H,K,f,t)$ of an $\Oe$-group and its profinite completion $\hnn(\widehat G_1,H,K,f,t)$.

\begin{rem} \label{rem dos normalizadores e densidade}
Since $\inn(H)\leq \overline{N}_{G_1}(H)$ ( respectively $\inn(H) \leq \overline{\aut}_{G_1}(H)$), the statement that $\widetilde{N}_{\widehat G_1}(H)=\widetilde{N}_{ G_1}(H) $ ( respectively $\widetilde{\aut}_{G_1}(H) = \widetilde{\aut}_{\widehat G_1}(H)$) immediately implies $\overline{N}_{\widehat G_1}(H)=\overline{N}_{ G_1}(H)$ ( respectively $\overline{\aut}_{G_1}(H) = \overline{\aut}_{\widehat G_1}(H)$).
\end{rem}

\begin{rem} \label{tilde implica overline} Recall that
if the associated subgroups $H$ and $K$ are conjugate in  $G_1$, say $H^{g_1}=K\,(g_1 \in G_1),$ then $\hnn(G_1,H,K,f,t)$ is isomorphic to a normal  HNN-extension $G=\hnn(G_1, H, f_1) \, \mbox{ with } f_1 \in \aut(H)$  (see Lemma \ref{conjugados da normal}), i.e.,  one can assume $G= \hnn(G_1, H, f),$ where $f \in \aut(H)$ is normal.  If $\overline{N}_{\widehat G_1}(H)=\overline{N}_{G_1}(H)$ (this is the case if for example $N_{G_1}(H)$ is dense in $N_{\widehat G_1}(H)$), then by Proposition \ref{cp1} 

$$\overline{N}_{\widehat G}(H) = \langle \overline{N}_{\widehat G_1}(H),  f\rangle= \langle \overline{N}_{G_1}(H), f\rangle=\overline{N}_{G}(H),$$ $$\widetilde{N}_{\widehat G}(H) = \langle \widetilde{N}_{\widehat G_1}(H), \tilde f\rangle= \langle \widetilde{N}_{G_1}(H), \tilde f\rangle=\widetilde{N}_{G}(H),$$     
where $\tilde f$ is the image of $f$ in $\out(H)$.
\end{rem}

The situations described in the remark above occur in particular when $G_1$ is finite.

Thus having abstract and profinite HNN-extensions $\hnn(G_1,H,K,f,t)$  and  $\hnn(\widehat G_1,H,K,f,t)$ of an $\Oe$-group $G_1$ we have two groups $\overline {\Gamma}_{HK}$ and $\overline{\Gamma}_{\widehat{HK}}$ acting on $\Iso(H,K)$ (the first in fact is a subgroup of the second) and the cardinality of genus is the difference between the the orbits of these action: more precisely, it is the number of $\overline {\Gamma}_{HK}$-orbits lying in $\overline{\Gamma}_{\widehat{HK}}$-orbit of $f$.  Of course the $\overline {\Gamma}_{HK}$-orbits will coincide with $\overline{\Gamma}_{\widehat{HK}}$-orbits if the groups $\overline{\Gamma}_{HK}$ and $\overline{\Gamma}_{\widehat{HK}}$ are equal.

Recall that a pair $(H,K)$ of finite subgroups of $G_1$ (resp. $\widehat G_1$) is a swapping pair if there exists an  automorphism $\psi\in \aut(G_1)$ (resp. $\in \aut(\widehat G_1)$) that swaps the conjugacy classes of $H$ and $K$ in $G_1$ (resp. in $\widehat G_1)$. Of course, to have  $\overline{\Gamma}_{HK}=\overline{\Gamma}_{\widehat{HK}}$ it is necessary for $(H,K)$ to be either a swapping pair or not a swapping pair in both $G_1$ and $\widehat G_1$.  Having this, it suffices to have the density of $\Gamma_{HK}$ in $\Gamma_{\widehat{HK}}$, since $\Iso(H,K)$ is finite. However, this would imply the density of $\aut_{G_1}(H)$  in $\aut_{\widehat G_1}(H)$ and for infinite $G_1$ it happens not in too many cases despite the fact that it is quite difficult to check. The next lemma gives more transparent sufficient conditions that can be quite often checked when $\out(H)$ is small; it  will be used in  Subsection \ref{section 5 1}. 


\begin{lem} \label{orbits non conjugated}
With the assumptions in this subsection, suppose the following conditions are satisfied 

\begin{enumerate}
    \item [i)]  $\widetilde{\aut}_{\widehat{G}_1}(H)=\widetilde{\aut}_{G_1}(H)$ and $[G_1:N_{G_1}(K)]=[\widehat G_1:N_{\widehat G_1}(K)]<\infty$; 
    
    \item [ii)] $(H,K)$ is either a swapping pair in  $G_1$ or not a swapping pair in $\widehat G_1$. 
\end{enumerate}
Then $\overline{\Gamma}_{HK}=\overline{\Gamma}_{\widehat{HK}}$. 
\end{lem}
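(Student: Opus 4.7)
The plan is to prove the equality in two stages: first reducing to the claim $\widetilde{\Gamma}_{HK}=\widetilde{\Gamma}_{\widehat{HK}}$ via condition (ii), then establishing this equality using condition (i). Throughout, I will exploit the fact that the image of $(g,\hat\alpha)\in\Gamma_{\widehat{HK}}$ in $Sym(\Iso(H,K))$, i.e.\ the map $f\mapsto\tau_g\hat\alpha f\hat\alpha^{-1}$, is encoded by the pair $(\hat\alpha|_H,\tau_g\hat\alpha|_K)\in\aut(H)\times\aut(K)$, because $\hat\alpha^{-1}|_H\in\aut(H)$ and the composition $K\xrightarrow{\hat\alpha}\hat\alpha(K)\xrightarrow{\tau_g}K$ is an automorphism of $K$.

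First I would handle condition (ii). It forces $\overline{\Gamma}_{HK}$ and $\overline{\Gamma}_{\widehat{HK}}$ to be defined through the same case of Definitions \ref{novaacao} and \ref{novaacaopp}. If $(H,K)$ is a swapping pair in $G_1$, pick $\psi\in\aut(G_1)$ swapping the conjugacy classes of $H$ and $K$; then its canonical extension $\hat\psi\in\aut(\widehat G_1)$ also swaps them in $\widehat G_1$ with the same element $g_1\in G_1$. By Lemmas \ref{carmo} and \ref{carmoprof}, the element $\iota$ does not depend on the choice, so the same formula $\iota(f)=\tau_{\psi^{-1}(g_1^{-1})}(f^{\psi})^{-1}$ defines the same permutation of $\Iso(H,K)$ in both the abstract and profinite settings. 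In the alternative case (not swapping in $\widehat G_1$, hence also not swapping in $G_1$), no $\iota$ is adjoined on either side.

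Second I would prove $\widetilde{\Gamma}_{HK}=\widetilde{\Gamma}_{\widehat{HK}}$. The inclusion $\widetilde{\Gamma}_{HK}\subseteq\widetilde{\Gamma}_{\widehat{HK}}$ is immediate: every $\alpha\in\aut_{G_1}(H)$ extends uniquely to an element of $\aut_{\widehat G_1}(H)$, and $G_1\hookrightarrow\widehat G_1$, giving $\Gamma_{HK}\hookrightarrow\Gamma_{\widehat{HK}}$. For the reverse, take $(g,\hat\alpha)\in\Gamma_{\widehat{HK}}$. By Remark \ref{rem dos normalizadores e densidade}, condition (i) upgrades to $\overline{\aut}_{\widehat G_1}(H)=\overline{\aut}_{G_1}(H)$, so we can choose $\alpha\in\aut_{G_1}(H)$ with $\alpha|_H=\hat\alpha|_H$ in $\aut(H)$ (any residual discrepancy by an inner automorphism $\tau_h$ with $h\in H\subseteq G_1$ can be absorbed into $\alpha$). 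The second part of condition (i), by the density argument used in the proof of Corollary \ref{8}, yields that $N_{G_1}(K)$ is dense in $N_{\widehat G_1}(K)$ and, moreover, that the $G_1$-conjugacy class of $K$ equals its $\widehat G_1$-conjugacy class; in particular $\hat\alpha(K)$, which is $\widehat G_1$-conjugate to $K$ via $g$, is already $G_1$-conjugate to $K$.

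The main obstacle is producing a specific $g'\in G_1$ such that $(g',\alpha)\in\Gamma_{HK}$ and $\tau_{g'}\alpha|_K=\tau_g\hat\alpha|_K$ as automorphisms of $K$. The subtlety is that $\hat\beta:=\hat\alpha\alpha^{-1}\in\aut(\widehat G_1)$ fixes $H$ pointwise but its restriction to $K$ need not be inner in $\widehat G_1$, so one cannot equate $\alpha(K)$ with $\hat\alpha(K)$ directly. The plan is: using the conjugacy-class coincidence above, pick $g_0\in G_1$ with $g_0\alpha(K)g_0^{-1}=K$; this places $(g_0,\alpha)$ in $\Gamma_{HK}$. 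The discrepancy between $(g,\hat\alpha)$ and $(g_0,\alpha)$ then lies in the subgroup acting on $\Iso(H,K)$ through some element $\rho=\tau_g\hat\alpha\alpha^{-1}\tau_{g_0^{-1}}|_K\in\aut(K)$; this $\rho$ is automatically in $\overline{N}_{\widehat G_1}(K)=\overline{N}_{G_1}(K)$ (from the normalizer-density half of (i)), so $\rho=\tau_n|_K$ for some $n\in N_{G_1}(K)$. Setting $g'=ng_0\in G_1$ gives the desired element and completes the equality $\widetilde{\Gamma}_{HK}=\widetilde{\Gamma}_{\widehat{HK}}$, finishing the proof.
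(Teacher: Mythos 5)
Your overall strategy is the same as the paper's: condition (ii) is used to show that $\overline{\Gamma}_{HK}$ and $\overline{\Gamma}_{\widehat{HK}}$ fall under the same case of Definitions \ref{novaacao} and \ref{novaacaopp} and that the two copies of $\iota$ coincide (extend $\psi$ to $\hat\psi$ and invoke Lemmas \ref{carmo} and \ref{carmoprof}), and condition (i) is used to match each element of $\widetilde{\Gamma}_{\widehat{HK}}$ with an element of $\widetilde{\Gamma}_{HK}$. Your observation that the image of $(\hat g,\hat\alpha)$ in $Sym(\Iso(H,K))$ is encoded by the pair $\left(\hat\alpha|_H,\ \tau_{\hat g}\hat\alpha|_K\right)$ is exactly the right bookkeeping, and the $\iota$-part and the inclusion $\widetilde{\Gamma}_{HK}\subseteq\widetilde{\Gamma}_{\widehat{HK}}$ are handled correctly.

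The problem is the final step. Having chosen $\alpha\in\aut_{G_1}(H)$ with $\alpha|_H=\hat\alpha|_H$, you need two things that condition (i) does not obviously supply: first, that $\alpha(K)$ is $G_1$-conjugate to $K$ (your conjugacy-class argument applies to $\hat\alpha(K)$, not to $\alpha(K)$, and nothing in the choice of $\alpha$ constrains $\alpha(K)$); second, that the discrepancy $\rho=\tau_{\hat g}\hat\alpha\alpha^{-1}\tau_{g_0^{-1}}|_K$ lies in $\overline{N}_{\widehat G_1}(K)$. The word ``automatically'' here is doing unjustified work: $\rho$ is the restriction to $K$ of the automorphism $(\tau_{\hat g}\hat\alpha)(\tau_{g_0}\alpha)^{-1}$ of $\widehat G_1$, which preserves $K$ but is in general not inner, so a priori $\rho\in\overline{\aut}_{\widehat G_1}(K)$ only, and this can be strictly larger than $\overline{N}_{\widehat G_1}(K)$. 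Condition (i) controls restrictions to $H$ and the normalizer of $K$, but says nothing about how an automorphism of $\widehat G_1$ fixing $H$ pointwise may act on $K$ -- which is precisely what $\rho$ measures. To be fair, the paper's own proof makes the corresponding leap even more tersely: it picks $g,\alpha$ with $\tau_{\hat g}|_K=\tau_g|_K$ and $\hat\alpha|_H=\alpha|_H$ and then asserts $\tau_{\hat g}f^{\hat\alpha^{-1}}=\tau_g f^{\alpha^{-1}}$, an identity that actually requires $\tau_{\hat g}\hat\alpha|_K=\tau_g\alpha|_K$. So you have reproduced the paper's argument and, to your credit, isolated exactly where its difficulty sits -- but your resolution of that difficulty is an assertion, not a proof.
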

\begin{proof}
First assume that there exists $\psi \in \aut(G_1)$ such that $\psi(K)=H^{g_1}$ and $\psi(H)=K$, for some $g_1\in G_1$ and observe that $\psi$ extends to an isomorphism $\hat\psi\in \aut(\widehat G_1)$.  Let $f \in \Iso(H, K)$. It follows from the definitions of the respective $\iota$ in (\ref{def iota abst}) and (\ref{def iota prof})  that 
$$\overline{\Gamma}_{\widehat{HK}}\cdot f\ni \iota\cdot f =\tau_{\hat\psi^{-1}(g_1^{-1})} \left(f^{\hat\psi}\right)^{-1}=\tau_{\psi^{-1}(g_1^{-1})} \left(f^{\psi}\right)^{-1}\in \overline{\Gamma}_{HK}\cdot f$$

Take $(\hat{g}, \hat{\alpha}) \in \Gamma_{\widehat{HK}}$ such that $\tilde{w}$ is its image in $\widetilde{\Gamma}_{\widehat{HK}}.$ 
According to item i) 
there exists $\tau_g \in \inn(G_1)$ and $\alpha \in \aut(G_1)$ such that $\tau_{\hat{g}}(k)=\tau_{g}(k), \, \forall k \in K$ and $\hat \alpha|_{_{H}} = \alpha|_{_{H}}$. Let $w$ be the image of $(g, \alpha)$ in $\widetilde{\Gamma}_{HK}.$ In this way, we have  $$\overline{\Gamma}_{\widehat{HK}}\cdot f\ni \tilde{w} \cdot f = \tau_{\hat g} f^{\hat{\alpha}^{-1}} = \tau_{ g} f^{\alpha^{-1}} =  w \cdot f\in \overline{\Gamma}_{HK}\cdot f,\, \forall f \in \Iso(H, K).$$

If the second possibility of item ii) occurs, the proof is similar but simpler since we do not have the action of $\iota$ since $\overline{\Gamma}_{HK}=\widetilde{\Gamma}_{HK}$ and $\overline{\Gamma}_{\widehat{HK}}=\widetilde{\Gamma}_{\widehat{HK}}.$
    
\end{proof}

\begin{rem} \label{altenative conditions}
Note that the condition $[G_1:N_{G_1}(K)]=[\widehat G_1:N_{\widehat G_1}(K)]<\infty$ in item i) of the Lemma \ref{orbits non conjugated} can be replaced by the condition $$N_{G_1}(K)C_{\widehat G_1}(K)=N_{\widehat G_1}(K)$$ together with the hypothesis that $K$ is a conjugacy distinguished subgroup in $G_1$ (that is, if $K$ and $L \leq G_1$ are conjugate in $\widehat G_1$, then they are conjugate in $G_1$), since in both cases it follows that given $\tau_{\widehat g} \in \inn(\widehat G_1)$ there exists $\tau_{g} \in \inn(G_1)$ such that $\tau_{\hat{g}}(k)=\tau_{g}(k), \, \forall k \in K.$
\end{rem}

\section{Genus of HNN-extensions}\label{secao 5}

Let $H$ and $K$ be finite subgroups of an $\Oe$-group $G_1$ and $f\in \Iso(H,K)$. 
Note here that the abstract HNN-extension $\hnn(G_1, H, K, f,t)$  is residually finite and its profinite completion is the proper profinite HNN-extension $\hnn(\widehat{G}_1, H, K, f,t)$ (see \cite{RZ}, Proposition 9.4.3)   and so we can apply the results of the previous sections.

\subsection{Fixed associated subgroups} \label{section 5 1}

Let  $$\mathcal{F}=\{ \hnn(G_1, H, K, f, t) | f \in \Iso(H,K) \mbox{ and } H, K \mbox{ fixed  }\}$$ be the class of all  
abstract HNN-extensions $\hnn(G_1, H,K, f, t)$, where $H$ and $K$ are fixed finite subgroups of $G_1$ and $f \in \Iso(H,K)$. 
Recall that
\begin{equation*}
N^+=\left\{ gt^{- 1} \in N_{\widehat G}(H) \hspace{0.1cm} \left| \hspace{0.1cm}  \overline{\left\langle g, \widehat G_1 \right\rangle}=\widehat G \right. \right\},   
\end{equation*} and  \begin{equation} \label{cadeia de N mais etc} N_{ G_1}(H) \subset N_{\widehat G_1}(H) \subset N^{+} \subset N_{\widehat G}(H). \end{equation} 


Given $G\in \mathcal{F}$, our goal now is to find the cardinality of the set $g(G, \mathcal{F})$.   Define \begin{equation} \label{Omega como uniao} \Omega:=\bigcup_{n\in N^{+}}\overline{\Gamma}_{\widehat{HK}} \cdot (f\tau_{n^{-1}}) \subset \Iso(H, K)\end{equation} as the union of orbits of  $f\tau_{n^{-1}}\,(n \in N^{+}) \in \Iso(H, K)$ with respect to action given in Definition \ref{novaacaopp}. Consider $B=\hnn(G_1,H,K,f_1,t_1)\in \mathcal{F}$. By Proposition \ref{nonnormalhnnprof} $\widehat G \cong \widehat B$ if and only if $f_1$ belongs to $\overline{\Gamma}_{\widehat{HK}}$-orbit of $f\tau_{n^{-1}}$ for some $n\in N^{+}$ if and only if $f_1 \in \Omega.$ 

Thus, we have the following 

\begin{thm}\label{fixed H} Let $H$ and $K$ be finite subgroups of an $\Oe$-group $G_1.$ Then the cardinality of  genus $g(G, \mathcal{F})$ of HNN-extension $G=\hnn(G_1,H,K,f,t)$ is 
 $$|g(G, \mathcal{F})| = \left|\overline{\Gamma}_{HK}  \backslash \Omega\right|\,\,(\mbox{ see formula  } (\ref{Omega como uniao})).$$ 
\end{thm}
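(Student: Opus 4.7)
The strategy is to combine the abstract classification of isomorphism classes in $\mathcal{F}$ (Theorem \ref{classnonnormalabs}) with the profinite isomorphism criterion (Proposition \ref{nonnormalhnnprof}), realizing both on the common parameter space $\Iso(H,K)$ and reading off the genus as the number of $\overline{\Gamma}_{HK}$-orbits inside a distinguished $\overline{\Gamma}_{\widehat{HK}}$-saturated subset.

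First, I would unpack the definition of genus: $g(G,\mathcal{F})$ consists of isomorphism classes of abstract HNN-extensions $B=\hnn(G_1,H,K,f_1,t_1)\in\mathcal{F}$ with $\widehat{B}\cong\widehat{G}$. Since the abstract HNN-extensions in $\mathcal{F}$ are parametrized by $f_1\in\Iso(H,K)$, Theorem \ref{classnonnormalabs} tells us that the isomorphism class of $B$ in $\mathcal{F}$ is determined precisely by the $\overline{\Gamma}_{HK}$-orbit of $f_1$ in $\Iso(H,K)$. Thus $g(G,\mathcal{F})$ is in bijection with a subset of $\overline{\Gamma}_{HK}\backslash\Iso(H,K)$, namely with the $\overline{\Gamma}_{HK}$-orbits of those $f_1$ for which $\widehat{B}\cong\widehat{G}$.

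Next I would characterize those profinite-equivalent $f_1$. Since $G_1$ is an $\Oe$-group, its profinite completion $\widehat{G}_1$ is a profinite $\OE$-group, so Proposition \ref{nonnormalhnnprof} applies to the proper profinite HNN-extensions $\widehat{G}=\hnn(\widehat{G}_1,H,K,f,t)$ and $\widehat{B}=\hnn(\widehat{G}_1,H,K,f_1,t_1)$ (propriety by \cite[Prop.~9.4.3]{RZ}). Translating the two alternatives (i)--(ii) of that proposition into the action of $\Gamma_{\widehat{HK}}$ and, when a swapping automorphism exists, of $\iota$, the condition $\widehat{G}\cong\widehat{B}$ becomes precisely
$$f_1\in \overline{\Gamma}_{\widehat{HK}}\cdot(f\tau_{n^{-1}})\quad\text{for some }n\in N^+,$$
i.e.\ $f_1\in\Omega$. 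This is the statement already recorded in the paragraph preceding the theorem; I would just justify it line-by-line via Proposition \ref{nonnormalhnnprof}, noting how the factors $\tau_{n^{-1}}$ with $n\in N^+$ encode the freedom to replace the stable letter $t$ by $nt$ (with $\overline{\langle n,\widehat{G}_1\rangle}=\widehat{G}$ ensuring the result is still an HNN-extension with base $\widehat{G}_1$).

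Finally I would verify that $\Omega$ is stable under the action of $\overline{\Gamma}_{HK}$, which is immediate because the natural map $\Gamma_{HK}\to\Gamma_{\widehat{HK}}$ realizes $\overline{\Gamma}_{HK}$ as a subgroup of $\overline{\Gamma}_{\widehat{HK}}$ (the abstract swapping automorphism $\psi$ extends continuously to $\widehat{G}_1$, so the $\iota$ operator is common to both), and $\Omega$ is by construction a union of $\overline{\Gamma}_{\widehat{HK}}$-orbits. Putting the two bijections together yields
$$|g(G,\mathcal{F})|=|\overline{\Gamma}_{HK}\backslash\Omega|,$$
as required. The main (and only) potentially delicate step is the translation of Proposition \ref{nonnormalhnnprof}(i)--(ii) into the orbit-form $f_1\in\overline{\Gamma}_{\widehat{HK}}\cdot f\tau_{n^{-1}}$; this is handled exactly as in the abstract analogue (proof of Theorem \ref{classnonnormalabs}), the sole new ingredient being the element $\tau_{n^{-1}}\in\overline{N}^+$ that accounts for stable-letter replacements available in $\widehat{G}$ but not in $G$.
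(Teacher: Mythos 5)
Your proposal is correct and follows essentially the same route as the paper: the paper's argument is exactly the combination of Theorem \ref{classnonnormalabs} (isomorphism classes in $\mathcal{F}$ correspond to $\overline{\Gamma}_{HK}$-orbits in $\Iso(H,K)$) with the observation, recorded in the paragraph defining $\Omega$, that Proposition \ref{nonnormalhnnprof} gives $\widehat G\cong\widehat B$ iff $f_1\in\Omega$. Your extra remarks on the $\overline{\Gamma}_{HK}$-stability of $\Omega$ (via $\overline{\Gamma}_{HK}\leq\overline{\Gamma}_{\widehat{HK}}$) only make explicit what the paper leaves implicit.
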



Next  we find the cardinality of the genus of $G$ when the subgroups  $\overline{\Gamma}_{\widehat{HK}}$ and $\overline{\Gamma}_{HK}$ of $\Iso(H,K)$ coincide (see Lemma \ref{orbits non conjugated}).   Note that this is always the case if $G_1$ is finite ($G_1 = \widehat{G}_1$) (see Lemma \ref{orbits non conjugated}). 

Let $\kappa: \Iso(H,K)\longrightarrow  \overline\Gamma_{HK}\backslash \Iso(H,K)$ be the natural map and \begin{equation} \label{O conjunto S} R:=\{f\tau_{n^{-1}}\,|\, n \in N^{+}\}.
\end{equation} 

The following theorem gives the precise value for the genus of HNN-extensions; in particular it strengthens  Theorem 5.1 in \cite{BZ}.

\begin{thm} \label{nova conta non conjugadosss} Let $H$ and $K$ be finite subgroups in the $\Oe$-group $G_1$ and $G=\hnn(G_1,H,K,f,t)$ be an HNN-extension. If  $\overline\Gamma_{HK}=\overline\Gamma_{\widehat{HK}}$, then $|g(G, \mathcal{F})| = |\kappa(R)|.$ This is the case if \begin{enumerate}
    \item [i)]  $\widetilde{\aut}_{\widehat{G}_1}(H)=\widetilde{\aut}_{G_1}(H),$  $[G_1:N_{G_1}(K)]=[\widehat G_1:N_{\widehat G_1}(K)]<\infty$ and; 
    
    \item [ii)] $(H,K)$ is either a swapping pair in  $G_1$ or not a swapping pair in $\widehat G_1$. 
\end{enumerate} 
\end{thm}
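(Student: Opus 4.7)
The plan is to derive the equality $|g(G,\mathcal{F})|=|\kappa(R)|$ as an immediate consequence of Theorem~\ref{fixed H} once we know that the two acting groups coincide, and then invoke Lemma~\ref{orbits non conjugated} to justify that conditions (i) and (ii) suffice.

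First, I would apply Theorem~\ref{fixed H}, which already identifies the cardinality of the genus with
\[
|g(G,\mathcal{F})|=\bigl|\overline{\Gamma}_{HK}\backslash \Omega\bigr|,\qquad \Omega=\bigcup_{n\in N^{+}}\overline{\Gamma}_{\widehat{HK}}\cdot(f\tau_{n^{-1}}).
\]
Under the hypothesis $\overline{\Gamma}_{HK}=\overline{\Gamma}_{\widehat{HK}}$, each set $\overline{\Gamma}_{\widehat{HK}}\cdot(f\tau_{n^{-1}})$ is simultaneously an $\overline{\Gamma}_{HK}$-orbit, so $\Omega$ is the union of the $\overline{\Gamma}_{HK}$-orbits of the elements of $R=\{f\tau_{n^{-1}}\mid n\in N^{+}\}$. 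Consequently the number of $\overline{\Gamma}_{HK}$-orbits in $\Omega$ is precisely the number of distinct images of elements of $R$ under the quotient map $\kappa\colon \Iso(H,K)\to \overline{\Gamma}_{HK}\backslash\Iso(H,K)$, i.e.\ $|\kappa(R)|$. This proves the first assertion.

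For the second assertion, I would simply observe that conditions (i) and (ii) in the theorem are literally the hypotheses of Lemma~\ref{orbits non conjugated}, whose conclusion is exactly $\overline{\Gamma}_{HK}=\overline{\Gamma}_{\widehat{HK}}$. Hence the first part applies and gives $|g(G,\mathcal{F})|=|\kappa(R)|$.

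There is no real obstacle here, since the theorem is essentially the combination of two earlier results. The only minor subtlety worth being explicit about is verifying that $\Omega$ is a union of $\overline{\Gamma}_{HK}$-orbits under the coincidence hypothesis—this just uses that $\overline{\Gamma}_{HK}\subseteq\overline{\Gamma}_{\widehat{HK}}$ always holds (from the natural inclusion $\Gamma_{HK}\hookrightarrow\Gamma_{\widehat{HK}}$ induced by $G_1\hookrightarrow\widehat G_1$, together with the fact that when a swapping automorphism $\psi$ exists in $G_1$ it extends to one in $\widehat G_1$ giving the same $\iota$), so under equality the $\overline{\Gamma}_{\widehat{HK}}$-orbits are genuine $\overline{\Gamma}_{HK}$-orbits and not strictly larger.
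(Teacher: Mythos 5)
Your proposal is correct and follows essentially the same route as the paper, which simply cites Lemma \ref{orbits non conjugated} (for the implication from (i)--(ii) to $\overline{\Gamma}_{HK}=\overline{\Gamma}_{\widehat{HK}}$) and relies implicitly on Theorem \ref{fixed H} and the definition of $\Omega$ for the orbit count. You merely make explicit the small bookkeeping step --- that under the coincidence hypothesis the $\overline{\Gamma}_{\widehat{HK}}$-orbits constituting $\Omega$ are genuine $\overline{\Gamma}_{HK}$-orbits, so $\bigl|\overline{\Gamma}_{HK}\backslash\Omega\bigr|=|\kappa(R)|$ --- which the paper leaves to the reader.
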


\begin{proof} Follows directly from Lemma \ref{orbits non conjugated}. \end{proof}

Since $\widetilde{N}^{+} \subset \widetilde{N}_{\widehat G}(H),$ 
the corollary below follows directly from the previous result together with Theorem \ref{classnonnormalabs} and the definition of $R$.  
\begin{cor} \label{normal iguais genero 1 non conjugated} Suppose, in addition to the
hypotheses of Theorem \ref{nova conta non conjugadosss} that $\widetilde{N}_{\widehat G}(H)=\widetilde{N}_{G_1}(H),$
then $|g(G, \mathcal{F})|=1$. In particular, this holds if $\widetilde{N}_{G_1}(H)=\out(H)$.
\end{cor}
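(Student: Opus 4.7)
The plan is to invoke Theorem \ref{nova conta non conjugadosss}, which under the standing hypotheses already yields $|g(G, \mathcal{F})| = |\kappa(R)|$, where $R = \{f\tau_{n^{-1}} \mid n \in N^+\}$. Thus it remains to prove that $\kappa(R)$ is a single point, equivalently, that every $f\tau_{n^{-1}}$ lies in the $\overline{\Gamma}_{HK}$-orbit of $f$. The extra assumption $\widetilde{N}_{\widehat G}(H) = \widetilde{N}_{G_1}(H)$ is used to reduce from an arbitrary $n \in N^+$ to a genuine element of $N_{G_1}(H)$, after which an explicit element of $\Gamma_{HK}$ witnesses the desired identity.

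Fix $n \in N^+$. Since $\widetilde{N}^+ \subseteq \widetilde{N}_{\widehat G}(H) = \widetilde{N}_{G_1}(H)$, there exist $m \in N_{G_1}(H)$ and $h \in H$ with $\tau_n|_H = \tau_h \circ \tau_m|_H$ as elements of $\aut(H)$. The identity $\tau_h \circ \tau_m = \tau_{hm}$ of inner automorphisms then gives $\tau_n|_H = \tau_{m'}|_H$ with $m' := hm \in H \cdot N_{G_1}(H) = N_{G_1}(H)$. In particular $\tau_{n^{-1}}|_H = \tau_{m'^{-1}}|_H$, so $f\tau_{n^{-1}} = f\tau_{m'^{-1}}$ as elements of $\Iso(H,K)$. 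It therefore suffices to show that $f\tau_{m'^{-1}}$ lies in the $\overline{\Gamma}_{HK}$-orbit of $f$.

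The witness is the pair $(m'^{-1}, \tau_{m'}) \in \Gamma_H = G_1 \rtimes \aut_{G_1}(H)$; here $\tau_{m'}$ denotes the inner automorphism of $G_1$ by $m'$, which lies in $\aut_{G_1}(H)$ precisely because $m' \in N_{G_1}(H)$. The stabilizer condition is immediate, since the action sends $K$ to $m'^{-1}(m'Km'^{-1})m' = K$, so $(m'^{-1}, \tau_{m'}) \in \Gamma_{HK}$. Applying the formula $(g_1, \alpha) \cdot f = \tau_{g_1}\alpha f\alpha^{-1}$ gives
\begin{equation*}
(m'^{-1}, \tau_{m'}) \cdot f = \tau_{m'^{-1}} \circ \tau_{m'} \circ f \circ \tau_{m'^{-1}} = f \circ \tau_{m'^{-1}} = f\tau_{m'^{-1}},
\end{equation*}
so $f\tau_{n^{-1}}$ shares a $\widetilde{\Gamma}_{HK}$-orbit with $f$, hence a fortiori its $\overline{\Gamma}_{HK}$-orbit. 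Therefore $|\kappa(R)| = 1$ and $|g(G, \mathcal{F})| = 1$.

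For the final assertion, the chain $\widetilde{N}_{G_1}(H) \subseteq \widetilde{N}_{\widehat G}(H) \subseteq \out(H)$ collapses as soon as $\widetilde{N}_{G_1}(H) = \out(H)$, forcing $\widetilde{N}_{\widehat G}(H) = \widetilde{N}_{G_1}(H)$ for free. The only nontrivial point in the whole argument is the bookkeeping reduction that absorbs the inner $H$-automorphism $\tau_h$ into the normalizer element, but this is a routine manipulation; no substantial obstacle is anticipated.
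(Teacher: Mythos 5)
Your proof is correct and follows essentially the same route as the paper: reduce to $|g(G,\mathcal F)|=|\kappa(R)|$ via Theorem \ref{nova conta non conjugadosss}, use $\widetilde N^{+}\subseteq\widetilde N_{\widehat G}(H)=\widetilde N_{G_1}(H)$ to replace each twist $f\tau_{n^{-1}}$ by $f\tau_{m'^{-1}}$ with $m'\in N_{G_1}(H)$, and absorb that twist into a $\Gamma_{HK}$-element. The paper leaves this last step to the remark preceding the corollary (citing Theorem \ref{classnonnormalabs}); you have merely made the witness $(m'^{-1},\tau_{m'})\in\Gamma_{HK}$ explicit, which is a welcome filling-in of detail rather than a different argument.
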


\begin{proof} It follows from Lemma \ref{orbits non conjugated} that the actions of $\overline{\Gamma}_{HK}$ and  $\overline{\Gamma}_{\widehat{HK}}$ on $\Iso(H, K)$ correspond to the same set of orbits. The result then follows from Theorem \ref{fixed H}.
\end{proof}


\subsubsection{Conjugate subgroups $H$ and $K$}

If the associated subgroups $H$ and $K$ are conjugate in the base group $G_1$,  one can assume w.l.o.g. that the HNN-extension $G=\hnn(G_1, H, K, f,t)$ is normal (see Lemma \ref{conjugados da normal}). Therefore,  the set $R$ from the preceeding subsection can be seen as a subset of $\aut(H).$  Furthermore,  
if $gt^{-1} \in N^{+}$ then $g \in N_{\widehat G}(H)$. 
Note that the image of $gt^{-1}$ in $\aut(H)$ is given by $\tau_{_{(gt^{-1})^{-1}}}=f^{-1} \tau_{g^{-1 }}.$ Therefore the set $R$ (see formula (\ref{O conjunto S})) can be rewritten as \begin{equation} \label{conjunto S com BZ} R=\{f\tau_{n^{-1}}\,\mid\, n \in N^{+}\}= \{ \tau_{g}^{-1} \mid g \in N_{\widehat G}(H) \hspace{0.1cm} \mbox{and} \hspace{0.1cm}   \overline{ \langle g, \widehat G_1 \rangle}  =\widehat G \}.\end{equation} 




Suppose $\mu\in \aut(H)$ and define $\tilde{\mu}$ to be the image of $\mu$ in $\out(H).$ Define the following subset of $\out(H)$ \begin{equation}\label{cS} \cR= \tilde{f}\widetilde{N}^+ := 
\{\tilde f \tilde \tau_{n}^{-1}\,|\, n \in N^{+}\},\end{equation} where $\cR$ is the natural image of the subset $R$ (see (\ref{O conjunto S})) in $\out(H).$

It follows that $\cR=\tilde{f}\widetilde{N}^+$ is a subset of $\tilde f \widetilde{N}_{\widehat G}(H)$. As discussed in Subsubsection \ref{quotasprofconjugates}, $\widetilde{\aut}_{\widehat G_1}(H)$ acts on $\out(H)$ on the right by conjugation.  
The group $\widetilde{N}_{\widehat G_1}(H)$ acts on $\out(H)$ on the left by multiplication. It follows that $\widetilde{N}_{\widehat G_1}(H) \cdot \tilde{f} \widetilde{N}^+ \cdot \widetilde{\aut}_{\widehat G_1}(H)$ is invariant with respect to the action of $\widetilde{N}_{G_1}(H)$ on the left and with respect to the action of $\widetilde{\aut}_{G_1}(H)$ on the right. In the case of conjugated associated subgroups  in the base group, we have the following formula for the genus.

\begin{thm} \label{1.8 da introducao}  Let $H$ and $K$ be conjugate finite subgroups   in an $\Oe$-group $G_1$ and $G=\hnn(G_1,H,K,f,t)$ be an HNN-extension.  Then $$|g(G, \mathcal{F})| = | \widetilde{N}_{G_1}(H) \backslash \widetilde{N}_{\widehat G_1}(H) \cdot \cR  \cdot \widetilde{\aut}_{\widehat G_1}(H)/( \widetilde{\aut}_{G_1}(H) \times C_2)|,$$ where $C_2$ acts on $\out(H)$ by inversion. 
\end{thm}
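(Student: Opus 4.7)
First I would invoke Lemma \ref{conjugados da normal} to reduce to the case of a normal HNN-extension: since $H$ and $K$ are conjugate in $G_1$, we may replace $G$ by an isomorphic normal HNN-extension and assume $H=K$, $f\in\aut(H)$. Theorem \ref{fixed H} then yields
\[
|g(G,\mathcal{F})| \;=\; |\overline\Gamma_{HH}\backslash\Omega|, \qquad \Omega \;=\; \bigcup_{n\in N^+}\overline\Gamma_{\widehat{HH}}\cdot(f\tau_{n^{-1}}) \;\subseteq\; \aut(H).
\]

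Next I would pass from $\aut(H)$ to $\out(H)$. Since $\inn(H)\subseteq\overline{N}_{G_1}(H)$, both actions descend to $\widetilde\Gamma_{HH}$ and $\widetilde\Gamma_{\widehat{HH}}$ acting on $\out(H)$ with the same orbit spaces. Using formulas (\ref{acao1}) and (\ref{acao2}), the action of $\overline\Gamma_{HH}$ on $\aut(H)$ decomposes as left multiplication by $\overline N_{G_1}(H)$, right conjugation by $\overline{\aut}_{G_1}(H)$, and inversion by $C_2$; analogously for $\overline\Gamma_{\widehat{HH}}$. Hence
\[
|g(G,\mathcal{F})| \;=\; \bigl|\widetilde N_{G_1}(H)\backslash\tilde\Omega/(\widetilde{\aut}_{G_1}(H)\times C_2)\bigr|,
\]
where $\tilde\Omega\subseteq\out(H)$ denotes the image of $\Omega$.

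The final step is to relate $\tilde\Omega$ to $\Theta:=\widetilde N_{\widehat G_1}(H)\cdot\cR\cdot\widetilde{\aut}_{\widehat G_1}(H)$. Using the analogous decomposition of the $\widetilde\Gamma_{\widehat{HH}}$-action, the orbit of each element $\tilde f\tilde\tau_{n^{-1}}$ (with $n\in N^+$) is
\[
\bigl\{\,m\,\alpha(\tilde f\tilde\tau_{n^{-1}})^{\pm 1}\alpha^{-1} : m\in\widetilde N_{\widehat G_1}(H),\ \alpha\in\widetilde{\aut}_{\widehat G_1}(H)\,\bigr\}.
\]
Since $\aut_{\widehat G_1}(H)$ preserves $N_{\widehat G_1}(H)$ setwise, the conjugation by $\widetilde{\aut}_{\widehat G_1}(H)$ absorbs cleanly into the outer $\widetilde N_{\widehat G_1}(H)$- and $\widetilde{\aut}_{\widehat G_1}(H)$-factors of $\Theta$, giving $\tilde\Omega=\Theta\cup\Theta^{-1}$. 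Since every $C_2$-orbit in $\tilde\Omega$ meets $\Theta$, representatives may be chosen in $\Theta$, yielding the claimed formula. The hard part will be the bookkeeping in this last identification — in particular, verifying the compatibility of $\widetilde{\aut}_{\widehat G_1}(H)$-conjugation with the $\widetilde N^+$ factor of $\cR$ (which is precisely the invariance of $\Theta$ under the abstract $\widetilde N_{G_1}(H)$- and $\widetilde{\aut}_{G_1}(H)$-actions noted in the paragraph preceding the theorem), and handling the $C_2$-action on $\Theta$ as an equivalence inherited from its genuine action on $\Theta\cup\Theta^{-1}$.
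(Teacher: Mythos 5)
Your proposal is correct and follows essentially the same route as the paper: reduce to a normal HNN-extension via Lemma \ref{conjugados da normal}, describe the genus set inside $\aut(H)$ using Proposition \ref{nonnormalhnnprof} (which is what Theorem \ref{fixed H} packages as $\overline\Gamma_{HH}\backslash\Omega$), pass to $\out(H)$ using $\inn(H)\leq\overline{N}_{G_1}(H)\cap\overline{\aut}_{G_1}(H)$, and count abstract isomorphism classes within that set by the double-coset description of Corollary \ref{corhnn} and formula (\ref{a bijecao das orbitas}). The only difference is that you carry out by hand the orbit bookkeeping (including the $C_2$-inversion being absorbed by choosing representatives in $\widetilde{N}_{\widehat G_1}(H)\cdot\cR\cdot\widetilde{\aut}_{\widehat G_1}(H)$) that the paper delegates to Corollary \ref{corhnn}.
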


\begin{proof} It can be assumed that $G$ is normal and $f \in \aut(H)$ (see Lemma \ref{conjugados da normal}). Let $B=\hnn(G_1,H,f_1,t_1)$ with $f_1 \in \aut(H).$ By Proposition \ref{nonnormalhnnprof}, $\widehat B \cong \widehat G$ if and only if $f_1  \in \overline{N}_{\widehat G_1}(H) \cdot R \cdot \overline{\aut}_{\widehat G_1}(H)$ if and only if  $\widetilde{f_1} \in \widetilde{N}_{\widehat G_1}(H) \cdot \cR \cdot \widetilde{\aut}_{\widehat G_1}(H)$ (since $\inn(H)\leq \overline{N}_{G_1}(H) \cap \overline{\aut}_{G_1}(H)$) and the result follows from Corollary \ref{corhnn}. 
\end{proof}

Now, let 
\begin{equation} \label{kappa function} \tilde\kappa: \out(H)\longrightarrow \widetilde{N}_{G_1}(H) \backslash \out(H)/  ( \widetilde{\aut}_{G_1}(H) \times C_2)\end{equation}
be the natural map (see Corollary \ref{corhnn}).
Then we have the following 


\begin{cor} \label{nova conta conjugadoss} Let $H$ and $K$ be conjugate finite subgroups of an $\Oe$-group $G_1$ and $G=\hnn(G_1,H,K,f,t)$ be an HNN-extension. Suppose that $\widetilde{\aut}_{\widehat G_1}(H)\\=\widetilde{\aut}_{G_1}(H)$ and $\widetilde{N}_{\widehat G_1}(H)=\widetilde{N}_{ G_1}(H).$  
Then $$|g(G, \mathcal{F})| = |\tilde \kappa(\cR)|=| \widetilde{N}_{G_1}(H) \backslash \cR/  ( \widetilde{\aut}_{G_1}(H) \times C_2)|.$$ 
\end{cor}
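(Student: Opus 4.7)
The plan is to derive this corollary as a direct consequence of Theorem \ref{1.8 da introducao} combined with the two simplifying hypotheses. Theorem \ref{1.8 da introducao} gives
$$|g(G,\mathcal{F})| = \left|\widetilde{N}_{G_1}(H) \backslash \widetilde{N}_{\widehat G_1}(H) \cdot \cR \cdot \widetilde{\aut}_{\widehat G_1}(H) / (\widetilde{\aut}_{G_1}(H) \times C_2)\right|.$$
Using the assumptions $\widetilde{N}_{\widehat G_1}(H)=\widetilde{N}_{G_1}(H)$ and $\widetilde{\aut}_{\widehat G_1}(H) = \widetilde{\aut}_{G_1}(H)$, the profinite factors would be replaced by their abstract counterparts, so the set appearing in the numerator becomes $\widetilde{N}_{G_1}(H) \cdot \cR \cdot \widetilde{\aut}_{G_1}(H)$.

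Next I would observe that every element of this set has the form $n \cdot r \cdot \alpha$ with $r \in \cR$, $n \in \widetilde{N}_{G_1}(H)$ and $\alpha \in \widetilde{\aut}_{G_1}(H)$, and is therefore already in the same equivalence class as $r$ under the action of $\widetilde{N}_{G_1}(H)$ on the left and $\widetilde{\aut}_{G_1}(H) \times C_2$ on the right. Hence every orbit that intersects $\widetilde{N}_{G_1}(H) \cdot \cR \cdot \widetilde{\aut}_{G_1}(H)$ already meets $\cR$, so the number of orbits equals the number of distinct images $\tilde\kappa(r)$ with $r \in \cR$, namely $|\tilde\kappa(\cR)|$. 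The second equality in the statement is then just an unpacking of the definition of $\tilde\kappa$ from (\ref{kappa function}) restricted to the subset $\cR$.

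The main subtlety to be careful about is the $C_2$-action by inversion: if $r \in \cR$, then $r^{-1}$ need not belong to $\cR$, so the notation $\widetilde{N}_{G_1}(H) \backslash \cR / (\widetilde{\aut}_{G_1}(H) \times C_2)$ is strictly speaking an abuse, since $\cR$ is neither a group nor necessarily invariant under the combined action. This is not a counting issue, however, because both sides of the second equality are defined as the number of equivalence classes of $\out(H)$ (under the full combined action, inversion included) that intersect $\cR$, which is exactly $|\tilde\kappa(\cR)|$. Beyond invoking Theorem \ref{1.8 da introducao} and substituting the hypotheses, no further computation is required.
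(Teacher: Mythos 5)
Your proposal is correct and follows exactly the paper's route: the paper's own proof of this corollary is the single line ``The result follows from Theorem \ref{1.8 da introducao}.'' You have merely made explicit the substitution of the hypotheses and the observation that every orbit meeting $\widetilde{N}_{G_1}(H)\cdot\cR\cdot\widetilde{\aut}_{G_1}(H)$ already meets $\cR$, together with the (correct) remark that the double-coset notation is an abuse counting the equivalence classes that intersect $\cR$.
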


\begin{proof} The result follows from Theorem \ref{1.8 da introducao}. 
\end{proof}

\begin{cor} \label{cotas n} Let $H$ be a finite subgroup of an $\Oe$-group $G_1$ and $G=\hnn(G_1,H,f,t)$  be a normal HNN-extension.  Suppose $\widetilde{N}_{\widehat G_1}(H)=\{1\}$ (for example if $H$ is malnormal in $\widehat G_1$) and $\widetilde{\aut}_{ G_1}(H)=\widetilde{\aut}_{\widehat{G}_1}(H)$ (this is the case if $|G_1|<\infty$). Then 

$$|g(G, \mathcal{F})|\leq \left\{\begin{array}{cc}
  1,&\mbox{ if } |\tilde f| \leq 2,\\
\displaystyle\frac{\phi(|\tilde f|)}{2} , &\mbox{ if } |\tilde f| \geq 3,
\end{array}\right.$$
where $\phi$ is Euler's function.  
\end{cor}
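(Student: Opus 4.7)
The plan is to apply Corollary \ref{nova conta conjugadoss}. Its two hypotheses are satisfied: $\widetilde{\aut}_{G_1}(H) = \widetilde{\aut}_{\widehat G_1}(H)$ is assumed, and since $\widetilde{N}_{G_1}(H) \leq \widetilde{N}_{\widehat G_1}(H) = 1$ one has $\widetilde{N}_{G_1}(H) = 1$ as well. The formula then collapses to
$$|g(G, \mathcal{F})| = |\cR / (\widetilde{\aut}_{G_1}(H) \times C_2)|,$$
and because passing to the $\widetilde{\aut}_{G_1}(H)$-quotient only merges $C_2$-orbits, it suffices to upper-bound $|\cR/C_2|$.

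The crux is to pin down $\cR$ in terms of $m := |\tilde f|$. By Proposition \ref{cp1}(ii) in the normal case, $N_{\widehat G}(H) = \hnn(N_{\widehat G_1}(H), H, f)$, so it is topologically generated by $N_{\widehat G_1}(H)$ together with $t$. The natural map $N_{\widehat G}(H) \to \widetilde{N}_{\widehat G}(H) \leq \out(H)$ kills $N_{\widehat G_1}(H)$ (since $\widetilde N_{\widehat G_1}(H) = 1$) and sends $t$ to $\tilde f$; comparing with the $t$-exponent map $\sigma: \widehat G \to \widehat G/\overline{\widehat G_1^{\widehat G}} \cong \widehat\Z$, which also kills $\widehat G_1$ and sends $t$ to $1$, I conclude that the two (anti-)homomorphisms agree, so every $g \in N_{\widehat G}(H)$ satisfies $\tilde\tau_{g^{-1}}|_H = \tilde f^{\sigma(g) \bmod m}$. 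In particular $\widetilde{N}_{\widehat G}(H) = \langle \tilde f\rangle \cong \Z/m$. Using the description (\ref{conjunto S com BZ}), $\cR = \{\tilde f^{\sigma(g)} : g \in N_{\widehat G}(H),\ \overline{\langle g, \widehat G_1\rangle} = \widehat G\}$, and the generating condition forces the image of $g$ in $\widehat\Z$ to be a topological generator, i.e.\ $\sigma(g) \in \widehat\Z^{\,*}$. Reducing modulo $m$ gives $\cR \subseteq \{\tilde f^k : k \in (\Z/m)^{*}\}$, and therefore $|\cR| \leq \phi(m)$.

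The final step is a short case analysis. If $m \leq 2$ then $\phi(m) \leq 1$, so $|\cR| \leq 1$ and $|g(G, \mathcal{F})| \leq 1$. If $m \geq 3$ then inversion $\tilde f^k \mapsto \tilde f^{-k}$ acts freely on $\{\tilde f^k : k \in (\Z/m)^{*}\}$: indeed $\tilde f^k = \tilde f^{-k}$ would give $m \mid 2k$, which combined with $\gcd(k, m) = 1$ forces $m \mid 2$, contradicting $m \geq 3$. Hence $|\cR/C_2| \leq \phi(m)/2$, as required. The main obstacle is the middle paragraph: verifying that the map $N_{\widehat G}(H) \to \widetilde{N}_{\widehat G}(H)$ coincides with $\sigma \bmod m$, which hinges on using Proposition \ref{cp1}(ii) to exhibit explicit (topological) generators of $N_{\widehat G}(H)$ so that the two homomorphisms to $\Z/m$ can be compared on them.
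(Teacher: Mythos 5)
Your proposal is correct and follows essentially the same route as the paper: identify $\widetilde{N}_{\widehat G}(H)=\langle \tilde f\rangle$ via Proposition \ref{cp1}, recognize $\cR$ as (contained in) the set of generators of this cyclic group, and count the $C_2$-orbits under inversion. The only difference is that you supply the verification (via the $t$-exponent map $\sigma$) that the generating condition $\overline{\langle g,\widehat G_1\rangle}=\widehat G$ forces $\tilde\tau_{g^{-1}}$ to generate $\langle\tilde f\rangle$, a step the paper asserts without proof.
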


\begin{proof}
By Proposition \ref{cp1} $\widetilde{N}_{\widehat{G}}(H)=\left \langle \widetilde{N}_{\widehat{G }_1}(H), \tilde{f} \right\rangle=\left \langle \tilde{f} \right\rangle.$ Note that  $\mathcal{R}=\left\{ \tilde{\tau}_{g^{-1}}\,|\, \tilde{\tau}_{g^{-1}} \mbox{ is a generator of } \left \langle \tilde{f}\right \rangle \right\}.$ Therefore, the bound can be obtained by considering only the action of $C_2$ on $\mathcal{R}$ as in Corollary \ref{nova conta conjugadoss}. 
\end{proof} 

\begin{rem}\label{H subgroup cyclic}
In the preceding result, if $\aut(H)$ is abelian (for example, when $H$ is cyclic) then $|g(G, \F)|=\frac{\phi(|\tilde f|)}{2}$ if $|\tilde f|\geq 3.$      
\end{rem} 

Denote by $tor(G)$ the subset of elements of finite order in a given group $G.$ 

\begin{exa} Suppose that $G_1$ is a finitely generated abelian group which is not virtually infinite cyclic and let $H$ be one of its finite subgroups. By Proposition \ref{virtually cyclic} $G_1$ is an $\Oe$-group. Let $G=\hnn(G_1,H,f,t)$ be a normal HNN-extension. Since $G_1$ is finitely generated abelian $\widetilde{N}_{G_1}(H)= \widetilde{N}_{\widehat{G }_1}(H)=\{1\}$ and furthermore $\widetilde{\aut}_{ G_1}(H)=\out(H)=\widetilde{\aut}_{\widehat{G}_1}(H).$ It follows from Corollary \ref{cotas n} that the genus satisfies 
$$|g(G, \mathcal{F})|\leq \left\{\begin{array}{cc}
  1,&\mbox{ if } |\tilde f| \leq 2,\\
\displaystyle\frac{\phi(|\tilde f|)}{2}, &\mbox{ if } |\tilde f| \geq 3.
\end{array}\right.$$ 

If $G_1$ is virtually infinite cyclic and $H \neq tor(G_1)$ the same result is valid by Remarks \ref{finite-by-cyclic} and \ref{finite-by-cyclic profinite}.  
For $H=tor(G_1)$ we have that the HNN-extension is of the form $G=H \rtimes F,$ where $F$ is a free group and the genus in this case was calculated in Theorem 3.26 in \cite{GZ}. 
\end{exa}



The second part of the result below, which deals with genus equal to 1, is similar to what was done in Theorem 3.1 in \cite{BZ} when $G$ is restricted to the class $\mathcal{F}$ and the finite associated subgroups are conjugate in $G_1,$ that is, when the HNN-extension can be considered normal.

\begin{cor} \label{autG classe} 
Let $H$ and $K$ be conjugate finite subgroups of an $\Oe$-group $G_1$  and $G=\hnn(G_1, H, K, f, t)$ be an HNN-extension with $f \in \Iso(H, K).$ Suppose $\widetilde{\aut}_{\widehat G_1}(H)=\widetilde{\aut}_{G_1}(H)$ and $\widetilde{N}_{\widehat G_1}(H)=\widetilde{N}_{ G_1}(H)$ (for example $|G_1|<\infty$).  Then $$|g(G, \mathcal{F})| \leq | \tilde\kappa(\widetilde{N}_{G}(H))|.$$ In particular, if $\widetilde{N}_{\widehat G}(H)=\widetilde{N}_{G_1}(H)$ (this is the case if $\tilde f\in \widetilde{N}_{ G_1}(H),$ where $\tilde f$ is the image of $f$ in $\out(H)$) or $|\out(H)|\leq 2,$ then $|g(G, \mathcal{F})|=1$.
\end{cor}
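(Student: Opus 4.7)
The plan is to deduce this corollary directly from Corollary~\ref{nova conta conjugadoss}, by tracking the normalizer identities that the present hypotheses force.

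First I would invoke Lemma~\ref{conjugados da normal} to replace $G$ by an isomorphic normal HNN-extension $\hnn(G_1,H,f,t)$ with $f\in \aut(H)$, and write $\tilde f$ for its image in $\out(H)$. The standing assumptions $\widetilde{\aut}_{\widehat G_1}(H)=\widetilde{\aut}_{G_1}(H)$ and $\widetilde{N}_{\widehat G_1}(H)=\widetilde{N}_{G_1}(H)$ are exactly those of Corollary~\ref{nova conta conjugadoss}, which therefore gives
$$|g(G,\mathcal{F})| = |\tilde\kappa(\cR)|, \qquad \cR=\tilde f\,\widetilde{N}^{+}.$$
Next I would apply Remark~\ref{tilde implica overline}: using $\widetilde{N}_{\widehat G_1}(H)=\widetilde{N}_{G_1}(H)$ together with Proposition~\ref{cp1}, it yields
$$\widetilde{N}_{\widehat G}(H) = \langle \widetilde{N}_{\widehat G_1}(H), \tilde f\rangle = \langle \widetilde{N}_{G_1}(H), \tilde f\rangle = \widetilde{N}_G(H).$$
Since the chain~(\ref{cadeia de N mais etc}) yields $\widetilde{N}^{+}\subseteq \widetilde{N}_{\widehat G}(H)$, and hence $\cR=\tilde f\widetilde{N}^{+}\subseteq \widetilde{N}_{\widehat G}(H)=\widetilde{N}_G(H)$, monotonicity of $\tilde\kappa$ produces the announced bound $|g(G,\mathcal{F})|\le |\tilde\kappa(\widetilde{N}_G(H))|$.

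For the ``in particular'' assertions I would argue as follows. If $\widetilde{N}_{\widehat G}(H)=\widetilde{N}_{G_1}(H)$, then $\cR$ sits inside $\widetilde{N}_{G_1}(H)$, which is collapsed by $\tilde\kappa$ to the trivial double coset by the very definition of $\tilde\kappa$ in~(\ref{kappa function}); combined with $|g(G,\mathcal{F})|\ge 1$ this forces genus one. The stronger hypothesis $\tilde f\in \widetilde{N}_{G_1}(H)$ is a special case via the generating description $\widetilde{N}_{\widehat G}(H)=\langle \widetilde{N}_{G_1}(H),\tilde f\rangle$ above.

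The case $|\out(H)|\le 2$ is the most delicate, and I expect it to be the main obstacle. Here $\out(H)$ is abelian with every element self-inverse, so both the conjugation action of $\widetilde{\aut}_{G_1}(H)$ and the inversion action of $C_2$ on $\out(H)$ become trivial; the double cosets in the target of $\tilde\kappa$ therefore coincide with left $\widetilde{N}_{G_1}(H)$-cosets in $\out(H)$. The plan is to use the explicit description~(\ref{conjunto S com BZ}) of the preimage $R$ of $\cR$ in $\aut(H)$ together with the generating constraint $\overline{\langle g,\widehat{G}_1\rangle}=\widehat{G}$ on the elements $g\in N_{\widehat G}(H)$ appearing there to show that $\cR$ is contained in a single left $\widetilde{N}_{G_1}(H)$-coset. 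Granted this, $|\tilde\kappa(\cR)|=1$ and the conclusion follows.
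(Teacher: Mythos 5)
Your treatment of the main inequality and of the first ``in particular'' clause is correct and is essentially the paper's own argument: reduce to a normal HNN-extension, apply Corollary \ref{nova conta conjugadoss} to get $|g(G,\F)|=|\tilde\kappa(\cR)|$ with $\cR=\tilde f\widetilde N^{+}$, observe $\widetilde N_{\widehat G}(H)=\langle \widetilde N_{\widehat G_1}(H),\tilde f\rangle=\langle \widetilde N_{G_1}(H),\tilde f\rangle=\widetilde N_G(H)$, and use $\widetilde N^{+}\subseteq \widetilde N_{\widehat G}(H)$ together with $\tilde f\in\widetilde N_{\widehat G}(H)$ to place $\cR$ inside $\widetilde N_G(H)$; the clause $\widetilde N_{\widehat G}(H)=\widetilde N_{G_1}(H)$ then collapses everything to one double coset exactly as you say.

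The genuine gap is the clause $|\out(H)|\le 2$, which you leave as a plan rather than a proof. Your instinct that it is the delicate point is sound: the inequality $|g(G,\F)|\le|\tilde\kappa(\widetilde N_G(H))|$ alone only gives $|g(G,\F)|\le 2$ when $\out(H)\cong C_2$, $\widetilde N_{G_1}(H)=1$ and $\tilde f\ne 1$, so one really must exploit the generating condition in the definition of $N^{+}$ (the paper's ``follows directly from the first'' conceals exactly this computation, which is only made explicit later, in the proof of Corollary \ref{genus cotas faceis}). Your plan does close the gap, and here is the missing step. Since $|\out(H)|\le 2$, $\out(H)$ is abelian, so $\widetilde N_{G_1}(H)\trianglelefteq \widetilde N_{\widehat G}(H)$ and $Q:=\widetilde N_{\widehat G}(H)/\widetilde N_{G_1}(H)$ is cyclic of order $m\le 2$ generated by the class $q$ of $\tilde f$. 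Let $\deg:\widehat G\to\widehat\Z$ be the epimorphism killing $\widehat G_1$ with $\deg t=1$. The homomorphism $N_{\widehat G}(H)\to Q$, $x\mapsto$ class of $\tilde\tau_{x^{-1}}$, kills $N_{\widehat G_1}(H)$ (here the hypothesis $\widetilde N_{\widehat G_1}(H)=\widetilde N_{G_1}(H)$ is used) and sends $t$ to $q$; since $Q$ is abelian it factors through $\deg$ and equals $x\mapsto q^{\deg x}$. For $n=gt^{-1}\in N^{+}$ the condition $\overline{\langle g,\widehat G_1\rangle}=\widehat G$ forces $\deg g\in\widehat\Z^{\ast}$, hence $\deg g\equiv 1$ and $\deg n=\deg g-1\equiv 0$ modulo $2$; as $m$ divides $2$ this gives $q^{\deg n}=1$, i.e. $\widetilde N^{+}\subseteq\widetilde N_{G_1}(H)$. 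Therefore $\cR=\tilde f\widetilde N^{+}\subseteq\tilde f\,\widetilde N_{G_1}(H)$ is contained in a single $\widetilde N_{G_1}(H)$-coset, so $|\tilde\kappa(\cR)|=1$, as required.
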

\begin{proof} Note that $\widetilde N_{\widehat G}(H)=\langle \widetilde N_{\widehat G_1}(H), \tilde f\rangle=\widetilde N_G(H)$ in this case. So  the first part follows directly from Corollary \ref{nova conta conjugadoss} and the fact that $N^{+} \subset N_{\widehat G}(H),$ because the genus is less than or equal to $$| \tilde\kappa(\tilde f \cdot \widetilde{N}_{G}(H))|=| \tilde\kappa( \widetilde{N}_{G}(H))|.$$ The second statement follows directly from the first. 
\end{proof}

\begin{rem} \label{case polycyclic}
Let $G_1$ be a polycyclic-by-finite group and $H$ a finite subgroup of $G_1$ such that $\widetilde{\aut}_{\widehat G_1}(H)= \widetilde{\aut}_{G_1}(H)$. Consider  a normal HNN-extension $G=\hnn(G_1, H, f, t)$. Then the cardinality of the genus of $G$ in $\mathcal{F}$  satisfies $$|g(G, \F)| \leq  | \tilde\kappa( \widetilde{N}_{G}(H))|,$$ by Proposition \ref{cp1},  Corollaries  \ref{8} and \ref{autG classe} since $\widetilde N_{\widehat G}(H)=\widetilde N_{G}(H)$. In particular, the result holds if $G_1$ is finite. 
\end{rem}

The  result, which provides more effective conditions for calculating the genus of normal HNN-extensions, will be given in the following corollary. Note that it is a generalization of Corollary 5.5 in \cite{BZ}.  


\begin{cor} \label{genus cotas faceis}
Let $H$ be a finite subgroup of an $\Oe$-group $G_1$ and $G=\hnn(G_1,H,f,t)$ be an HNN-extension, where $f \in \aut(H)$. Suppose that $\widetilde{\aut}_{\widehat{G}_1}(H)=\widetilde{\aut}_{G_1}(H)$, $\widetilde{N}_{\widehat G_1}(H)=\widetilde{N}_{ G_1}(H)$   and moreover,  one of the following conditions is satisfied \begin{enumerate}
    \item [i)] $f \in \overline{\aut}_{G_1}(H);$

    \item [ii)] $\overline{N}_{ G_1}(H)=\inn(H);$

    \item [iii)] $H \leq Z(G_1).$
\end{enumerate}  
Then \begin{enumerate}    \item [a)] $|g(G, \F)|=1,$ if $|\widetilde{N}_{ G}(H)/\widetilde{N}_{ G_1}(H)|\leq 2$;
    \item [b)] 
 $|g(G, \F)|\leq \frac{\phi\left(|\widetilde{N}_{G}(H)/\widetilde{N}_{G_1}(H)|\right)}{2},$ if $\left| \frac{\widetilde{N}_{ G}(H)}{\widetilde{N}_{ G_1}(H)}\right| \geq 3,$ where $\phi$ is the Euler function.
\end{enumerate} 

\end{cor}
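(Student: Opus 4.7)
The plan is to apply Corollary~\ref{nova conta conjugadoss}, which under the two blanket hypotheses reduces the computation to
\[
|g(G,\F)| = |\widetilde{N}_{G_1}(H)\backslash \cR /(\widetilde{\aut}_{G_1}(H)\times C_2)|,
\]
and then to identify the image of $\cR$ in an appropriate cyclic quotient of $\widetilde N_G(H)$, where the $C_2$-action alone already yields the desired bound.

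First I would check that under each of the alternatives (i), (ii), (iii), the subgroup $\widetilde{N}_{G_1}(H)$ is normalized by $\tilde f$: in (i) this follows from $\tilde f\in\widetilde{\aut}_{G_1}(H)$ together with the fact that any $\psi\in\aut_{G_1}(H)$ preserves $N_{G_1}(H)$ (so $\widetilde{\aut}_{G_1}(H)$ normalizes $\widetilde N_{G_1}(H)$); in (ii) and (iii) one has $\widetilde{N}_{G_1}(H)=1$ trivially. By Proposition~\ref{cp1} and the hypothesis $\widetilde{N}_{\widehat G_1}(H)=\widetilde{N}_{G_1}(H)$, we also obtain $\widetilde{N}_G(H)=\widetilde{N}_{\widehat G}(H)=\langle\widetilde{N}_{G_1}(H),\tilde f\rangle$, so the quotient $Q:=\widetilde{N}_G(H)/\widetilde{N}_{G_1}(H)$ is cyclic of finite order $n:=|\widetilde N_G(H)/\widetilde N_{G_1}(H)|$, generated by the class of $\tilde f$.

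Next I would compute the image of $\cR$ in $Q\cong\Z/n\Z$. By Proposition~\ref{cp1} the normalizer $N_{\widehat G}(H)=\hnn(N_{\widehat G_1}(H),H,f)$ is itself an HNN-extension, so every $g\in N_{\widehat G}(H)$ has a well-defined $t$-exponent $[g]\in\widehat\Z$; a normal-form computation using the relation $t^{-1}ht=f(h)$ together with the cyclicity of $Q$ shows that the natural image of $g$ in $Q$ equals $\tilde f^{[g]}$. Rewriting $\cR$ as $\{\tilde\tau_g^{-1}:g\in N_{\widehat G}(H),\;[g]\in\widehat\Z^\times\}$ via formula~(\ref{conjunto S com BZ}) and using the surjectivity of the reduction map $\widehat\Z^\times\twoheadrightarrow(\Z/n\Z)^\times$, one concludes that the image of $\cR$ in $Q$ is exactly $(\Z/n\Z)^\times$.

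Finally I would count the orbits. The $C_2$-action on $\out(H)$ by inversion descends on $Q$ to $k\mapsto -k$, which preserves $(\Z/n\Z)^\times$, and the right $\widetilde{\aut}_{G_1}(H)$-action can only merge orbits further. For $n\le 2$ the set $(\Z/n\Z)^\times$ has at most one element, so $|g(G,\F)|\le 1$, and non-emptiness of $\cR$ forces equality, establishing (a). For $n\ge 3$, inversion has no fixed points in $(\Z/n\Z)^\times$, so the $\phi(n)$ units fall into $\phi(n)/2$ two-element orbits, yielding (b). The main technical obstacle is the surjectivity of the image of $\cR$ onto $(\Z/n\Z)^\times$: the containment is immediate, but equality requires carefully combining the HNN-structure of $N_{\widehat G}(H)$ with the density of $\widehat\Z^\times$ modulo $n$ and the hypotheses equating the abstract and profinite versions of $\widetilde N_{G_1}(H)$ and $\widetilde{\aut}_{G_1}(H)$.
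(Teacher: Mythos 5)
Your proposal is correct and follows essentially the same route as the paper: reduce via Corollary \ref{nova conta conjugadoss}, use the hypotheses and Proposition \ref{cp1} to get $\widetilde{N}_{G}(H)=\widetilde{N}_{\widehat G}(H)=\langle\widetilde{N}_{G_1}(H),\tilde f\rangle$ with $\widetilde{N}_{G_1}(H)$ normal (so the quotient is finite cyclic), identify the image of $\cR$ there with the set of generators, and count the orbits of the inversion action. Your explicit justification of that last identification via the $t$-exponent and the surjection $\widehat\Z^{\times}\twoheadrightarrow(\Z/n\Z)^{\times}$ only fills in a step the paper asserts without detail.
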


\begin{proof} By Remarks \ref{rem dos normalizadores e densidade} and 
\ref{tilde implica overline} we have  $$\overline{N}_{\widehat G}(H) = \langle \overline{N}_{\widehat G_1}(H),  f\rangle= \langle \overline{N}_{G_1}(H),  f\rangle=\overline{N}_{G}(H)$$ and $\widetilde{N}_{\widehat G}(H) = \widetilde{N}_{G}(H)=\langle  \widetilde{N}_{G_1}(H),  \tilde{f}\rangle.$ Since $\overline{N}_{G_1}(H) \triangleleft \overline{\aut}_{G_1}(H),$ then for any of the hypotheses i)-iii) of this corollary we have that $\overline{N}_{G_1}(H) \triangleleft \overline{N}_{\widehat G}(H)=\overline{N}_{G}(H),$  whence it follows that $\widetilde{N}_{G_1}(H) \triangleleft \widetilde{N}_{\widehat G}(H)=\widetilde{N}_{G}(H).$ Therefore, the quotients $\overline{N}_{ G}(H)/\overline{N}_{ G_1}(H) \cong \widetilde{N}_{ G}(H)/\widetilde{N}_{ G_1}(H)$ are finite cyclic. Let $\tilde{\zeta}: \widetilde{N}_{G}(H) \longrightarrow \widetilde{N}_{ G}(H)/\widetilde{N}_{ G_1}(H)$ be the canonical epimorphism. By the definition of $R$ and $\mathcal{R}$ in (\ref{conjunto S com BZ}) and (\ref{cS}), we have that  $\tilde{\zeta}(\mathcal{R})$ is the set of all generators of the cyclic group $\widetilde{N}_{ G}(H)/\widetilde{N}_{ G_1}(H).$ Since $\widetilde{N}_{G_1}(H) \backslash \cR \subset \widetilde{N}_{G_1}(H) \backslash \widetilde{N}_{G}(H)$, we obtain the result by Corollary \ref{nova conta conjugadoss} by looking only at the orbits obtained by the action of $C_2$ on the generator set $\widetilde{N}_{G_1}(H) \backslash \cR.$
\end{proof}

\subsection{Genus 1}

In this subsection we will give some sufficient conditions for  the genus of HNN-extensions with  an $\Oe$ base group and finite associated subgroups to have only one element. Of course, if  $\widetilde{N}_{G_1}(H)=\out(H)$, then $|g(G, \mathcal{F})| =1
$ by Corollary 
\ref{normal iguais genero 1 non conjugated} and so we shall omit this condition from the list  below to avoid repeatition. The following results give a generalization of Theorem 1.1 in \cite{BZ}. 


We start the case of normal HNN-extension (equivalently $H$ and $K$ are conjugate in the base group).

\begin{thm}\label{genus 1 normal}
Let $H$ be a finite subgroup of an $\Oe$-group $G_1$ and $$G=\hnn(G_1, H, f, t)$$ be a normal HNN-extension, i.e. $f \in \aut(H).$ 
Suppose that   $\widetilde{\aut}_{\widehat{G}_1}(H)=\widetilde{\aut}_{G_1}(H)$ and $\widetilde N_{G_1}(H)=\widetilde N_{\widehat G_1}(H)$. If $\widetilde{N}_{\widehat G}(H)=\widetilde{N}_{G_1}(H)$ (this is the case if $f\in \overline N_{G_1}(H)$), then $|g(G,\F)|=1$. 
\end{thm}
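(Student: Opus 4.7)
The plan is to apply the genus formula of Corollary \ref{nova conta conjugadoss} (which already uses both hypotheses $\widetilde{\aut}_{\widehat G_1}(H)=\widetilde{\aut}_{G_1}(H)$ and $\widetilde N_{G_1}(H)=\widetilde N_{\widehat G_1}(H)$) and show that under the additional hypothesis $\widetilde N_{\widehat G}(H) = \widetilde N_{G_1}(H)$, the set $\cR \subseteq \out(H)$ on which the group $\widetilde N_{G_1}(H)$ acts reduces to a single left coset, forcing one orbit.

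First I would invoke Corollary \ref{nova conta conjugadoss} to obtain
\[
|g(G,\mathcal{F})| \;=\; \bigl|\widetilde N_{G_1}(H) \backslash \cR \,/\, (\widetilde{\aut}_{G_1}(H)\times C_2)\bigr|,
\]
where $\cR = \tilde f\,\widetilde N^+$. Next, I would analyse $\widetilde N^+$ using the chain~(\ref{cadeia de N mais etc}): since $\widetilde N_{\widehat G_1}(H)\subseteq \widetilde N^+\subseteq \widetilde N_{\widehat G}(H)$, the two standing hypotheses $\widetilde N_{G_1}(H)=\widetilde N_{\widehat G_1}(H)$ and $\widetilde N_{\widehat G}(H)=\widetilde N_{G_1}(H)$ squeeze $\widetilde N^+$ between two copies of $\widetilde N_{G_1}(H)$, giving $\widetilde N^+ = \widetilde N_{G_1}(H)$. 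Consequently $\cR = \tilde f\,\widetilde N_{G_1}(H)$.

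Since the normal HNN-extension satisfies $tHt^{-1}=H$, we have $t\in N_{\widehat G}(H)$ and its image $\tilde f\in \widetilde N_{\widehat G}(H) = \widetilde N_{G_1}(H)$ by hypothesis. Therefore $\cR = \tilde f\,\widetilde N_{G_1}(H) = \widetilde N_{G_1}(H)$, and the left multiplication action of $\widetilde N_{G_1}(H)$ on itself is transitive. This yields a single orbit, which the subsequent right action of $\widetilde{\aut}_{G_1}(H)\times C_2$ cannot split further, so $|g(G,\mathcal{F})|=1$.

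For the parenthetical sufficient condition $f\in \overline N_{G_1}(H)$, I would appeal to Proposition~\ref{cp1}: in the normal case $N_{\widehat G}(H)=\hnn(N_{\widehat G_1}(H),H,f)$, so passing to $\out(H)$ gives $\widetilde N_{\widehat G}(H)=\langle \widetilde N_{\widehat G_1}(H),\tilde f\rangle$. Combining with $\widetilde N_{\widehat G_1}(H)=\widetilde N_{G_1}(H)$ and the fact that $f\in \overline N_{G_1}(H)$ forces $\tilde f\in \widetilde N_{G_1}(H)$, we conclude $\widetilde N_{\widehat G}(H)=\widetilde N_{G_1}(H)$, reducing to the previous case. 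There is no real obstacle here; the only subtle point is keeping track of the squeeze argument on $\widetilde N^+$, which is where the two global hypotheses on normalizers genuinely enter.
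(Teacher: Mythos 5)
Your proof is correct and follows essentially the same route as the paper: the paper derives the result directly from Theorem \ref{1.8 da introducao}, while you pass through its specialization, Corollary \ref{nova conta conjugadoss}, and then carry out the same collapse of $\widetilde{N}^{+}$ via the chain (\ref{cadeia de N mais etc}) and the observation that $\tilde f\in\widetilde{N}_{\widehat G}(H)=\widetilde{N}_{G_1}(H)$ makes $\cR$ a single $\widetilde{N}_{G_1}(H)$-orbit. The details you supply (including the justification of the parenthetical case via Proposition \ref{cp1}) are exactly the ones the paper leaves implicit.
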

\begin{proof} Follows from Theorem \ref{1.8 da introducao}.
\end{proof}



Now we state theorem for not 
normal HNN-extensions. Note that the particular case in item iv) of Theorem \ref{genus 1 intr} automatically enters in the next theorem,  since if $H$ or $K$ is central in $G_1,$ then $H\neq K$ implies non-conjugacy of $H$ and $K$.

\begin{thm}\label{genus 1}
Let $H$ and $K$ be  finite non-conjugate subgroups of an $\Oe$-group $G_1$ and $G=\hnn(G_1, H, K, f, t)$ be an HNN-extension.
Suppose that $\widetilde{\aut}_{\widehat{G}_1}(H)=\widetilde{\aut}_{G_1}(H)$,   $[G_1:N_{G_1}(K)]=[\widehat G_1:N_{\widehat G_1}(K)]<\infty$ and 
     either $(H,K)$ is a swapping pair in $G_1$ or  not a swapping pair in $\widehat G_1$. 
 Then there is only one element in the genus of $G$ in $\F,$ if one of the following conditions is satisfied:
\begin{itemize} 
\item[i)]   
     $f$ extends to an automorphism of $G_1;$    
    \item [ii)] $N_{G_1}(K)=K \cdot C_{G_1}(K)$ or $N_{G_1}(H)=H \cdot C_{G_1}(H)$ (in particular,  $H$ or $K$ is central in $G_1$).  
\end{itemize}
\end{thm}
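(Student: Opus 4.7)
The proof is an application of Theorem~\ref{nova conta non conjugadosss} combined with a structural computation of $\overline{N}_{\widehat G}(H)$ that relies on Proposition~\ref{cp1}(ii). By Lemma~\ref{orbits non conjugated}, the common hypotheses force $\overline{\Gamma}_{HK}=\overline{\Gamma}_{\widehat{HK}}$, and Theorem~\ref{nova conta non conjugadosss} then gives $|g(G,\mathcal{F})|=|\kappa(R)|$, where $R=\{f\tau_{n^{-1}}:n\in N^+\}$. The claim is therefore equivalent to showing $R\subseteq\overline{\Gamma}_{HK}\cdot f$.

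The swapping-pair dichotomy excludes ``$(H,K)$ swap pair in $\widehat G_1$ but not in $G_1$'', so we are either in the case where $H$ and $K$ are non-conjugate in $\widehat G_1$, or in the swap-pair-in-$G_1$ branch. In the former, Proposition~\ref{cp1}(ii) gives $\overline{N}_{\widehat G}(H)=\langle \overline{N}_{\widehat G_1}(H),\, f^{-1}\overline{N}_{\widehat G_1}(K)f\rangle$. In case (i), the extension $\alpha\in\aut(G_1)$ of $f$ satisfies $\alpha^{-1}(K)=H$; since $f^{-1}=\alpha^{-1}|_K$, conjugation by $\alpha^{-1}$ (extended continuously to $\widehat G_1$) carries $N_{\widehat G_1}(K)$ isomorphically onto $N_{\widehat G_1}(H)$, whence $f^{-1}\overline{N}_{\widehat G_1}(K)f=\overline{N}_{\widehat G_1}(H)$ and so $\overline{N}_{\widehat G}(H)=\overline{N}_{\widehat G_1}(H)$. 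In case (ii), say $N_{G_1}(K)=K\cdot C_{G_1}(K)$ (the $H$-variant is analogous, swapping the roles of $H$ and $K$), one has $\overline{N}_{G_1}(K)=\inn(K)$; the index identity together with the openness of $N_{G_1}(K)$ in the profinite topology on $G_1$ yields $N_{\widehat G_1}(K)=\overline{N_{G_1}(K)}$, so $\overline{N}_{\widehat G_1}(K)=\inn(K)$, hence $f^{-1}\inn(K)f=\inn(H)$ and again $\overline{N}_{\widehat G}(H)=\overline{N}_{\widehat G_1}(H)$. The residual subcase where $H,K$ are conjugate in $\widehat G_1$ (only possible in the swap-pair-in-$G_1$ branch) is handled via the HNN-description of $N_{\widehat G}(H)$ in Proposition~\ref{cp1}(ii), noting that the stable letter induces $\tau_{g_1}f|_H\in\overline{\aut}_{\widehat G_1}(H)$ under (i) or an element of $\inn(H)$ under (ii).

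Combined with $\overline{\aut}_{\widehat G_1}(H)=\overline{\aut}_{G_1}(H)$, which follows from $\widetilde{\aut}_{\widehat G_1}(H)=\widetilde{\aut}_{G_1}(H)$ via Remark~\ref{rem dos normalizadores e densidade}, we obtain $\overline{N}^+\subseteq\overline{N}_{\widehat G_1}(H)\subseteq\overline{\aut}_{G_1}(H)$. For each $n\in N^+$ there is therefore $\beta\in\aut_{G_1}(H)$ with $\beta|_H=\tau_{n^{-1}}|_H$. Starting from the profinite witness $(\hat\varphi,\hat g_1,n)$ supplied by Proposition~\ref{nonnormalhnnprof} for any $B\in\mathcal{F}$ with $\widehat B\cong\widehat G$, the $\widetilde{\aut}$-hypothesis lifts $\hat\varphi$ to $\aut(G_1)$ modulo $\inn(H)$; the index identity lifts $\hat g_1$ to $G_1$ modulo $N_{\widehat G_1}(K)$, whose image in $\aut(K)$ reduces to $\inn(K)$ in case (ii) or is absorbed via $\alpha$ in case (i); and $\beta$ absorbs the twist $\tau_{n^{-1}}$. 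The resulting abstract witness satisfies Proposition~\ref{nonnormalhnnabs}, so $B\cong G$, yielding $|g(G,\mathcal{F})|=1$.

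The main technical obstacle is the final assembly: the lift $\varphi\in\aut(G_1)$ combining $\hat\varphi$ and $\beta$ must simultaneously stabilise $H$, send $K$ into its $G_1$-conjugacy class, and satisfy the equation of Proposition~\ref{nonnormalhnnabs}(ii) up to the remaining inner discrepancies. In the swap-pair-in-$G_1$ branch, the inversion $\iota\in\overline{\Gamma}_{HK}$ provides the flexibility to absorb any residual mismatch in the $K$-direction; in the other branch, the non-swap condition in $\widehat G_1$ forces Proposition~\ref{nonnormalhnnprof} into case (ii), where the assembly is direct.
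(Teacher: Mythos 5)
Your overall strategy --- reduce to $|g(G,\F)|=|\kappa(R)|$ via Lemma \ref{orbits non conjugated} and Theorem \ref{nova conta non conjugadosss}, then compute $\overline{N}_{\widehat G}(H)=\langle \overline{N}_{\widehat G_1}(H),\overline{N}_{\widehat G_1}(K)^{f}\rangle$ from Proposition \ref{cp1} and show that the twists $f\tau_{n^{-1}}$, $n\in N^{+}$, all lie in the $\overline\Gamma_{HK}$-orbit of $f$ --- is exactly the paper's, and your treatment of case (i) and of the subcase $N_{G_1}(K)=K\cdot C_{G_1}(K)$ matches the paper's Cases 1 and 2.

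The gap is in the other half of (ii). You dismiss $N_{G_1}(H)=H\cdot C_{G_1}(H)$ with ``the $H$-variant is analogous, swapping the roles of $H$ and $K$,'' but the hypotheses are not symmetric in $H$ and $K$: the $\widetilde{\aut}$-equality is assumed for $H$ only and the index equality for $K$ only, so the swapped statement is simply not available. Read literally (keeping $H$ as the distinguished subgroup), the hypothesis $N_{G_1}(H)=H\cdot C_{G_1}(H)$ gives $\overline{N}_{\widehat G_1}(H)=\overline{N}_{G_1}(H)=\inn(H)$, whence $\overline{N}_{\widehat G}(H)=\langle\inn(H),\overline{N}_{\widehat G_1}(K)^{f}\rangle=\overline{N}_{\widehat G_1}(K)^{f}$, which is in general strictly larger than $\overline{N}_{\widehat G_1}(H)$; your claimed conclusion ``again $\overline{N}_{\widehat G}(H)=\overline{N}_{\widehat G_1}(H)$'' fails there, and with it the inclusion $\overline{N}^{+}\subseteq\overline{N}_{G_1}(H)$ on which your assembly step rests. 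The paper closes this case by a different route: from $\overline{N}^{+}\subseteq\overline{N}_{\widehat G}(H)=\overline{N}_{G_1}(K)^{f}$ (using the density $\overline{N}_{\widehat G_1}(K)=\overline{N}_{G_1}(K)$) one gets $R=f\,\overline{N}^{+}\subseteq\overline{N}_{G_1}(K)\cdot f$, and every $\tau_{g_1}f$ with $g_1\in N_{G_1}(K)$ lies in the $\overline\Gamma_{HK}$-orbit of $f$ by Theorem \ref{classnonnormalabs} (the element $(g_1,\mathrm{id})\in\Gamma_{HK}$ sends $f$ to $\tau_{g_1}f$), so $|\kappa(R)|=1$. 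Separately, your final ``assembly'' paragraph re-proves by hand what Theorem \ref{nova conta non conjugadosss} already delivers: once $R\subseteq\overline\Gamma_{HK}\cdot f$ is established there is nothing left to lift.
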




\begin{proof}
First note that the hypothesis $[G_1:N_{G_1}(K)]=[\widehat G_1:N_{\widehat G_1}(K)]<\infty$ implies that $N_{G_1}(K)$ is dense in $N_{\widehat G_1}(K),$ and as $$N_{G_1}(H)=N_{G_1}(K^{t^{-1}})=\left(N_{G_1}(K)\right)^{t^{-1}}$$ it follows that $N_{G_1}(H)$ is dense in $N_{\widehat G_1}(H)$. Therefore  $\overline N_{G_1}(K) = \overline N_{\widehat G_1}(K)$ and $\overline N_{G_1}(H) = \overline N_{\widehat G_1}(H).$ Since $f(h)=h^t=k$ if and only if $f(h)^{t^{-1}}=h=k^{t^{-1}}, \forall h \in H, k \in K,$ one has $\inn(H)=f^{-1} \cdot \inn(K) \cdot f.$  By Proposition \ref{cp1} \begin{equation} \label{genus 1 prova}  \overline{N}_{\widehat G}(H) = \langle \overline{N}_{\widehat G_1}(H), \overline{N}_{\widehat G_1}(K)^{f} \rangle=\langle \overline{N}_{G_1}(H), \overline{N}_{G_1}(K)^{f} \rangle=\overline{N}_{G}(H)\end{equation} and so   $\widetilde{N}_{\widehat G}(H) = \widetilde{N}_{G}(H)=\langle  \widetilde{N}_{G_1}(H),  \widetilde{N}_{G_1}(K)^{\tilde f}\rangle.$ We need to find the image of $\overline{N}_{\widehat G_1}(H)$ and $\overline{N}_{\widehat G_1}(K)^{f}$ in $\aut(H).$ We will divide the rest of the proof into the following cases.

\textbf{Case 1:} Suppose that $f$ extends to an automorphism of $G_1.$ In this case, if $\hat{g}_1 \in N_{\widehat G_1}(K)$,  then $\tau_{\left( \hat{g}_1^{t^{-1}}\right)^{-1}}= f^{-1} \tau_{g_1^{-1}}f=\tau_{f^{-1}(g_1^{-1})}$ for some $g_1 \in N_{G_1}(K)$, since $\overline N_{G_1}(K) = \overline N_{\widehat G_1}(K)$ and since $f$ extends to $G_1$. It follows that $\tau_{f^{-1}(g_1^{-1})} \in \overline{N}_{G_1}(H).$ Therefore $\overline{N}_{\widehat G_1}(K)^{f} \subset \overline{N}_{G_1}(H),$ whence it follows by (\ref{genus 1 prova}) that $\widetilde{N}_{\widehat G}(H)=\widetilde{N}_{G_1}(H),$ thus the result follows from the Corollary \ref{normal iguais genero 1 non conjugated}. 

\textbf{Case 2:} Suppose $N_{G_1}(K)=K \cdot C_{G_1}(K).$ In this case, we have that $$\overline{N}_{G_1}(K)=\inn(K)=f\cdot \inn(H) \cdot f^{-1}=\overline{N}_{\widehat G_1}(K),$$  thus $\overline{N}_{\widehat G_1}(K)^{f} \subset \inn(H) \subset \overline{N}_{G_1}(H),$ and the result follows from (\ref{genus 1 prova}) and the Corollary \ref{normal iguais genero 1 non conjugated} since $\widetilde{N}_{\widehat G}(H)=\widetilde{N}_{G_1}(H).$ 

\textbf{Case 3:} Suppose $N_{G_1}(H)=H \cdot C_{G_1}(H).$ In this case $$\overline{N}_{G_1}(H)=\inn(H)=f^{-1}\cdot \inn(K) \cdot f\subset f^{-1} \cdot \overline{N}_{G_1}(K) \cdot f.$$ It follows from (\ref{genus 1 prova}) that $\overline{N}_{\widehat G}(H)=\overline{N}_{G_1}(K)^{ f}$ and therefore $\widetilde{N}_{\widehat G}(H)=\widetilde{N}_{G_1}(K)^{\tilde f}.$ Note that $R=f\overline{N}^+\subset \overline{N}_{ G_1}(K)f$ since $\overline{N}^+\subset \overline{N}_{\widehat G}(H)$ (see equations (\ref{cadeia de N mais etc}) and  (\ref{O conjunto S})). The result follows from Theorem \ref{nova conta non conjugadosss} since $|g(G, \F)|=|\kappa(R)| \leq |\kappa(\overline{N}_{ G_1}(K)f)|=1$ because any HNN-extension of the form $B=\hnn(G_1, H,\\ K, \tau_{g_1}f, t')$ with $g_1 \in N_{G_1}(K)$ are isomorphic by Theorem \ref{classnonnormalabs}. 
\end{proof}


The result below is an immediate consequence of item ii) of the previous result because if $G_1$ is finitely generated abelian then $\overline{\aut}_{G_1}(H)=\overline{\aut}_{\widehat{G_1}}(H)$ and the other conditions of the previous theorem are also satisfied. Note that if $G_1$ is virtually infinite cyclic and $H \neq tor(G_1)$ the result still follows from Theorem \ref{genus 1} ii) and Remark  \ref{finite-by-cyclic}. For $H=tor(G_1)$ we have that the HNN-extension is of the form $G=H \rtimes F,$ where $F$ is a free group and the genus in this case was calculated in Theorem 3.26 in \cite{GZ}. This result generalizes Corollary 3.3 in \cite{BZ}.

\begin{cor} \label{fin ger abelian} Suppose that $G_1$ is a finitely generated abelian group and $H\neq K$ its finite subgroups. Then
 $|g(\hnn(G_1, H, K, f, t), \mathcal{F})|=1$.  
\end{cor}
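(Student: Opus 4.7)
The plan is to verify that all hypotheses of Theorem \ref{genus 1}(ii) are satisfied when $G_1$ is a finitely generated abelian group, so that the result follows directly from that theorem.

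First, since $G_1$ is abelian and $H \neq K$, the subgroups $H$ and $K$ are not conjugate in $G_1$, placing us in the non-normal case of Theorem \ref{genus 1}. Moreover $N_{G_1}(K) = C_{G_1}(K) = G_1$, so $[G_1 : N_{G_1}(K)] = 1 = [\widehat{G}_1 : N_{\widehat{G}_1}(K)]$, and condition (ii) of Theorem \ref{genus 1} is immediate from $N_{G_1}(K) = G_1 = K \cdot C_{G_1}(K)$.

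The remaining hypotheses are the equality $\widetilde{\aut}_{\widehat{G}_1}(H) = \widetilde{\aut}_{G_1}(H)$ and the swapping-pair condition. To handle these, I would use the structure theorem to write $G_1 = \mathbb{Z}^n \oplus F$, where $F = \mathrm{tor}(G_1)$ is a finite characteristic subgroup containing both $H$ and $K$. Then $\widehat{G}_1 = \widehat{\mathbb{Z}}^n \oplus F$, and since $F$ is still the torsion subgroup of $\widehat{G}_1$, every automorphism of $\widehat{G}_1$ preserves $F$. Given $\hat\alpha \in \aut_{\widehat{G}_1}(H)$, its restriction $\hat\alpha|_F$ is an automorphism of $F$ leaving $H$ invariant, and extending by the identity on $\mathbb{Z}^n$ yields an element of $\aut_{G_1}(H)$ with the same action on $H$. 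Hence the images in $\aut(H)$, and a fortiori in $\out(H)$, coincide. The identical construction shows the swapping-pair condition: if $\psi \in \aut(\widehat{G}_1)$ swaps $H$ and $K$, then $\psi|_F$ extended by the identity on $\mathbb{Z}^n$ defines an element of $\aut(G_1)$ swapping $H$ and $K$, so $(H,K)$ is either a swapping pair in $G_1$ or not a swapping pair in $\widehat{G}_1$.

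With all hypotheses of Theorem \ref{genus 1}(ii) in force, one concludes $|g(G,\mathcal{F})| = 1$. The only nontrivial step is the reduction of automorphism-theoretic data from $\widehat{G}_1$ to $G_1$, which rests on the observation that finite subgroups of a finitely generated abelian group live inside a characteristic finite direct summand on which $G_1$ and $\widehat{G}_1$ agree; the rest of the argument is a direct verification.
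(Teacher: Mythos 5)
Your overall strategy is exactly the paper's: the corollary is deduced from Theorem \ref{genus 1}(ii), and your verification of the hypotheses --- non-conjugacy from $H\neq K$ in an abelian group, $N_{G_1}(K)=G_1=K\cdot C_{G_1}(K)$, and the transfer of $\widetilde{\aut}_{\widehat G_1}(H)=\widetilde{\aut}_{G_1}(H)$ and of the swapping-pair condition via the decomposition $G_1=\Z^n\oplus F$ with $F=\mathrm{tor}(G_1)=\mathrm{tor}(\widehat G_1)$ characteristic --- is correct and in fact more detailed than what the paper writes down.

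There is, however, one genuine gap: Theorem \ref{genus 1} is stated for an $\Oe$-group $G_1$, and a finitely generated abelian group of torsion-free rank $1$ (e.g.\ $G_1=\Z\times C_2\times C_2$ with $H,K$ distinct subgroups of order $2$) is two-ended, hence neither an $\OE$- nor an $\Oe$-group, so the theorem does not apply to it as stated. Your proof never addresses this case; you would need to split on the rank $n$ of the free part. For $n=0$ the group is finite and for $n\geq 2$ it is an $\Oe$-group by Proposition \ref{virtually cyclic} (a citation you should add), so your argument goes through; for $n=1$ the paper instead invokes Remark \ref{finite-by-cyclic} (and its profinite companion, Remark \ref{finite-by-cyclic profinite}), which assert that the relevant results persist for $G_1=M_1\rtimes\Z$ with $M_1$ finite and distinct from $H$ and $K$. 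That distinctness is automatic here: every finite subgroup lies in $\mathrm{tor}(G_1)$, and since $H\cong K$ have equal order, either of them equalling $\mathrm{tor}(G_1)$ would force $H=K$, contrary to hypothesis (so the exceptional subcase $H=\mathrm{tor}(G_1)$ that the paper routes through \cite{GZ} does not actually arise for this corollary). Adding this case distinction, with the appeal to Remark \ref{finite-by-cyclic} for rank $1$, closes the gap.
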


Recall that an indecomposable (into a free product) finitely generated Fuchsian group $\Gamma$ has the presentation of
the form: 
$$
\Gamma=\langle\,a_1,b_1,\ldots,a_n, b_n,\, c_1,\ldots, c_t\, \mid$$
$$ c_1^{e_1}=\ldots=
c_t^{e_t}=1,c_1^{-1}\cdot\ldots \cdot
c_t^{-1}\cdot[a_1,b_1]\cdot\ldots\cdot [a_n,b_n]=1 \, \rangle,$$  where $n,\,
t\geq 0$ and $e_j>1$ for $j=1,\ldots, t$ (see \cite{newman}).

\begin{exa}\label{grupo fuchsian}  Let $\Gamma$ be indecomposable into a free product finitely generated Fuchsian group and $H \neq \{1\}$ is one of its finite subgroup. Let $G_1=K\times \Gamma$ such that $H \cong K$. We first observe that indecomposable into a free product Fuchsian groups are virtually compact surface groups and so they are $\Oe$-groups by Proposition \ref{examples} (i). Therefore by Proposition \ref{extensao de OE group} $G_1$ is an $\Oe$-group. Consider $G=\hnn(G_1,H, K, f, t)$ where $f\in \Iso(H, K).$ Note that $H \neq K$ and therefore $G$ is not normal HNN-extension. By Corollary 7.4 of \cite{BPZ2}  $\overline{\aut}_{\Gamma}(H)= \aut(H)=\overline{\aut}_{\widehat \Gamma}(H)$ therefore we have $$\widetilde{\aut}_{\Gamma}(H)=\widetilde{\aut}_{G_1}(H)= \out(H)=\widetilde{\aut}_{\widehat \Gamma}(H)=\widetilde{\aut}_{\widehat G_1}(H).$$ Now note that $H$ is not a normal subgroup of $\widehat \Gamma$ by Corollary 5.2 in \cite{BCR16}, thus there is no $\hat \psi \in \aut(\widehat G_1)$ such that $\hat \psi(H)=K$ because $K$ is normal in $\widehat G_1.$ Furthermore, since $K \leq Z(\widehat G_1)$, it follows that $N_{G_1}(K)=G_1$ and $N_{\widehat G_1}(K)=\widehat G_1.$ Thus all the hypotheses of Theorem \ref{genus 1} are satisfied, and it follows from item ii) that $|g(G, \mathcal{F})|=1.$ 


\end{exa}


\begin{exa} Let $M$ be an $\Oe$-group and $H$ be a finite group such that $\out(H) $ is trivial (for example this occurs for symmetric groups $ S_n$ with $n \neq 6$ and Mathieu simple groups $M_{11}, M_{23}$ and $M_{24}$). 
Let $G_1=M\times H.$ From Proposition \ref{extensao de OE group} $G_1$ is an $\Oe$-group. Consider $G=\hnn(G_1,H,f,t)$ where $f\in \aut(H)$. 
Note that $\overline{\aut}_{G_1}(H)=\aut(H).$ Indeed, take $\alpha\in \aut(H)$ and observe that $\psi:=(id,\alpha) \in \aut(G_1)$. Thus $\overline{\aut}_{G_1}(H)=\aut(H)=\overline{\aut}_{\widehat G_1}(H)=\inn(H),$ and therefore we have $\widetilde{N}_{\widehat G}(H)=\widetilde{N}_{G_1}(H)=\widetilde{N}_{\widehat G_1}(H)=\widetilde\aut_{G_1}(H)=\widetilde{\aut}_{\widehat G_1}(H)=\out(H)=\{1\}.$ 
Thus all hypotheses of Theorem \ref{genus 1 normal} are satisfied, so $|g(G, \mathcal{F})|=1.$
\end{exa}

\begin{rem}
In the previous example, if $G_1=M\times H,$ where $H=S_6,$ then $\overline{\aut}_{G_1}(H)=\aut(H)=\overline{\aut}_{\widehat G_1}(H)$ consequently $\widetilde{\aut}_{G_1}(H)=\out(H)=\widetilde{\aut}_{\widehat G_1}(H) \cong C_2$ and $\widetilde{N}_{\widehat G_1}(H)=\widetilde{N}_{ G_1}(H)$ (this occurs since $N_{G_1}(H)= G_1$  is dense in $N_{\widehat G_1}(H)=\widehat G_1$), therefore it follows from Corollary \ref{autG classe} that $|g(G, \mathcal{F})|=1.$ Similarly, the same result is concluded for the simple Mathieu groups $M_{12}$ and $M_{22}.$ 
\end{rem}

We finish this subsection with an example of HNN-extension where the base group is virtually infinite 
cyclic which is relevant by Remark \ref{finite-by-cyclic}.

\begin{exa} Let $H$ and $K$ be  isomorphic finite cyclic groups and $G_1= (H\rtimes_{\varphi}\mathbb{Z})\times K$ where  $\varphi:\mathbb{Z}\longrightarrow \aut(H)$ is a homomorphism satisfying   $\varphi(1)\neq id.$ Consider $G=\hnn(G_1,H,K,f,t)$ where $f\in \Iso(H,K)$. Observe that $\widetilde{\aut}_{G_1}(H)=\overline{\aut}_{G_1}(H)=\out(H)=\aut(H)=\overline{\aut}_{\widehat G_1}(H)=\widetilde{\aut}_{\widehat G_1}(H).$ Indeed, let $\beta$ be an automorphism of $H$. The map $\theta:G_1\longrightarrow G_1$ defined by $\theta|_{H}=\beta$, $\theta|_{K}=id$, $\theta|_{\mathbb{Z}}=id$ is an automorphism of $G_1$ that lifts $\beta$. Now there is no $\hat \psi\in \aut(\widehat G_1)$ such that $\hat \psi(H)=K$ since $K$ is central in $G_1$ but $H$ is not. Note that $H \neq K,$ thus it follows from Theorem \ref{genus 1} $ii)$ that $|g(G, \mathcal{F})|=1$.
 
\end{exa}

\section{Non-fixed associated subgroup} \label{non fixed}

Let $\A$ be the class of all finitely generated residually finite accessible groups whose vertex groups of its JSJ-decomposition are $\Oe$-groups (see  on p. 224 of \cite{BPZ} for more details). 

Let $H$ and $K$ be finite subgroups of an $\Oe$-group $G_1$ and $$G=\hnn(G_1, H,\ K, f, t)$$ be an HNN-extension. Let $B\in \A$ such that $\widehat G \cong \widehat B.$ Then by \cite[Theorem 1.1]{BPZ} $B$ is an HNN-extension of the form $\hnn(B_1, H_1, K_1, f_1, t_1)$ with $\widehat B_1\cong \widehat G_1, H \cong H_1\cong K \cong K_1,$ and furthermore $B_1 \in \mathcal{A}$. The class $\F(G,G_1,\A)$ is the subclass of all such $B \in \mathcal{A}$ with $B_1\cong G_1$. Thus we assume from now on that $B_1=G_1$, i.e. consider all HNN-extensions of the form $\hnn(G_1, H_1, K_1, f_1, t_1)$ whose associated  subgroups are finite. 

 We will denote the genus of $G$ in the class $\F(G,G_1,\A)$ as $g(G, G_1, \mathcal{A}).$ Our goal in this section is to find formulas and/or bounds for the cardinality of the genus $g(G, G_1, \mathcal{A})$ of a given HNN-extension with $G_1$ fixed in the class $\A$.

Recall that $S_H$ (respectively $P_H$) is the collection of all finite subgroups of $G_1$ isomorphic to $H$ (respectively subgroups of $\widehat{G}_1$ isomorphic to $H$).  Note that $G_1$ (resp. $\widehat{G}_1$) acts naturally on $S_H$ (resp. $P_H$) by conjugation and as defined in Subsections \ref{cota bonita non conjugados} and \ref{quotasprofnonconjugates}, respectively, denote by 
$$\overline{S}_H:=G_1 \backslash S_H\ \  (\textrm{resp.}\  \overline{P}_H:= \widehat{G}_1 \backslash P_H),$$ the orbit spaces of these actions (the sets of conjugacy classes of subgroups of $S_H$ (and of $P_H$ respectively)). 

Denote by $[S_H]^2$ and $[P_H]^2$ the respective sets of unordered pairs of elements of $S_H$ and $P_H.$
The respective sets of unordered pairs of these orbits were defined as $[\overline{S}_H]^2$ and $[\overline{P}_H]^2.$ Furthermore, let us remember that $\aut(G_1)$ (resp. $\aut(\widehat{G}_1)$) acts naturally on $[\overline{S}_H]^2$ (resp. $[\overline{P}_H]^2$) (see (\ref{acao epsilon}) and (\ref{acao p})) (see Subsections \ref{cota bonita non conjugados} and \ref{quotasprofnonconjugates} for more details). 

 Their respective stabilizers will be denoted as $\aut_{G_1}(H_1, K_1)$ and $\aut_{\widehat G_1}(H_1,\\ K_1),$ respectively. Note that $\aut_{G_1}(H_1, K_1)$ is the subgroup of automorphisms of $G_1$ that leaves the conjugacy classes of $H_1$ and $K_1$ in $G_1$ invariant 
 up to a swap 
 and similarly $\aut_{\widehat G_1}(H_1, K_1)$ is the subgroup of automorphisms of $\widehat G_1$ that leaves the conjugacy classes of $H_1$ and $K_1$ in $\widehat G_1$ invariant 
 up to a swap.

\bigskip
 Let $G=\hnn(G_1, H, K, f, t)$ be an abstract HNN-extension. Denote by $\aut(\widehat G_1)\{H,K\}  \subset [\overline{P}_H]^2$ the $\aut(\widehat G_1)$-orbit of $\{ \langle\langle H\rangle\rangle^{\widehat G_1} , \langle\langle K\rangle\rangle^{\widehat G_1} \}$ (see Section \ref{quotasprofnonconjugates} for the notation). Note that $\aut(G_1)$ (seen as a subgroup of $\aut(\widehat G_1)$) acts on $\aut(\widehat G_1)\{H,K\} $. If $\aut(\widehat G_1)\{H,K\}$ is finite, then   the number of $\aut(G_1)$-orbits of $\aut(\widehat G_1)\{H,K\}$ is
$$|\aut(G_1)\backslash \aut(\widehat G_1)/\aut_{\widehat G_1}(H, K)| < \infty.$$
 
 We denote by $\mathscr{L}_{_{HK}}$ the subset of $\aut(G_1)$-orbits of $\aut(\widehat G_1)\{H,K\}$  which has representatives in $[S_H]^2,$ i.e, satisfies the following property: $$\{ \langle\langle H_1 \rangle\rangle^{\widehat G_1} , \langle\langle K_1\rangle\rangle^{\widehat G_1} \} \in \mathscr{L}_{_{HK}}$$ if and only if  there exists $g_i\in \widehat G_1 (i=1,2)$ such that $\{H_1^{g_1}, K_1^{g_2}\} \in [S_H]^2$. 
 
We denoted by $k(G_1,H,K)$ the cardinality of set  $\mathscr{L}_{_{HK}}$. The estimate for $k(G_1,H,K)$ is given by 
$$k(G_1,H,K)\leq |\aut(G_1)\backslash \aut(\widehat G_1)/\aut_{\widehat G_1}(H, K)|.$$
Furthermore, if $S=P$ (in particular $S_H = P_H$)  then the inequality becomes an equality. This is true for example if $G_1$ is finitely generated nilpotent, because $tor(G_1)$ is finite  and $tor(G_1)=tor(\widehat{G}_1)$ (see Corollary 4.7.9 in \cite{RZ}).

\begin{rem} \label{os ls}
Note that $\aut(G_1)$ acts naturally on $S_H$ by $\psi \cdot L=\psi(L), \, \forall L \in S_H \mbox{ and } \psi \in \aut( G_1).$ Denote by $\aut(G_1) \cdot L$ the orbit of $L$ with respect to this action, similarly write $\aut(\widehat{G}_1) \cdot L$ for the orbit of the respective action when $L$ is in $P_H.$ Let $l_{H}, l_{K}$ be the cardinality of the $\aut(G_1)$-orbits of $S_H \cap (\aut(\widehat{G}_1) \cdot H)$ and $S_H \cap (\aut(\widehat{G}_1) \cdot K)$, respectively. Then $k(G_1, H, K) \leq l_{H} \cdot l_{K}.$ So when $l_H=l_K=1$ we have  $k(G_1, H, K)=1.$       
\end{rem}

\begin{rem} Note that if $G_1$ has only finitely many conjugacy classes of finite subgroups, then  $k(G_1,H,K)$ is finite. Arithmetic groups   \cite{borelarith} and many groups of geometric nature as well as  mapping class groups of any surface of finite type \cite[Theorem 6]{Brid}, the automorphism group and the outer automorphism group of a free group of finite rank (see \cite{culler}, \cite{zimm} and p. 19 in \cite{vog}),  hyperbolic groups   \cite[Theorem 3.2, p. 459]{bridcurvature},  
groups that act properly and cocompactly by isometries on a $CAT(0)$ space \cite[Corollary 2.8 (2), p. 179]{bridcurvature}, share this property.    
\end{rem}

Now  we can state the  theorem which gives the precise cardinality for the genus $g(G,G_1,\A)$, where $G=\hnn(G_1, H, K, f, t)$. If one of the indices in the following summation is infinite or if any of the summands is infinite, then we say that the genus is infinite.

\begin{thm}\label{genus hnn nao fixos} Let $G_1$ be an $\Oe$-group and $G=\hnn(G_1, H, K, f, t)$ be an HNN-extension with finite associated subgroups. Then the cardinality of genus $g(G,G_1,\A)$ of $G$  equals to $$|g(G,G_1,\A)|=\displaystyle\sum_{\{H_1,K_1\}} \left| g\left(\hnn(G_1, H_1, K_1, f_1, t_1), \F\right)\right|,$$ where $\{H_1,K_1\}$  ranges over unordered pairs of representatives of  $\mathscr{L}_{_{HK}}.$ 

\end{thm}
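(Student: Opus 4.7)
The plan is to decompose $g(G,G_1,\A)$ according to the $\aut(G_1)$-orbit of the unordered pair of associated subgroups and to identify each resulting block with a fixed-subgroups genus inside $\F$.

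First I would parametrize the genus. Let $B\in g(G,G_1,\A)$; by the definition of $\F(G,G_1,\A)$ one has $B=\hnn(G_1,H_1,K_1,f_1,t_1)$ with $H_1,K_1$ finite. Applying Proposition~\ref{2a} to the profinite isomorphism $\widehat B\cong \widehat G$ produces $\hat\psi\in \aut(\widehat G_1)$ sending $\{H_1,K_1\}$ onto $\{H,K\}$ up to $\widehat G_1$-conjugation, so the pair $\{\langle\langle H_1\rangle\rangle^{\widehat G_1},\langle\langle K_1\rangle\rangle^{\widehat G_1}\}$ lies in $\aut(\widehat G_1)\{H,K\}$, and since $H_1,K_1\in S_H$ its $\aut(G_1)$-orbit belongs to $\mathscr{L}_{_{HK}}$. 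Combined with Proposition~\ref{fixed hnn abstract}, which ensures that isomorphic HNN-extensions of $G_1$ with finite associated subgroups share the $\aut(G_1)$-orbit of their associated pair, this yields the disjoint decomposition
\[
g(G,G_1,\A)=\bigsqcup_{\{H_1,K_1\}} g_{\{H_1,K_1\}}(G,G_1,\A),
\]
where $\{H_1,K_1\}$ ranges over representatives of $\mathscr{L}_{_{HK}}$ and $g_{\{H_1,K_1\}}$ is the set of those classes in $g(G,G_1,\A)$ whose associated pair lies in the $\aut(G_1)$-orbit of $\{H_1,K_1\}$.

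Next, for each representative $\{H_1,K_1\}\in \mathscr{L}_{_{HK}}$ I would establish the equality $|g_{\{H_1,K_1\}}(G,G_1,\A)|=|g(\hnn(G_1,H_1,K_1,f_1,t_1),\F)|$ for a suitable $f_1\in \Iso(H_1,K_1)$. Relabeling $H_1,K_1$ if necessary, I pick $\hat\psi\in \aut(\widehat G_1)$ and $\hat g\in \widehat G_1$ with $\hat\psi(H)=H_1$ and $\hat\psi(K)=K_1^{\hat g}$, and set $f_1:=\tau_{\hat g}\,f^{\hat\psi^{-1}}$; this is a genuine element of $\Iso(H_1,K_1)$ because $H_1,K_1$ are finite, and the converse direction of Proposition~\ref{2a} yields $\widehat{\hnn(G_1,H_1,K_1,f_1,t_1)}\cong \widehat G$. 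The inclusion $g(\hnn(G_1,H_1,K_1,f_1,t_1),\F)\subseteq g_{\{H_1,K_1\}}(G,G_1,\A)$ is tautological. Conversely, given $B=\hnn(G_1,H_1',K_1',f_1',t_1')\in g_{\{H_1,K_1\}}(G,G_1,\A)$, I pick $\psi\in \aut(G_1)$ and $b\in G_1$ with $\psi(H_1)=H_1'$ and $\psi(K_1)=(K_1')^{b}$, then apply the converse of Proposition~\ref{fixed hnn abstract} to produce $f_1''\in \Iso(H_1,K_1)$ such that $\hnn(G_1,H_1,K_1,f_1'',t_1)\cong B$, so the class of $B$ corresponds to a class in $g(\hnn(G_1,H_1,K_1,f_1,t_1),\F)$.

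The main obstacle is verifying that the correspondence in the preceding paragraph is a well-defined bijection on isomorphism classes: non-isomorphic representatives in $g(\hnn(G_1,H_1,K_1,f_1,t_1),\F)$ must give distinct classes in $g_{\{H_1,K_1\}}(G,G_1,\A)$, and two different choices of the normalizing data $\psi,b$ must yield the same class in $g(\hnn(G_1,H_1,K_1,f_1,t_1),\F)$. Both follow from Proposition~\ref{fixed hnn abstract} together with Lemma~\ref{conjugados da normal}: the abstract isomorphism class of $\hnn(G_1,\cdot,\cdot,\cdot,\cdot)$ is itself the invariant we are counting, while the $\overline{\Gamma}_{HK}$-action appearing in Theorem~\ref{classnonnormalabs} already absorbs any ambiguity in the choice of $\psi,b$ (including the swap built into $\iota$). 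Summing the resulting equalities over representatives of $\mathscr{L}_{_{HK}}$ delivers the formula, with the usual convention that $|g(G,G_1,\A)|$ is infinite whenever the index set or any summand is infinite.
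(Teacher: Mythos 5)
Your overall strategy is exactly the one the paper's setup is designed to support (the paper in fact states Theorem \ref{genus hnn nao fixos} without writing out a proof): decompose the genus according to the orbit of the unordered pair of associated subgroups, then identify each block with a fixed-pair genus via Propositions \ref{fixed hnn abstract} and \ref{2a} and Lemma \ref{conjugados da normal}. Your first paragraph and the forward inclusion of the second are fine.

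The gap is in the converse inclusion of your second paragraph. Given $B=\hnn(G_1,H_1',K_1',f_1',t_1')$ in the block of $\{H_1,K_1\}$, you ``pick $\psi\in\aut(G_1)$ and $b\in G_1$ with $\psi(H_1)=H_1'$ and $\psi(K_1)=(K_1')^{b}$''. But membership in the same element of $\mathscr{L}_{HK}$ only says that the pairs of \emph{$\widehat G_1$-conjugacy classes} $\{\langle\langle H_1\rangle\rangle^{\widehat G_1},\langle\langle K_1\rangle\rangle^{\widehat G_1}\}$ and $\{\langle\langle H_1'\rangle\rangle^{\widehat G_1},\langle\langle K_1'\rangle\rangle^{\widehat G_1}\}$ lie in one $\aut(G_1)$-orbit; this yields $\psi\in\aut(G_1)$ with $\psi(H_1)$ conjugate to $H_1'$ only in $\widehat G_1$, whereas the converse of Proposition \ref{fixed hnn abstract} requires the conjugating element $b$ to lie in $G_1$. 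If $\psi(H_1)$ and $H_1'$ are $\widehat G_1$-conjugate but not $G_1$-conjugate (and not related by any further automorphism of $G_1$), then $B$ need not be isomorphic to any $\hnn(G_1,H_1,K_1,f_1'',t'')$, and the block would strictly contain $g\left(\hnn(G_1,H_1,K_1,f_1,t_1),\F\right)$. To close this you must either index the decomposition by $\aut(G_1)$-orbits of pairs of \emph{$G_1$}-conjugacy classes (as in Theorem \ref{classe geral familia F}, restricted to those pairs whose $\widehat G_1$-classes fall in $\aut(\widehat G_1)\{H,K\}$) and then reconcile that index set with $\mathscr{L}_{HK}$, or invoke a hypothesis under which $\widehat G_1$-conjugacy of the relevant finite subgroups of $G_1$ forces $G_1$-conjugacy up to $\aut(G_1)$ (e.g. $G_1$ finite, or conjugacy distinguished subgroups as in Remark \ref{altenative conditions}). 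This subtlety is inherited from the definition of $\mathscr{L}_{HK}$ itself, so it is as much a point the paper leaves implicit as a defect of your argument; but as written the step does not follow from what precedes it.
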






\begin{exa} Let $G_1$ be a finitely generated abelian group and $H\neq K$ its finite subgroups. It follows from Corollary \ref{fin ger abelian} that 
 $$|g(\hnn(G_1, H, K, f, t), G_1,  \mathcal{A})|=1.$$ Note if $G_1$ is virtually infinite cyclic with $H \neq tor(G_1) \neq K$ the result still follows by Remarks \ref{finite-by-cyclic}. 
 This result generalizes Corollary 3.3 in \cite{BZ}. 
\end{exa}

\begin{rem} \label{ki finitos}
If $k(G_1,H,K)=1$ then it follows from Theorems \ref{fixed H} and \ref{genus hnn nao fixos} that $|g(G, \F)|=|g(G, G_1,\A)|=\left|\overline{\Gamma}_{HK}  \backslash  \Omega  \right|$, where $\Omega$ was defined in (\ref{Omega como uniao}). This happens for example when  $G_1$ is finite or $H$ and $K$ are characteristic in $\widehat G_1$. 
\end{rem}

\begin{exa} \label{usar genus class A}
 Let $\Gamma$ be indecomposable into a free product finitely generated of Fuchsian groups and $H \neq \{1\}$ is one of its finite subgroup. Let $G_1=K\times \Gamma$ such that $H \cong K$. As in the example \ref{grupo fuchsian} $G_1$ is an $\Oe$-group. Consider $G=\hnn(G_1,H, K, f, t)$. It follows from the fact that $K$ is a  characteristic subgroup of $G_1$ (also of $\widehat{G}_1$) and from the calculation of the orbits made in Corollary 7.4 of \cite{BPZ2} that $k(G_1,H,K)=1$ (see Remark \ref{os ls}). It follows from the Example \ref{grupo fuchsian} and Remark \ref{ki finitos} that $|g(G, G_1,\A)|=|g(G, \F)|=1$. 
\end{exa}

\begin{exa} Let $\Gamma$ be an indecomposable into a free product finitely generated  Fuchsian group and $H \neq \{1\}$ a finite abelian group. Let $G_1=H\times \Gamma$ and $G=\hnn(G_1, H, f,t)$ a normal HNN-extension. As in the previous example  $H$ is central and characteristic in $\widehat{G}_1$ and in addition $k(G_1, H, H)=1.$ Let $\alpha \in \aut(H)$ and consider the following automorphism $\beta:G_1 \longrightarrow G_1$ that satisfies $\beta|_{_{H}}=\alpha$ and $\beta|_{_{ \Gamma}}=id,$ so we have $\alpha = \beta|_{_{H}} \in \overline{\aut}_{G_1}(H),$ from where it follows that $\widetilde{\aut}_{G_1}(H)= \out(H)=\widetilde{\aut}_{\widehat G_1}(H).$ As $H$ is central it follows that $\widetilde{N}_{\widehat{G}_1}(H)=\{1\}=\widetilde{N}_{G_1}(H).$ It follows from Remark \ref{ki finitos} and Corollary \ref{cotas n} that $$|g(G, G_1,\A)|=|g(G, \mathcal{F})| \leq \left\{\begin{array}{cc}
  1,&\mbox{ if } |\tilde f| \leq 2,\\
\displaystyle\frac{\phi(|\tilde f|)}{2}, &\mbox{ if } |\tilde f| \geq 3,
\end{array}\right.$$
where $\phi$ is Euler's function and $\tilde f$ is the image of $f$ in $\out(H)$. Furthermore, if $\aut(H)$ is abelian, the inequality becomes an equality in the formula above.
\end{exa}

\begin{exa}\label{free by Cp} Let $H$ be a group of order $3$ and $M$ a  $\Z H$-lattice of $\Z$-rank $2$ with non-trivial action. Let $G_1=M\rtimes_{\varphi} H$ be the corresponding semidirect product  and $G=\hnn(G_1,H,f,t)$ with $f\in \aut(H)$. Then $G_1$ is an $\Oe$-group (by Proposition 3.1 in \cite{BPZ2}) and has just two subgroups of order $3$ up to conjugation that can be swapped by an automorphism, so $k(G_1,H,H)=1$. 
Note that $\widehat G_1=\widehat M\rtimes_{\hat \varphi} H,$ where $\hat \varphi$ is the natural extension of $\varphi$ (see Proposition 2.6 in \cite{GZ}) and $\overline{N}_{\widehat G_1}(H) \leq \inn(H)=1$ since the action of $H$ is non-trivial, so $\widetilde{N}_{\widehat G_1}(H) = \{1\}=\widetilde{N}_{G_1}(H).$ By Lemma 2.1 \cite{Die}  $\overline{\aut}_{ G_1}(H)$ contains  $\overline N_{\aut(M)}(H)$. Besides, $\aut(M)\cong GL_2(\Z)$ contains $S_3$ and a subgroup of order $3$ is unique up to conjugation in $GL_2(\Z)$. Note that $\overline{\aut}_{G_1}(H)=\aut(H).$ Indeed, take $\alpha\in \aut(H)$ and observe that $\psi:=(id,\alpha) \in \aut(G_1)$ and $\psi|_H=\alpha \in \overline{\aut}_{G_1}(H)$. So $\widetilde{\aut}_{G_1}(H)=\widetilde{\aut}_{\widehat{G}_1}(H)=\aut(H)=\out(H)\cong C_2$. Then by Remark \ref{ki finitos} and Corollary \ref{autG classe} $|g(G,G_1,\A)|=|g(G, \mathcal{F})|=1.$ 
If the action of $H$ is trivial then $H$ is normal in $\widehat{G}_1$ and since $\aut(H) \cong C_2$, its follows that $\widetilde{N}_{G_1}(H)=\widetilde{N}_{\widehat G_1}(H)=\{1\},$  and the genus is 1. 

\end{exa}

\begin{cor} Let $G_1$ be an indecomposable into a free product finitely generated Fuchsian group and $G=\hnn(G_1, H, f, t)$ a normal HNN-extension, where $H$ is a finite subgroup of $G_1$. Then $|g(G,G_1,\A)|=|\widetilde{N}_{G_1}(H)\backslash \cR/C_2|,$ where  
$\cR=
\{\tilde f \tilde \tau_{n^{-1}}\,|\, n \in N^{+}\}$.
  In particular, if $H$ is maximal then the cardinality of the genus satisfies $$|g(G,G_1,\A)|=|\mathcal{R}/C_2| = \left\{\begin{array}{cc}
  1,&\mbox{ if } |f| \leq 2,\\
\displaystyle\frac{\phi(|f|)}{2}, &\mbox{ if } | f| \geq 3,
\end{array}\right.$$
where $\phi$ is Euler's function. 
\end{cor}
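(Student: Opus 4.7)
The plan is to combine Theorem~\ref{genus hnn nao fixos} for the class $\A$ with Corollary~\ref{nova conta conjugadoss} for the subclass $\F$, and then to make an explicit computation of $\cR$ in the maximal case.

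First I would verify three structural inputs for an indecomposable finitely generated Fuchsian group $G_1$ and a finite subgroup $H\le G_1$, using Corollary~7.4 of \cite{BPZ2}, which is the engine already exploited in Examples~\ref{grupo fuchsian} and \ref{usar genus class A}: (a) $\widetilde{\aut}_{G_1}(H)=\out(H)=\widetilde{\aut}_{\widehat G_1}(H)$; (b) the invariant $k(G_1,H,H)$ of Section~\ref{non fixed} equals $1$, because the $\aut(\widehat G_1)$-action on conjugacy classes of finite subgroups of $\widehat G_1$ is induced from the $\aut(G_1)$-action on the corresponding classes in $G_1$; and (c) $N_{G_1}(H)$ is the maximal finite cyclic subgroup stabilising the (unique) cone point fixed by $H$, in particular is finite, so $N_{G_1}(H)=N_{\widehat G_1}(H)$ and $\widetilde{N}_{G_1}(H)=\widetilde{N}_{\widehat G_1}(H)$. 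Since every finite subgroup of $G_1$ is cyclic, $\out(H)$ is abelian and the right conjugation action of $\widetilde{\aut}_{G_1}(H)$ on $\out(H)$ is trivial.

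With these in hand, Remark~\ref{ki finitos} gives $|g(G,G_1,\A)|=|g(G,\F)|$, and Corollary~\ref{nova conta conjugadoss} gives $|g(G,\F)|=|\widetilde{N}_{G_1}(H)\backslash\cR/(\widetilde{\aut}_{G_1}(H)\times C_2)|$; the triviality above collapses this to $|\widetilde{N}_{G_1}(H)\backslash\cR/C_2|$, proving the first assertion. For the ``in particular'' part with $H$ maximal, $N_{G_1}(H)=H$ forces $\widetilde{N}_{G_1}(H)=1$, so the genus equals $|\cR/C_2|$. I would then compute $\cR$ directly. By Proposition~\ref{cp1}, $N_{\widehat G}(H)=H\rtimes\widehat\Z$ with $\widehat\Z=\overline{\langle t\rangle}$ acting through $f$. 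An element $n=ht^m\in N_{\widehat G}(H)$ belongs to $N^{+}$ precisely when its lift $nt=ht^{m+1}$ projects to a topological generator of $\widehat G/\overline{\widehat G_1^{\widehat G}}\cong\widehat\Z$, i.e.\ $m+1\in\widehat\Z^{*}$; and the natural image of $n=ht^m$ in $\out(H)$ is $\tilde f^{\,m}$. Hence $\cR=\tilde f\widetilde{N}^{+}=\{\tilde f^{\,k}:k\in\widehat\Z^{*}\}$, which reduced modulo $|f|$ is exactly the set of generators of $\langle\tilde f\rangle$, of cardinality $\phi(|f|)$. The involution $C_2$ sends $\tilde f^{\,k}\mapsto\tilde f^{-k}$; since a unit $k$ modulo $|f|$ satisfies $2k\equiv 0$ only when $|f|\le 2$, the action is free for $|f|\ge 3$, giving the stated values $1$ or $\phi(|f|)/2$.

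The main obstacle I foresee is item (c) for non-maximal $H$, namely the equality $N_{\widehat G_1}(H)=N_{G_1}(H)$. It is standard for $H=\langle c_j\rangle$ (the cone-point stabiliser is self-normalising in any Fuchsian group and closed in $\widehat G_1$ because it is finite), but for a proper subgroup $H<\langle c_j\rangle$ one must rule out extra normalizing elements in $\widehat G_1$. This will rely on Corollary~7.4 of \cite{BPZ2} together with the fact that a normalizing element of $H$ in $\widehat G_1$ must still preserve the (unique) cone point of $H$, and so lies in the already closed, finite copy of $\langle c_j\rangle$. Once this is granted, the remainder of the proof is a mechanical assembly of the preceding machinery with the explicit identification of $\cR$.
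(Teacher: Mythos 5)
Your overall architecture matches the paper's: both proofs feed Corollary~7.4 of \cite{BPZ2} into Remark~\ref{os ls} to get $k(G_1,H,H)=1$, use the extension of automorphisms of $H$ to $G_1$ to get $\widetilde{\aut}_{G_1}(H)=\out(H)=\widetilde{\aut}_{\widehat G_1}(H)$, note that $\out(H)$ is abelian because finite subgroups of Fuchsian groups are cyclic, and then collapse $|g(G,G_1,\A)|$ to $|g(G,\F)|=|\widetilde N_{G_1}(H)\backslash\cR/C_2|$ via Theorem~\ref{genus hnn nao fixos} (Remark~\ref{ki finitos}) and Corollary~\ref{nova conta conjugadoss}; your direct computation of $\cR$ in the maximal case is a correct unwinding of what the paper gets from Corollary~\ref{cotas n} and Remark~\ref{H subgroup cyclic}.

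The genuine gap is exactly the step you flag yourself: the hypothesis $\widetilde N_{\widehat G_1}(H)=\widetilde N_{G_1}(H)$ needed for Corollary~\ref{nova conta conjugadoss}. You propose to prove the much stronger equality $N_{\widehat G_1}(H)=N_{G_1}(H)$ by arguing that a normalizing element of $H$ in $\widehat G_1$ "must preserve the unique cone point of $H$"; but there is no hyperbolic plane, cone point, or fixed-point set available for $\widehat G_1$, so this argument does not transfer, and Corollary~7.4 of \cite{BPZ2} (which controls liftability of automorphisms and conjugacy classes of finite subgroups) does not by itself bound the profinite normalizer. This affects not only non-maximal $H$ but also the maximal case, since $N_{G_1}(H)=H$ gives $\widetilde N_{G_1}(H)=1$ but says nothing a priori about $\widetilde N_{\widehat G_1}(H)$. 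The paper closes this hole by a different mechanism: it invokes conjugacy separability of Fuchsian groups (Fine--Rosenberger, \cite{fine}) to conclude $\overline N_{\widehat G_1}(H)=\overline N_{G_1}(H)$ as subgroups of $\aut(H)$ --- which is all that is needed, since only the images in $\aut(H)$ and $\out(H)$ enter the genus formula. Replacing your cone-point argument by this conjugacy-separability input (or some other proof that every automorphism of $H$ induced by an element of $N_{\widehat G_1}(H)$ is already induced by an element of $N_{G_1}(H)$) completes the proof; the rest of your assembly is sound.
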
 


\begin{proof} 
By Proposition 3.3 (i) in \cite{BPZ2} $G_1$ is an $\Oe$-group since indecomposable into a free product Fuchsian groups are virtually compact surface groups. By Corollary 7.4  in \cite{BPZ2} (see also the proof) we have  $l_H=1$ and so $$k(G_1, H, H)=1$$ by Remark \ref{os ls}. On the other hand, it follows from the presentation of a Fuchsian group that any automorphism of $H$ extends to an automorphism of $G_1$, i.e., $\widetilde{\aut}_{G_1}(H)=\out(H)=\widetilde{\aut}_{\widehat{G}_1}(H)$. Furthermore, $G_1$ is conjugacy separable  (see \cite{fine}), so $\overline{N}_{\widehat{G_1}}(H)=\overline{N}_{G_1}(H)$ and consequently $$\widetilde{N}_{\widehat{G_1}}(H)=\widetilde{N}_{G_1}(H).$$ Note that $\out(H)$ is abelian since $H$ is cyclic. Therefore, by Corollary \ref{nova conta conjugadoss} 
and Theorem \ref{genus hnn nao fixos} $|g(G,G_1,\A)|=|g(G,\mathcal{F})|= |\widetilde{N}_{\widehat G_1}(H)\backslash\mathcal{R}/C_2|=|\widetilde{N}_{G_1}(H)\backslash \cR/C_2|.$ If $H$ is a maximal finite subgroup then $N_{G_1}(H)=H$ and therefore $\widetilde{N}_{G_1}(H)=\{1\}=\widetilde{N}_{\widehat{G}_1}(H).$ The formula for the genus in this case follows from Corollary \ref{cotas n} and Remark \ref{H subgroup cyclic}.
\end{proof}

\begin{rem}
With hypotheses of the previous corollary, assume that $G=\hnn(G_1, H, K, f,t)$ where $H, K$ are non-conjugate finite subgroups of $G_1.$ Note that $|g(G,G_1,\A)|=|g(G,\mathcal{F})|= \left|\overline{\Gamma}_{HK}  \backslash \Omega\right|\,\,(\mbox{ see formula  } (\ref{Omega como uniao}))$ by  Theorem \ref{fixed H}. If $H$ is maximal, then $K$ is also maximal and they are self-normalized, from where it follows that $$\widetilde{N}_{G_1}(H)=\{1\}=\widetilde{N}_{G_1} (K)=\widetilde{N}_{\widehat{G}_1}(H)=\widetilde{N}_{\widehat{G}_1}(K).$$ Therefore by Proposition \ref{cp1} $\widetilde{N}_{G}(H)=\{1\}=\widetilde{N}_{\widehat{G}}(H)=\widetilde{N}^{+}.$ In this case the genus is simply given by $|g(G,G_1,\A)|=|g(G,\mathcal{F})|= \left|\overline{\Gamma}_{HK} \backslash \Omega\right|,$ where $\Omega=\overline{\Gamma}_{_{\widehat{HK}}} \cdot f.$  
\end{rem}

From Theorems \ref{genus hnn nao fixos} and \ref{1.8 da introducao} we deduce the following

\begin{prop} \label{genus classF}
Let $H$ be a finite subgroup of an $\Oe$-group $G_1$ and $$G=\hnn(G_1, H, f, t)$$ be a normal HNN-extension. Then the cardinality of genus $g(G, G_1,\A)$ satisfies $$ k(G_1, H, H) \leq |g(G, G_1,\A)| 
\leq k(G_1, H, H) \cdot |\out(H)|.$$ In particular, if $|\out(H)|=1$ then $|g(G, G_1,\A)|=k(G_1, H, H).$ 
\end{prop}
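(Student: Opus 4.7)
The plan is to reduce directly to Theorems \ref{genus hnn nao fixos} and \ref{1.8 da introducao}. Since $G=\hnn(G_1,H,f,t)$ is normal, the ordered associated subgroups coincide, so the starting pair in Theorem \ref{genus hnn nao fixos} is $\{H,H\}$, i.e.\ the singleton $\{\langle\langle H\rangle\rangle^{\widehat G_1}\}$. The $\aut(\widehat G_1)$-action sends this to singletons $\{\hat\psi(H)\}$, so the elements of $\mathscr L_{HH}$ are singletons $\{H_1\}$ with $H_1\in S_H$; in particular $H_1\cong H$. Moreover, by Proposition \ref{normal non iso nao normal profinite}, any HNN-extension in $g(G,G_1,\A)$ has its associated subgroups conjugate in the base group, hence by Lemma \ref{conjugados da normal} can be rewritten as a normal HNN-extension $\hnn(G_1,H_1,f_1,t_1)$ with $f_1\in\aut(H_1)$. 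Thus Theorem \ref{genus hnn nao fixos} yields
$$|g(G,G_1,\A)| \;=\; \sum_{\{H_1\}\in\mathscr L_{HH}} \bigl|g\bigl(\hnn(G_1,H_1,f_1,t_1),\F\bigr)\bigr|,$$
a sum of exactly $k(G_1,H,H)$ terms.

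Next I would bound each summand using Theorem \ref{1.8 da introducao}, which expresses it as
$$\bigl|\widetilde N_{G_1}(H_1)\backslash \widetilde N_{\widehat G_1}(H_1)\cdot\cR\cdot\widetilde\aut_{\widehat G_1}(H_1)\big/\bigl(\widetilde\aut_{G_1}(H_1)\times C_2\bigr)\bigr|,$$
a set of orbits inside $\out(H_1)$. Since quotients of a finite set by an action never grow it, each summand is at most $|\out(H_1)|=|\out(H)|$. At the same time each summand is at least $1$, because the HNN-extension indexing it always belongs to its own genus. Summing these bounds over the $k(G_1,H,H)$ indices produces
$$k(G_1,H,H) \;\le\; |g(G,G_1,\A)| \;\le\; k(G_1,H,H)\cdot|\out(H)|,$$
which is the desired estimate.

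For the last assertion, if $|\out(H)|=1$ then every summand is sandwiched between $1$ and $1$, so equals $1$, and the total sum is $k(G_1,H,H)$. The only point that needs care is the reduction to normal HNN-extensions when enumerating $\mathscr L_{HH}$; this is not an obstacle but should be written out explicitly via Proposition \ref{normal non iso nao normal profinite} combined with Lemma \ref{conjugados da normal}, so that the summation indeed ranges over the correct set of $k(G_1,H,H)$ representatives and the application of Theorem \ref{1.8 da introducao} (which requires a normal presentation) is legitimate.
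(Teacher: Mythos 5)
Your proposal is correct in substance and follows exactly the route the paper intends: the proposition is stated immediately after the sentence ``From Theorems \ref{genus hnn nao fixos} and \ref{1.8 da introducao} we deduce the following,'' and your argument is precisely that deduction --- the sum over the $k(G_1,H,H)$ elements of $\mathscr{L}_{HH}$ from Theorem \ref{genus hnn nao fixos}, with each summand squeezed between $1$ and $|\out(H)|$ via Theorem \ref{1.8 da introducao}. One point in your write-up is looser than you acknowledge: Proposition \ref{normal non iso nao normal profinite} only forces the associated subgroups $H_1,K_1$ of a member of the genus to be conjugate in $\widehat G_1$, not in $G_1$, whereas Lemma \ref{conjugados da normal} (and hence the normal presentation required by Theorem \ref{1.8 da introducao}) needs conjugacy in $G_1$. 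This does not sink the estimate: for a pair $\{H_1,K_1\}$ whose members are conjugate only in $\widehat G_1$, the corresponding summand is still at least $1$ (Proposition \ref{2a} supplies a compatible $f_1$) and at most $\left|\overline{\Gamma}_{H_1K_1}\backslash\Iso(H_1,K_1)\right|\le|\out(H)|$, because the image of $\Gamma_{H_1K_1}$ in $Sym(\Iso(H_1,K_1))$ contains left multiplication by $\inn(K_1)$, whose orbit space on $\Iso(H_1,K_1)$ already has cardinality $|\aut(H)|/|\inn(H)|=|\out(H)|$. With that patch the argument, including the case $|\out(H)|=1$, is complete and matches the paper's.
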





By Remarks \ref{rem dos normalizadores e densidade}, \ref{tilde implica overline}, \ref{ki finitos} and Corollary \ref{autG classe}, we have the following

\begin{cor}\label{densohnn} Let $H$ be a finite subgroup of an $\Oe$-group $G_1$ and $$G=\hnn(G_1, H, f, t)$$ be a normal HNN-extension. Suppose that  $$\widetilde{\aut}_{G_1}(H) = \widetilde{\aut}_{\widehat G_1}(H), \widetilde{N}_{\widehat G_1}(H)=\widetilde{N}_{ G_1}(H)$$ and $k(G_1, H, H)=1.$ Then $$|g(G, G_1,\A)|=|g(G, \mathcal{F})|\leq |\tilde \kappa(\widetilde{N}_{ G}(H))|.$$ In particular, if $\widetilde{N}_{\widehat G}(H)=\widetilde{N}_{G_1}(H)$ or $|\out(H)|\leq 2$ then $|g(G, G_1,\A)|=1$. 
\end{cor}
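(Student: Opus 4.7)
The plan is to assemble this corollary directly from the pieces already gathered in the paper, since each hypothesis has been chosen precisely to match an earlier result. First, I would note that because $G = \hnn(G_1, H, f, t)$ is a normal HNN-extension, both associated subgroups coincide (they are both $H$), so in particular they are conjugate in $G_1$. Thus we are in the setting of Corollary \ref{autG classe}. The two hypotheses $\widetilde{\aut}_{G_1}(H) = \widetilde{\aut}_{\widehat G_1}(H)$ and $\widetilde{N}_{\widehat G_1}(H)=\widetilde{N}_{G_1}(H)$ are exactly the ones needed there, so Corollary \ref{autG classe} immediately yields
\[
|g(G, \mathcal{F})| \leq |\tilde\kappa(\widetilde{N}_{G}(H))|.
\]

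Next, I would use the assumption $k(G_1, H, H) = 1$ to pass from the cardinality of the genus in $\mathcal{F}$ to that in the larger class $\mathcal{A}$. Precisely, Remark \ref{ki finitos} asserts that whenever $k(G_1, H, K)=1$, the combined application of Theorems \ref{fixed H} and \ref{genus hnn nao fixos} yields $|g(G, \mathcal{F})| = |g(G, G_1, \mathcal{A})|$. Applying this with $K=H$ gives the first chain of equality and inequality in the statement.

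Finally, for the ``in particular'' clause, each of the two alternative hypotheses $\widetilde{N}_{\widehat G}(H)=\widetilde{N}_{G_1}(H)$ and $|\out(H)|\leq 2$ is exactly one of the conditions listed in the second part of Corollary \ref{autG classe} which forces $|g(G, \mathcal{F})|=1$; combining with the equality $|g(G, \mathcal{F})| = |g(G, G_1, \mathcal{A})|$ from the previous step produces $|g(G, G_1, \mathcal{A})|=1$. (Remarks \ref{rem dos normalizadores e densidade} and \ref{tilde implica overline} are invoked implicitly inside Corollary \ref{autG classe} to convert the tilde hypotheses into statements about $\overline{N}_{\widehat G}(H)$ and $\overline{N}_G(H)$, respectively, guaranteeing we may apply it verbatim.)

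There is no substantive obstacle here: the entire content of the corollary is the observation that all hypotheses are pre-aligned with Remark \ref{ki finitos} (to bridge $\mathcal{F}$ and $\mathcal{A}$) and Corollary \ref{autG classe} (to bound $|g(G,\mathcal{F})|$). The only point to verify carefully is that the normal HNN-extension assumption lets us view $f$ as an element of $\aut(H)$ rather than merely $\Iso(H,K)$, so that $\tilde\kappa$ is defined on the appropriate set, and this is automatic from the definition of a normal HNN-extension recalled in Lemma \ref{conjugados da normal}.
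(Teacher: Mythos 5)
Your proposal is correct and follows essentially the same route as the paper, which derives this corollary precisely by combining Corollary \ref{autG classe} (for the bound on $|g(G,\mathcal{F})|$ and the genus-one conditions) with Remark \ref{ki finitos} (to identify $|g(G,\mathcal{F})|$ with $|g(G,G_1,\A)|$ when $k(G_1,H,H)=1$), together with Remarks \ref{rem dos normalizadores e densidade} and \ref{tilde implica overline}. Nothing is missing.
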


\begin{exa} Let $P=\langle x_1, x_2\rangle$ be a  free nilpotent group of rank $2$ and \break $H\cong \Z/p \Z$ be  a group of odd prime order $p$. Note that $$\aut(\Z/p \Z)=\langle c\rangle\cong C_{p-1}$$ is a  cyclic group of order  $p-1$.   We look at $c$ as the unit of $\Z/p\Z$ and define $G_1=H\rtimes_{\psi_1} P$ with $h^{x_j}= h^{c}$, for $i,j=1,2$ and by Proposition 3.1 in \cite{BPZ2} $G_1$ is an $\Oe$-group. Note that $\widehat G_1=H\rtimes_{\hat \psi_1} \widehat P,$ where $\hat \psi_1$ is the natural extension of $\psi_1$ to $\widehat P$ (see Proposition 2.6 in \cite{GZ}). Let $G=\hnn(G_1,H,f,t)$ be an HNN-extension with $f\in \aut(H)$. Since $H$ is characteristic in $\widehat G_1$, by Remark \ref{ki finitos} $k(G_1, H, H)=1$. Furthermore, $\widetilde{N}_{G_1}(H)=\out(H)=\widetilde{N}_{\widehat G_1}(H)=\widetilde{N}_{\widehat{G}}(H)$. By Example 7.11 in \cite{BPZ2} we have $\widetilde{\aut}_{ G_1}(H)=\widetilde{\aut}_{\widehat{G}_1}(H)=\aut(H)=\out(H).$ It follows from Corollary \ref{densohnn} that $|g(G,\mathcal{F})|=|g(G,G_1,\A)|=1$. 
\end{exa}

\begin{exa} Let $G_1 = GL_n(\mathbb{F}_p)$ and consider $H$ to be the Borel subgroup of the upper triangular matrices of $G_1$. Let $G=\hnn(G_1, H, f, t)$ be an HNN-extension with $f \in \aut(H).$ Note that $H$ is self-normalizing in $G_1$ so $\widetilde{N}_{G_1}(H)=\{1\},$ moreover $k(G_1, H, H)=1$ since $G_1$ is finite. By Proposition \ref{cp1} we have that $\widetilde{N}_{\widehat{G}}(H)=\left \langle \tilde{f} \right \rangle,$ where $\tilde{ f}$ is the image of $f$ in $\out(H).$ It follows from Corollaries \ref{nova conta conjugadoss}, \ref{cotas n} and  Remark \ref{ki finitos} 
that  $$|g(G,G_1,\A)|=|g(G,\mathcal{F})|= |\mathcal{R}/(\widetilde{\aut}_{ G_1}(H) \times C_2)| \leq \left| \left \langle \tilde{f} \right \rangle / (\widetilde{\aut}_{ G_1}(H) \times C_2) \right|,$$ and therefore 

$$|g(G,G_1,\A)| \leq \left\{\begin{array}{cc}
  1,&\mbox{ if } |\tilde f| \leq 2,\\
\displaystyle\frac{\phi(|\tilde f|)}{2}, &\mbox{ if } |\tilde f| \geq 3,
\end{array}\right.$$
where $\phi$ is Euler's function and $\tilde f$ is the image of $f$ in $\out(H)$.
In particular, if $f \in \inn(H),$  then $|g(G,G_1,\A)|=1.$    
\end{exa}

\begin{exa}\label{dieadra ove klein} Let $G_1= \langle r, c \ | \ r^4=c^2=1, crc^{-1}=r^{-1}\rangle \cong D_8$  and $H=\langle c, r^2\rangle \cong C_2 \times C_2$ be one of the Klein subgroups of $G_1.$ Let $f_1$ be an automorphism of $H$ that fixes $c$ and maps $r^2$ to $cr^2$ and $f_2$ an automorphism that fixes $r^2$ and maps $c$ to $cr^2$. Let $B=\hnn(G_1, H,  f_1,t_1)$ and $D=\hnn(G_1, H, f_2,t_2)$  be normal HNN-extensions. Note that $$\aut(H)=\out(H)\cong S_3, \overline{N}_{G_1}(H)=\{id, f_2=\tau_{cr}|_{_{H}}\} = \overline{\aut}_{G_1}(H) \cong C_2.$$ By Proposition \ref{nonnormalhnnabs}  $B$ and $D$ are not isomorphic 
and by Corollary \ref{corhnn}  the number of the isomorphism classes of HNN-extensions $G=\hnn(G_1, H, f,t)$ with $f\in \aut(H)$ is equal to 2 since the two distinct orbits are given by the orbits of the representatives $f_1$ and $f_2.$ By Theorem \ref{genus 1 normal}  $$|g(D, \F)|=1$$ because $f_2 \in \overline{N}_{G_1}(H)$. Consequently, $\widehat B$ and $\widehat D$
are non-isomorphic and $|g(B,\F)|=1$.
As $G_1$ is finite it follows from Remark \ref{ki finitos} that $$k(G_1, H, H)=1,$$ therefore the genus satisfies $$|g(D, \F)|=|g(B,\F)|=|g(D,G_1,\A)|=|g(B,G_1,\A)|=|g(G,G_1,\A)|=1.$$
\end{exa}

\begin{exa} In the previous example take $H=\langle r^2,  c \rangle \cong C_2 \times C_2$ and $K=\langle r^2, r c \rangle \cong C_2 \times C_2$ the distinct Klein subgroups of $G_1$ and let $$G=\hnn(G_1, H, K, f, t)$$ be a non-normal HNN-extension where $f \in \Iso(H, K)$ fixes $r^2$ and sends $c$ to $rc.$ Note that $f$ extends to an automorphism $\varphi \in \aut(G_1)$ where $\varphi(r)=r$ and $\varphi(c) = rc,$ therefore it follows from  Theorem \ref{genus 1} i) and Remark \ref{ki finitos} that $|g(G, G_1,\A)|=1.$ 

Similarly let $G_2=Q_8=\left \langle i, j\,|\, i^4=j^4=1, i^j=j^{3}=-j \right \rangle$ be the quaternion group of order 8, $H=\left \langle i \right \rangle$ and $K=\left \langle j \right \rangle $ two of its non-conjugate subgroups of order 4. Let $W=\hnn(G_2, H, K, f, t)$ such that $f(i)=\pm j.$ Note that $f$ extends to an automorphism $\psi: G_2 \longrightarrow G_2$ such that $\psi(i)=j$ and $\psi(j)=i$ or $\psi(i)=-j$ and $\psi(j)=-i.$ Then for the same reason as in the dihedral case we have  $|g(W, G_2,\A)|=1,$ and similarly for any permutation between three distinct subgroups of order 4 of the quaternion group.
\end{exa} 

\begin{exa}\label{diedral over cyclic}
In the previous example, let $H$ be any of the proper subgroups of $G_1 \cong D_8$ that are not Klein subgroups. Then $H$ is cyclic of order $\leq 4,$ from where it follows that $|\aut(H)|=|\out(H) | \leq 2,$ so $$|g(\hnn(D_8, H, H, f,t), D_8,\A)|=1$$ by Corollary \ref{autG classe} and Remark \ref{ki finitos}.  Therefore, for any HNN-extension $$G=\hnn(D_8, H, H, f,t)$$ the cardinality of  genus $g(G, G_1,\A)$ is always equal to 1 regardless of the chosen associated subgroup. If $G_2=Q_8$ is the Quaternion group of order $8,$ the same follows because all its proper subgroups are cyclic of order $\leq 4,$ i.e., $$|g(\hnn(Q_8, H, H, f,t), Q_8,\A)|=1$$ independently of the chosen associated subgroups.     
\end{exa}

\begin{rem} Note that $$G=\hnn(D_8, H, K, f,t),\  {\rm and}\  G=\hnn(Q_8, H, K, f,t)$$ are  profinitely rigid if the Remeslennikov's question has a positive answer. Indeed,  a finitely generated residually finite group $B$  having the same profinite completion as $G$ would be virtually free and by \cite[Theorem 1.1]{BPZ} $B$ will be an HNN-extension with finite base group.
\end{rem}

\subsection{Genus in a more general class} \label{general section}

 In this subsection we give formulas  for the genus of HNN-extensions in class $\mathcal{A}$ with $G_1$ not fixed. When $G$ is a group in $\mathcal{A}$,  we will denote the genus of $G$ in class $\mathcal{A}$, as $g(G, \mathcal{A}).$

Let $H$ and $K$ be finite subgroups of an $\Oe$-group $G_1$ and $$G=\hnn(G_1, H, K, f, t)$$ an HNN-extension. The objective  is to find a formula to calculate the cardinality of the set $g(G, \mathcal{A}).$
To do this, consider $B\in \mathcal{A}$ such that $\widehat B\cong \widehat G$. Then by \cite[Theorem 1.1]{BPZ} $B$ is an HNN-extension of the form $B=\hnn(B_1, H_1, K_1, f_1, t_1)$ with $\widehat B_1\cong \widehat G_1, H \cong H_1\cong K \cong K_1,$ and furthermore $B_1 \in \mathcal{A}$.


 \begin{deff} Let $G_1$ be a group and $\widehat G_1$ be its profinite completion. Let $X_1, X_2, Y_1, Y_2$ be finite subgroups of $\widehat G_1$. We say that the pairs $(X_1,Y_1)$ and $(X_2,Y_2)$ are \textbf{permutable}, if there exists $\psi\in\aut(\widehat G_1)$ that  permutes the conjugacy classes of $X_i$ and $Y_i,$ for each  $i=1,2$.\end{deff} 
 
Let $B_1 \in g(G_1,\A)$. We will identify $\widehat G_1$ with $\widehat B_1$ and consider $G_1$ and $B_1$ as dense subgroups of $\widehat G_1=\widehat B_1.$ 
  Let $B=\hnn(B_1,H_1,K_1,f_1,t_1)$ be an HNN-extension with finite associated subgroups. We say that $B$ is \textbf{ compatible }  with $G$ if the pairs $(H,H_1)$ and $(K,K_1)$ are permutable in $\widehat G_1$ (i.e. exists $\psi\in\aut(\widehat G_1)$ such that $\psi(H)=H_1$ and $\psi(K)=K_1^{g_1}$ for some $g_1\in \widehat G_1$) and $f_1 = \tau_{g_1} f^{\psi^{-1}}$.
By Proposition \ref{2a}, $B$ is compatible with $G$ if and only if $\widehat B\cong \widehat G.$ 
With these considerations, we state the following result

\begin{thm}\label{fixed H2} The cardinality of the genus $g(G, \mathcal{A})$ of an HNN-extension $G=\hnn(G_1, H, K, f, t)$ is equal to $$|g(G,\A)| = \sum_{B_1\in g(G_1,\A)} |g\left(\hnn(B_1, H_1, K_1, f_1, t_1  ), B_1,\A\right)|,$$ where $B_1$ ranges over the representatives of isomorphism classes in $g(G_1,\A)$ such that $G$ is compatible with $\hnn(B_1, H_1, K_1, f_1, t_1 ).$   
\end{thm}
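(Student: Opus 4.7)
The plan is to partition the genus $g(G,\mathcal{A})$ according to the isomorphism class of the base group of a splitting of each element. The first key input is \cite[Theorem 1.1]{BPZ}, already invoked in the paragraphs preceding the statement, which guarantees that any $B \in g(G,\mathcal{A})$ admits a decomposition $B = \hnn(B_1,H_1,K_1,f_1,t_1)$ with $B_1 \in \mathcal{A}$, with $H_1,K_1$ finite, and crucially with $\widehat{B}_1 \cong \widehat{G}_1$. Consequently the isomorphism class of $B_1$ is a well-defined element of $g(G_1,\mathcal{A})$, so that the assignment $B \mapsto [B_1]$ gives a map from $g(G,\mathcal{A})$ to $g(G_1,\mathcal{A})$, and the theorem reduces to computing the cardinality of each fibre.

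Next, I would fix a representative $B_1$ of a class in $g(G_1,\mathcal{A})$ and compute the cardinality of the fibre over $[B_1]$. By definition this fibre consists of the isomorphism classes of HNN-extensions $B=\hnn(B_1,H_1,K_1,f_1,t_1)$ with finite associated subgroups such that $\widehat{B} \cong \widehat{G}$. After fixing an identification of $\widehat{B}_1$ with $\widehat{G}_1$, Proposition \ref{2a} translates the condition $\widehat{B}\cong\widehat{G}$ into the existence of $\psi \in \aut(\widehat{G}_1)$ permuting the conjugacy classes of $H,H_1$ and of $K,K_1$ with $f_1 = \tau_{g_1} f^{\psi^{-1}}$; this is precisely the compatibility condition of the definition preceding the statement. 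Hence the fibre over $[B_1]$ is non-empty iff some HNN-extension of $B_1$ is compatible with $G$, and in that case it coincides set-theoretically with the genus $g(\hnn(B_1,H_1,K_1,f_1,t_1),B_1,\mathcal{A})$ computed inside the subclass $\mathcal{F}(B,B_1,\mathcal{A})$ of HNN-extensions with fixed base group $B_1$.

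Putting the two steps together, summing the cardinalities of the fibres over a system of representatives of $g(G_1,\mathcal{A})$ for which at least one compatible HNN-extension exists gives the claimed formula. The main subtle point, which is where I would spend the most care, is the independence from the choice of identification $\widehat{B}_1\cong\widehat{G}_1$: different identifications differ by an automorphism of $\widehat{G}_1$, and one must check that this freedom is absorbed by the $\aut(\widehat{G}_1)$-action built into the compatibility condition, so that the value $|g(\hnn(B_1,H_1,K_1,f_1,t_1),B_1,\mathcal{A})|$ depends only on $[B_1]$ and not on the chosen compatible HNN-extension representative. This is essentially a bookkeeping argument using Proposition \ref{2a} and the definition of compatibility, and completes the proof.
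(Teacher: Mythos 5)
Your proposal is correct and follows essentially the same route as the paper, which states this theorem as a direct consequence of the preceding discussion: \cite[Theorem 1.1]{BPZ} forces every $B\in g(G,\A)$ to split as $\hnn(B_1,H_1,K_1,f_1,t_1)$ with $\widehat B_1\cong\widehat G_1$, Proposition \ref{2a} identifies the condition $\widehat B\cong\widehat G$ with compatibility, and the genus is then partitioned by the isomorphism class of the base group. Your extra attention to the independence of the identification $\widehat B_1\cong\widehat G_1$ is a worthwhile point that the paper handles implicitly via the $\aut(\widehat G_1)$-freedom in the definition of compatibility.
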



\begin{cor} \label{genus 1 gerall}
If $| g(G_1,\A)|=1$ then $|g(G, \mathcal{A})|=|g(G, G_1,\A)|.$    
\end{cor}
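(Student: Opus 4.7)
The plan is to derive this corollary directly from Theorem \ref{fixed H2}. The hypothesis $|g(G_1,\A)|=1$ says that $G_1$ is the unique isomorphism-class representative of groups in $\A$ having the same profinite completion as $G_1$. Consequently, the index set in the summation of Theorem \ref{fixed H2}, namely the set of $B_1\in g(G_1,\A)$ such that $G$ is compatible with an HNN-extension of the form $\hnn(B_1, H_1, K_1, f_1, t_1)$, reduces to the single class represented by $B_1 = G_1$ (which certainly admits the compatible HNN-extension $G$ itself, by taking $\psi = \mathrm{id}$, $g_1=1$, $H_1=H$, $K_1=K$, $f_1=f$).

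First I would invoke Theorem \ref{fixed H2} to write
\begin{equation*}
|g(G,\A)| \;=\; \sum_{B_1\in g(G_1,\A)} |g(\hnn(B_1, H_1, K_1, f_1, t_1), B_1,\A)|,
\end{equation*}
where the sum is indexed by representatives of isomorphism classes in $g(G_1,\A)$ carrying an HNN-extension compatible with $G$. Then I would note that by hypothesis there is exactly one such representative, namely $G_1$ itself, and the corresponding compatible HNN-extension is $G=\hnn(G_1,H,K,f,t)$. Hence the sum collapses to a single term, giving $|g(G,\A)|=|g(G, G_1, \A)|$.

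The only point that requires a line of verification is that $G_1$ really does appear as an indexing representative, i.e. that $G$ is compatible with itself in the sense of Theorem \ref{fixed H2}; this is immediate from the definition of compatibility. There is no substantive obstacle here — the corollary is a direct specialization of Theorem \ref{fixed H2} to the situation where the outer summation has exactly one term.
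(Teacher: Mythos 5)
Your proof is correct and follows exactly the route the paper intends: the corollary is stated as an immediate consequence of Theorem \ref{fixed H2}, whose sum collapses to the single term $B_1=G_1$ when $|g(G_1,\A)|=1$, with compatibility of $G$ with itself being trivial. No issues.
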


\begin{exa} 
 Let $\Gamma$ be an indecomposable into a free product finitely generated  Fuchsian group and $H \neq \{1\}$ is one of its finite subgroup. Let $G_1=K\times \Gamma$ such that $H \cong K$. Consider $G=\hnn(G_1,H, K, f, t)$. If $\Gamma$ is profinitely rigid (see \cite{BMRS20} for examples of such) then so is $G_1$ and by Example \ref{usar genus class A} and Corollary \ref{genus 1 gerall}  $|g(G,\mathcal{A})|=1.$  
\end{exa}

\nocite{*}



\begin{thebibliography}{99}


\bibitem{Aka} M. Aka. {\it Arithmetic groups with isomorphic finite quotients.} Journal of Algebra, 352 (2012) 322-240.

\bibitem{A} S. Andreadakis. {\it On the automorphisms of free groups and free nilpotent groups.} Proc. London Math. Soc. 15 (1965) 239-268.

\bibitem{BAU74} G. Baumslag. {\it Residually finite groups with the same finite images.} Compositio Mathematica, 29 (\textbf{3}) (1974) 249-252.

\bibitem{borelarith} A. Borel. \textit{ Arithmetic properties of linear algebraic groups.}
Arithmetic properties of linear algebraic groups.   Proc. ICM Stockholm. (1962) 10-22. 



\bibitem{BPZ2}  V.R. de Bessa, A.L.P. Porto, P.A. Zalesskii. {\it Profinite genus of free products with finite amalgamation.} Journal of Algebra, {\bf 643} (2024) 11-48.

\bibitem{BPZ}  V.R. de Bessa, A.L.P. Porto, P.A. Zalesskii. {\it The profinite  completion of accessible groups.} Monatsh. Math. v. 202 {\bf 2} (2022) 217-227.

\bibitem{BZ} V.R. Bessa, P.A. Zalesskii. {\it The genus for HNN-extensions.} Mathematische Nachrichten, v. 286, (2013) 817-831.

\bibitem{BCR16}  M.R. Bridson, M.D.E. Conder, A.W. Reid. {\it Determining Fuchsian groups by their finite quotients.} Israel Journal of Mathematics, \textbf{214} (2016) 1-41.
	
\bibitem{BMRS18} M.R. Bridson, D.B. McReynolds, A.W. Reid, R. Spitler. \textit{ Absolute profinite rigidity and hyperbolic geometry}. Annals of Mathematics,  {\bf 192} (2020) 679--719. arXiv preprintarXiv:  1811.04394, 2018.
	
\bibitem{BMRS20} M.R. Bridson, D.B. McReynolds, A.W. Reid, R. Spitler. \textit{On the profinite rigidity of triangle groups}, Bulletin of the London Math. Society, {\bf 53} (2021) 1849--1862. arXiv: 2004.07137, 2020.



\bibitem{Brid} M.R. Bridson. \textit{ Finiteness properties for subgroups of $GL(n, \mathbb{Z})$.} Math. Ann. 317 {\bf(4)} (2000) 629-633.

\bibitem{bridcurvature} M.R. Bridson, A. Haefliger. \textit{Metric spaces of non-positive curvature.} Grundlehren der math Wiss. 319, Springer-Verlag, Berlin. Heidelberg. New York (1999).


\bibitem{Dav} D. Carolillo, G. Paolini. \textit{Profinite rigidity of crystallographic groups arising from Lie theory.} arXiv:2506.15494

\bibitem{Sam} S. M. Corson, S. Hughes, P. M\"oller, O. Varghese. \textit{Profinite rigidity of affine Coxeter groups.} Mathematische Zeitschrift, 311 \textbf{(11)} (2025) 1--9. 

\bibitem{Sam2} S. M. Corson, S. Hughes, P. M\"oller, O. Varghese. \textit{Higman-Thompson groups and profinite properties
of right-angled Coxeter groups.} (2023) arXiv:2309.06213







\bibitem{culler} M. Culler. {\it Finite groups of automorphisms of free groups.} In: Contributions to Group Theory, Contemp. Math., 33 (1984) 197-207. Amer. Math. Soc., Providence, R.I.

\bibitem{DI} W. Dicks, M.J. Dunwoody. Groups acting on graphs. Cambridge Studies in Advanced Math. {\bf 17}, Cambridge University Press, 1989.

\bibitem{Die} J. Dietz. {\it Automorphism of groups of semi-direct products.}  Communications in Algebra {\bf 40} (2012) 3308-3316.  

\bibitem{fine} B. Fine, G. Rosenberger. {\it Conjugacy separability of Fuchsian groups and related questions}.  Contemporary Mathematics {\bf 109} (American Mathematical Society, Providence), (1990) 11-18.

\bibitem{GPS} F.J. Grunewald, P.F. Pickel, D. Segal. {\it Finiteness theorems for polycyclic groups.} Bull. Amer. Math. Soc. (N.S.) 1 (1979), no. 3, 575-578.

\bibitem{GS} F. Grunewald, R. Scharlau. {\it A note on finitely generated torsion-free nilpotent groups of class 2.} J. Algebra 58 (\textbf{1}) (1979) 162-175.

\bibitem{GZ} F.J. Grunewald, P.A. Zalesskii. {\it Genus for groups.} Journal of Algebra 326, (2011) 130-168.


\bibitem{J} A. Jaikin-Zapirain. The finite and solvable genus of finitely generated free and surface groups. Res. Math. Sci. 10, 44 (2023). 

\bibitem{J2} A. Jaikin-Zapirain, I. Morales. {\it Prosolvable rigidity of surface groups.} Sel. Math. New Ser. 31, 86 (2025). https://doi.org/10.1007/s00029-025-01082-1

\bibitem{J3} A. Jaikin-Zapirain, A. Lubotzky. {\it Some remarks on Grothendieck pairs.} Groups, Geometry, and Dynamics, 19 (2), (2025) 597-616. https://doi.org/10.4171/GGD/889






\bibitem{K}  E. I. Khukhro, V. D. Mazurov (eds.). The Kourovka notebook. Unsolved problems in group theory, 18th ed., Ross. Akad. Sci. Sib. Div., Inst. Math., Novosibirsk 2014, 227 pp.

\bibitem{L} A. Lubotzky. {\it Combinatorial group theory for pro-p-groups.} J. Pure and Appl. Algebra 25 (1982) 311-325.


\bibitem{LS} R.C. Lyndon, P.E. Schupp. Combinatorial group theory. Reprint of the (1977) edition. Classics in Mathematics. Springer-Verlag, Berlin, 2001.

\bibitem{MKS} W. Magnus, A. Karrass, D. Solitar. Combinatorial group theory: Presentations of groups in terms of generators and relations. Second edition, Dover Publication, INC. New York, 1976.

\bibitem{M} I.  Morales, On the profinite rigidity of free and surface groups. Mathematische Annalen, Volume 390, pages 1507–1540, (2024).


\bibitem{Ner19} G.J. Nery, \textit{Profinite genus of fundamental groups of torus bundles.} Communications in Algebra, \textbf{48} (2019) 1567-1576.

\bibitem{Ner20} G.J. Nery. {\it Profinite genus of fundamental groups of compact flat manifolds with holonomy group of prime order.} Journal of Group Theory, \textbf{24\,(6)}, 1135-1148. 

\bibitem{Ner24} G.J. Nery. {\it Profinite genus of fundamental groups of compact flat manifolds with cyclic holonomy group of square-free order.} Forum Mathematicum, 2024. 





\bibitem{newman} M. Newman. {\it A note on Fuchsian groups.} Illinois Journal of Mathematics, v. 29, {\bf 
 4} (1985) 682-686.

\bibitem{R} L. Ribes. Profinite graphs and groups. Springer-Verlag {\bf 66}, 2017.

\bibitem{Sta} J. R. Stallings. Group theory and three-dimensional manifolds, Yale University Press, New Haven, Conn.-London, 1971, A James K. Whittemore Lecture in Mathematics given at Yale University, 1969, Yale Mathematical Monographs, 4. 

\bibitem{RO} D.J.S. Robinson. A course on the theory of groups. Springer-Verlag, New York, Berlin-Heidelberg, 1982.

\bibitem{RSZ} L. Ribes, D. Segal, P.A. Zallesskii. {\it Conjugacy separability and free products of groups with cyclic amalgamation.} J. London Math. Soc. 57 (\textbf{2}), (1998) 609-628.

\bibitem{RZ} L. Ribes, P.A. Zalesskii. Profinite groups. Second edition, Springer-Verlag, Berlin Heidelberg, 2010.


\bibitem{Se} D. Segal, Polycyclic groups (Cambridge University Press, 1983).

\bibitem{S} J.P. Serre. Trees. Springer-Verlag, 2003.

\bibitem{Ste72} P.F. Stebe. {\it Conjugacy separability of groups of integer matrices.} Proceedings of the American Mathematical Society, 32 (\textbf{1}) (1972)  1-7.


\bibitem{vog} K. Vogtmann. {\it Automorphisms of Free Groups and Outer Space.}, Geometriae Dedicata 94, (2002) 1-31. 

\bibitem{Wil17} G. Wilkes. \textit{Profinite rigidity for Seifert fibre spaces.} Geometriae Dedicata, \textbf{188} (2017) 141-163.


\bibitem{ZM} P.A. Zalesskii, O.V. Melnikov. {\it Fundamental Groups of Graphs of Profinite Groups.} {\empty Algebra i Analiz} {\bf 1} (1989); translated in: {\empty Leningrad Math. J.} {\bf 1} (1990) 921-940.

\bibitem{ZM1} P.A. Zalesskii, O.V. Melnikov. {\it Subgroups of profinite groups acting on trees.} {\empty Math. USSR Sbornik} {\bf 63} (1989) 405-424.


\bibitem{zimm} B. Zimmermann. {\it Finite groups of outer automorphisms of free groups.} Glasgow Mathematical Journal,  Vol. 38, Issue 3 (1996) 275-282. 





\end{thebibliography}
\end{document}